\newtheorem{theorem}{Theorem}[section]
\newtheorem{lemma}[theorem]{Lemma}
\newtheorem{proposition}[theorem]{Proposition}
\newtheorem{corollary}[theorem]{Corollary}
\newtheorem{conjecture}[theorem]{Conjecture}
\newtheorem{definition}[theorem]{Definition}
\newenvironment{proof}[1][Proof:]{\begin{trivlist}
\item[\hskip \labelsep {\bfseries #1}]}{\end{trivlist}}
\title{\bf A search for quantum coin-flipping protocols \\ using optimization techniques}
\author{%
Ashwin Nayak\thanks{%
Department of Combinatorics and Optimization,
and Institute for Quantum Computing, University of Waterloo.
Address: 200 University Ave.\ W.,
Waterloo, ON, N2L 3G1, Canada. \newline
Email: {
\tt
ashwin.nayak@uwaterloo.ca}.}
\and
Jamie Sikora\thanks{%
Laboratoire d'Informatique Algorithmique: Fondements et Applications, Universit\'e Paris Diderot.
Address: 5 rue Thomas-Mann 75205 Paris cedex 13, France.  \newline
Email: {
\tt
jamie.sikora@liafa.univ-paris-diderot.fr}.}
\and
Levent Tun\c cel\thanks{%
Department of Combinatorics and Optimization, University of Waterloo.
Address: 200 University Ave.\ W., Waterloo, ON, N2L 3G1, Canada. \newline
Email: {
\tt
ltuncel@uwaterloo.ca}.}
}
\date{March 3, 2014}                                           
\newcommand{\comment}[1]{{}}
\newcommand{\e}{\mathrm{e}}
\newcommand{\sqrtt}[1]{\sqrt{#1}\sqrt{#1}^\transpose}
\newcommand{\calA}{\mathcal{A}}
\newcommand{\calP}{\mathcal{P}}
\newcommand{\calH}{\mathcal{H}}
\newcommand{\prob}{\textup{Prob}}
\newcommand{\Prob}{\textup{Prob}}
\newcommand{\id}{\mathrm{I}}
\newcommand{\tr}{\mathrm{Tr}}
\newcommand{\kb}[1]{e_{#1} e_{#1}^*}
\newcommand{\dsum}{\displaystyle\sum}
\newcommand{\inner}[2]{\left\langle #1, #2 \right\rangle}
\newcommand{\set}[1]{\left\{ #1 \right\}}
\newcommand{\Lor}{\mathrm{SOC}}
\newcommand{\RL}{\mathrm{RSOC}}
\newcommand{\conv}{\mathrm{conv}}
\newcommand{\Diag}{\mathrm{Diag}}
\newcommand{\Herm}{\mathbb{S}}
\newcommand{\Pos}{\mathbb{S}_+}
\newcommand{\pos}{\mathbb{S}_+}
\newcommand{\SDP}{\mathrm{SDP}}
\newcommand{\supp}{\mathrm{supp}}
\newcommand{\conc}{\mathrm{conc}}
\newcommand{\diag}{\mathrm{diag}}
\newcommand{\nulll}{\textrm{Null}}
\newcommand{\C}{\mathbb{C}}
\newcommand{\R}{\mathbb{R}}
\newcommand{\Z}{\mathbb{Z}}
\newcommand{\forAll}{\textrm{for all }}
\newcommand{\bit}{\in \{ 0,1 \}}
\newcommand{\twovec}[2]{\left[ \begin{array}{c} #1\\ #2 \end{array} \right]}
\newcommand{\qed}{$\quad \square$}
\newcommand{\norm}[1]{\left\| #1 \right\|}
\newcommand{\tsigma}{\tilde{\sigma}}
\newcommand{\abort}{\textup{abort}}
\newcommand{\accept}{\textup{accept}}
\newcommand{\infconv}{\scalebox{0.9}{$\square$}}
\newcommand{\half}{\frac{1}{2}}
\newcommand{\quarter}{\frac{1}{4}}
\newcommand{\zo}{\set{0,1}}
\newcommand{\eps}{\varepsilon}
\newcommand{\lmax}[1]{\lambda_{\max} \left( #1 \right)}
\newcommand{\transpose}{\mathrm{T}}
\newcommand{\tensor}{\otimes}
\newcommand{\rF}{\mathrm{F}}
\newcommand{\rA}{\mathrm{A}}
\newcommand{\rB}{\mathrm{B}}
\newcommand{\rO}{\mathrm{O}}
\newcommand{\tgamma}{\tilde{\gamma}}
\newcommand{\bM}{{\mathbb M}}
\newcommand{\floor}[1]{\left\lfloor #1 \right\rfloor}
\newcommand{\size}[1]{\left| #1 \right|}
\begin{document}
\maketitle

\begin{abstract}
Coin-flipping is a cryptographic task in which two physically separated,
mistrustful parties wish to generate a fair coin-flip by
communicating with each other. Chailloux and Kerenidis~(2009) designed
quantum protocols that guarantee coin-flips with near optimal bias away
from uniform, even when one party deviates arbitrarily from the
protocol. The probability of any outcome in these protocols is provably
at most~$\tfrac{1}{\sqrt{2}} + \delta$ for any given~$\delta > 0$.
However, no explicit
description of these protocols is known, and the number of rounds in the
protocols tends to infinity as~$\delta$ goes to~$0$. In fact, the
smallest bias achieved by known explicit protocols is~$1/4$ (Ambainis,
2001).

We take a \emph{computational optimization\/} approach, based mostly
on convex optimization, to the search for simple and
explicit quantum strong coin-flipping protocols. We present a search
algorithm to identify protocols with low bias within a
natural class, protocols based on \emph{bit-commitment\/} (Nayak and
Shor, 2003). To make this search computationally feasible, we further
restrict to commitment states \emph{{\`a} la\/} Mochon (2005).
An analysis of the resulting protocols via semidefinite programs (SDPs)
unveils a simple structure. For example, we show that the SDPs reduce to
second-order cone programs. We devise novel cheating strategies in the
protocol by restricting the semidefinite programs and use the strategies
to prune the search.

The techniques we develop enable a computational search for protocols given by a mesh
over the corresponding parameter space. The protocols have up to six
rounds of communication, with messages of varying dimension and include
the best known explicit protocol (with bias~$1/4$). We conduct
two kinds of search: one for protocols with bias below~$0.2499$, and one
for protocols in the neighbourhood of protocols with bias~$1/4$. Neither
of these searches yields better bias. 
Based on the mathematical ideas behind the search algorithm, we prove a lower bound of~$0.2487$ on the bias of a class of four-round protocols.
\end{abstract}

\tableofcontents


\section{Introduction}

Some fundamental problems in the area of Quantum Cryptography
allow formulations in the language of convex optimization in the space
of hermitian matrices over the complex numbers, in particular,
in the language of semidefinite optimization.  These formulations enable
us to take a computational optimization approach towards solutions of
some of these problems.  In the rest of this section, we describe
\emph{quantum coin-flipping} and introduce our approach.

\subsection{Quantum coin-flipping}

Coin-flipping is a classic cryptographic task introduced by
Blum~\cite{Blu81}. In this task, two remotely situated parties,
Alice and Bob, would like to agree on a uniformly random bit by
communicating with each other. The complication is that neither party
trusts the other. If Alice were to toss a coin and send the
outcome to Bob, Bob would have no means to verify whether this was a
uniformly random outcome. In particular, if Alice wishes to cheat, she
could send the outcome of her choice without any possibility of being caught
cheating. We are interested in a communication protocol that is \emph{designed
to protect\/} an honest party from being cheated.

More precisely, a ``strong coin-flipping protocol'' with
bias~$\epsilon$ is a two-party communication protocol in the style
of Yao~\cite{Yao79,Yao93}. In the protocol, the two players, Alice and Bob,
start with no inputs and compute a value~$c_\rA, c_\rB \in
\set{0,1}$, respectively, or declare that the other player is
cheating. If both players are honest, i.e., follow the protocol, then
they agree on the outcome of the protocol ($c_\rA = c_\rB$), and
the coin toss is fair ($\Pr(c_\rA = c_\rB = b) = 1/2$, for any~$b
\in \set{0,1}$). Moreover, if one of the players deviates arbitrarily
from the protocol in his or her local computation, i.e., is ``dishonest''
(and the other party is honest), then the probability of either outcome~$0$
or~$1$ is at most~$1/2 + \epsilon$. Other variants of coin-flipping have
also been studied in the literature. However, in the rest of the
article, by ``coin-flipping'' (without any modifiers) we mean
\emph{strong\/} coin flipping.

A straightforward game-theoretic argument proves that if the two
parties in a coin-flipping protocol
communicate classically and are computationally unbounded,
at least one party can cheat perfectly (with bias~$1/2$).
In other words, there is at least one party, say Bob, and at least one
outcome~$b \in \set{0,1}$ such that Bob can ensure outcome~$b$ with
probability~$1$ by choosing his messages in the protocol appropriately.
Consequently, classical coin-flipping protocols with bias~$\epsilon <
1/2$ are only possible under complexity-theoretic assumptions, and
when Alice and Bob have limited computational resources.

Quantum communication offers the possibility of ``unconditionally
secure'' cryptography, wherein the security of a protocol
rests solely on the validity of
quantum mechanics as a faithful description of nature. The first few
proposals for quantum information processing, namely the Wiesner quantum
money scheme~\cite{Wiesner83} and the Bennett-Brassard quantum key expansion
protocol~\cite{BB84} were motivated by precisely this idea. These
schemes were indeed eventually shown to be unconditionally secure in
principle~\cite{M01,LC99,PS00,MVW12}. In light of these
results, several researchers have studied the possibility of
\emph{quantum\/} coin-flipping protocols, as a step towards studying
more general secure multi-party computations.

Lo and Chau~\cite{LC97} and Mayers~\cite{May97} were the first to
consider quantum protocols for coin-flipping without any computational
assumptions. They proved that no protocol with a finite number of rounds
could achieve~$0$ bias. Nonetheless, Aharonov, Ta-Shma, Vazirani, and
Yao~\cite{ATVY00} designed a simple, three-round quantum protocol that
achieved bias~$\approx 0.4143 < 1/2$. This is impossible classically, even
with an unbounded number of rounds. Ambainis~\cite{Amb01} designed
a protocol with bias~$1/4$ \emph{{\`a} la\/} Aharonov \emph{et al.\/},
and proved that it is optimal within a class (see also
Refs.~\cite{SR01,KN04} for a simpler version of the protocol and a
complete proof of security).  Shortly thereafter, Kitaev~\cite{Kit03}
proved that any strong coin-flipping protocol with a finite number of
rounds of communication has bias at least~$(\sqrt{2}-1)/2 \approx 0.207$ (see
Ref.~\cite{GW07} for an alternative proof). Kitaev's seminal work
uses semidefinite optimization in a central way.  This argument extends to
protocols with an unbounded number of rounds. This remained the state
of the art for several years, with inconclusive evidence in either
direction as to whether~$1/4 = 0.25$ or~$(\sqrt{2}-1)/2$
is optimal. In 2009, Chailloux and Kerenidis~\cite{CK09} settled this
question through an elegant protocol scheme that has
bias at most~$(\sqrt{2}-1)/2 + \delta$ for any~$\delta > 0$ of our
choice (building on~\cite{Moc07}, see below). We refer to this as
the CK protocol.

The CK protocol uses breakthrough work by Mochon~\cite{Moc07}, which
itself builds upon the ``point game'' framework proposed by Kitaev.
Mochon shows there are \emph{weak\/} coin-flipping protocols with
arbitrarily small bias. (This work has appeared only in the form of
an unpublished manuscript, but has been verified by experts on the
topic; see e.g.~\cite{AharonovCGKM13}.) A weak coin-flipping protocol
is a variant of
coin-flipping in which each party favours a distinct outcome, say Alice
favours~$0$ and Bob favours~$1$. The requirement when they are honest is
the same as before. We say it has bias~$\epsilon$ if the following
condition holds. When Alice is dishonest and Bob honest, we only
require that Bob's outcome is~$0$ (Alice's favoured outcome) with
probability at most~$1/2+\epsilon$. A similar condition to protect
Alice holds, when she is honest and Bob is dishonest.
The weaker requirement of security against a dishonest player allows us
to circumvent the Kitaev lower bound. While Mochon's work pins down
the optimal bias for weak coin-flipping, it does this in a non-constructive
fashion: we only know of the \emph{existence\/} of protocols with
arbitrarily small bias, not of its \emph{explicit description\/}.
Moreover, the number of rounds tends to infinity as the bias decreases to $0$.
As a consequence, the CK protocol for strong coin-flipping is also
existential, and the number of rounds tends to infinity as the bias
decreases to $(\sqrt{2}-1)/2$.  It is perhaps
very surprising that no progress on finding better explicit protocols has
been made in over a decade.

\subsection{Search for explicit protocols}
\label{sec-search}

This work is driven by the quest to find \emph{explicit\/} and
\emph{simple\/} strong coin-flipping protocols with bias smaller
than~$1/4$.  There are two main challenges in this quest. First, there
seems to be little insight into the structure (if any) that protocols with
small bias have; knowledge of such structure might help narrow our search
for an optimal
protocol. Second, the analysis of protocols, even those of a restricted
form, with more than three rounds of communication is technically quite
difficult. As the first step in deriving the~$(\sqrt{2}-1)/2$ lower bound,
Kitaev~\cite{Kit03} proved that the optimal cheating
probability of any dishonest party in a protocol with an explicit
description is characterized by a semidefinite program (SDP). While this
does not entirely address the second challenge, it reduces
the analysis of a protocol to that of a well-studied optimization problem.
In fact this formulation as an SDP enabled Mochon to analyze
an important class of weak coin-flipping protocols~\cite{Moc05},
and later discover the optimal weak coin flipping
protocol~\cite{Moc07}. SDPs resulting from strong coin-flipping protocols,
however, do not appear to be amenable to similar analysis.

We take a \emph{computational optimization\/} approach to the search for explicit strong
coin-flipping protocols. We focus on a class of protocols studied by Nayak and
Shor~\cite{NS03} that are based on ``bit commitment''. This is a natural
class of protocols that generalizes those due to Aharonov \emph{et al.\/}
and Ambainis, and provides a rich test bed for our search.  (See
Section~\ref{family} for a description of such protocols.) Early
proposals of multi-round protocols in this class were all shown to have
bias at least~$1/4$, without eliminating the possibility of smaller
bias (see, e.g., Ref.~\cite{NS03}).
A characterization of the smallest bias achievable in this class
would be significant progress on the problem: it would either lead to
simple, explicit protocols with bias smaller than~$1/4$, or we would
learn that protocols with smaller bias take some other, yet to be
discovered form.

Chailloux and Kerenidis~\cite{CK11} have studied a version of
quantum bit-commitment that may have implications for coin-flipping.
They proved that in any quantum bit-commitment protocol with
computationally unbounded players, at least one party can cheat with bias
at least~$\approx 0.239$. Since the protocols we study involve two
interleaved commitments to independently chosen bits, this lower bound
does not apply to the class. Chailloux and Kerenidis also give a
protocol scheme for
bit-commitment that guarantees bias arbitrarily close to~$0.239$.
The protocol scheme is non-constructive as it uses the Mochon weak
coin-flipping protocol. It is possible that any explicit protocols we
discover for coin-flipping could also lead to explicit bit-commitment with
bias smaller than~$1/4$.

We present an algorithm for finding protocols with low bias.
Each bit-commitment based coin-flipping protocol is specified by
a~$4$-tuple of quantum states.  At a high level, the algorithm
iterates through a suitably fine mesh of such~$4$-tuples,
and computes the bias of the resulting protocols. The size of the mesh
scales faster than~$1/\nu^{\kappa D}$, where~$\nu$ is a precision
parameter, $\kappa$ is a universal constant, and~$D$ is the dimension
of the states. The dimension itself scales as~$2^n$, where~$n$ is the
number of quantum bits involved. In order to minimize the doubly
exponential size of the set of $4$-tuples we examine, we further
restrict our attention to states of the form introduced by Mochon for
weak coin-flipping~\cite{Moc05}. The additional advantage of this kind of
state is that the SDPs in the analysis of the protocols simplify
drastically. In fact, all but a few constraints reduce to linear equalities so
that the SDPs may be solved more efficiently.

Next, we employ two
techniques to prune the search space of~$4$-tuples. First, we use a
sequence of strategies for dishonest players whose bias is given by a
closed form expression determined by the four states. The idea is that
if the bias for any of these strategies  is higher than~$1/4$ for
any~$4$-tuple of states, we may
safely rule it out as a candidate optimal protocol. This also has the
advantage of avoiding a call to the SDP solver, the computationally most
intensive step in the search algorithm. The second technique is to
invoke symmetries in the search space as well as in the problem to
identify protocols with the same bias. The idea here is to compute the
bias for as few members of an equivalence class of protocols as
possible.

These techniques enable a computational search for protocols with up to six
rounds of communication, with messages of varying dimension. The Ambainis
protocol with bias~$1/4$ has three rounds, and it is entirely possible
that a strong coin-flipping protocol with a small number of
rounds be optimal. Thus, the search non-trivially extends
our understanding of this cryptographic primitive. We elaborate on this
next.

\subsection{The results}
\label{sec-results}

We performed two types of search. The first was an optimistic search
that sought protocols within the mesh with bias at most $1/4$
minus a small constant.  We chose the constant to be $0.001$. The rationale
here was that if the mesh contains protocols with bias close to the
lower bound of $\approx 0.207$, we would find protocols that have
bias closer to~$0.25$ (but smaller than it) relatively quickly.
We searched for four-round protocols in which each message is of
dimension ranging from $2$ to $9$, each with varying fineness for the
mesh. We found that our heuristics, i.e., the filtering by fixed cheating
strategies, performed so well that they eliminated every protocol:
all of the protocols given by the mesh were found to have bias larger
than $0.2499$ without the need to solve any SDP.  Inspired by the search algorithm, we give an analytical proof that
four-round qubit protocols have bias at least~$0.2487$.

The initial search for four-round protocols helped us fine-tune the
filter by a careful selection of the order in which the cheating strategies
were tried. The idea was to eliminate most protocols with the least amount
of computation. This made it feasible for us to search for protocols in
finer meshes, with messages of higher dimension, and with a larger number
of rounds.
In particular, we were able to check six-round protocols with messages of dimension $2$ and $3$. Our heuristics again performed very well, eliminating almost every protocol before any SDP needed to be solved. Even during this search, not a single protocol with bias less than $0.2499$ was found. We also performed a search over
meshes shifted by a randomly chosen parameter. This was to avoid potential anomalies caused by any special properties of the mesh
we used. No protocols with bias less than $0.2499$ were found in this search either.

The second kind of search focused on protocols with bias close to~$0.25$.
We first identified protocols in the mesh with the least bias. Not
surprisingly, these protocols all had computationally verified bias $1/4$.
We zoned in on the neighbourhood of these protocols. The idea here was to see if there are perturbations to the $4$-tuple that lead
to a decrease in bias. This search revealed $2$ different equivalence classes of protocols for the four-round version and $6$ for the six-round version. Four of these eight protocols are equivalent
to optimal three-round protocols (within this class). However, the four remaining six-round protocols bear no resemblance to any known protocol
with bias $1/4$. A search in the neighbourhoods of all these protocols revealed no protocols with bias less than $1/4$ (details in Section~\ref{sect:numerical}).

It may not immediately be evident that the above searches involved
a computational examination of extremely large sets of protocols and that
the techniques described above in Section~\ref{sec-search}, were crucial
in enabling this search.
The symmetry arguments pruned the searches drastically, and in some cases only $1$ in every $1,000,000$ protocols needed to be checked. In most cases, the cheating strategies (developed in Section~\ref{sect:filter}) filtered out the rest of the protocols entirely. To give an example of the efficiency of our search, we were able to check $2.74 \times 10^{16}$ protocols in a matter of days. Without the symmetry arguments and the
use of cheating strategies as a filter, this same search would have taken well over $69$ million years, even using the very simplified forms of the SDPs.
Further refinement of these ideas may make a more thorough search of
protocols with four or more rounds feasible.

The search algorithm, if implemented with exact feasibility guarantees,
has the potential to give us \emph{computer aided proofs\/} that certain classes of protocols in the family \emph{do not
achieve optimal bias\/}.
Suppose we use a mesh such that given any $4$-tuple~$S$ of states,
there is a $4$-tuple $S'$ in the mesh such that the pairwise fidelity
between corresponding distributions is at least~$1 - \delta$. Further
suppose the numerical approximation to the bias for~$S'$ has additive
error~$\tau$ due to the filter or SDP solver,
and finite precision arithmetic\footnote{Note that in our experiments feasibility is guaranteed only
up to a tolerance, so as a result we do not have an independently verifiable upper bound on the additive
error in terms of the objective value.  Indeed, efficiently obtaining an exact feasible solution to SDPs,
in general, is still an open problem
at the time of this writing.}.
If the algorithm reports that there are no tuples in the mesh with bias
at most~$\epsilon^*$, then it holds that there are no~$4$-tuples, even
outside the mesh, with bias at most~$\epsilon^* - \sqrt{8 \, \delta} - \tau$.
The fineness of the mesh we are able to support currently is not sufficient for such proofs. A refinement of the search algorithm along the lines described above, however, would yield lower bounds for new classes
of bit-commitment based protocols. 

\comment{
The four-round search gives us computer aided \emph{proofs\/}
that certain classes of protocols in the family \emph{do not
achieve optimal bias\/}.
Suppose we use a mesh such that given any $4$-tuple~$S$ of states,
there is a $4$-tuple $S'$ in the mesh such that the pairwise fidelity
between corresponding distributions is at least~$1 - \delta$. Further suppose
the numerical approximation to the bias for~$S'$ has additive error~$\nu$
due to the filter or SDP solver, and finite precision arithmetic.
If the algorithm reports that there are no tuples in the mesh with bias
at most~$\epsilon^*$, then it holds that there are no~$4$-tuples, even
outside the mesh, with bias at most~$\epsilon^* - \sqrt{\delta} - \nu$.
The fineness of the mesh we are able to support for four-round protocols
with message dimension~$2$ is sufficient for us to conclude
that such protocols do not achieve optimal bias (see
Section~\ref{sec-cproof}). This class is incomparable with three round
bit-commitment based protocols, for which bias~$1/4$ is known to be
optimal~\cite{Amb01}. We thus obtain lower bounds
for a new set of bit-commitment based protocols. A refinement of
the search algorithm along the lines described above would extend the
lower bound to yet larger classes.
} 

Finally, based on our computational findings, we make the following conjecture:

\begin{conjecture}
\label{conj-bias}
Any strong coin-flipping protocol based on bit-commitment as defined
formally in Section~\ref{family} has bias at least~$1/4$.
\end{conjecture}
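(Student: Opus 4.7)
The plan is to combine the SDP characterization of cheating probabilities (Kitaev) with the closed-form ``filter'' cheating strategies developed in Section~\ref{sect:filter}. For each bit-commitment based $4$-tuple $S$, let $\phi(S)$ denote the largest bias guaranteed by this filter family; the empirical content of the computational search is that $\phi(S) \geq 1/4$ everywhere on the mesh, with equality (to within solver tolerance) at exactly eight equivalence classes of protocols. The proposed proof would upgrade this to an analytic statement $\phi(S) \geq 1/4$ for every admissible $4$-tuple, which immediately implies the conjecture.

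First I would reduce to a finite-dimensional semi-algebraic optimization problem. Each filter strategy depends on $S$ only through the Gram matrices of pairs of the four commitment states, so a Naimark-style purification bounds the effective dimension independently of the message dimension; the symmetries used to prune the search then cut the parameter space down to a small number of pairwise fidelities, constrained by positive semidefiniteness of the Gram matrix and by the honest-verification conditions. Second I would apply Lagrangian duality: each filter strategy yields a cheating-probability bound that is a low-degree polynomial in these fidelities, and an appropriate convex combination of such bounds should collapse to an inequality of the form $p_\rA + p_\rB \geq 3/2$, forcing $\max(p_\rA, p_\rB) \geq 3/4$. Structurally this parallels Kitaev's~$(\sqrt{2}-1)/2$ trade-off but specialized to the bit-commitment family, with the aim of sharpening the bound to $1/4$ within this class. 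To handle arbitrary round counts I would use a ``round folding'' reduction that absorbs consecutive rounds into enlarged messages while preserving the optimal cheating probabilities, leaving only a bounded-round base case to analyze.

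The main obstacle is the convex-combination step. The four-round qubit analysis in the paper yields only $0.2487$, so the inequality required is very tight and the current filter family may be insufficient on its own; one likely needs to enrich it by interpolating between existing extreme strategies, or by adjoining strategies coming from the SDP dual optimizer (which in Mochon's framework would correspond to point-game moves). A rigidity statement extracted from the search -- that the global minimizers of $\phi$ are isotopic to the known bias-$1/4$ protocols and to their six-round variants -- is a strong structural hint for what the optimal combination looks like, since the KKT conditions at those points determine which filter bounds are active. Converting that hint into an analytic certificate seems to require new ideas beyond the ones already assembled here, but would, in exchange, close the case for this entire natural family of protocols.
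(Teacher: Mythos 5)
The statement you are addressing is explicitly a \emph{conjecture} in the paper; the authors offer no proof, only computational evidence from the mesh searches and the analytic lower bound $0.2487$ for four-round qubit protocols in Section~\ref{ssect:newbounds}. So there is no ``paper's own proof'' to compare against. Your submission is a research plan rather than a proof, and you are candid about this, correctly identifying the convex-combination step as the place where a new idea is needed. That step is indeed unfilled: you do not exhibit a combination of filter bounds that yields $P_{\rA,c}^* + P_{\rB,c}^* \geq 3/2$ (or any equivalent certificate), and the paper's own partial result falls short of $1/4$ even for two-dimensional messages.

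Beyond the acknowledged gap, two of your preparatory reductions are not sound as stated. First, the claim that ``each filter strategy depends on $S$ only through the Gram matrices of pairs of the four commitment states'' is false for most of the strategies in Theorems~\ref{AliceFilter} and~\ref{BobFilter}. Bob's four-round eigenstrategy lower bound $\tfrac{1}{2}\sum_{x\in A}\lambda_{\max}\!\bigl(\sum_a \alpha_{a,x}\sqrtt{\beta_a}\bigr)$ depends on the individual entries $\alpha_{a,x}$ for each $x$, not merely on $\rF(\alpha_0,\alpha_1)$; the same holds for Alice's improved eigenstrategy $\sum_{y}\conc\{\tfrac{1}{2}\beta_{a,y}\rF(\cdot,\alpha_a): a\in\zo\}(v)$ and for all the six-round strategies involving $\tr_{B_2}(\beta_a)$ or $\tr_{A_2}(\alpha_a)$. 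These bounds are genuinely functions of the distributions themselves, so the parameter space does not collapse to a small set of pairwise fidelities, and the semi-algebraic reformulation you want is not immediate. Second, the proposed ``round folding'' reduction does not preserve optimal cheating probabilities: merging Alice's messages $A_1,\dotsc,A_n$ into one enlarged message hands Bob all of Alice's commitment before he sends anything, which can only \emph{increase} $P_{\rB,c}^*$ while decreasing $P_{\rA,c}^*$. This asymmetric effect is precisely why the paper analyzes four-round and six-round protocols separately and why, empirically, the six-round minimizers are not mere embeddings of three-round protocols. Any reduction across round counts would need to control this trade-off explicitly rather than invoke an equivalence.
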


This conjecture, if true, would imply that we need to investigate new kinds
of protocols to find ones with bias less than $1/4$. Regardless of the truth
of the above conjecture, we hope that the new techniques developed for
analyzing protocols via modern optimization methods and for simplifying
semidefinite optimization problems with special structure will be helpful
in future work in the areas of quantum computing and semidefinite programming.

\paragraph{Organization of the paper.}

We begin with an introduction to the ideas contained in this paper in Section~\ref{sect:background}. Section~\ref{ssect:quantum} introduces quantum computing background and Section~\ref{ssect:opt} introduces semidefinite programming and related optimization classes. Section~\ref{sect:CF} defines strong coin-flipping protocols and the measure of their security
(namely, their bias). We define the notion of protocols based on bit-commitment in Section~\ref{family}. We model optimal cheating strategies for
such protocols using semidefinite programming in Section~\ref{sect:opt}. Sections~\ref{sect:filter} and~\ref{sect:symmetry} exploit the structure of the semidefinite programs in order to design the search algorithm presented in Section~\ref{sect:algo}. We conclude with computational results in Section~\ref{sect:numerical} and some final remarks in Section~\ref{sect:conclusions}.

The background material on quantum computation and optimization is
aimed at making this work accessible to researchers
in both communities.  Readers conversant with either topic need only skim
the corresponding sections to familiarize themselves with the notation used.
Proofs of most results are deferred to the appendix.


\section{Background and notation}\label{sect:background}

In this section, we establish the notation and the necessary background for this paper.

\subsection{Linear algebra}

For a finite set $A$, we denote by $\R^A$, $\R_+^A$, $\Prob^A$, and $\C^A$ the set of real vectors, nonnegative real vectors, probability vectors, and complex vectors, respectively, each indexed by $A$. We use $\R^n$, $\R_+^n$, $\Prob^n$, and $\C^n$ for the special case when $A = \set{1, \ldots, n}$. For~$x
\in A$, the
vectors~$e_x$ denote the standard basis vectors of~$\R^A$. The
vector~$e_A \in \R^A$ denotes the all~$1$ vector~$\sum_{x \in A} e_x$.

We denote by $\Herm^A$ and $\pos^A$ the set of Hermitian matrices and positive semidefinite matrices, respectively, each over
the reals with columns and rows indexed by $A$.

It is convenient to define $\sqrt{x}$ to be the element-wise square root of a nonnegative vector $x$. The element-wise square root of a probability vector yields a unit vector (in the Euclidean norm). This operation maps a probability vector to a quantum state, see Subsection~\ref{ssect:quantum}.

For vectors $x$ and $y$, the notation $x \geq y$ denotes that $x-y$ has nonnegative entries, $x > y$ denotes that $x-y$ has positive entries, and for matrices $X$ and $Y$, the notation $X \succeq Y$ denotes that  $X - Y$ is positive semidefinite, and $X \succ Y$ denotes $X - Y$ is positive definite when the underlying spaces are clear from context. When we say that a matrix is positive semidefinite or positive definite, it is assumed to be Hermitian which implies that $\Herm_+^A \subset \Herm^A$.

The Kronecker product of an~$n \times n$ matrix $X$ and another matrix $Y$,
denoted $X \otimes Y$, is defined as
\[ X \otimes Y := \left[ \begin{array}{cccc}
X_{1,1} \; Y & X_{1,2} \; Y & \cdots & X_{1,n} \; Y \\
X_{2,1} \; Y & X_{2,2} \; Y & \cdots & X_{2,n} \; Y \\
\vdots & \vdots & \ddots & \vdots \\
X_{n,1} \; Y & X_{n,2} \; Y & \cdots & X_{n,n} \; Y
\end{array} \right] \enspace. \]
Note that $X \otimes Y \in \pos^{A \times B}$ when $X \in \pos^A$ and $Y \in \pos^B$ and $\tr(X \otimes Y) = \tr(X) \cdot \tr(Y)$ when $X$ and $Y$ are square.

The \emph{Schatten $1$-norm}, or \emph{nuclear norm}, of a matrix $X$ is defined as
\[ \norm{X}_* := \tr(\sqrt{X^* X}), \]
where $X^*$ is the adjoint of $X$ and $\sqrt{X}$ denotes the square root of a positive semidefinite matrix $X$, i.e., the positive semidefinite matrix $Y$ such that $Y^2 = X$. Note that the $1$-norm of a matrix is the sum of its singular values. The $1$-norm of a vector $p \in \C^A$ is denoted as
\[ \norm{x}_1 := \sum_{x \in A} |p_x|. \]

We use the notation $\bar{a}$ to denote the complement of a bit $a$ with respect to $0$ and $1$ and $a \oplus b$ to denote the XOR of the bits $a$ and $b$. We use $\mathbb{Z}_2^n$ to denote the set of $n$-bit binary strings.

For a vector $p \in \R^A$, we denote by $\Diag(p) \in \mathbb{S}^A$ the diagonal matrix with $p$ on the diagonal. For a matrix $X \in \Herm^A$, we denote by $\diag(X) \in \R^A$ the vector on the diagonal of $X$.

For a vector $x \in \C^A$, we denote by $\supp(x)$ the set of indices of $A$ where $x$ is nonzero. We denote by $x^{-1}$ the element-wise inverse of $x$ (mapping the $0$ entries to $0$).

For a matrix $X$, we denote by $\nulll(X)$ the nullspace of $X$, by $\det(X)$ the determinant of $X$, and by $\lambda_{\max}(X)$ the largest eigenvalue of $X$. We denote by $\inner{X}{Y}$ the standard inner product~$\tr(X^*
Y)$ of matrices~$X,Y$ of the same dimension.

\subsection{Convex analysis} \label{convex}

A \emph{convex combination} of finitely many vectors $x_1, \ldots, x_n$ is any vector of the form $\sum_{i=1}^n \lambda_i x_i$, when $\lambda_1, \ldots, \lambda_n \in [0,1]$ satisfy $\sum_{i=1}^n \lambda_i = 1$. The \emph{convex hull} of a set $C$ is the set of convex combinations of elements of $C$, denoted $\mathrm{conv}(C)$. A set $C$ is \emph{convex} if $C = \mathrm{conv}(C)$.

A \emph{convex function} $f : \R^n \to \R \cup \set{\infty}$ is one that satisfies
\[ f(\lambda x + (1-\lambda) y) \leq \lambda f(x) + (1-\lambda) f(y), \; \forAll x, y \in \R^n, \lambda \in [0,1]. \]
A convex function is \emph{strictly convex} if
\[ f(\lambda x + (1-\lambda) y) < \lambda f(x) + (1-\lambda) f(y), \; \forAll x \neq y, \, x, y \in \R^n, \lambda \in (0,1). \]
We say that a convex function is proper if $f(x) < + \infty$ for some $x \in \R^n$.
The \emph{epigraph} of a function $f$ is the set
\[ \mathrm{epi}(f) := \set{(x,t): f(x) \leq t} \]
which are the points above the graph of the function~$f$.
A function is convex if and only if its epigraph is a convex set.

A function $f : \R^n \to \R \cup \set{-\infty}$ is \emph{(strictly) concave} if $-f$ is (strictly) convex, and proper when $f(x) > - \infty$ for some $x \in \R^n$.
The \emph{hypograph} of a function $f$ is the set
\[ \mathrm{hypo}(f) := \set{(x,t): f(x) \geq t} \]
which are the points below the graph of the function~$f$.
A function is concave if and only if its hypograph is a convex set.

Let ${f_1, \ldots, f_n} : \R^m \to \R \cup \set{\infty}$ be proper, convex functions. We denote the  \emph{convex hull} of the functions $\{ f_1, \ldots, f_n \}$ by $\conv \{f_1, \ldots, f_n \}$ which is the greatest convex function $f$ such that $f(x) \leq f_1(x), \ldots, f_n(x)$ for every $x \in \R^m$. The convex hull can be written in terms of the epigraphs
\[ \conv \{f_1, \ldots, f_n \}(x) := \inf \set{t: (x,t) \in \mathrm{conv}( \cup_{i=1}^n \mathrm{epi}(f_i))}. \]
We denote the \emph{concave hull} of $\{ f_1, \ldots, f_n \}$ by $\conc \{f_1, \ldots, f_n \}$ which can be written as
\[ \conc \set{f_1, \ldots, f_n} := - \conv \set{-f_1, \ldots, -f_n} \]
when $f_1, \ldots, f_n : \R^m \to \R \cup \{ - \infty \}$ are proper, concave functions. The concave hull is the least concave function $f$ such that $f(x) \geq f_1(x), \ldots, f_n(x)$ for every $x \in \R^m$ and can be written as
\[ \conc \{f_1, \ldots, f_n \}(x) := \sup \set{t: (x,t) \in \mathrm{conv}( \cup_{i=1}^n \mathrm{hypo}(f_i))}. \]

A \emph{convex optimization problem} or \emph{convex program} is one of the form
\[ \inf_{x \in C} f(x), \]
where $f$ is a convex function and $C$ is a convex set. Alternatively, one could maximize a concave function over a convex set.

\comment{
A set $C \subseteq \R^n$ is closed if it contains every limit point of sequences within $C$. In a complex Euclidean space, a set is compact if and only if it is closed and bounded.

\comment{We define the \emph{relative interior} of a nonempty, convex set $C$ as
\[ \mathrm{relint}(C) := \set{x \in C : \forall y \in C, \exists \lambda > 1 : \lambda x + (1 - \lambda) y \in C}. \]
} 

We call a convex set $K$ a cone if $\lambda K \subseteq K$, for all $\lambda > 0$. This thesis concerns the optimization of linear functions over closed, convex cones, see Subsection~\ref{ssect:opt}.
Given a set $C \subseteq \R^n$, its dual cone, denoted $C^*$, is defined as
\[ C^* := \set{ x \in \R^n : \inner{x}{y} \geq 0, \, \forAll y \in C}. \]
One can check that the dual cone is always a closed, convex cone.
Also, we have that $C_1 \subseteq C_2$ implies $C_1^* \supseteq C_2^*$ and the converse holds if $C_1$ and $C_2$ are closed convex cones.

A function $f: \Herm^n \to \Herm^m$ is said to be \emph{operator monotone} if
\[ f(X) \succeq f(Y) \quad \textup{ when } \quad X \succeq Y. \]
The set of operator monotone functions is a convex cone.

A function $f: \R^n \to \R$ is said to be \emph{positively homogeneous} if
\[ f(\lambda x) = \lambda f(x), \quad \forAll \lambda > 0. \]

A \emph{polyhedron} is the solution set of a system of finitely many linear inequalities (or equalities). A \emph{polytope} is a bounded polyhedron.

} 

\subsection{Quantum information}\label{ssect:quantum}

In this subsection, we give a brief introduction to quantum information. For a more thorough treatment of the subject, we refer the reader to \cite{NC00}.

\paragraph{Quantum states} \quad \\

Quantum states are a description of the state of a physical system, such as the spin of an electron. In
the simplest case, such a state is a \emph{unit vector} in a finite-dimensional Hilbert space (which is a complex Euclidean space). For example, the following vectors are quantum states in $\C^2$
\[ e_0 := \twovec{1}{0}, \; e_1 := \twovec{0}{1}, \; e_+ := \dfrac{1}{\sqrt 2} \twovec{1}{1}, \; e_- := \dfrac{1}{\sqrt 2} \twovec{1}{-1}. \]
The first two are standard basis vectors and can be thought of as the logical states of a standard computer. In general, a qubit can be written as
\[ \psi := \alpha_0 \, e_0 + \alpha_1 \, e_1, \]
where $\alpha_0, \alpha_1 \in \C$ satisfy $|\alpha_0|^2 + |\alpha_1|^2 = 1$. This condition ensures that $\psi$ has norm equal to $1$. Up
to factor of modulus~$1$,  the set of pairs~$(\alpha_0, \alpha_1)$ defining a two-dimensional
quantum state is in one-to-one correspondence with the unit sphere in $\R^3$.

Systems with a two dimensional state space are called \emph{quantum bits} or \emph{qubits}. The
state space of a sequence of~$n$ qubits is given by the~$n$-fold tensor
product~$(\C^2)^{\tensor n} \cong \C^{2^n}$. Higher
dimensional systems, say, of dimension~$d \le 2^n$, may be viewed as being
composed of a sequence of~$n$ qubits via a canonical isometry~$\C^d
\rightarrow \C^{2^n}$.

Notice that $e_+ = \frac{1}{\sqrt 2} e_0 + \frac{1}{\sqrt 2} e_1$ and $e_- = \frac{1}{\sqrt 2} e_0 - \frac{1}{\sqrt 2} e_1$. These states are said to be in a \emph{superposition} of the states $e_0$ and $e_1$ and exhibit properties of being in both states at the same time. This is in part what gives quantum computers the power to efficiently tackle hard problems such as factoring \cite{Sho94}.

In general, a system may be in a random superposition according
to some probability distribution. Suppose a quantum system is in such a state
drawn from the ensemble of states $(\psi_0, \psi_1, \ldots, \psi_n)$ with
probabilities $(p_0, p_1, \ldots, p_n)$, respectively. This quantum state may
be described more succinctly as a \emph{density matrix}, defined as
\[ \sum_{i=0}^n p_i \, \psi_i \psi_i^*. \]
Notice that this matrix is positive semidefinite and has unit trace. Moreover, any positive semidefinite matrix with unit trace can be written in the above form using its spectral decomposition.

Two different probability distributions over superpositions may have the
same density matrix. For example, density matrices do not record ``phase
information'', i.e., the density matrix of state~$\psi$ is the same as
that of~$-\psi$. However, two ensembles with the same density matrix
behave identically under all allowed physical operations. Therefore,
there is no loss in working with density matrices, and we identify an
ensemble with its density matrix.

A quantum superposition given by the vector $\psi$ corresponds to the rank $1$ density matrix $\psi \psi^*$ and we call it
a \emph{pure state}.  States with a density matrix of rank $2$ or more
are said to be \emph{mixed}.

\paragraph{Quantum operations} \quad \\

The most basic quantum operation is specified by a unitary
transformation.
Suppose $U$ is a unitary operator acting on $\C^A$ and $\psi \in \C^A$ is a quantum state. If we apply $U$ to $\psi$ then the resulting quantum state is $U \psi \in \C^A$. Note this is a well-defined quantum state since unitary operators preserve Euclidean norm.

Suppose we are given a state drawn from the ensemble~$(\psi_0, \psi_1, \ldots, \psi_n)$ with probabilities $(p_0, p_1, \ldots, p_n)$. Then if we apply a unitary matrix $U$ to the state, the resulting state is given
by the
ensemble~$(U \psi_0, U \psi_1, \ldots, U \psi_n)$ with the same probabilities. The
new density matrix is thus
\[ \sum_{i=0}^n p_i \, U \psi_i \psi_i^* U^* = U \left( \sum_{i=0}^n p_i \, \psi_i \psi_i^* \right) U^*, \]
where $U^*$ is the adjoint of $U$.
Thus, if we apply the unitary $U$ to a state (with density matrix)~$\rho$, then the
resulting quantum state is $U \rho U^*$. Note that this matrix is still positive semidefinite with unit trace.

We assume that parties capable of quantum information processing
have access to qubits initialized to a fixed quantum state, say~$e_0$,
can apply arbitrary unitary operations, and can physically
transport (``send'') qubits without disturbing their state.
We use the phrase ``prepare a quantum state~$\psi \in \C^A$'' to mean
that we start with sufficiently many qubits (say~$n$ such that~$\C^A
\subseteq \C^{2^n}$) in
state~$e_0^{\otimes n}$ and apply any unitary transformation that
maps~$e_0^{\otimes n}$ to~$\psi$.

\paragraph{Quantum measurement} \quad \\

Measurement is a means of extracting classical information from a quantum state.
A \emph{quantum measurement\/} on space~$\C^{A}$ is a sequence of positive semidefinite operators $(\Pi_1, \ldots, \Pi_n)$,
with~$\Pi_i \in \Pos^A$ for each~$i \in \set{1,\dotsc,n}$, satisfying $\sum_{i=1}^n \Pi_i = \id$. This sequence of operators is also called a \emph{positive operator valued measure} or a \textup{POVM}
in the literature. If we have
some qubits in state~$\rho$ and we apply the measurement $(\Pi_1, \ldots, \Pi_n)$
(or ``observe the qubits according to the measurement''), we obtain \emph{outcome\/}~``$i$'' with probability $\inner{\Pi_i}{\rho}$,
and the state of the qubits becomes $\Pi_i \rho \Pi_i/ \inner{\Pi_i}{\rho}$.
The definitions of density matrices and measurements establish $(\inner{\Pi_i}{\rho})$ as a well-defined probability distribution over the indices. The
alteration of state resulting from a measurement is referred to as a
\emph{collapse\/}.
Due to this restricted kind of access,
in general only a limited amount of classical information may be extracted from
a given quantum state.

For example, if we apply the measurement $\set{\Pi_0 := e_0 e_0^*, \Pi_1 := e_1 e_1^*}$ to the state $e_+ e_+^*$, we obtain the outcomes:
\[ \left\{ \begin{array}{rcl} ``0" & \text{ with probability } & \inner{\Pi_0}{e_+ e_+^*} = 1/2, \\ ``1" & \text{ with probability } & \inner{\Pi_1}{e_+ e_+^*} = 1/2. \end{array} \right. \]

\comment{ 
We note here a special measurement which is optimal for distinguishing two quantum states called the \emph{Helstrom measurement}. If Alice gives Bob one of two quantum states $\rho_0$ or $\rho_1$ uniformly at random then this measurement gives Bob a probabilistic method to learn which state was sent such that the probability of an incorrect inference is minimized. This measurement succeeds with probability $\half + \quarter \norm{\rho_0 - \rho_1}_{*}$. Note if $\rho_0 = \rho_1$ then this is no better than randomly guessing and if $\inner{\rho_0}{\rho_1} = 0$ then this succeeds with probability $1$.
} 

\paragraph{Multiple quantum systems} \quad \\

For convenience, we refer to a quantum system with state space~$\C^A$ by
the index set~$A$.  Suppose we have two quantum systems~$A_1, A_2$
that are independently in pure states $\psi_1 \in \C^{A_1}$ and $\psi_2 \in \C^{A_2}$. Their combined state is $\psi_1 \otimes \psi_2 \in \C^{A_1} \otimes \C^{A_2} \cong \C^{A_1 \times A_2}$ where $\otimes$ denotes the Kronecker (or tensor) product. Note that the Kronecker product has the property that $\norm{x \otimes y}_2 = \norm{x}_2 \norm{y}_2$ so unit norm is preserved. It is not always possible to decompose a vector
in~$\C^{A_1} \tensor \C^{A_2}$ as a Kronecker product of vectors
in~$\C^{A_1}$ and~$\C^{A_2}$; a state with this property is said to be
\emph{entangled\/}. For example, the state $\Phi^+ = [1/\sqrt{2}, 0, 0, 1/\sqrt{2}]^\transpose$
is entangled; it cannot be expressed as $\psi_1 \otimes \psi_2$ for any choice of $\psi_1, \psi_2 \in \C^2$.

These concepts extend to mixed states as well.
If two disjoint quantum systems are independently in states $\rho_1 \in \pos^{A_1}$ and $\rho_2 \in \Pos^{A_2}$, then the joint state of the combined system is the density matrix~$\rho_1 \otimes \rho_2 \in \Pos^{A_1 \times A_2}$. We make use of the properties that Kronecker products preserve positive semidefiniteness and that $\tr (A \otimes B) = \tr (A)
\, \tr(B)$. It is not always possible to write a density matrix $\rho \in \Pos^{A_1 \times A_2}$ as $\rho_1 \otimes \rho_2$ where $\rho_1 \in \Pos^{A_1}$ and $\rho_2 \in \pos^{A_2}$,
or more generally, as a convex combination of such Kronecker products.
In the latter case, the state is said to be \emph{entangled\/}, and
otherwise, it is said to be \emph{unentangled\/}.

\comment{However, there is a way to describe the ``part" of the state which is in ${A_1}$ using the notion of \emph{partial trace}.
}

We typically consider systems consisting of two-dimensional particles
(qubits), but it is sometimes convenient to work with higher dimensional
particles. Since higher dimensional spaces may be viewed as subspaces of
suitable tensor powers of~$\C^2$, we continue to describe such systems
in terms of qubits.

\paragraph{Partial trace} \quad \\

The \emph{partial trace over ${A_1}$} is the unique linear transformation $\tr_{{A_1}}: \Herm^{A_1 \times A_2} \to \Herm^{A_2}$, which satisfies
\[ \tr_{A_1}(\rho_1 \otimes \rho_2) = \tr(\rho_1) \cdot \rho_2, \] for all $\rho_1 \in \Herm^{A_1}$ and $\rho_2 \in \Herm^{A_2}$. More explicitly, given any matrix $X \in \pos^{A_1 \times A_2}$ we define $\tr_{A_1}$ as
\[ \tr_{A_1}(X) := \sum_{x_1 \in A_1} \left( e_{x_1}^* \otimes \id_{A_2} \right) X \left( e_{x_1} \otimes \id_{A_2} \right), \]
where $\set{ e_{x_1}: x_1 \in A_1}$ is the standard basis for $\C^{A_1}$. In
fact, the definition is independent of the choice of basis, so long as
it is orthonormal.
Note that the partial trace is positive, i.e., $\tr_{A_1} (X) \in \pos^{A_2}$ when $X \in \pos^{A_1 \times A_2}$, and also trace-preserving.
(In fact, it is a \emph{completely positive\/} operation.) This ensures that the image
of any density matrix under this operation,
called its \emph{reduced state\/}, is a well-defined density matrix.

Consider the scenario where two parties, Alice and Bob, hold parts of a
quantum system which are jointly in some state~$\rho$, i.e., they ``share'' a quantum state~$\rho$ over the space $\C^A
\otimes \C^B$.
Then the partial trace of~$\rho$ over one space characterizes the quantum state
over the remaining space (if we are interested only in operations on the
latter space). For example, $\tr_{A}(\rho)$ is the density matrix representing Bob's half of the state and $\tr_{B}(\rho)$ represents Alice's half. Note that $\rho$
may not equal $\tr_B(\rho) \otimes \tr_A(\rho)$ in general.

Suppose we are given the density matrix $\rho \in \pos^{A}$. We call the pure state $\psi \in \C^A \otimes \C^B$ a \emph{purification} of $\rho$ if $\tr_{B} \left( \psi \psi^* \right) = \rho$. A purification always exists if $|B| \geq |A|$,
and is in general not unique.
An important property of purifications of the same state is that they
are related to each other by a unitary operator:
if $\tr_{B} \left( \psi \psi^* \right) = \tr_{B} \left( \phi \phi^* \right)$, then there exists a unitary $U$ acting on $\C^B$ alone such that $\psi = (\id_{A} \otimes U) \, \phi$.

The partial trace operation is the quantum analogue of calculating marginal probability distributions. Consider
the linear operator~$\tr_A : \R^{A \times B} \rightarrow \R^B$ defined
by
\[ [\tr_{A}(v)]_y = \sum_{x \in A} v_{x,y} \enspace, \]
for~$y \in B$.  This is called the partial trace over~$A$.
Note that~$\tr_A(p)$ gives the marginal distribution over $B$
 of the probability distribution~$p \in \Prob^{A \times B}$.
One may view probability distributions as diagonal positive semidefinite matrices with unit trace. Then, taking the partial trace (as defined for quantum states) corresponds exactly to the computation of marginal distributions.

\paragraph{Distance measures} \quad \\

Notions of distance between quantum states and probability distributions are very important in quantum cryptography. Here, we discuss two measures used in this paper and how they are related.

We define the \emph{fidelity} of two nonnegative vectors $p,q \in \R_+^A$ as
\[ \rF(p,q) := \left( \sum_{x \in A} \sqrt{p_x} \sqrt{q_x} \right)^2 \]
and the fidelity of two positive semidefinite matrices $\rho_1$ and $\rho_2$ as
\[ \rF(\rho_1, \rho_2) := \norm{\sqrt{\rho_1}\sqrt{\rho_2}}_*^2. \]
Notice, $\rF(\rho_1, \rho_2) \geq 0$ with equality if and only if $\inner{\rho_1}{\rho_2} = 0$ and, if $\rho_1$ and $\rho_2$ are quantum states, $\rF(\rho_1, \rho_2) \leq 1$ with equality if and only if $\rho_1 = \rho_2$. An analogous statement can be made for fidelity over probability vectors.

Fidelity has several useful properties, which we later use in this paper. We have occasion to consider fidelity only of probability distributions,
and state the properties in terms of these.
However, the following properties hold for quantum states as well.
Fidelity is symmetric, positively homogeneous in both arguments, i.e., $\lambda \, \rF(p,q) = \rF(\lambda p, q) = \rF(p, \lambda q)$ for all $\lambda > 0$, and
is concave, i.e., $\rF \left( \sum_{i=1}^n \lambda_i \, p_i, q \right) \geq \sum_{i=1}^n \lambda_i \, \rF \left( p_i, q \right)$, for all $\lambda \in \Prob^n$.

Another distance measure is the \emph{trace distance}. We define the trace distance between two probability vectors $p$ and $q$, denoted $\Delta(p, q)$, as
\[ \Delta(p, q) := \half \norm{p - q}_1. \]
We similarly define the trace distance between two quantum states $\rho_1$ and $\rho_2$ as
\[ \Delta(\rho_1, \rho_2) := \frac{1}{2} \norm{\rho_1 - \rho_2}_*. \]
Notice $\Delta(\rho_1, \rho_2) \geq 0$ with equality if and only if $\rho_1 = \rho_2$, and $\Delta(\rho_1, \rho_2) \leq 1$ with equality if and only if $\inner{\rho_1}{\rho_2} = 0$. An analogous statement can be made for the trace distance between probability vectors.

It is straightforward to show that for any~$\Pi$ with~$0 \preceq \Pi \preceq
\id$,
\begin{equation}
\label{eqn-trbound}
\tr(\Pi(\rho_1 - \rho_2)) \quad \le \quad \Delta(\rho_1,\rho_2)
\enspace.
\end{equation}

We now discuss two important notions in quantum cryptography. The first is how easily two states can be distinguished from each other. For example, if Alice gives to Bob one of two states $\rho_1$ or $\rho_2$ chosen uniformly at random, then Bob can measure to learn whether he has been given $\rho_1$ or $\rho_2$ with maximum probability
\[ \frac{1}{2} + \frac{1}{4} \norm{\rho_1 - \rho_2}_* = \frac{1}{2} + \frac{1}{2} \Delta(\rho_1, \rho_2). \]
The second notion is \emph{quantum steering}. Suppose Alice has given to Bob the $A_1$ part (i.e.,
the subsystem~$A_1$ of qubits) of $\phi \in \C^{A_1 \times A_2}$. Now suppose she wants to modify
and send the $A_2$ part in a way so as to convince Bob that a different state was sent, say $\psi \in \C^{A_1 \times A_2}$. Her most general strategy is to apply a quantum operation on $A_2$ (i.e.,
a sequence of unitary operations and measurements) before sending it to Bob. If
Bob measures according to the POVM~$(\psi\psi^*, \id - \psi\psi^*)$, Alice can convince him that the state is $\psi$ with maximum probability
\[ \rF(\tr_{A_2} (\psi \psi^*), \tr_{A_2} (\phi \phi^*)) \enspace. \]

Trace distance and fidelity are closely related. The Fuchs-van de Graaf inequalities \cite{FvdG99} illustrate this relationship:
\begin{proposition}
\label{thm-fvdg}
For any finite dimensional quantum states~$\rho_1,\rho_2 \in \Pos^D$, we
have
\[ 1 - \sqrt{\rF(\rho_1, \rho_2)}
    \quad \leq \quad \Delta (\rho_1, \rho_2)
    \quad \leq \quad \sqrt{1 - \rF(\rho_1, \rho_2)}
\enspace. \]
\end{proposition}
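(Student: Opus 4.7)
The plan is to prove the two inequalities separately, reducing each to an elementary case via a carefully chosen auxiliary system and then invoking monotonicity properties.

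For the upper bound $\Delta(\rho_1,\rho_2) \le \sqrt{1-\rF(\rho_1,\rho_2)}$, I would first establish the identity for pure states. If $\psi,\phi \in \C^A$ are unit vectors, then $\psi\psi^* - \phi\phi^*$ is supported on $\Span\{\psi,\phi\}$, and a direct eigenvalue computation on this two-dimensional subspace gives eigenvalues $\pm\sqrt{1-|\braket{\psi}{\phi}|^2}$, so $\Delta(\psi\psi^*,\phi\phi^*) = \sqrt{1-\rF(\psi\psi^*,\phi\phi^*)}$. Next, I would invoke Uhlmann's theorem: there exist purifications $\tilde{\psi}_1,\tilde{\psi}_2 \in \C^{A\times B}$ of $\rho_1,\rho_2$ with $|\braket{\tilde{\psi}_1}{\tilde{\psi}_2}|^2 = \rF(\rho_1,\rho_2)$. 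This is immediate from the stated fact that purifications of the same state differ by a unitary on the ancilla and the definition $\rF(\rho_1,\rho_2) = \|\sqrt{\rho_1}\sqrt{\rho_2}\|_*^2 = \max_W |\tr(\sqrt{\rho_1}\sqrt{\rho_2}W)|^2$ over unitaries $W$. Finally, monotonicity of $\Delta$ under partial trace follows from~(\ref{eqn-trbound}): for any POVM element~$\Pi$ on $\C^A$, the lifted operator $\Pi \otimes \id_B$ is also a valid POVM element, so $\tr(\Pi(\rho_1-\rho_2)) = \tr((\Pi\otimes\id_B)(\tilde{\psi}_1\tilde{\psi}_1^* - \tilde{\psi}_2\tilde{\psi}_2^*)) \leq \Delta(\tilde{\psi}_1\tilde{\psi}_1^*, \tilde{\psi}_2\tilde{\psi}_2^*)$. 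Taking the supremum over $\Pi$ and combining with the pure-state identity yields the upper bound.

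For the lower bound $1-\sqrt{\rF(\rho_1,\rho_2)} \le \Delta(\rho_1,\rho_2)$, I would first handle the commutative (probability-vector) case. For $p,q \in \Prob^A$, the pointwise inequality $(\sqrt{p_x} - \sqrt{q_x})^2 \le |\sqrt{p_x} - \sqrt{q_x}|\,(\sqrt{p_x}+\sqrt{q_x}) = |p_x - q_x|$ (which holds because $|\sqrt{p_x}-\sqrt{q_x}| \leq \sqrt{p_x}+\sqrt{q_x}$) can be summed over $x$ to give $2 - 2\sqrt{\rF(p,q)} \le 2\Delta(p,q)$, so $1 - \sqrt{\rF(p,q)} \leq \Delta(p,q)$. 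To transfer this to quantum states, I would invoke the Fuchs--Caves characterization: $\rF(\rho_1,\rho_2) = \min_M \rF(p_M,q_M)$, where the minimum is over POVMs~$M$ and $p_M,q_M$ are the induced outcome distributions. Thus one can choose a POVM $M^*$ attaining this minimum, giving $\rF(p_{M^*},q_{M^*}) = \rF(\rho_1,\rho_2)$. Combining with the classical bound and the monotonicity $\Delta(\rho_1,\rho_2) \ge \Delta(p_{M^*},q_{M^*})$ (again a direct consequence of~(\ref{eqn-trbound}) applied to each POVM element) delivers $\Delta(\rho_1,\rho_2) \ge \Delta(p_{M^*},q_{M^*}) \ge 1 - \sqrt{\rF(p_{M^*},q_{M^*})} = 1 - \sqrt{\rF(\rho_1,\rho_2)}$.

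The main obstacles are the two auxiliary results: Uhlmann's theorem for the upper bound and the Fuchs--Caves theorem for the lower bound. Uhlmann follows cleanly from the unitary-equivalence of purifications together with the variational characterization of the nuclear norm $\|\sqrt{\rho_1}\sqrt{\rho_2}\|_* = \max_{W\text{ unitary}} \tr(\sqrt{\rho_1}\sqrt{\rho_2}W)$ (essentially the polar decomposition). Fuchs--Caves is the more delicate ingredient; its standard derivation constructs the optimal measurement from the operator $\rho_1^{-1/2}\sqrt{\rho_1^{1/2}\rho_2\rho_1^{1/2}}\rho_1^{-1/2}$ (on the support of $\rho_1$) and then verifies via Cauchy--Schwarz that no measurement can yield a smaller Bhattacharyya coefficient than $\rF(\rho_1,\rho_2)$. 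Both facts, once granted, slot into the above reduction cleanly, and the remaining steps are purely computational and rely only on properties of the nuclear norm and the basic inequality~(\ref{eqn-trbound}).
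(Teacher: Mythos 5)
The paper does not prove this proposition; it simply cites Fuchs and van de Graaf~\cite{FvdG99}. Your proof is the standard argument from that reference and is correct: the pure-state eigenvalue computation, Uhlmann's theorem, and monotonicity of $\Delta$ under partial trace for the upper bound; the pointwise $(\sqrt{p_x}-\sqrt{q_x})^2 \le |p_x-q_x|$ inequality, the Fuchs--Caves measured-fidelity characterization, and monotonicity of $\Delta$ under measurement for the lower bound. Two small glosses worth tightening: (i) to conclude the upper bound you need the equality $\Delta(\rho_1,\rho_2) = \sup_{0\preceq\Pi\preceq\id}\tr\bigl(\Pi(\rho_1-\rho_2)\bigr)$, whereas the paper's inequality~(\ref{eqn-trbound}) only states ``$\le$''; the equality is standard (take $\Pi$ to be the projector onto the positive eigenspace of $\rho_1-\rho_2$), but you should invoke it explicitly; (ii) monotonicity of $\Delta$ under a POVM is not literally ``(\ref{eqn-trbound}) applied to each POVM element'' --- applying it elementwise gives $\tr(\Pi_i(\rho_1-\rho_2)) \le \Delta(\rho_1,\rho_2)$ for each $i$, which does not sum correctly; the clean argument writes $\rho_1-\rho_2 = P-N$ with $P,N\succeq 0$, $PN=0$, and bounds $\sum_i |\tr(\Pi_i(\rho_1-\rho_2))| \le \tr(P)+\tr(N) = \lVert\rho_1-\rho_2\rVert_*$. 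Neither is a genuine gap; the overall structure is sound.
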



\subsection{Semidefinite programming}\label{ssect:opt}

A natural model of optimization when studying quantum information is {semidefinite programming}. A \emph{semidefinite program}, abbreviated
as $\SDP$, is an optimization problem of the form

\[ \begin{array}{rrrcllllllllllllll}
\textrm{(P)} & \sup                         & \inner{C}{X} \\
                     & \textrm{subject to} & \calA(X) & = & b, \\
                     &                                   & X & \in & \Pos^n, \\
\end{array} \]
where $\calA: \Herm^n \to \R^m$ is linear, $C \in \Herm^n$, and $b \in \R^m$.
The SDPs that arise in quantum computation involve optimization
over complex matrices.
However, they may be transformed to the above standard form in a
straightforward manner, by observing that Hermitian matrices form
a real subspace of the vector space of~$n \times n$ complex matrices.
We give direct arguments as to why we may restrict ourselves to SDPs
over real matrices when they arise in this article.

Similar to linear programs, every SDP has a dual. We can write the dual of (P) as

\[ \begin{array}{rrrcllllllllllllll}
\textrm{(D)} & \inf                         & \inner{b}{y} \\
                     & \textrm{subject to} & \calA^*(y) - S & = & C, \\
                     &                                   & S & \in & \Pos^n, \\
\end{array} \]
where $\calA^*$ is the adjoint of $\calA$. We refer to (P) as the primal problem and to (D) as its dual. We
say~$X$ is \emph{feasible\/} for (P) if it satisfies the
constraints~$\calA(X) = b$ and~$X \in \Pos^n$, and~$(y,S)$ is feasible
for~(D) if~$\calA^*(y) - S = C, S \in \Pos^n$.
The usefulness of defining the dual in the above manner is apparent in the following lemmas.

\begin{lemma}[Weak duality]
For every $X$ feasible for $\textup{(P)}$ and $(y,S)$ feasible for $\textup{(D)}$ we have
\[ \inner{C}{X} \leq \inner{b}{y}. \]
\end{lemma}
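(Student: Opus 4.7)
The plan is to compute the duality gap $\inner{b}{y} - \inner{C}{X}$ directly and show it is nonnegative using the feasibility conditions together with the positive semidefiniteness of $X$ and $S$.

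First, I would use the dual feasibility constraint $\calA^*(y) - S = C$ to substitute for $C$ in the primal objective:
\[
\inner{C}{X} \;=\; \inner{\calA^*(y) - S}{X} \;=\; \inner{\calA^*(y)}{X} - \inner{S}{X}.
\]
Next, by the definition of the adjoint $\calA^*$, we have $\inner{\calA^*(y)}{X} = \inner{y}{\calA(X)}$, and the primal feasibility constraint $\calA(X) = b$ then yields $\inner{\calA^*(y)}{X} = \inner{y}{b} = \inner{b}{y}$. Combining these gives the key identity
\[
\inner{b}{y} - \inner{C}{X} \;=\; \inner{S}{X},
\]
so the weak duality inequality reduces to showing $\inner{S}{X} \geq 0$.

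The remaining step is the standard fact that the trace inner product of two positive semidefinite matrices is nonnegative. I would prove this by writing $S = T^*T$ for some matrix $T$ (via a square root or Cholesky-like factorization guaranteed by $S \in \Pos^n$), so that
\[
\inner{S}{X} \;=\; \tr(S X) \;=\; \tr(T^* T X) \;=\; \tr(T X T^*) \;=\; \inner{\id}{T X T^*} \;\geq\; 0,
\]
where the last inequality holds because $T X T^* \in \Pos^n$ (conjugation preserves positive semidefiniteness) and the trace of a positive semidefinite matrix is the sum of its nonnegative eigenvalues.

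No step here is truly an obstacle; the proof is a routine two-line manipulation once one has the inner-product-of-PSD-matrices fact in hand. The only place where one must be careful is in invoking the adjoint identity, which rests on the inner products being defined consistently on $\Herm^n$ and $\R^m$ — but this is part of how $\calA^*$ is defined in the preceding paragraph, so there is nothing subtle to verify.
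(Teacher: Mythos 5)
Your proof is correct and complete; this is the standard textbook argument for weak duality in semidefinite programming. Note that the paper itself states this lemma without proof (treating it as a well-known fact with a pointer to the SDP literature), so there is no in-paper proof to compare against — but your chain of substitutions and the cyclic-trace argument for $\inner{S}{X} \geq 0$ is exactly the canonical derivation.
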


Using weak duality, we can prove bounds on the optimal objective value of $\textup{(P)}$ and $\textup{(D)}$, i.e., the objective function value of any primal feasible solution yields a lower bound on $\textup{(D)}$ and the objective function value of any dual feasible solution yields an upper bound on $\textup{(P)}$.

Under mild conditions, we have that the optimal objective values of $\textup{(P)}$ and $\textup{(D)}$ coincide.

\begin{lemma}[Strong duality]
If the objective function of $\textup{(P)}$
is bounded from above
on the set of feasible solutions of $\textup{(P)}$ and there exists a strictly feasible solution, i.e., there exists $\bar{X} \succ 0$ such that $\calA(\bar{X}) = b$, then $\textup{(D)}$ has an optimal solution and the optimal objective values of $\textup{(P)}$ and $\textup{(D)}$ coincide.
\end{lemma}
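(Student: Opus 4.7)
The plan is to start from what weak duality already gives, namely $p^* \leq d^*$ where $p^*$ and $d^*$ denote the optimal values of (P) and (D), and reduce the problem to producing, for every $\epsilon > 0$, a dual feasible solution with objective value at most $p^* + \epsilon$ (which forces $d^* \leq p^*$), together with a compactness argument that promotes this to attainment. Note that boundedness of the primal above, combined with feasibility of the Slater point $\bar X$, ensures $p^*$ is finite, so the construction makes sense.

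The geometric heart of the argument is a separation applied to the convex set
\[ K := \left\{ \left(b - \calA(X),\ p^* - \inner{C}{X} + s\right) : X \in \Pos^n,\ s \geq 0 \right\} \subseteq \R^m \times \R. \]
By the definition of $p^*$, the point $(0, -\epsilon)$ lies outside $K$ for every $\epsilon > 0$, since membership would force a primal feasible $X$ with $\inner{C}{X} \geq p^* + \epsilon$. The separating hyperplane theorem then yields a nonzero pair $(y, \alpha) \in \R^m \times \R$ satisfying
\[ \inner{y}{b} + \alpha p^* - \inner{\calA^*(y) + \alpha C}{X} + \alpha s \geq -\alpha\epsilon \quad \textrm{for all } X \succeq 0,\ s \geq 0. \]
Letting $s \to \infty$ forces $\alpha \geq 0$; scaling $X$ along the positive semidefinite cone forces $\calA^*(y) + \alpha C \preceq 0$; and setting $X = 0$, $s = 0$ yields $\inner{y}{b} + \alpha p^* \geq -\alpha\epsilon$. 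In the good case $\alpha > 0$, the vector $\tilde y := -y/\alpha$ is dual feasible with slack $S := \calA^*(\tilde y) - C \succeq 0$, and its objective satisfies $\inner{b}{\tilde y} \leq p^* + \epsilon$; sending $\epsilon \to 0$ gives $d^* \leq p^*$, which combined with weak duality delivers $p^* = d^*$.

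The main obstacle is the degenerate case $\alpha = 0$, and this is precisely where Slater's condition is indispensable. In that case $\calA^*(y) \preceq 0$ and $\inner{y}{b} \geq 0$; evaluating against $\bar X \succ 0$ yields
\[ 0 \geq \inner{\calA^*(y)}{\bar X} = \inner{y}{\calA(\bar X)} = \inner{y}{b} \geq 0, \]
so the chain collapses and $\inner{-\calA^*(y)}{\bar X} = 0$ with $-\calA^*(y) \succeq 0$ and $\bar X \succ 0$ forces $\calA^*(y) = 0$. After a standard reduction to surjective $\calA$ (replace its codomain by its range, which contains $b$, and restrict $y$ to this subspace without altering either the dual constraint or the dual objective), injectivity of $\calA^*$ gives $y = 0$, contradicting $(y,\alpha) \neq 0$. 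Finally, for attainment, take a minimizing sequence $y_n$ of dual feasible points with $\inner{b}{y_n} \to p^*$; if $\|y_n\|$ were unbounded, passing to a subsequence with $\hat y_n := y_n/\|y_n\| \to \hat y$ satisfying $\|\hat y\| = 1$ and taking limits in $\calA^*(y_n) \succeq C$ and the bounded values $\inner{b}{y_n}$ would give $\calA^*(\hat y) \succeq 0$ and $\inner{b}{\hat y} = 0$; the same Slater-based forcing would then drive $\hat y = 0$, a contradiction. Hence $\{y_n\}$ is bounded, a convergent subsequence produces a dual optimal solution by closedness of the feasible set, and the proof is complete.
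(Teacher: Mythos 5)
The paper does not actually supply a proof of this lemma; it states the result as standard and simply refers the reader to the SDP duality literature. Your argument is a correct, self-contained proof following the classical separating-hyperplane route: you separate the point $(0,-\epsilon)$ from the convex perturbation set $K$, analyze the two cases for the sign of the vertical coefficient $\alpha$, use the Slater point to dispose of the degenerate case $\alpha = 0$, and establish dual attainment via a normalized minimizing sequence whose unboundedness would again contradict Slater's condition. The individual steps are sound: letting $s \to \infty$ forces $\alpha \ge 0$, scaling along rays of the semidefinite cone forces $\calA^*(y)+\alpha C \preceq 0$, and the key Slater step --- $\calA^*(y) \preceq 0$, $\bar X \succ 0$, and $\inner{\calA^*(y)}{\bar X}=0$ forcing $\calA^*(y)=0$ --- is exactly right.

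Two places would benefit from an explicit word if this were to appear in print, though neither is a genuine gap. First, $K$ is convex but need not be closed, so the separation step should appeal to the \emph{proper} separation theorem (a point not in a convex set is not in its relative interior and can be properly separated from it), rather than the strict separation of a point from a closed set; proper separation also automatically excludes the pathology that the hyperplane contains both $K$ and the point, which is exactly what the $\alpha=0$ case threatens. Second, the reduction to surjective $\calA$ (restrict the codomain to the range of $\calA$, which contains $b$) should be performed once at the outset, since injectivity of $\calA^*$ is invoked twice --- once to rule out $\alpha=0$, and again to rule out an unbounded dual minimizing sequence. With those two points made explicit, the proof is complete and matches what the cited references establish.
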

A strictly feasible solution as in the above lemma is also called a
\emph{Slater point}.

Semidefinite programming has a powerful and rich duality theory and the interested reader is referred to $\cite{SDP, TW08}$ and the references therein.

\subsubsection{Second-order cone programming}

The \emph{second-order cone} (or \emph{Lorentz cone}) in~$\R^n$, $n \ge
2$, is defined as
\[ \Lor^n := \set{(x,t) \in \R^{n} : t \geq \norm{x}_2}. \]
A \emph{second-order cone program}, denoted SOCP, is an optimization problem of the form
\[ \begin{array}{rrrcllllllllllllll}
\textrm{(P)} & \sup                         & \inner{c}{x} \\
                     & \textrm{subject to} & Ax & = & b, \\
                     &                                   & x & \in & \Lor^{n_1} \oplus \cdots \oplus \Lor^{n_k}, \\
\end{array} \]
where $A$ is an $m \times (\sum_{i=1}^k n_k)$ matrix, $b \in \R^m$, and $k$ is finite. We say that a feasible solution $\bar{x}$ is strictly feasible if $\bar{x}$ is in the interior of $\Lor^{n_1} \oplus \cdots \oplus \Lor^{n_k}$.

An SOCP also has a dual which can be written as
\[ \begin{array}{rrrcllllllllllllll}
\textrm{(D)} & \inf                         & \inner{b}{y} \\
                     & \textrm{subject to} & A^\transpose y - s & = & c, \\
                     &                                   & s & \in & \Lor^{n_1} \oplus \cdots \oplus \Lor^{n_k}. \\
\end{array} \]
Note that weak duality and strong duality also hold for SOCPs for the properly modified definition of a strictly feasible
solution.

A related cone, called the \emph{rotated second-order cone}, is defined as
\[ \RL^n := \set{(a,b,x) \in \R^{n} : a, b \geq 0, \, 2 a b \geq \norm{x}_2^2}. \]
Optimizing over the rotated second-order cone is also called second-order cone programming because $(x,t) \in \Lor^{n}$ if and only if $(t/2,t,x) \in \RL^{n+1}$ and $(a,b,x) \in \RL^n$ if and only if $(x,a,b,a+b) \in \Lor^{n+1}$
and~$a,b \ge 0$. In fact, both second-order cone constraints can be cast as positive semidefinite constraints:
\[ t \geq \norm{x}_2 \iff \left[ \begin{array}{cc} t & x^\transpose \\ x & t \, \id \end{array} \right] \succeq 0 \quad \textup{ and } \quad a,b \geq 0, \, 2ab \geq \norm{x}_2^2 \iff \left[ \begin{array}{cc} 2a & x^\transpose \\ x & b \, \id \end{array} \right] \succeq 0. \]

There are some notable differences between semidefinite programs and second-order cone programs. One is that the algorithms for solving second-order cone programs can be  more efficient and robust than those for solving semidefinite programs. We refer the interested reader to~\cite{Stu99, Stu02, Mit03, AG03} and the references therein.

\comment{
Mittelmann, H. D.
An independent benchmarking of SDP and SOCP solvers.
Computational semidefinite and second order cone programming:
the state of the art.
Math. Program. 95 (2003), no. 2, Ser. B, 407--430

Sturm, J.F. (1999): Using SeDuMi 1.02,
a MATLAB toolbox for optimization over symmetric cones.
Optimization Methods and Software 11, 625--653

Sturm, Jos F.
Implementation of interior point methods for mixed semidefinite
and second order cone optimization problems.
Optim. Methods Softw. 17 (2002), no. 6, 1105--1154.

F. Alizadeh and D. Goldfarb, Second-order cone programming,
Math. Program., 95 (2003), pp. 3-51.
} 


\section{Coin-flipping protocols}

\subsection{Strong coin-flipping}
\label{sect:CF}

A strong coin-flipping protocol is a two-party quantum
\emph{communication protocol\/} in the style of Yao~\cite{Yao93}.
We concentrate on a class of communication protocols relevant to
coin-flipping.  Informally, in such protocols, two parties Alice and Bob
hold some number of qubits; the qubits with each party are initialized
to a fixed pure state. The initial joint state is therefore unentangled
across Alice and Bob. The two parties then ``play'' in turns.
Suppose it is Alice's turn to play. Alice applies a unitary transformation on
her qubits and then sends one or more qubits to Bob. Sending qubits does
not change the overall superposition, but rather changes the ownership of the
qubits. This allows Bob to apply his next unitary transformation on the
newly received qubits.  At the end of the protocol, each player
makes a measurement of their qubits and announces the outcome as the result
of the protocol.

Formally, the players Alice and Bob, hold some number of qubits, which
initially factor into a tensor product~$\C^{A_0} \tensor \C^{B_0}$ of Hilbert
spaces. The qubits corresponding to~$\C^{A_0}$ are in Alice's possession,
and those in~$\C^{B_{0}}$ are in Bob's possession. When the
protocol starts, the qubits in~$\C^{A_0}$ are initialized to some
superposition~$\psi_{\rA,0}$ and those in~$\C^{B_0}$ to~$\psi_{\rB,0}$,
both of which specified by the protocol.
The communication consists of~$t \ge 1$ alternations of message
exchange (``rounds''), in which the two players ``play''. Either party may play
first. The protocol specifies a factorization of the joint state space
just before each round, corresponding to the ownership of the qubits.
In the~$i$th round, $i \geq 1$, suppose it is Alice's turn to
play. Suppose the factorization of the state space just before the~$i$th
round is~$\C^{A_{i-1}} \tensor \C^{B_{i-1}}$. Alice
applies a unitary operator~$U_{\rA,i}$ to the qubits in~$\C^{A_{i-1}}$.
Then, Alice sends some of her qubits to Bob. Formally, the
space~$\C^{A_{i-1}}$ is expressed as~$\C^{A_{i}} \tensor \C^{M_i}$,
where~$\C^{A_{i}}$ is Alice's state space after the~$i$th message
is sent and $\C^{M_i}$ is the state space for the~$i$th message.
Consequently, Bob's state space after receiving the~$i$th message
is~$\C^{B_{i}} = \C^{M_i} \tensor \C^{B_{i-1}}$. In the next round,
Bob may thus apply a unitary operation to the qubits previously in Alice's
control.

At the end of the~$t$ rounds of play, Alice and Bob observe the qubits in
their possession according to some measurement. The outcomes of these
measurements represent their outputs.
We emphasize that there are no measurements until all rounds of
communication are completed. A protocol with
intermediate measurements may be transformed into this form by appealing
to standard techniques~\cite{BernsteinV97}.

\begin{definition}[Strong coin-flipping]
\label{def-scf}
A \emph{strong coin-flipping protocol\/} is a two-party communication
protocol as described above, in which the measurements of Alice and
Bob are given by three-outcome POVMs $(\Pi_{\rA,0}, \Pi_{\rA,1},
\Pi_{\rA, \abort})$ and~$(\Pi_{\rB,0}, \Pi_{\rB,1}, \Pi_{\rB, \abort})$,
respectively. When both parties follow the protocol, they do not abort,
i.e., only get outcomes in~$\set{0,1}$.
Further, each party outputs the same bit $c \in \zo$ and each binary
outcome occurs with probability 1/2.
\end{definition}

\comment{
A protocol with four messages is depicted in Figure~\ref{fig-scf}.

\begin{figure}[!h]
\begin{center}
  \unitlength=3pt

\gasset{frame=false}
  \begin{gpicture}(80, 25)

    \gasset{Nw=20,Nh=10,Nmr=00, Nframe=n}

    \node[Nw=50,Nh=10,Nmr=00](Alice)(0,1600){}

    \node(calA)(-30,130){{$\C^{A}$}}
    \node(calM)(0,130){{$\C^{M}$}}
    \node(calB)(30,130){{$\C^B$}}

    \node(PsiA)(-30,120){{$\psi_{\rA,0}$}}
    \node(PsiM)(0,120){{$\psi_{M,0}$}}
    \node(PsiB)(30,120){{$\psi_{\rB,0}$}}

    \gasset{Nw=20,Nh=10,Nmr=00, Nframe=y}

    \node[Nw=50,Nh=10,Nmr=00](U1)(-15,100){{$U_{\rA,1}$}}
    \node[Nw=50,Nh=10,Nmr=00](U3)(-15,60){{$U_{\rA,3}$}}

    \node[Nw=50,Nh=10,Nmr=00](U2)(15,80){{$U_{\rB,2}$}}
    \node[Nw=50,Nh=10,Nmr=00](U4)(15,40){{$U_{\rB,4}$}}

    \gasset{Nw=20,Nh=10,Nmr=00, Nframe=n}

    \node[Nw=50,Nh=10,Nmr=00](Alice)(0,1600){}

    \node(PiA)(-30,20){{$\{ \Pi_{\rA,0}, \Pi_{\rA,1}, \Pi_{\rA, \textup{abort}} \}$}}
    \node(PiB)(30,20){{$\{ \Pi_{\rB,0}, \Pi_{\rB,1}, \Pi_{\rB, \textup{abort}} \}$}}

    \drawedge[sxo=0,exo=-15, ELpos=30](PsiA,U1){}
    \drawedge[sxo=0,exo=15, ELpos=30](PsiM,U1){}
    \drawedge[sxo=0,exo=15, ELpos=30](PsiB,U2){}

    \drawedge[sxo=-15,exo=-15, ELpos=30](U1,U3){}

    \drawedge[sxo=15,exo=15, ELpos=30](U2,U4){}

    \drawedge[sxo=15,exo=-15, ELpos=30](U1,U2){}
    \drawedge[sxo=-15,exo=15, ELpos=30](U2,U3){}
    \drawedge[sxo=15,exo=-15, ELpos=30](U3,U4){}

    \drawedge[sxo=-15,exo=0, ELpos=30](U3,PiA){}
    \drawedge[sxo=15,exo=0, ELpos=30](U4,PiB){}

  \end{gpicture}
\end{center}
\caption{A strong coin-flipping protocol with four messages, all of
which have the same dimension.}
\label{fig-scf}
\end{figure}
} 

We are interested in the probabilities of the different outcomes in a
coin-flipping protocol, when either party ``cheats''. Suppose Alice and
Bob have agreed upon a protocol, i.e., a set of rules for the state
initialization, communication, quantum operations, and
measurements. What if Alice or Bob do not follow protocol? Suppose Alice
is dishonest and would like to force an outcome of $0$. She may
use a different number of qubits for her private operations, so that her
space $\C^{A'_i}$ may be much larger than
$\C^{A_i}$. She may create any initial state she wants. During the
communication, the only restriction is that she send a state of the
correct dimension, e.g., if the protocol requires a message with~$3$
qubits in the first message, then Alice sends $3$ qubits. Between messages,
she may apply any quantum operation she wants on the qubits in her
possession.
At the end of the protocol, she may use a different measurement of her
choice. For example, she may simply output ``$0$'' as this is her desired
outcome (which corresponds to a trivial measurement).
The rules that Alice chooses to follow instead of the protocol
constitute a \emph{cheating strategy\/}.

We would like to quantify the extent to which a cheating player can
convince an honest one of a desired outcome, so we focus on runs of the
protocol in which at most one party is dishonest. We analyze in this paper
the maximum probability with which Alice (or Bob) can force a
desired outcome in terms of the ``bias'', i.e., the advantage
over~$1/2$ that a cheating party can achieve.

\begin{definition}[Bias]
\label{def-bias}
For a given strong coin-flipping protocol, for each~$c \bit$, define
\begin{itemize}
\item $P_{\rA,c}^* := \sup \; \{\Pr[\text{honest Bob outputs } c \text{ when Alice may cheat}]\}$,
\item $P_{\rB,c}^* := \sup \; \{\Pr[\text{honest Alice outputs } c \text{ when Bob may cheat}]\}$,
\end{itemize}
where the suprema are taken over all cheating strategies of the dishonest
player. The bias~$\epsilon$ of the protocol is defined as
\[
\epsilon := \max\{P_{\rA,0}^*, P_{\rA,1}^*, P_{\rB,0}^*, P_{\rB,1}^* \} - 1/2
\enspace.
\]
\end{definition}

\comment{ 
Strong coin-flipping protocols are designed to protect Alice and Bob from biasing either outcome. However, sometimes all that is needed is to protect Alice from biasing the outcome toward ``$0$'' and Bob from biasing the outcome toward ``$1$'', i.e., they desire opposing outcomes. This variant is called \emph{weak coin-flipping} and is defined below.

\begin{definition}[Weak coin-flipping]
A weak coin-flipping protocol with \emph{bias} $\varepsilon > 0$ is a protocol that satisfies:
\begin{itemize}
\item If both parties are honest, i.e, follow the protocol, they do not abort and each party outputs the same bit $c \in \zo$ and each outcome occurs with probability 1/2,
\item $P_{\rA,0}^* := \sup \{\Pr[\text{honest Bob outputs } 0 \text{ when Alice may cheat}]\}$,
\item $P_{\rB,1}^* := \sup \{\Pr[\text{honest Alice outputs } 1 \text{ when Bob may cheat}]\}$,
\item $\eps := \max\{P_{\rA,0}^*, P_{\rB,1}^* \} - 1/2$,
\end{itemize}
where the suprema are taken over all cheating strategies.
\end{definition}
} 

\subsection{An example protocol} \label{ex}

Here we describe a construction of strong coin-flipping protocols based on
quantum \emph{bit-commitment\/}~\cite{ATVY00, Amb01, SR01, KN04} that
consists of three messages.
First, Alice chooses a uniformly random bit $a$, creates a state of the form
\[ \psi_a \in \C^A \otimes \C^{A'} \]
and sends $A$ to Bob, i.e., the first message consists of qubits
corresponding to the space~$\C^A$. (For ease of exposition, we use this
language throughout,
i.e., refer to qubits by the labels of the corresponding spaces.)
This first message is the \emph{commit stage} since she potentially
gives some information about the bit $a$, for which she may be held
accountable later. Then Bob chooses a uniformly random bit $b$ and sends it to Alice. Alice then sends $a$ and $A'$ to
Bob. Alice's last message is the \emph{reveal stage}. Bob checks to see if the qubits
he received  are in state $\psi_a$ (we give more details about this step below). If Bob is convinced that the state is correct, they both output $0$ when $a=b$, or $1$ if $a \neq b$, i.e., they output the XOR of $a$ and $b$.

This description can be cast in the form of a quantum protocol as
presented in Section~\ref{sect:CF}: we can encode $0$ as basis state
$e_0$ and $1$ as $e_1$,  we can simulated the generation
of a uniformly random bit
by preparing a uniform superposition over the two basis states, and we
can ``send'' qubits by permuting their order (a unitary operation) so that they
are part of the message subsystem.
In fact, we can encode an entirely classical protocol using a quantum
one in this manner.  A more general protocol of this kind is described
formally in Section~\ref{family}.

We present a protocol from~\cite{KN04} which follows the above framework.

\begin{definition}[Coin-flipping protocol example] \quad \\
Let $A := \{ 0, 1, 2 \}$, $A' := A$, and let $\C^A$ and $\C^{A'}$ be spaces for Alice's two messages.
\begin{itemize}
\item Alice chooses $a \in \zo$ uniformly at random and creates the state
\[ \psi_a = \frac{1}{\sqrt{2}} \, e_a \otimes e_a + \frac{1}{\sqrt{2}} \, e_2 \otimes e_2 \in \C^A \otimes \C^{A'}, \]
where $\{ e_0, e_1, e_2 \}$ are standard basis vectors. Alice sends the $A$ part of $\psi_a$ to Bob.
\item Bob chooses $b \in \zo$ uniformly at random and sends it to Alice.
\item Alice reveals $a$ to Bob and sends the rest of $\psi_a$, i.e., she sends ${A'}$.
\item Bob checks to see if the state sent by Alice is $\psi_a$, i.e., he checks to see if Alice has tampered with the state during the protocol. The measurement on $\C^A \otimes \C^{A'}$ corresponding to this check is
\[ ( \Pi_{\accept} := \psi_a \psi_a^*, \quad \Pi_{\abort} := \id - \Pi_{\accept} ). \]
If the measurement outcome is ``$\abort$'' then Bob aborts the protocol.
\item Each player outputs the $\mathrm{XOR}$ of the two bits, i.e.,
Alice outputs~$a \oplus b'$, where~$b'$ is the bit she received in
the second round, and if he does not abort, Bob outputs~$a' \oplus b$,
where~$a'$ is the bit received by him in the third round.
\end{itemize}
\end{definition}

In the honest case, Bob does not abort since $\inner{\Pi_{\abort}}{\psi_a \psi_a^*}
= 0$. Furthermore, Alice and Bob get the same outcome which is uniformly random. Therefore, this is a well-defined coin-flipping protocol. We now sketch a proof that this  protocol has bias $\epsilon = 1/4$.

\paragraph{Bob cheating:} We consider the case when Bob cheats
towards~$0$; the analysis of cheating towards $1$ is similar.
If Bob wishes to maximize the probability of outcome~$0$,
he has to maximize
the probability that the bit~$b$ he sends equals~$a$. In other words,
he may only cheat by measuring Alice's first message to try to learn $a$,
then choose $b$ suitably to force the desired outcome.
Define $\rho_a := \tr_{A'} \left( \psi_a \psi_a^* \right)$. This is the
reduced state of the~$A$-qubits Bob has after the first message.
Recall Bob can learn the value of $a$ with probability
\[ \half + \frac{1}{2} \Delta(\rho_0, \rho_1) = 3/4 \enspace, \]
and this bound can be achieved.
This strategy is independent of the outcome Bob desires, thus $P_{\rB,0}^* = P_{\rB,1}^* = 3/4$.

\paragraph{Alice cheating:} Alice's most general cheating strategy is to send a state in the first message such that she can decide the value of $a$ after receiving $b$,
and yet pass Bob's cheat detection step with maximum probability.  For example, if Alice wants outcome $0$ then she returns $a = b$ and if she wants outcome $1$, she returns $a = \bar{b}$. Alice always gets the desired outcome as long as Bob does not detect her cheating. As a primer for more complicated protocols, we show an SDP formulation for a cheating Alice based on the above cheating strategy description. There are three important quantum states to consider here. The first is Alice's first message, which we denote as $\sigma \in \Pos^{A}$. The other two states are the states Bob has at the end of the protocol depending on whether $b=0$ or $b=1$, we denote them by $\sigma_b \in \Pos^{A \otimes A'}$. Note that $\tr_{A'}(\sigma_0) = \tr_{A'}(\sigma_1) = \sigma$ since they are consistent with the first message $\sigma$---Alice does not know $b$ when $\sigma$ is sent. However, they could be different on $A'$ because Alice may apply some quantum operation depending
upon~$b$ before sending the $A'$ qubits.  Then Alice can cheat with probability given by the optimal objective value of the following SDP:
\[ \begin{array}{rrrcllllllllllllll}
& \quad \sup                         & \half \langle \psi_0 \psi_0^* , \sigma_0 \rangle & + & \half \langle \psi_1 \psi_1^* , \sigma_1 \rangle \\
                     & \textup{subject to} & \tr_{A'}(\sigma_b) & = & \sigma, & \forAll b \in \{ 0,1 \}, \\
                       &      & \tr(\sigma) & = & 1, \\
                            &      & \sigma & \in & \Pos^{A}, \\
                                                    &      & \sigma_b & \in & \Pos^{A \otimes A'}, & \forAll b \bit,
\end{array} \]
recalling that the partial trace is trace-preserving, any unit trace, positive semidefinite matrix represents a valid quantum state,
and that two purifications of the same density matrix are related to
each other by a unitary transformation on the part that is traced out.

A few words about the above optimization problem are in order here.
First, the restriction to real positive semidefinite matrices does not
change the optimum: the real part of any feasible set of complex
matrices~$\sigma, \sigma_0, \sigma_1$ is also feasible, and has the
same objective function value. Second, using straightforward transformations,
we may verify that the problem is an SDP of the form defined in
Section~\ref{ssect:opt}.

It has been shown~\cite{SR01,Amb01,NS03} that the optimal objective value of this problem is
\[ \half + \half \sqrt{\rF(\rho_0, \rho_1)} = 3/4 \]
given by the optimal solution $(\sigma_0, \sigma_1, \sigma) = (\psi \psi^*, \psi \psi^*, \tr_{A'}(\psi \psi^*))$, where
\[ \psi = \sqrt{\frac{1}{6}} \, e_0 \otimes e_0 + \sqrt{\frac{1}{6}} \,  e_1 \otimes e_1 + \sqrt{\frac{2}{3}} \, e_2 \otimes e_2
\enspace. \]
Therefore, the bias of this protocol is $\max \{ P_{\rA,0}^*, P_{\rA,1}^*, P_{\rB,0}^*, P_{\rB,1}^* \} - 1/2 = 3/4 - 1/2 = 1/4$.
Using Proposition~\ref{thm-fvdg}, it was shown in~\cite{Amb01} that for any $\rho_0$ and $\rho_1$, we have 
\[
\max \set{\half + \half \sqrt{\rF(\rho_0, \rho_1)}, \half + \frac{1}{2} \Delta(\rho_0, \rho_1)} - 1/2 \quad \geq \quad  1/4
\enspace.
\]
Thus, we cannot improve the bias by simply changing the starting states in this type of protocol, suggesting a substantial change of the form of the protocol is necessary to find a smaller bias.

\subsection{A family of protocols} \label{family}

We now consider a family of protocols which generalizes the above idea. Alice and Bob each flip a coin and commit
to their respective bits by exchanging quantum states. Then they reveal
their bits and send the remaining part of the commitment state.
Each party checks the received state against the one they expect, and
abort the protocol if they detect an inconsistency. They output the
XOR of the two bits otherwise. We see that this is uniformly random,
when $a$ and $b$ are uniformly random.

The difficulty in designing a good protocol is in deciding how Alice and Bob
commit
to their bits. If Alice or Bob leaks too much information early, then the other party has more freedom to cheat. Thus, we try to maintain a balance between the two parties so as to minimize the bias they can achieve by cheating.

Consider the following Cartesian product of finite sets $A = A_1 \times \cdots \times A_n$. These are used for Alice's first $n$ messages to Bob. Suppose we are given two probability distributions $\alpha_0, \alpha_1 \in \Prob^A$. Define the following two quantum states
\[ \psi_a = \sum_{x \in A} \sqrt{\alpha_{a,x}} \, e_x \otimes e_x \in \C^A \otimes \C^{A'} \quad \text{ for } \quad a \bit, \]
where $A' = A$. The reason we define the state over $\C^A$ and a copy is because in the protocol, Alice sends states in $\C^A$ while retaining copies in $\C^{A'}$ for herself. We
may simulate Alice's choice of uniformly random~$a$ and the
corresponding messages by preparing the initial state
\[ \psi := \sum_{a \bit} \frac{1}{\sqrt 2} \, e_a \otimes e_a \otimes \psi_a \in \C^{A_0} \otimes \C^{A'_0} \otimes \C^A \otimes \C^{A'}, \]
where $A_0 = A'_0 = \{ 0, 1 \}$ are used for two copies of Alice's bit $a$, one for Bob and a copy for herself.

We now describe the setting for Bob's messages. Consider the following Cartesian  product of finite sets $B = B_1 \times \cdots \times B_n$ used for Bob's first $n$ messages to Alice. Suppose we are given two probability distributions $\beta_0, \beta_1 \in \Prob^B$. Define the following two quantum states
\[ \phi_b = \sum_{y \in B} \sqrt{\beta_{b,y}} \, e_y \otimes e_y \in \C^B \otimes \C^{B'} \quad \text{ for } \quad b \bit, \]
where $B' = B$. Bob's choice of uniformly random $b$, and the
corresponding messages may be simulated
by preparing the initial state
\[ \phi := \sum_{b \bit} \frac{1}{\sqrt 2} \, e_b \otimes e_b \otimes \phi_b \in \C^{B_0} \otimes \C^{B'_0} \otimes \C^B \otimes \C^{B'}, \]
where $B_0 = B'_0 = \{ 0, 1 \}$ are used for two copies of Bob's bit $b$, one for Alice and a copy for himself.

We now describe the communication and cheat detection in the protocol.

\begin{definition}[Coin-flipping protocol based on bit-commitment]
A \emph{coin-flipping protocol based on bit-commitment\/} is specified
by a $4$-tuple of probability distributions~$(\alpha_0, \alpha_1,
\beta_0, \beta_1)$ that define states~$\psi,\phi$ as above.
\begin{itemize}
\item Alice prepares the state $\psi$ and Bob prepares the state $\phi$
as defined above.
\item For $i$ from $1$ to $n$: Alice sends $\C^{A_i}$ to Bob who replies with $\C^{B_i}$.
\item Alice fully reveals her bit by sending $\C^{A'_0}$. She also sends $\C^{A'}$ which Bob uses later to check if she was honest. Bob then reveals his bit by sending $\C^{B'_0}$. He also sends $\C^{B'}$ which Alice uses later to check if he was honest.
\item Alice observes the qubits in her possession according to the measurement $(\Pi_{\rA,0}, \Pi_{\rA,1}, \Pi_{\rA, \abort})$ defined
on the space $\Pos^{A_0 \times B'_0 \times B \times B'}$, where
\[ \Pi_{\rA,0} := \sum_{b \bit} e_b e_b^* \otimes e_b e_b^* \otimes \phi_b \phi_b^*, \quad
\Pi_{\rA,1} := \sum_{b \bit} e_{\bar b} e_{\bar b}^* \otimes e_b e_b^* \otimes \phi_b \phi_b^*, \]
and $\Pi_{\rA, \abort} := \id - \Pi_{\rA,0} - \Pi_{\rA,1}$. \\
\item Bob observes the qubits in his possession according to the measurement $(\Pi_{\rB,0}, \Pi_{\rB,1}, \Pi_{\rB, \abort})$
defined on the space~$\Pos^{B_0 \times A'_0 \times A \times A'}$, where
\[ \Pi_{\rB,0} := \sum_{a \bit} e_a e_a^* \otimes e_a e_a^* \otimes \psi_a \psi_a^*, \quad
\Pi_{\rB,1} := \sum_{a \bit} e_{\bar a} e_{\bar a}^* \otimes e_a e_a^* \otimes \psi_a \psi_a^*, \]
and $\Pi_{\rB, \abort} := \id - \Pi_{\rB,0} - \Pi_{\rB,1}$. (These last two steps can be interchanged.)
\end{itemize}
\end{definition}

Note that the measurements check two things. First, they check whether the outcome is $0$ or $1$. The first two terms determine this, i.e., whether $a = b$ or if $a \neq b$. Second, they check whether the other party was honest. For
example, if Alice's measurement projects onto a subspace where $b=0$ and Bob's messages are not in
state $\phi_0$, then Alice knows Bob has cheated and aborts. A six-round protocol is depicted in Figure~\ref{fig:6Rprotocol}, on the next page. 

\begin{figure}[htbp] 
   \centering
   \includegraphics[width=5.5in]{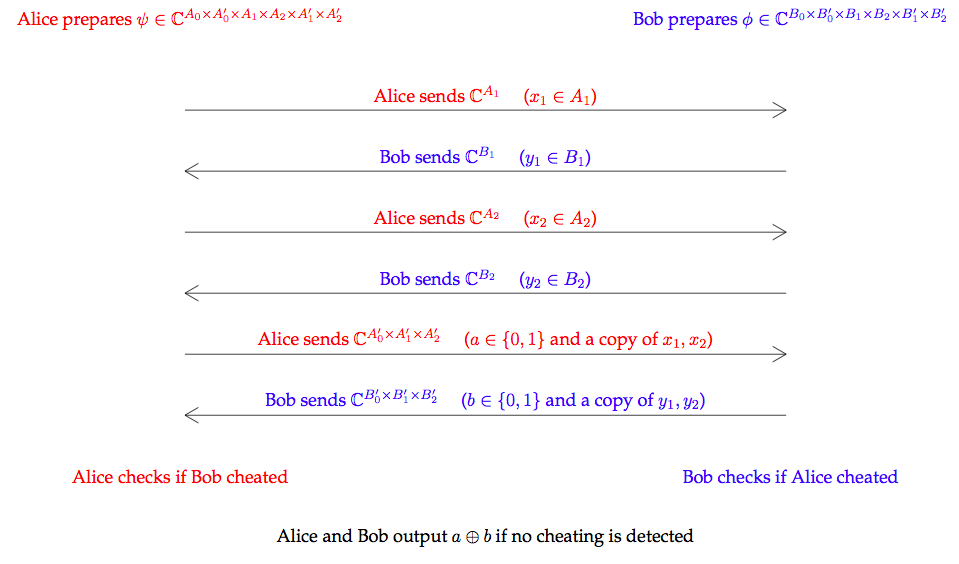} 
  \caption{Six-round coin-flipping protocol based on bit-commitment. Alice's actions are in red and Bob's actions are in blue.}
  \label{fig:6Rprotocol}
\end{figure} 

We could also consider the case where Alice and Bob choose $a$ and $b$ with different probability distributions, i.e., we could change the $1/\sqrt{2}$ in the definitions of $\psi$ and $\phi$ to other values depending on $a$ or $b$. This causes the honest outcome probabilities to not be uniformly random and this no longer falls into our definition of a coin-flipping protocol. However, sometimes such ``unbalanced'' coin-flipping protocols are useful, see~\cite{CK09}. We note that our optimization techniques in Section~\ref{sect:opt} are robust enough to handle the analysis of such modifications.

Notice that our protocol is parameterized by the four probability distributions $\alpha_0$, $\alpha_1$, $\beta_0$, and~$\beta_1$. It seems to be a very difficult problem to solve for the choice of these parameters
that gives us the least bias. Indeed, we do not even have an upper bound on the dimension
of these parameters in an optimal protocol. However, we can solve for the bias of a protocol once these parameters are fixed using the optimization techniques in Section~\ref{sect:opt}. Once we have a means for computing the bias given some choice of fixed parameters, we then turn our attention to solving for the best choice of parameters. We use the heuristics in Sections~\ref{sect:filter} and~\ref{sect:symmetry} to design an algorithm in Section~\ref{sect:algo} to search for these.


\section{Cheating strategies as optimization problems} \label{sect:opt}

In this section, we show that the optimal cheating strategy of a player
in a coin-flipping protocol is characterized by highly structured
semidefinite programs.

\subsection{Characterization by semidefinite programs}

We start by formulating strategies for cheating Bob and cheating Alice as
semidefinite optimization problems as proposed by Kitaev~\cite{Kit03}.
The extent to which Bob can cheat is captured by the following lemma.

\newpage 
\begin{lemma}
\label{thm-bob-sdp}
The maximum probability with which cheating Bob can force honest Alice to accept $c \bit$ is given by the optimal objective value of the following SDP:
\[ \begin{array}{rrrcllllllllllllll}
& \quad \sup                         & \langle \, \rho_F , \Pi_{\rA,c} \, \rangle \\
                     & \textup{subject to} & \tr_{B_1}(\rho_1) & = & \tr_{A_1} \left( \psi \psi^* \right), \\
                     &                                   & \tr_{B_j} (\rho_j) & = & \tr_{A_j} (\rho_{j-1}), \quad \forAll j \in \{ 2, \ldots, n \}, \\
                     &                                   & \tr_{B' \times B'_0}(\rho_F) & = & \tr_{A' \times A'_0}(\rho_n), \\
                     &                                   & \rho_j & \in & \pos^{A_0 \times A'_0 \times B_1 \times \cdots \times B_j \times A_{j+1} \times \cdots \times A_n \times A'}, \quad  \forAll j \in \{ 1, \ldots, n \}, \\
                     &                                   & \rho_F & \in & \pos^{A_0 \times B'_0 \times B \times B'}. \\
\end{array} \]
Furthermore, an optimal cheating strategy for Bob may be derived from an
optimal feasible solution of this SDP.
\end{lemma}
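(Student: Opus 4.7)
The plan is to prove both directions of the characterization: (i) every cheating strategy of Bob yields a feasible solution of the SDP whose objective value equals the probability that honest Alice accepts with outcome $c$, and (ii) conversely, every feasible solution of the SDP is realized by some cheating strategy of Bob.

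For the forward direction, I would first invoke the standard purification-of-strategies reduction of~\cite{Kit03}: without loss of generality Bob attaches a sufficiently large ancilla, performs only unitary operations on his subsystem before each outgoing message, and defers any measurement until the end. Because the objective depends only on Alice's POVM, we may further assume Bob performs no final measurement at all. Set $\rho_0 := \psi\psi^*$, let $\rho_j$ denote the reduced state on Alice's subsystem immediately \emph{after} Bob has transmitted his $j$th message, and let $\rho_F$ denote Alice's reduced state at the end of the protocol. The trace identities are forced by the locality of Bob's operations: between the moment Alice sends $A_j$ and the moment Bob sends $B_j$, Alice's reduced state does not change, so it simultaneously equals $\tr_{A_j}(\rho_{j-1})$ and $\tr_{B_j}(\rho_j)$. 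The analogous remark at the reveal step gives the final trace constraint, and the objective value $\langle \rho_F, \Pi_{\rA,c}\rangle$ is by definition the probability that Alice outputs $c$.

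For the converse direction, I would build a strategy from a feasible solution by a round-by-round purification argument. Let $|\tilde\rho_j\rangle$ be a purification of $\rho_j$ living on Alice's subsystem tensored with a sufficiently large Bob ancilla. When Alice sends $A_j$, the joint state becomes $|\tilde\rho_{j-1}\rangle$ with $A_j$ relabelled as belonging to Bob; its Alice-side reduction is $\tr_{A_j}(\rho_{j-1})$. By feasibility this equals $\tr_{B_j}(\rho_j)$, which is also the Alice-side reduction of $|\tilde\rho_j\rangle$ once $B_j$ is viewed as being on Bob's side (i.e.\ just prior to being sent). Since two purifications of the same reduced state are related by a unitary acting on the purifying system alone, there is a unitary on Bob's side converting the former into the latter; this is Bob's $j$th strategy unitary. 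The reveal step is handled identically using the last trace constraint. By construction the Alice-side reductions along this strategy coincide with $\rho_1,\dots,\rho_n,\rho_F$, so Alice's acceptance probability for outcome $c$ is exactly $\langle \rho_F, \Pi_{\rA,c}\rangle$, matching the SDP value.

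The main technical obstacle is bookkeeping: one must pad Bob's ancilla at each round so that the dimensions of his ``incoming'' side ($A_j$ together with his ancilla) and ``outgoing'' side ($B_j$ together with his ancilla) match, ensuring that the purification-to-purification unitary exists and can be extended consistently across rounds. A secondary point to dispatch, as in the three-round example of Section~\ref{ex}, is to justify restricting the SDP to $\pos$ rather than to complex Hermitian positive semidefinite matrices; this follows by taking real parts of any complex feasible solution, which preserves feasibility and does not change the value of the (real-valued) objective. Once these are handled, the existence of an optimal feasible solution together with the explicit recovery of a matching strategy yields the ``furthermore'' claim that an optimal cheating strategy can be extracted from any optimal solution of the SDP.
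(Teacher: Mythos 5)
Your proposal takes essentially the same route as the paper: both directions are proved by the same locality argument for necessity of the constraints and the same purification-plus-unitary-equivalence construction for sufficiency, including the same observation about passing to real parts of a complex feasible solution. The only small point you gloss over is justifying that an optimal feasible solution actually exists (the paper notes that the honest strategy gives a feasible point and that the feasible region is compact, so the supremum is attained); with that one sentence added, your argument matches the paper's.
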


We depict Bob cheating, and the context of the SDP variables, in a six-round protocol in Figure~\ref{fig:6RBobprotocolwv}, below.

\begin{figure}[htbp] 
   \centering
   \includegraphics[width=6in]{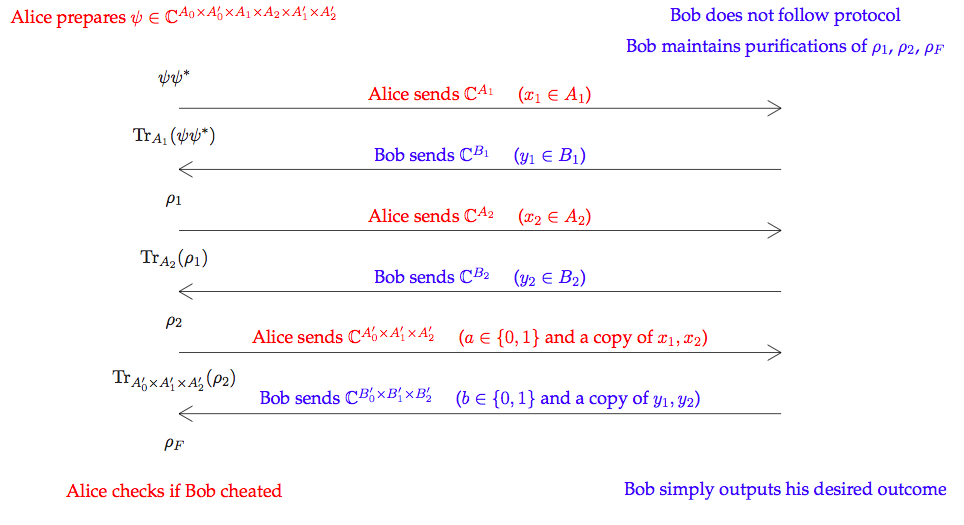} 
  \caption{Bob cheating in a six-round protocol.}
  \label{fig:6RBobprotocolwv}
\end{figure}

We call the SDP Lemma~\ref{thm-bob-sdp} Bob's cheating SDP. In a similar fashion, we can formulate Alice's cheating SDP.

\newpage 
\begin{lemma}
\label{thm-alice-sdp}
The maximum probability with which cheating Alice can force honest Bob to accept $c \bit$ is given by the optimal objective value of the following SDP:
\[ \begin{array}{rrrcllllllllllllll}
& \sup
& \inner{\sigma_F}{\Pi_{\rB,c} \otimes \id_{B'_0 \times B'}} \\
                     & \textup{subject to} & \tr_{A_1}(\sigma_1) & = & \phi \phi^*, \\
                     &                               & \tr_{A_2}(\sigma_2) & = & \tr_{B_1}(\sigma_1), \\
                     & & & \vdots \\
                     &                               & \tr_{A_n}(\sigma_n) & = & \tr_{B_{n-1}}(\sigma_{n-1}), \\
                     &                               & \tr_{A' \otimes A'_0}(\sigma_F) & = & \tr_{B_n}(\sigma_n), \\
                     &                               & \sigma_{j} & \in & \pos^{B_{0} \times B'_{0} \times A_1 \times \cdots \times A_j \times B_j \times \cdots \times B_n \times B'}, \\
                     & & & & \forAll j \in \{ 1, \ldots, n \}, \\
                     & & \sigma_{F} & \in & \pos^{B_{0} \times B'_{0} \times A'_{0} \times A \times A' \times B'}.
\end{array} \]
Furthermore, we may derive an optimal cheating strategy for Alice from
an optimal feasible solution to this SDP.
\end{lemma}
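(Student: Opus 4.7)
The proof parallels that of Lemma~\ref{thm-bob-sdp} with the roles of Alice and Bob exchanged. I will establish two directions: (i) every cheating strategy for Alice yields a feasible SDP solution whose objective value equals her probability of making honest Bob accept $c$, and (ii) every feasible SDP solution can be realized by some Alice strategy with matching acceptance probability. Together these give equality of the two optima and a recipe for extracting an optimal cheating strategy from an optimal feasible solution.

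\emph{Forward direction.} By a standard Stinespring-style purification, I assume without loss of generality that Alice cheats by preparing a pure state on her registers (with arbitrary ancilla) and applying a unitary $U_j$ on her side immediately before each of her outgoing messages, including the final reveal of $A'_0$ and $A'$, while honest Bob prepares $\phi$ and plays as specified. For each $j$, let $\sigma_j$ be obtained from the joint pure state immediately after Alice's $j$-th message by tracing out all of Alice's registers (her ancilla together with her not-yet-sent $A$-registers), and define $\sigma_F$ analogously after the final reveal. Positive semidefiniteness is automatic. The constraint $\tr_{A_1}(\sigma_1) = \phi\phi^*$ holds because the unitary Alice applies before sending $A_1$ acts only on her side, leaving Bob's reduced state equal to $\phi\phi^*$. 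The constraint $\tr_{A_{j+1}}(\sigma_{j+1}) = \tr_{B_j}(\sigma_j)$ holds because between $\sigma_j$ and $\sigma_{j+1}$ Alice applies a unitary only to her own registers (now including the $B_j$ she just received), so tracing out the newly sent $A_{j+1}$ from $\sigma_{j+1}$ yields the same reduced state on Bob's remaining registers together with $A_1,\dots,A_j$ as tracing out $B_j$ from $\sigma_j$. The final constraint is analogous. Finally, $\inner{\sigma_F}{\Pi_{\rB,c} \otimes \id_{B'_0 \times B'}}$ equals $\tr(\Pi_{\rB,c} \cdot \tr_{B'_0 \times B'}(\sigma_F))$, which is exactly the probability honest Bob outputs $c$ under this strategy.

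\emph{Backward direction.} Given a feasible $(\sigma_1,\dots,\sigma_n,\sigma_F)$, I construct Alice's strategy inductively, using the purification lemma from Section~\ref{ssect:quantum} at each step. For the base case, purity of $\phi$ together with $\tr_{A_1}(\sigma_1) = \phi\phi^*$ forces $\sigma_1 = \tau \otimes \phi\phi^*$ for some $\tau \in \Pos^{A_1}$ (any bipartite state whose partial trace is pure factorizes as a product); Alice prepares an initial pure state and a unitary $U_1$ so that her reduced state on $A_1$ after $U_1$ equals $\tau$, yielding joint reduced state $\sigma_1$ once $A_1$ is sent. For the inductive step, assume that after Alice's $j$-th message and Bob's honest reply $B_j$ the joint pure state $\Psi$ has reduced state $\tr_{B_j}(\sigma_j)$ on (Bob's current registers together with $A_1,\dots,A_j$). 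Let $\Phi$ be any pure purification of $\sigma_{j+1}$ whose purifying system lies on Alice's current registers (enlarging her ancilla if needed). Since feasibility gives $\tr_{A_{j+1}}(\sigma_{j+1}) = \tr_{B_j}(\sigma_j)$, both $\Psi$ and $\Phi$ are purifications of the same state on (Bob's current registers together with $A_1,\dots,A_j$), so the purification lemma supplies a unitary on Alice's side relating them. Alice applies this unitary and sends $A_{j+1}$, producing joint reduced state $\sigma_{j+1}$. One more application of the same argument, using $\tr_{A' \times A'_0}(\sigma_F) = \tr_{B_n}(\sigma_n)$, supplies Alice's last unitary before the reveal; the resulting cheating probability is $\inner{\sigma_F}{\Pi_{\rB,c} \otimes \id_{B'_0 \times B'}}$ by the same calculation as above.

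The main challenge is bookkeeping rather than conceptual: carefully tracking which registers currently reside with Alice, which with Bob, and which have been traced out at each intermediate step, and confirming that Alice's ancilla is large enough to accommodate each required purification (which is never an issue since we permit unbounded ancilla). Once this bookkeeping is handled cleanly, the purification lemma supplies the needed unitaries at every stage and the proof closely mirrors that of Lemma~\ref{thm-bob-sdp}.
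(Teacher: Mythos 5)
Your proof is correct and follows essentially the same strategy as the paper's, which itself simply adapts the argument for Lemma~\ref{thm-bob-sdp}: the forward direction records that the constraints are necessary conditions on Bob's reduced states at each stage, the backward direction uses the unitary equivalence of purifications (with Lemma~\ref{rank1} absorbed into your base-case factorization), and the objective is obtained via the adjoint of the partial trace over $B'_0 \times B'$. The only small point you leave implicit, which the paper states, is that the supremum is attained because the feasible region is compact and the objective continuous.
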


For completeness, we present proofs of these lemmas in
Appendix~\ref{sec-cheating-sdp}.

We depict Alice cheating, and the context of her SDP variables, in a six-round protocol in Figure~\ref{fig:6RAliceprotocolwv}, below. 

\begin{figure}[htbp] 
   \centering
   \includegraphics[width=6in]{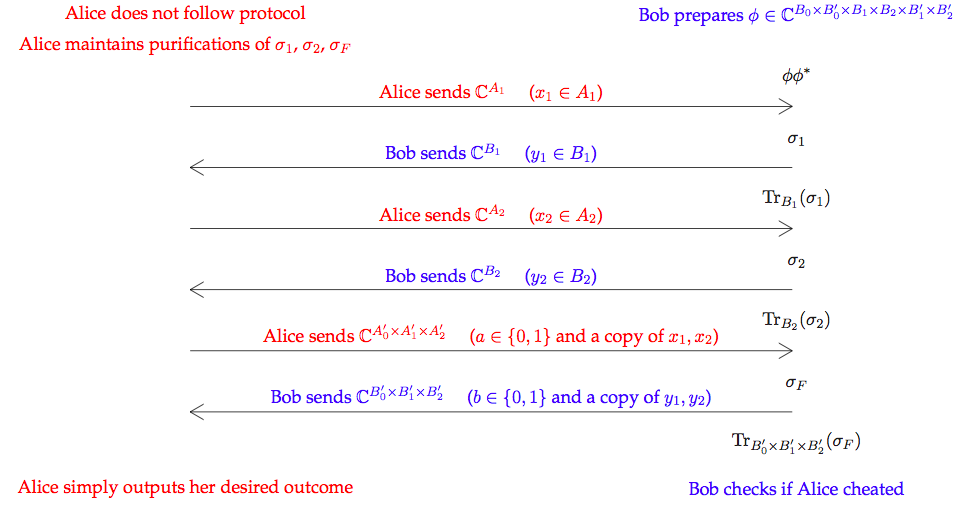} 
  \caption{Alice cheating in a six-round protocol.}
  \label{fig:6RAliceprotocolwv}
\end{figure}

Analyzing and solving these problems computationally gets increasingly difficult and time consuming as~$n$
increases, since
the dimension of the variables increases exponentially in~$n$. In the analysis
of the bias, we make use of the following results which simplify the underlying optimization problems without changing their optimal objective values.

\begin{definition}
We define \emph{Bob's cheating polytope}, denoted as $\calP_{\rB}$, as the set of all vectors
$(p_{1}, p_{2}, \ldots, p_{n})$ such that
\[ \begin{array}{rrrcllllllllllllll}

                     &  & \tr_{B_1}(p_1) & = & e_{A_{1}}, \\
                     &  & \tr_{B_2} (p_2) & = & p_{1} \otimes e_{A_{2}}, \\
                     & & & \vdots \\
                     &  & \tr_{B_n} (p_n) & = & p_{n-1} \otimes e_{A_{n}}, \\
                     & & p_j & \in & \R_+^{A_{1} \times B_{1} \times \cdots \times A_{j} \times B_{j}}, \; \forAll j \in \{ 1, \ldots, n \},
\end{array} \]
where $e_{A_j}$ denotes the vector of all ones on the corresponding space $\C^{A_j}$.
\end{definition}

We can now define a simpler ``reduced'' problem that captures Bob's optimal
cheating probability.

\begin{theorem}[Bob's Reduced Problem]
\label{thm-brp}
The maximum probability with which cheating Bob can force honest Alice to accept outcome~$c
\in \set{0,1}$ is given by the optimal objective function value of the following convex optimization problem
\[ P_{\rB,c}^* = \max \left\{ \half \sum_{a \bit} \, \rF \left( (\alpha_{a} \otimes \id_{B})^{\transpose}p_{n}, \, \beta_{a
\oplus c} \right) : (p_{1}, \ldots, p_{n}) \in \calP_{\rB} \right\}, \]
where the arguments of the fidelity functions are probability distributions over $B$.
\comment{
Furthermore, he can force an outcome of $1$ with probability given by the optimal objective function value of the following convex optimization problem
\[ P_{\rB,1}^* = \max \left\{ \half \sum_{a \bit} \, \rF \left( (\alpha_{a} \otimes \id_{B})^{\transpose}p_{n}, \, \beta_{\bar{a}} \right) : (p_{1}, \ldots, p_{n}) \in \calP_{\rB} \right\}. \]
}
\end{theorem}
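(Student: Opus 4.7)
The plan is to reduce Bob's cheating SDP from Lemma~\ref{thm-bob-sdp} to the proposed convex program over $\calP_\rB$ in two steps: a dephasing argument that classicalizes the SDP variables, followed by an application of Uhlmann's theorem to eliminate $\rho_F$. For the first step, I would exploit the diagonal structure of $\psi = \sum_{a,x}\tfrac{1}{\sqrt 2}\sqrt{\alpha_{a,x}}\, e_a \otimes e_a \otimes e_x \otimes e_x$. Since Alice holds the registers $A_0, A'_0, A, A'$ throughout and her final measurement $\Pi_{\rA,c}$ acts diagonally on $A_0$, the principle of delayed measurement lets her dephase all four registers in the computational basis at the start of the protocol without altering Bob's achievable cheating probability; equivalently, we may replace $\psi\psi^*$ with its fully dephased version $\sum_{a,x}\tfrac{1}{2}\alpha_{a,x}(e_a e_a^* \otimes e_a e_a^* \otimes e_x e_x^* \otimes e_x e_x^*)$. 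With this modification, every SDP variable $\rho_j$ may be taken block-diagonal in Alice's classical registers. Furthermore, because $\phi_b \phi_b^*$ is supported on the diagonal subspace $\set{e_y \otimes e_y : y \in B}$ of $\C^B \otimes \C^{B'}$, the data-processing inequality for fidelity (with $\Diag(\beta_b)$ invariant under dephasing) shows that Bob's outgoing $B$ register may also be dephased without loss. Under this full classicalization, each $\rho_j$ collapses to a non-negative scalar vector $p_j \in \R_+^{A_1 \times B_1 \times \cdots \times A_j \times B_j}$, and the partial-trace constraints $\tr_{B_j}(\rho_j) = \tr_{A_j}(\rho_{j-1})$ become precisely the linear constraints defining $\calP_\rB$.

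Second, I would decompose $\Pi_{\rA,c} = \sum_a e_a e_a^* \otimes Q_a$ with $Q_a := e_{a \oplus c} e_{a \oplus c}^* \otimes \phi_{a \oplus c}\phi_{a \oplus c}^*$ and apply the generalized Uhlmann (``steering'') theorem to the supremum over $\rho_F$. Since $\rho_F$ is block-diagonal in $A_0$ by the first step, the objective splits as $\sum_a \inner{\rho_F^{(a,a)}}{Q_a}$. For each $a$, maximizing $\inner{\tau_a}{Q_a}$ over $\tau_a \succeq 0$ with $\tr_{B'_0 \times B'}(\tau_a) = \theta_a$ yields $\rF(\theta_a, \tr_{B'_0 \times B'}(Q_a)) = \rF(\theta_a, \Diag(\beta_{a \oplus c}))$, where the first step pins $\theta_a = \tfrac{1}{2}\Diag(r_a)$ with $r_a := (\alpha_a \otimes \id_B)^\transpose p_n$. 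By the positive homogeneity of fidelity, this value is $\tfrac{1}{2}\rF(r_a, \beta_{a \oplus c})$; summing over $a$ yields the claimed objective $\tfrac{1}{2}\sum_a \rF(r_a, \beta_{a \oplus c})$.

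The main obstacle is the rigorous justification of the dephasing step, especially the direction showing that no coherent quantum strategy can exceed the classical bound. This requires invoking the data-processing inequality for fidelity together with careful bookkeeping: dephasing Alice's registers preserves feasibility of the partial-trace constraints once $\psi\psi^*$ is replaced by its dephased counterpart, and dephasing Bob's outgoing $B$ messages preserves feasibility trivially while (by the data-processing argument) not decreasing the objective. Translating the resulting block-diagonal structure of each $\rho_j$ into non-negative scalars $p_j \in \calP_\rB$ and verifying the linear constraints is then a direct bookkeeping exercise.
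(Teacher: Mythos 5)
Your high-level plan — classicalize the SDP variables, then invoke Uhlmann to extract the fidelity — is the right shape and matches the paper's strategy. But the specific dephasing argument you give in step~1 has two genuine problems.

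First, the premise is false. Alice does \emph{not} hold $A_0, A'_0, A, A'$ throughout: she sends $A_1, \ldots, A_n$ to Bob over the course of the first $n$ rounds and sends $A'_0, A'$ to Bob at the reveal step. Of the four registers you list, only $A_0$ remains in Alice's possession for the whole protocol, and her final measurement $\Pi_{\rA,c}$ acts on $A_0 \times B'_0 \times B \times B'$, which does not even include $A'_0, A, A'$. So the delayed-measurement justification does not apply: replacing $\psi\psi^*$ by its fully dephased counterpart is not a commuting measurement on a spectator register, it is a modification of what Bob receives, and hence a genuine restriction of his cheating SDP (the constraint $\tr_{B_1}(\rho_1) = \tr_{A_1}(\psi\psi^*)$ becomes strictly tighter, since $\tr_{A_1}(\psi\psi^*)$ is diagonal in $A'_1$ but not in $A_0, A'_0, A'_2, \ldots, A'_n$). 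The inequality $P^*_{\text{dephased}} \le P^*_{\text{original}}$ is immediate; the hard direction is the reverse inequality, and asserting ``dephasing does not alter Bob's achievable cheating probability'' is precisely what needs to be proved, not invoked.

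Second, even the milder move of dephasing the SDP variables $\rho_j, \rho_F$ fully does not preserve the objective: $\Pi_{\rA,c}$ is \emph{not} diagonal in $B \times B'$ (it contains $\phi_b\phi_b^*$, which has off-diagonal entries), so $\inner{\Diag(\rho_F)}{\Pi_{\rA,c}}$ can be strictly smaller than $\inner{\rho_F}{\Pi_{\rA,c}}$. The paper's argument is more surgical: it dephases $\rho_j$ only in $A'_1 \times \cdots \times A'_j$, and $\rho_F$ only in $A_0 \times B'_0$. Feasibility is preserved because the structure of $\psi$ makes $\tr_{A_j}(\cdot)$ automatically diagonal in $A'_j$ (these are exactly the correlated ``copy'' registers), and the objective is preserved because $\Pi_{\rA,c}$ is diagonal in $A_0 \times B'_0$. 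The $B$-register diagonalization you want is handled only at the very end, after the Uhlmann step, by the paper's Lemma~\ref{diag} — which is essentially the data-processing inequality $\rF(\Diag(\theta), \beta) \ge \rF(\theta, \beta)$ you gesture at, but applied to the fidelities produced by Uhlmann rather than to the SDP variables upstream of it. Your proposal applies that inequality in step~1, where the objective is not yet a fidelity and where dephasing may hurt; the correct place is in step~2, as the mechanism showing that restricting $\tr_{A'_0 \times A'}(\rho_n)$ to its diagonal can only increase Bob's value. With these corrections — incremental dephasing of Alice's private registers using the structure of $\psi$ rather than blanket dephasing of the start state, and Lemma~\ref{diag} positioned after Uhlmann — the argument lands on the paper's proof.
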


The connection between the fidelity function and semidefinite programming is detailed in the next subsection. A proof of the above theorem is presented in Appendix~\ref{Alice}.

We can also define Alice's cheating polytope.

\begin{definition}
We define \emph{Alice's cheating polytope}, denoted as $\calP_{\rA}$, as the set of all vectors
$(s_{1}, s_{2}, \ldots, s_{n}, s)$ satisfying
\[ \begin{array}{rrrcllllllllllllll}
                     &                               & \tr_{A_1}(s_1) & = & 1, \\
                     &                               & \tr_{A_2}(s_2) & = & s_1 \otimes e_{B_{1}}, \\
                     & & & \vdots \\
                     &                               & \tr_{A_n}(s_n) & = & s_{n-1} \otimes e_{B_{n-1}}, \\
                     &                               & \tr_{A'_{0}}(s) & = & s_n \otimes e_{B_{n}}, \\
                     &                               & s_{1} & \in & \R_{+}^{A_{1}}, \\
                     &                               & s_{j} & \in & \R_{+}^{A_{1} \times B_{1} \times \cdots \times B_{j-1} \times A_{j}}, \; \forAll j \in \{ 2, \ldots, n \}, \\
                     & & s & \in & \R_{+}^{A'_{0} \times A \times B},
\end{array} \]
where $e_{B_j}$ denotes the vector of all ones on the corresponding space $\C^{B_j}$.
\end{definition}

Now we can define Alice's reduced problem.

\begin{theorem}[Alice's Reduced Problem]
\label{thm-arp}
The maximum probability with which
cheating Alice can force honest Bob to accept outcome $c \in \set{0,1}$ is given by the optimal objective function value of the following convex optimization problem
\[ P_{\rA,c}^* = \max \left\{ \half \sum_{a \bit} \sum_{y \in B} \beta_{a
\oplus c,y} \; \rF(s^{(a,y)}, \alpha_{a}) \; : \;
(s_{1}, \ldots, s_{n}, s) \in \calP_{\rA} \right\}, \]
where $s^{(a,y)} \in \R_+^A$ is the restriction of $s$ with the indices $(a,y)$ fixed, i.e., $[s^{(a,y)}]_x := s_{a,x,y}$.
\comment{
Similarly, she can force honest Bob to accept $1$ with probability given by the optimal objective function value of the following convex optimization problem
\[ P_{\rA,1}^* = \max \left\{ \half \sum_{a \bit} \sum_{y \in B} \beta_{\bar{a},y} \; \rF(s^{(a,y)}, \alpha_{a}) \; : \;
(s_{1}, \ldots, s_{n}, s) \in \calP_{\rA} \right \}. \]
}
\end{theorem}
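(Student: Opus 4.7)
The plan is to reduce the SDP of Lemma~\ref{thm-alice-sdp} to the claimed convex program by exploiting the classical-quantum structure of both the honest Bob state $\phi$ and the measurement operators $\Pi_{\rB,c}$. The overall strategy parallels the proof of Bob's reduced problem, Theorem~\ref{thm-brp}, but needs extra care because Alice's final message includes the classical reveal $A'_0$.

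First, I would restrict the support of the SDP variables. Writing $\phi = \sum_{b,y} \tfrac{1}{\sqrt{2}}\sqrt{\beta_{b,y}}\, e_b \otimes e_b \otimes e_y \otimes e_y$, we see that $\phi\phi^*$ is supported on $V := \Span\{e_b \otimes e_b \otimes e_y \otimes e_y : b \in \{0,1\},\, y \in B\}$. A standard PSD argument (if $\sigma \succeq 0$ and $\tr_{A_1}(\sigma)$ has support on $V$, then $\sigma$ has support on $V \otimes \C^{A_1}$, since the complementary block of $\sigma$ has zero trace) then forces $\sigma_1$ to be supported on $V \otimes \C^{A_1}$. The chain $\tr_{A_j}(\sigma_j) = \tr_{B_{j-1}}(\sigma_{j-1})$ propagates this support restriction, with $V$ shrinking appropriately as the $B_{j-1}$ factor is removed, through every $\sigma_j$ and into $\sigma_F$.

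Second, I would identify the polytope variables with the effective degrees of freedom inside these supports. The primed copies $B'_0, B'_1, \ldots$ always carry the same classical value as their unprimed partners, so reading off the diagonal entries of $\sigma_j$ in the computational basis on the Bob-side registers and tracing out the as-yet-unsent Alice registers produces a nonnegative real vector; I would define $s_j$ (and $s$ from $\sigma_F$, retaining also the $A'_0$ label) this way. Each SDP equality $\tr_{A_j}(\sigma_j) = \tr_{B_{j-1}}(\sigma_{j-1})$ then becomes $\tr_{A_j}(s_j) = s_{j-1} \otimes e_{B_{j-1}}$, because tracing out a classical-coherent register on the SDP side corresponds to summing the associated index on the polytope side, and this summation is exactly encoded by the all-ones tensor $e_{B_{j-1}}$. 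The initial condition $\tr_{A_1}(\sigma_1) = \phi\phi^*$ collapses to $\tr_{A_1}(s_1) = 1$ once the $\tfrac{1}{2}\beta_{b,y}$ honest weights are absorbed into the objective below.

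Third, I would bound the SDP objective by fidelity. Substituting $\Pi_{\rB,c} = \sum_{a} e_{a \oplus c} e_{a \oplus c}^* \otimes e_a e_a^* \otimes \psi_a \psi_a^*$ and using the block structure, the objective splits as
\[ \inner{\sigma_F}{\Pi_{\rB,c} \otimes \id_{B'_0 \times B'}} \;=\; \sum_{a \in \{0,1\}} \sum_{y \in B} \tfrac{1}{2}\,\beta_{a \oplus c,y}\, \inner{M_{a,y}}{\psi_a \psi_a^*}, \]
where $M_{a,y} \in \Pos^{A \times A'}$ is the appropriately normalized block of $\sigma_F$ on the classical labels $B_0 = a \oplus c$, $A'_0 = a$, and on the diagonal index $y$ in $B \otimes B'$. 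Since $\psi_a = \sum_x \sqrt{\alpha_{a,x}}\, e_x \otimes e_x$, the PSD inequality $|N_{x,x'}| \le \sqrt{N_{x,x} N_{x',x'}}$, applied to the principal submatrix of $M_{a,y}$ on the indices $\{(x,x) : x \in A\}$, gives
\[ \inner{M_{a,y}}{\psi_a\psi_a^*} \;\le\; \left(\sum_{x \in A} \sqrt{\alpha_{a,x}\, [s^{(a,y)}]_x}\right)^{\!2} \;=\; \rF(s^{(a,y)}, \alpha_a), \]
with equality when $M_{a,y}$ is the rank-one matrix $\sqrt{s^{(a,y)}}\sqrt{s^{(a,y)}}^{\transpose}$ embedded on the diagonal of $\C^A \otimes \C^{A'}$. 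This yields the upper bound on $P^*_{\rA,c}$. For the matching lower bound, I would construct, for any $(s_1, \ldots, s_n, s) \in \calP_{\rA}$, explicit feasible $\sigma_j$ and $\sigma_F$ achieving the fidelity objective by taking the $M_{a,y}$ to be the rank-one matrices above and stitching them together into valid purifications of the required marginals, using that two purifications of the same density matrix are related by an isometry on the purifying system.

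The main obstacle, as in the proof of Theorem~\ref{thm-brp}, is bookkeeping: correctly tracking which registers remain classically correlated with their primed partners after each round, and verifying that tracing over Bob's private copies together with Alice's unused future-message registers produces exactly the partial-trace-and-all-ones structure appearing in $\calP_{\rA}$. The genuinely new analytic content is concentrated in the principal-submatrix fidelity inequality above and the explicit reconstruction of feasible SDP solutions attaining equality; both are standard applications of PSD positivity and Uhlmann's theorem.
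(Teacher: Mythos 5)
Your proposal correctly identifies the high-level structure of the reduction (decompose the objective via the classical-quantum structure, bound each term by a fidelity, and reconstruct feasible SDP solutions for the converse direction), but the crucial intermediate step is flawed, and it is not merely bookkeeping.

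The problem is in your support-propagation argument and the extraction it is supposed to justify. For $\sigma_1$ the argument works: $\tr_{A_1}(\sigma_1)=\phi\phi^*$ has a one-dimensional support, which forces the tensor-product form $\sigma_1 = \phi\phi^* \otimes \tilde\sigma_1$. But after tracing out $B_1$, the marginal is a higher-rank, block-diagonal (in $B'_1$) matrix, and the constraint $\tr_{A_2}(\sigma_2)=\tr_{B_1}(\sigma_1)$ only gives that $\sigma_2$ is \emph{supported on} a direct sum $\bigoplus_{y_1} S_{y_1} \otimes \C^{A_2}$. Support containment in a direct sum does \emph{not} force $\sigma_2$ to be block-diagonal in $y_1$; coherences between $y_1\neq y_1'$ blocks can survive, and indeed can vanish under $\tr_{A_2}$ while being present in $\sigma_2$ (think of a maximally entangled state between $B'_1$ and $A_2$: its partial trace over $A_2$ is diagonal while the state itself is not). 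So your "$V$ shrinks appropriately" step does not deliver block-diagonality, and it is exactly this block-diagonality that the paper proves as a genuine WLOG: one first \emph{modifies} the optimal solution by applying the partial $\Diag$ superoperator to decohere the $B'$ registers, and then verifies that this modification preserves both feasibility and the objective value (the latter because the measurement $\Pi_{\rB,c}\otimes\id$ is already diagonal on the relevant subspaces). This decoherence step is not a consequence of the constraints; it is an optimality argument, and it is the load-bearing part of the proof.

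A second, related gap is that your prescription for $s_j$ ("read off the diagonal entries of $\sigma_j$ on the Bob-side registers and trace out the rest") would, even applied to the decohered solution, produce a vector that carries the honest Bob weight $w_{y_1,\dotsc,y_{j-1}} = \|\phi_{y_1,\dotsc,y_{j-1}}\|^2 = \tfrac{1}{2}\sum_b [\tr_{B_j\times\dotsb\times B_n}(\beta_b)]_{y_1,\dotsc,y_{j-1}}$ inside it. The polytope constraint $\tr_{A_j}(s_j) = s_{j-1}\otimes e_{B_{j-1}}$ fails for such a weighted extraction, because the left-hand side then depends on $y_{j-1}$ through $w$ while the right-hand side does not. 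The paper's extraction avoids this precisely by factoring out the (unnormalized) state $\phi_{y_1,\dotsc,y_{j-1}}\phi_{y_1,\dotsc,y_{j-1}}^*$ before taking the diagonal, so that the resulting $\tilde\sigma_{j,y_1,\dotsc,y_{j-1}}$ all have unit trace and the polytope constraints come out exactly. Your principal-submatrix fidelity inequality and the Uhlmann-based reconstruction for the lower bound are both correct, but without the decoherence lemma and the correct normalization in the extraction, the $s_j$ you produce need not lie in $\calP_{\rA}$ and the claimed upper bound on $P_{\rA,c}^*$ is not established.
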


We postpone a proof of the above theorem until Appendix~\ref{Alice}.

We note here that we can get similar SDPs and reductions if Alice chooses $a$ with a non-uniform probability distribution and similarly for Bob. It only changes the multiplicative factor $1/2$ in the reduced problems to something that depends on $a$ (or $b$) and the proofs are nearly identical to those in the appendix.

\comment{
\subsection{Optimizing the fidelity function using semidefinite programming}

In this subsection, we show that the reduced problems are also semidefinite programs.
}

We point out that the reduced problems are also semidefinite programs.
The containment of the variables in a polytope is captured by linear
constraints, so it suffices to express the objective function as a
linear functional of an appropriately defined positive semidefinite
matrix variable.

\begin{lemma} \label{FidelityLemma}
For any $p, q \in \R_+^{A}$, we have
\[ \rF(p,q) = \max \left\{ \inner{X}{\sqrtt{p}} : \diag(X) = q, \, X \in \pos^{A} \right\}. \]
\end{lemma}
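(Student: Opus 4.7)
The plan is to prove equality by establishing matching upper and lower bounds, and the argument is short enough that the main work is just recognizing the right feasible solution and the right inequality on PSD entries.

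First I would unpack the objective. Since $\sqrtt{p} = \sqrt{p}\sqrt{p}^{\transpose}$ is a rank-one PSD matrix, we have $\inner{X}{\sqrtt{p}} = \sqrt{p}^{\transpose} X \sqrt{p} = \sum_{x,y \in A} \sqrt{p_x}\sqrt{p_y}\, X_{xy}$. So the SDP asks us to maximize this bilinear form over PSD matrices whose diagonal is exactly $q$.

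For the upper bound, I would use the standard fact that any $X \in \pos^A$ satisfies $|X_{xy}| \leq \sqrt{X_{xx} X_{yy}}$ (which follows from nonnegativity of the $2 \times 2$ principal minors). Combined with $X_{xx} = q_x$, this gives $X_{xy} \leq \sqrt{q_x}\sqrt{q_y}$ for every pair $x,y$. Since $\sqrt{p_x}\sqrt{p_y} \geq 0$, we may substitute to get
\[
\inner{X}{\sqrtt{p}} \;=\; \sum_{x,y} \sqrt{p_x}\sqrt{p_y}\, X_{xy} \;\leq\; \sum_{x,y} \sqrt{p_x}\sqrt{p_y}\sqrt{q_x}\sqrt{q_y} \;=\; \Bigl(\sum_{x} \sqrt{p_x q_x}\Bigr)^{\!2} \;=\; \rF(p,q).
\]

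For the lower bound, I would exhibit the explicit feasible solution $X := \sqrt{q}\sqrt{q}^{\transpose} = \sqrtt{q}$. This is manifestly positive semidefinite (rank one) with $X_{xx} = q_x$, so $\diag(X) = q$ and $X$ is feasible. Its objective value is
\[
\inner{\sqrtt{q}}{\sqrtt{p}} \;=\; \bigl(\sqrt{p}^{\transpose}\sqrt{q}\bigr)^{\!2} \;=\; \Bigl(\sum_{x} \sqrt{p_x q_x}\Bigr)^{\!2} \;=\; \rF(p,q),
\]
which matches the upper bound and also certifies that the supremum is attained. There is no real obstacle here; the only thing to be careful about is noting that over real PSD matrices the off-diagonal entries are real but can be negative, so one should write $X_{xy} \leq |X_{xy}| \leq \sqrt{q_x q_y}$ in the chain of inequalities above. (The same argument works verbatim for complex Hermitian PSD $X$ if one later wishes to allow that, since $\inner{X}{\sqrtt{p}}$ is real for such $X$ and one may replace $X$ by its real part without decreasing the objective.)
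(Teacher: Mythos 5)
Your proof is correct, but it takes a genuinely different route from the paper's. You bound the objective directly on the primal side: expand $\inner{X}{\sqrtt{p}} = \sum_{x,y}\sqrt{p_x p_y}\,X_{xy}$ and invoke the $2\times 2$-principal-minor inequality $|X_{xy}| \le \sqrt{X_{xx}X_{yy}} = \sqrt{q_x q_y}$ for PSD $X$ with prescribed diagonal, then pair this with the same attaining solution $X=\sqrtt{q}$. The paper instead proves the upper bound via SDP weak duality: it writes down the dual $\inf\{\inner{y}{q} : \Diag(y)\succeq\sqrtt{p}\}$, constructs an explicit family of dual feasible vectors $y(\eps)$ (with careful case analysis for coordinates where $p_x=0$ or $q_x=0$), and lets $\eps\to 0$. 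Your argument is more elementary, avoids the $\eps$-limiting, and needs no special handling of zero coordinates, so as a self-contained proof of this lemma it is arguably cleaner. The paper's duality route, however, is not wasted effort: the intermediate equivalence $\Diag(y)\succeq\sqrtt{p} \iff \sum_x p_x/y_x \le 1$ (for $y>0$) that falls out of its proof is reused later to recast the reduced problems as second-order cone programs, which your primal-only argument would not supply.
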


\begin{proof}
Notice that $\bar{X} := \sqrtt{q}$ is a feasible solution to the SDP with objective function value $\rF(p,q)$. All that remains to show is that it is an optimal solution. If $p=0$, then we are done, so assume $p \neq 0$. The dual can be written as
\[ \inf \{ \inner{y}{q} : \Diag(y) \succeq \sqrtt{p}, y \in \R^A \}. \]
Define $y$, as a function of $\eps > 0$, entry-wise for each~$x \in A$
as
\[ y_x(\eps) := \left\{ \begin{array}{rcl}
(\sqrt{ \rF(p,q)} + \eps) \frac{\sqrt{p_x}}{\sqrt{q_x}} & \textup{ if } & p_x, q_x > 0, \\
\frac{(\sqrt{\rF(p,q)} + \eps) \norm{p}_1}{\eps} & \textup{ if } & q_x = 0, \\
\eps & \textup{ if } & p_x = 0, q_x > 0. \\
\end{array} \right. \]
We can check that $\inner{y(\eps)}{q} \to  \rF(p,q)$ as $\eps \to 0$, so it suffices to show  that $y(\eps)$ is dual feasible for all $\eps > 0$. For any $y > 0$,
\begin{eqnarray*}
\Diag(y) \succeq \sqrtt{p}
& \iff & \id_A \succeq \Diag(y)^{-1/2} \sqrtt{p} \Diag(y)^{-1/2} \\
& \iff & 1 \geq \sqrt{p}^{\transpose} \Diag(y)^{-1} \sqrt{p} \\
& \iff & 1 \geq \sum_{x \in A} \frac{p_x}{y_x},
\end{eqnarray*}
noting $\Diag(y)^{-1/2} \sqrtt{p} \Diag(y)^{-1/2}$ is rank $1$ so the largest eigenvalue is equal to its trace.
From this, we can check that $y(\eps)$ is feasible for all $\eps > 0$. \qed
\end{proof}

\comment{
The proof above shows that
\begin{eqnarray*}
\inf \{ \inner{y}{q} : \Diag(y) \succeq \sqrtt{p}, y \in \R^A \}
& = & \inf \{ \inner{y}{q} : \inner{y^{-1}}{p} \leq 1, \, y > 0 \} \\
& = & \inf_{y > 0} \{ \inner{y}{q} \inner{y^{-1}}{p} \},
\end{eqnarray*}
which is the classical version of Alberti's Theorem~\cite{A83}, which states that
\[ \rF (\rho, \sigma) = \inf_{X \succ 0} {\inner{X}{\rho} \inner{X^{-1}}{\sigma}}, \]
for any quantum states $\rho$ and $\sigma$.
} 

The optimization problem in Lemma~\ref{FidelityLemma} remains an SDP if we replace $q$ with a variable constrained to be in a polytope. Therefore, the reduced problems in
Theorems~\ref{thm-brp} and~\ref{thm-arp} can
be modelled as semidefinite programs.


\comment{
\subsection{Optimizing the fidelity function using second-order cone programming}
}

\subsection{SOCP formulations for the reduced problems}

In this section, we show that the reduced SDPs can be modelled using a simpler class of optimization problems,
second-order cone programs.
We elaborate on this below and explain the significance to solving these problems computationally.

We start by first explaining how to model fidelity as an SOCP. Suppose we are given the problem
\[ \max \left\{ \sqrt{\rF (p,q)} : q \in \R_+^n \cap S \right\}
= \max \left\{ \sum_{i = 1}^n {\sqrt{p_i}} \, t_i : t_i^2 \leq q_i, \, \forall i \in \{ 1, \ldots, n \}, \, q \in \R_+^n \cap S \right\}, \]
where $p \in \R_+^n$ and $S \subseteq \R^n$. We can replace $t_i^2 \leq q_i$ with the equivalent constraint $(1/2, q_i, t_i) \in \RL^3$, for all $i \in \set{1, \ldots, n}$. Therefore, we can maximize the fidelity using~$n$ rotated second-order cone constraints.

For the same reason, we can use second-order cone programming to solve a problem of the form
\[ \max \left\{ \sum_{j = 1}^m a_j \sqrt{\rF (p_j, q_j)} : (q_1, \ldots, q_m) \in \R_+^{mn} \cap S' \right\}, \]
where $a \in \R_+^m$ and $S' \subseteq \R^{mn}$.
However, this does not apply directly to the reduced problems since we need to optimize over a linear combination of fidelities and $f(x) = x^2$ is not a concave function. For example, Alice's reduced problem is of the form
\[ \max \left\{ \sum_{j = 1}^m a_j \, {\rF (p_j, q_j)} : (q_1, \ldots, q_m) \in \R_+^{mn} \cap S' \right\}. \]
The root of this problem arises from the fact that the fidelity function, which is concave, is a composition of a concave function with a convex function, thus we cannot break it into these two steps. Even though the above analysis does not work to capture the reduced problems as SOCPs, it does have a desirable property that it only uses $O(n)$ second-order cone constraints and perhaps this formulation will be useful for future applications.

We now explain how to model the reduced problems as SOCPs directly.

\begin{lemma} \label{soclemma}
For $p,q \in \R_+^n$, we have
\[ \rF (p,q) = \max \set{ \frac{1}{\sqrt 2} \sum_{i,j = 1}^n \sqrt{p_i p_j} \, t_{i,j} : \left( q_i, q_j, t_{i,j} \right) \in \RL^3, \, \forAll i,j \in \set{1, \ldots, n} }. \]
\end{lemma}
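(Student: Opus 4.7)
The plan is to exploit the fact that the constraints in the optimization problem separate completely across the index pairs $(i,j)$, so the maximization reduces to a coordinate-wise calculation. First I would unpack the definition of the rotated second-order cone: $(q_i, q_j, t_{i,j}) \in \RL^3$ means $q_i, q_j \geq 0$ and $t_{i,j}^2 \leq 2 q_i q_j$, equivalently $|t_{i,j}| \leq \sqrt{2 q_i q_j}$. Since $q \in \R_+^n$ by assumption, the nonnegativity portion of the cone constraint is automatic, and each variable $t_{i,j}$ appears in a single scalar constraint.

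Next, since the objective coefficient $\tfrac{1}{\sqrt{2}}\sqrt{p_i p_j}$ is nonnegative for every pair $(i,j)$, the maximum over each $t_{i,j}$ is attained at the upper endpoint $t_{i,j} = \sqrt{2 q_i q_j}$. Substituting this into the objective gives
\[
\frac{1}{\sqrt 2} \sum_{i,j=1}^n \sqrt{p_i p_j} \cdot \sqrt{2 q_i q_j}
\;=\; \sum_{i,j=1}^n \sqrt{p_i q_i}\sqrt{p_j q_j}
\;=\; \left( \sum_{i=1}^n \sqrt{p_i q_i}\right)^{\!2}
\;=\; \rF(p,q),
\]
where the last equality is just the definition of fidelity on nonnegative vectors. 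This chain of equalities establishes both that the supremum is at most $\rF(p,q)$ (by the cone constraint) and that it is attained by the specific choice $t_{i,j} = \sqrt{2 q_i q_j}$, so the $\max$ is indeed achieved.

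There is essentially no obstacle here: the result is a direct consequence of the separability of the constraints combined with the algebraic identity $\bigl(\sum_i \sqrt{p_i q_i}\bigr)^2 = \sum_{i,j}\sqrt{p_i q_i p_j q_j}$. The only small thing to remark on is that the feasibility of $t_{i,j} = \sqrt{2 q_i q_j}$ uses $q \in \R_+^n$, which is given by hypothesis; otherwise one would also need to enforce $q_i, q_j \geq 0$ as part of the optimization. The utility of the lemma, of course, is conceptual rather than technical: it shows that fidelity of a pair of nonnegative vectors can be written as the optimal value of an SOCP that is linear in $p$ (with square-root coefficients) and jointly linear in the cone variables $q$ and $t$, which is what is needed in subsequent sections to recast the reduced problems of Theorems~\ref{thm-brp} and~\ref{thm-arp} as SOCPs.
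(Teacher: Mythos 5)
Your proof is correct and follows essentially the same route as the paper: unpack the $\RL^3$ constraint, observe that each $t_{i,j}$ is maximized at $\sqrt{2 q_i q_j}$ because the objective coefficients are nonnegative, and substitute to recover $\bigl(\sum_i \sqrt{p_i q_i}\bigr)^2 = \rF(p,q)$. The paper's proof is just a terser version of the same argument.
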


\begin{proof}
For every $i,j \in \set{1, ,\ldots, n}$, we have $\left( q_i, q_j, t_{i,j} \right) \in \RL^3$ if and only if $q_i, q_j \geq 0$, and $2 q_i q_j \geq t_{i,j}^2$. Thus, $t_{i,j} = \sqrt{2 q_i q_j}$ is optimal with objective function value $\rF (p,q)$. \qed
\end{proof}

This lemma provides an SOCP
representation for the hypograph of the fidelity function. Recall that the hypograph of a concave function is a convex set. Also, the dimension of the hypograph of $\rF (\cdot, q) : \R_+^n \to \R$ is equal to $n$ (assuming $q > 0$). Since the hypograph is $O(n)$-dimensional and convex, there exists a \emph{self-concordant barrier function} for the set with complexity parameter $\rO(n)$, shown by Nesterov and Nemirovski~\cite{NN94}.
This allows the derivation of interior-point methods for the underlying convex optimization problem which use $\rO(\sqrt{n} \log(1/\eps))$ iterations, where $\eps$ is an accuracy parameter.
The above lemma uses $\Omega(n^2)$ second-order cone constraints and the usual
treatment of these ``cone constraints'' with optimal self-concordant barrier functions
lead to interior-point methods with an iteration complexity bound of $\rO(n \log(1/\eps))$. It is
conceivable that there exist better convex representations of the hypograph
of the fidelity function than the one we provided in Lemma~\ref{soclemma}.

\comment{
\subsection{Finding efficient SOCP formulations for the reduced problems}
}

We can further simplify the reduced problems using fewer SOC constraints than
derived above. We first consider the dual formulation of the reduced problems,
so as to avoid the hypograph of the fidelity function.

Using Lemma~\ref{FidelityLemma}, we write Alice's reduced problem for
forcing outcome $0$ as an SDP. The dual of this SDP is
\[ \begin{array}{rrrcllllllllllllll}
\textrm{} & \inf                         & z_1 \\
                     & \textup{subject to} & z_1 \cdot e_{A_1} & \geq & \tr_{B_1}(z_2), \\
                     &                               & z_2 \otimes e_{A_2} & \geq & \tr_{B_2}(z_3), \\
                     & & & \vdots \\
                     &                               & z_n \otimes e_{A_n} & \geq & \tr_{B_n}(z_{n+1}), \\
                     & & \Diag(z_{n+1}^{(y)}) & \succeq & \half \beta_{a,y}
\, \sqrtt{\alpha_a}, & \forAll a \in \zo, y \in B \enspace, \\
& & z_1 & \in & \R, \\
& & z_i & \in & \R^{A_1 \times B_1 \times \dotsb \times A_{i-1} \times B_{i-1}}, & \forAll i \in \set{2, \dotsc, n+1} \enspace, \\
 & \textrm{where} & z^{(y)}_{n+1,x} & = & z_{n+1, x_1 y_1 x_2 y_2 \dotsb,
x_n y_n}, & \forAll x \in A, y \in B \enspace.
\end{array} \]

\comment{
\[ \begin{array}{rrrcllllllllllllll}
& \sup                         & \dfrac{1}{2} \dsum_{a \in \zo} \dsum_{y \in B} \inner{\sigma^{(a,y)}}{\sqrtt{\alpha_a}} \\
& \textup{s. t.} & \Diag(\sigma^{(a,y)}) & = & s^{(a,y)}, & \forAll a \in \zo, \\
& & & & & \phantom{\forAll} y \in B, \\
                     & & \sigma^{(a,y)} & \in & \Herm_+^{A}, & \forAll a \in \zo, \\
& & & & & \phantom{\forAll} y \in B, \\
                     &                               & \tr_{A_1}(s_1) & = & 1, \\
                     &                               & \tr_{A_2}(s_2) & = & s_1 \otimes e_{B_{1}}, \\
                     & & & \vdots \\
                     &                               & \tr_{A_n}(s_n) & = & s_{n-1} \otimes e_{B_{n-1}}, \\
                     &                               & \tr_{A'_{0}}(s) & = & s_n \otimes e_{B_{n}}, \\
                     &                               & s_{1} & \in & \R_{+}^{A_{1}}, \\
                     &                               & s_{j} & \in & \R_{+}^{A_{1} \times B_{1} \times \cdots \times B_{j-1} \times A_{j}}, & \forAll j \in \{ 2, \ldots, n \}, \\
                     & & s & \in & \R_{+}^{A'_{0} \times A \times B}.
\end{array} \]
} 

The only nonlinear constraint in the above problem is of the form
\[ \Diag(z) \succeq \sqrtt{q}, \]
for some fixed $q \geq 0$.
From the proof of Lemma~\ref{FidelityLemma}, we see that for~$z$ which
is positive in every coordinate
\[ \Diag(z) \succeq \sqrtt{q} \iff \inner{z^{-1}}{q} \leq 1. \]
So, it suffices to characterize inverses using SOCP constraints which can be done by considering
\[ (z_i, r_i, \sqrt{2}) \in \RL \iff r_i \geq z_i^{-1}. \]

With this observation, we can write the dual of Alice and Bob's reduced problems using $\rO(n)$ $\RL$ constraints for each fidelity function in the objective function as opposed to $\Omega(n^2)$ constraints
as above.

\comment{
\begin{corollary}
The four cheating probabilities can be written as
\[ P_{\rB,0}^* = \displaystyle\inf_{\substack{\inner{\xi_a}{\beta_a} \leq 1 \\ (\xi_{a,y}, v_{a,y}, \sqrt{2}) \in \RL^3}} \left\{ \sum_{x_1} \max_{y_1} \sum_{x_2} \max_{y_2} \cdots \sum_{x_n} \max_{y_n} \sum_a \half \alpha_{a,x} v_{a,y} \right\}, \]

\[ P_{\rB,1}^* = \displaystyle\inf_{\substack{\inner{\xi_a}{\beta_{\bar{a}}} \leq 1 \\ (\xi_{a,y}, v_{a,y}, \sqrt{2}) \in \RL^3}} \left\{ \sum_{x_1} \max_{y_1} \sum_{x_2} \max_{y_2} \cdots \sum_{x_n} \max_{y_n} \sum_a \half \alpha_{a,x} v_{a,y} \right\}, \]

\[ P_{\rA,0}^* = \displaystyle\inf_{\substack{\frac{\beta_{a,y}}{2} \inner{\mu^{(y)}}{\alpha_a} \leq 1 \\ (\mu_{x,y}, z_{n+1,x,y}, \sqrt{2}) \in \RL^3}} \left\{ \max_{x_1} \sum_{y_1} \cdots \max_{x_n} \sum_{y_n} z_{n+1,x,y} \right\}, \]

\[ P_{\rA,1}^* = \displaystyle\inf_{\substack{\frac{\beta_{\bar{a},y}}{2} \inner{\mu^{(y)}}{\alpha_a} \leq 1 \\ (\mu_{x,y}, z_{n+1,x,y}, \sqrt{2}) \in \RL^3}} \left\{ \max_{x_1} \sum_{y_1} \cdots \max_{x_n} \sum_{y_n} z_{n+1,x,y} \right\}. \]
\end{corollary}
} 

\subsection{Numerical performance of SDP formulation vs. SOCP formulation} \label{SDPvsSOCP}

Since the search algorithm designed in this paper examines the optimal cheating probabilities of many protocols (more than $10^{16}$) we are concerned with the efficiency of solving the reduced problems. In this subsection, we discuss the efficiency of this computation. Our computational platform is an SGI XE C1103 with 2x 3.2 GHz 4-core Intel X5672 x86 CPUs processor, and 10 GB memory, running Linux. The reduced problems were solved using SeDuMi 1.3, a program for solving semidefinite programs and rotated second-order cone programs in Matlab (Version 7.12.0.635) \cite{Stu99, Stu02}.

Table~\ref{RSOCvsSDP1} (on the next page) compares the computation of Alice's reduced problem in a four-round protocol for forcing an outcome of $0$ with $5$-dimensional messages. The top part of the table presents the average running time, the maximum running time, and the worst gap (the maximum of the extra time needed to solve the problem compared to the other formulation). The bottom part of the table presents the average number of iterations, the average feasratio, the average timing (the time spent in preprocessing,  iterations, and postprocessing, respectively), and the average cpusec.

Table~\ref{RSOCvsSDP1} suggests that solving the rotated second-order cone programs are comparable to solving the semidefinite programs. However, before testing the other three cheating probabilities, we test the performance of the two formulations from Table~\ref{RSOCvsSDP1} in a setting that appears more frequently in the search. In particular,  the searches detailed in Section~\ref{sect:numerical} deal with many protocols with very sparse parameters. We retest the values in Table~\ref{RSOCvsSDP1} when we force the first entry of $\alpha_0$, the second entry of $\alpha_1$, the third entry of $\beta_0$, and the fourth entry of $\beta_1$ to all be $0$. The results are shown in Table~\ref{RSOCvsSDP2}.

\begin{table}
\caption{Comparison of solving the $\mathrm{SOCP}$ and $\SDP$ formulations of Alice forcing outcome $0$ with $5$-dimensional messages in four-rounds (averaged over $1,000$ randomly
selected protocols).}
\label{RSOCvsSDP1}
\quad
\begin{center}
\begin{tabular}{|r|c|c|}
\hline
\comment{Strategy Description & } INFO parameters & $\mathrm{SOCP}$ & SDP \\
\hline
\hline
\quad & \quad & \quad \\
Average running time (s) & $0.1551$ & $0.1529$ \\
Max running time (s) & $0.7491$ & $0.2394$ \\
Worst gap (s) & $+ \, 0.5098$ & $+ \, 0.0927$ \\
\quad & \quad & \quad \\
\hline
\quad & \quad & \quad \\
Average iteration & $14.4420$ & $12.2940$ \\
Average feasratio & $0.9990$ & $1.0000$ \\
Average timing & $[0.0270, 0.1267, 0.0010]^\transpose$ & $[0.0024, 0.1494, 0.0009]^\transpose$ \\
Average cpusec & $0.9283$ & $0.6588$ \\
\quad & \quad & \quad \\
\hline
\end{tabular}
\end{center}
\end{table}

\begin{table}
\caption{Comparison of solving the $\mathrm{SOCP}$ and $\SDP$ formulations of Alice forcing outcome $0$ with $5$-dimensional messages in four-rounds (averaged over $1,000$ randomly
selected protocols with forced $0$ entries).}
\label{RSOCvsSDP2}
\quad
\begin{center}
\begin{tabular}{|r|c|c|}
\hline
\comment{Strategy Description & } INFO parameters & $\mathrm{SOCP}$ & SDP \\
\hline
\hline
\quad & \quad & \quad \\
Average running time (s) & $0.4104$ & $0.1507$ \\
Max running time (s) & $0.7812$ & $0.2084$ \\
Worst gap (s) & $+ \, 0.6323$ & $ + \, 0$ \\
\quad & \quad & \quad \\
\hline
\quad & \quad & \quad \\
Average iterations & $32.7370$ & $12.2530$ \\
Average feasratio & $0.5172$ & $1.0000$ \\
Average timing & $[0.0279, 0.3814, 0.0010]^\transpose$ & $[0.0023, 0.1473, 0.0009]^\transpose$ \\
Average cpusec & $2.4953$ & $0.5605$ \\
\quad & \quad & \quad \\
\hline
\end{tabular}
\end{center}
\end{table}

As we can see, the second-order cone programming formulation stumbles when the data does not have full support. Since we search over many vectors without full support, we use the semidefinite programming formulation to solve the reduced problems and for the analysis throughout the rest of this paper.


\newpage
\section{Protocol filter}
\label{sect:filter}

In this section, we describe ways to bound the optimal cheating probabilities from below by finding feasible solutions to Alice and Bob's reduced cheating problems. In
the search for parameters that lead to the lowest bias, our algorithm tests many protocols. The idea
is to devise simple tests to check whether a protocol is a good candidate for being optimal. For example, suppose we can
quickly compute the success probability of a certain cheating strategy for Bob. If this strategy succeeds with too high
a probability for a given set of parameters, then we can rule out these parameters as being good choices. This
saves the time it would have taken to solve the SDPs (or SOCPs).

We illustrate this idea using the Kitaev lower bound below.
\begin{theorem}[\cite{Kit03, GW07}]
For \emph{any} coin-flipping protocol, we have
\[ P_{\rA,0}^* P_{\rB,0}^* \geq \half \quad \text{ and } \quad
P_{\rA,1}^* P_{\rB,1}^* \geq \half. \]
\end{theorem}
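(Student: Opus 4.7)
The plan is to prove Kitaev's product bound by combining SDP duality applied to Lemmas~\ref{thm-bob-sdp} and~\ref{thm-alice-sdp} with the structure of the honest joint state. Let $|\psi\rangle$ denote the pure joint state on Alice's and Bob's combined spaces just before the final measurements when both parties are honest; Definition~\ref{def-scf} gives $\langle \psi | \Pi_{\rA,c} \otimes \Pi_{\rB,c} | \psi\rangle = 1/2$ for each $c \in \{0,1\}$.

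First I would form the SDP duals of both cheating problems. Alice's cheating dual carries a chain of positive semidefinite matrix variables indexed by the round number that propagates operator constraints backward from a final-round term involving $\Pi_{\rB,c}$, with dual objective a linear functional of Bob's honest initial state; Bob's cheating dual is the mirror, with $\Pi_{\rA,c}$ and Alice's honest initial state in its objective. Slater's condition is readily verified by perturbations of the honest feasible points, so strong duality gives $P_{\rA,c}^* = u_A^*$ and $P_{\rB,c}^* = u_B^*$.

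The heart of the argument is to show that for every dual feasible pair $(Z_A,Z_B)$,
\[
u_A(Z_A) \cdot u_B(Z_B) \;\geq\; \tfrac{1}{2}.
\]
The idea is to build, from $Z_A$ and $Z_B$, a bipartite positive semidefinite witness $W_{AB}$ on the joint state space with the twin properties that $\langle \psi | W_{AB} | \psi\rangle = u_A(Z_A) \cdot u_B(Z_B)$ and $W_{AB} \succeq \Pi_{\rA,c} \otimes \Pi_{\rB,c}$ on the support of $|\psi\rangle\langle\psi|$. The construction proceeds by induction on the round index, using the fact that Alice's and Bob's honest unitaries act on disjoint subsystems: the final-round dual constraints supply the base inequality, and each preceding dual constraint absorbs one party's honest unitary while preserving the bipartite inequality. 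Taking the honest expectation then yields the displayed bound; applying it at the dual optima (or along minimizing sequences) together with strong duality gives $P_{\rA,c}^* \cdot P_{\rB,c}^* = u_A^* \cdot u_B^* \geq 1/2$. The identical argument with $c$ replaced by $\bar{c}$ gives the second inequality.

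The main obstacle is the inductive propagation of the bipartite operator inequality: each cheating dual propagates constraints through only one party's unitary chain at a time, so establishing $W_{AB} \succeq \Pi_{\rA,c} \otimes \Pi_{\rB,c}$ at the final round and then preserving the factorized honest expectation $u_A(Z_A) \cdot u_B(Z_B)$ throughout requires careful tracking of subsystems. Crucially, the factor of two improvement over the trivial bound $u_A^* \cdot u_B^* \geq (1/2)(1/2)$ (which follows because following the honest protocol is already a valid cheating strategy for either party) must emerge from how the two dual chains are interleaved along the honest joint evolution, and not from either chain individually. An alternative route, due to Gutoski and Watrous~\cite{GW07}, reformulates cheating strategies as linear functionals on a common ``strategy space'' and derives the bound from a single semidefinite feasibility statement, which sidesteps much of the round-by-round bookkeeping.
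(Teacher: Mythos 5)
The paper does not prove this theorem; it is stated as a citation to Kitaev~\cite{Kit03} and Gutoski--Watrous~\cite{GW07}, so there is no in-paper argument to compare against. Your sketch is the standard Kitaev duality argument, and the skeleton is correct: dualize the cheating SDPs of Lemmas~\ref{thm-bob-sdp} and~\ref{thm-alice-sdp}, tensor the two chains of dual positive-semidefinite matrices into a single bipartite operator that dominates $\Pi_{\rA,c}\otimes\Pi_{\rB,c}$ at the final round, show that the expectation of this operator along the honest joint evolution is weakly monotone (here you use the dual constraints together with the fact that the honest initial state is a \emph{product} state, so the tensored witness's initial expectation factors into $u_A(Z_A)\cdot u_B(Z_B)$), and then read off the bound from $\langle\psi|\Pi_{\rA,c}\otimes\Pi_{\rB,c}|\psi\rangle=1/2$; passing to the infimum over dual feasible pairs and invoking strong duality finishes.

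Two remarks. First, the ``inductive propagation'' you identify as the main obstacle is essentially the entire content of the theorem: the claim that the dual constraints of each chain, applied alternately as Alice's and Bob's honest unitaries are absorbed, preserve the bipartite operator inequality is exactly what makes the tensored witness work, and leaving it unverified means you have a correct plan rather than a proof. Second, the framing of the improvement from $(1/2)\cdot(1/2)$ to $1/2$ as something that ``emerges from the interleaving'' is a bit misleading: the interleaving is what allows the monotonicity argument to close at all, while the constant $1/2$ on the right-hand side is supplied directly by the honest outcome probability $\langle\psi|\Pi_{\rA,c}\otimes\Pi_{\rB,c}|\psi\rangle=1/2$ once the operator inequality is in place. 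Your pointer to the Gutoski--Watrous strategy-formalism route as a way to avoid the round-by-round bookkeeping is apt and is in fact why the paper cites \cite{GW07} alongside \cite{Kit03}.
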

Suppose that we find that $P_{\rA,0}^* \approx 1/2$, that is, the protocol is very secure against
dishonest Alice cheating towards $0$. Then, from the Kitaev bound, we infer that $P_{\rB,0} \approx 1$ and the protocol is highly insecure against cheating Bob. Therefore, we can
avoid solving for $P_{\rB,0}^*$.

The remainder of this section is divided according to the party that is
dishonest. We discuss cheating strategies for the two parties for the special cases of $4$-round and $6$-round protocols.

\paragraph{Cheating Alice} \quad \\

\comment{
Before proceeding, we need the following definition.

\begin{definition}
Let ${f_1, \ldots, f_n} : \R^n \to \R \cup \set{\infty}$ be proper, convex functions (convex functions not equal to the $+ \infty$ constant function). We define the \emph{convex hull} of $\{ f_1, \ldots, f_n \}$, denoted $\conv \{f_1, \ldots, f_n \}$, as the greatest convex function $f$ such that $f(x) \leq f_1(x), \ldots, f_n(x)$ for every $x \in \R^n$.
\end{definition}

We can similarly define the \emph{concave hull}, denoted $\conc$, as
\[ \conc \set{f_1, \ldots, f_n} := - \conv \set{-f_1, \ldots, -f_n} \]
when $f_1, \ldots, f_n$ are proper, concave functions (concave functions not equal to the $- \infty$ constant function).

} 

We now present a theorem which captures some of Alice's cheating strategies.

\begin{theorem}\label{AliceFilter}
For a protocol parameterized by $\alpha_0, \alpha_1 \in \prob^A$ and $\beta_0, \beta_1 \in \prob^B$, we can bound Alice's optimal cheating probability as follows:
\begin{eqnarray}
P_{\rA,0}^*
& \geq & \half \sum_{y \in B} \conc \set{\beta_{a,y} \rF(\cdot, \alpha_a)
: {a \bit}}(v) \label{A1} \\
& \geq & \half \lambda_{\max} \left( \eta \sqrtt{\alpha_0} + \tau \sqrtt{\alpha_1} \right) \label{A2} \\
& \geq & \left( \half + \half \sqrt{\rF(\alpha_0, \alpha_1)} \right) \left( \half + \half \Delta(\beta_0, \beta_1) \right), \label{A3}
\end{eqnarray}
where
\[ \eta := \sum_{y \in B: \atop \beta_{0,y} \geq \beta_{1,y}} \beta_{0,y} \quad \textup{ and } \quad \tau := \sum_{y \in B: \atop \beta_{0,y} < \beta_{1,y}} \beta_{1,y}
\enspace, \]
and $\sqrt{v}$ is the normalized principal eigenvector of $\eta \sqrtt{\alpha_0} + \tau \sqrtt{\alpha_1}$.

Furthermore, in a six-round protocol, we have
\begin{eqnarray}
P_{\rA,0}^*
& \geq & \half \lambda_{\max} \left( \eta' \sqrtt{\tr_{A_2}(\alpha_0)} + \tau' \sqrtt{\tr_{A_2}(\alpha_1)} \right) \label{A4} \\
& \geq & \left( \half + \half \sqrt{\rF(\tr_{A_2}(\alpha_0), \tr_{A_2}(\alpha_1))} \right) \left( \half + \half \Delta (\tr_{B_2}(\beta_0), \tr_{B_2}(\beta_1)) \right), \label{A5}
\end{eqnarray}
where
\[ \eta' := \sum_{y_1 \in B_1: \atop [\tr_{B_2}(\beta_{0})]_{y_1} \geq  [\tr_{B_2}(\beta_{1})]_{y_1}}  [\tr_{B_2}(\beta_{0})]_{y_1} \quad \textup{ and } \quad
\tau' := \sum_{y_1 \in B_1: \atop [\tr_{B_2}(\beta_{0})]_{y_1} <  [\tr_{B_2}(\beta_{1})]_{y_1}}  [\tr_{B_2}(\beta_{1})]_{y_1}
\enspace. \]
We have analogous bounds for $P_{\rA,1}^*$, which are obtained by
interchanging $\beta_0$ and $\beta_1$ in the above expressions.
\end{theorem}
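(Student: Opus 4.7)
The plan is to exhibit explicit feasible solutions to Alice's reduced problem (Theorem~\ref{thm-arp}) whose objective $\half \sum_{a,y} \beta_{a,y} \rF(s^{(a,y)}, \alpha_a)$ matches each stated bound, proving the five inequalities in order with each specializing the previous one. For (A1), I would fix $v \in \prob^A$ and use positive homogeneity of $\rF(\cdot, \alpha_a)$ to rewrite the concave hull as
\[
\conc\{\beta_{a,y} \rF(\cdot, \alpha_a) : a \bit\}(v) \;=\; \max\left\{\sum_{a \bit} \beta_{a,y} \rF(w_{a,y}, \alpha_a) : w_{0,y}, w_{1,y} \in \R_+^A, \; w_{0,y} + w_{1,y} = v\right\}.
\]
For each $y$, I would let $(w_{0,y}, w_{1,y})$ attain the maximum, set $s_{a,x,y} := [w_{a,y}]_x$, and let $s_j$ be the marginal $v^{(j)} \in \R_+^{A_1 \times \cdots \times A_j}$ of $v$ tensored with the all-ones vector on $B_1 \times \cdots \times B_{j-1}$. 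Since $\sum_a s^{(a,y)} = v$ is independent of $y$, the chain of polytope constraints in $\calP_\rA$ is satisfied by routine inspection, and the objective evaluates to $\half \sum_y \conc\{\cdot\}(v)$.

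To derive (A2) and (A3) from (A1), I would specialize by taking $w_{a^*(y), y} := v$ and $w_{\bar a^*(y), y} := 0$, where $a^*(y) := 0$ if $\beta_{0,y} \geq \beta_{1,y}$ and $a^*(y) := 1$ otherwise. The objective then collapses to
\[
\half \bigl(\eta\, \rF(v, \alpha_0) + \tau\, \rF(v, \alpha_1)\bigr) \;=\; \half\, \sqrt{v}^{\transpose} \bigl(\eta \sqrtt{\alpha_0} + \tau \sqrtt{\alpha_1}\bigr) \sqrt{v},
\]
using $\rF(v, \alpha) = (\sqrt{v}^\transpose \sqrt{\alpha})^2$. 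Choosing $\sqrt{v}$ to be the unit principal eigenvector (which is nonnegative by Perron--Frobenius, since the matrix has nonnegative entries, and therefore gives $v \in \prob^A$) turns the right-hand side into $\half \lambda_{\max}(\cdot)$, yielding (A2). For (A3), I would apply the Rayleigh quotient lower bound with the test vector $w := \sqrt{\alpha_0} + \sqrt{\alpha_1}$: using $\|\sqrt{\alpha_a}\|_2^2 = 1$ and $\langle \sqrt{\alpha_0}, \sqrt{\alpha_1}\rangle = \sqrt{\rF(\alpha_0, \alpha_1)}$, a direct computation yields $w^\transpose (\eta \sqrtt{\alpha_0} + \tau \sqrtt{\alpha_1}) w / \|w\|_2^2 = (\eta + \tau)(1 + \sqrt{\rF(\alpha_0, \alpha_1)})/2$, which combined with the identity $\eta + \tau = \sum_y \max(\beta_{0,y}, \beta_{1,y}) = 1 + \Delta(\beta_0, \beta_1)$ delivers (A3).

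For the six-round bounds (A4) and (A5), I would show that $P_{\rA, 0}^* \geq \tilde P_{\rA, 0}^*$, where $\tilde P_{\rA, 0}^*$ denotes Alice's optimal cheating probability in the four-round protocol with marginal distributions $\tr_{A_2}(\alpha_a)$ and $\tr_{B_2}(\beta_b)$, and then apply (A2) and (A3) to the latter. Given a four-round feasible $(\tilde s_1, \tilde s)$, the lifting
\[
s_{a, x_1, x_2, y_1, y_2} \;:=\; \tilde s_{a, x_1, y_1} \cdot \frac{\alpha_{a, x_1, x_2}}{[\tr_{A_2}(\alpha_a)]_{x_1}}
\]
(with the convention $0/0 := 0$) defines a six-round feasible solution, and a direct calculation yields the key identity $\rF(s^{(a, y_1, y_2)}, \alpha_a) = \rF(\tilde s^{(a, y_1)}, \tr_{A_2}(\alpha_a))$, so the objectives agree. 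The main obstacle throughout is verifying the polytope constraints for this lifting; the conditional weighting by $\alpha_{a, x_1, x_2} / [\tr_{A_2}(\alpha_a)]_{x_1}$ is chosen precisely so that the $A'_0$-marginal of $s$ is independent of $y_2$, which is the causality-type constraint imposed by $\calP_\rA$. The analogous bounds for $P_{\rA, 1}^*$ follow from the same constructions after swapping $\beta_0$ and $\beta_1$ in the objective of Theorem~\ref{thm-arp}.
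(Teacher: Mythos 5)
Your proposal is correct and rests on the same underlying idea as the paper's proof (exhibiting feasible solutions to Alice's reduced problem from Theorem~\ref{thm-arp}), but you arrange the derivations in a genuinely different and, in places, tidier way. The paper establishes the eigenstrategy (A2) first, then the improved eigenstrategy (A1) by optimizing the final message via infimal convolutions, and then (A3) by invoking the exact closed form of $\lambda_{\max}\bigl(\eta\sqrtt{\alpha_0}+\tau\sqrtt{\alpha_1}\bigr)$ from Lemma~\ref{lmaxfid2}. You instead go top-down: (A1)\,$\ge$\,(A2) follows by specializing the decomposition $w_{0,y}+w_{1,y}=v$ in the concave-hull maximum to a hard assignment $w_{a^*(y),y}=v$, and (A2)\,$\ge$\,(A3) follows by a Rayleigh-quotient lower bound with the explicit test vector $\sqrt{\alpha_0}+\sqrt{\alpha_1}$, which is more elementary than computing the exact eigenvalue. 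For (A4)--(A5) you reduce to a four-round marginal protocol and reapply (A2)--(A3), whereas the paper builds the six-round strategy directly via Lemma~\ref{PartialFidelityLemma}; the lifting you use is exactly the feasible point constructed in that lemma's proof, so the two approaches are isomorphic in content, but your modular formulation avoids repeating the eigenstrategy derivation.

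There is one technical slip in the six-round lifting. The convention $0/0 := 0$ in
\[
s_{a,x_1,x_2,y_1,y_2} := \tilde s_{a,x_1,y_1}\,\frac{\alpha_{a,x_1,x_2}}{[\tr_{A_2}(\alpha_a)]_{x_1}}
\]
does not preserve feasibility. If $[\tr_{A_2}(\alpha_a)]_{x_1}=0$ for some $a$ but $\tilde s_{a,x_1,y_1}>0$ — which a four-round optimal $\tilde s$ can exhibit, since $\calP_\rA$ only constrains the sum $\sum_a \tilde s^{(a,y_1)}$ — then your $s$ drops the corresponding mass, so $\tr_{A_2}(s_2)$ fails to equal $s_1\otimes e_{B_1}$ for any admissible $s_1$ and even $\tr_{A_1}(s_1)=1$ can break. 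The fix is the one the paper uses in Lemma~\ref{PartialFidelityLemma}: when $[\tr_{A_2}(\alpha_a)]_{x_1}=0$, spread $\tilde s_{a,x_1,y_1}$ uniformly over $A_2$ rather than setting it to zero. This restores the marginal identity exactly, and leaves your key fidelity identity $\rF(s^{(a,y_1,y_2)},\alpha_a) = \rF(\tilde s^{(a,y_1)},\tr_{A_2}(\alpha_a))$ unchanged, because $\alpha_{a,x_1,\cdot}\equiv 0$ on those coordinates contributes nothing to the inner product of square roots in either case.
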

We call \textup{(\ref{A1})} Alice's \emph{improved eigenstrategy\/},
\textup{(\ref{A2})} her \emph{eigenstrategy\/}, and \textup{(\ref{A3})} her \emph{three-round strategy\/}.
For six-round protocols, we call \textup{(\ref{A4})} Alice's \emph{eigenstrategy\/} and
\textup{(\ref{A5})} her \emph{measuring strategy\/}.

Note that only the improved eigenstrategy is affected by switching $\beta_0$ and $\beta_1$ (as long as we are willing to accept a slight modification to how we break ties in the definitions of $\eta, \eta', \tau,$ and $\tau'$).

We now briefly describe the strategies that yield the corresponding cheating probabilities in Theorem~\ref{AliceFilter}. Her three-round strategy is to prepare
the qubits~$AA'$ in the state~$\psi' = (\psi_0 + \psi_1)/\norm{\psi_0 + \psi_1}$ instead of
$\psi_0$ or~$\psi_1$, send the
first~$n$ messages accordingly, then measure the qubits received from Bob
to try to learn $b$, and reply with a bit $a$ using the measurement outcome
(along with the rest of the state~$\psi'$), to
bias the coin towards her desired output. Her eigenstrategy is the same as her three-round strategy, except that the first message is further optimized. The improved eigenstrategy has the same first message as in
her eigenstrategy, but the last message is further optimized.
\comment{
These strategies work for the general case as well, where Alice treats her first $n$ messages like the first one in a four-round protocol.
}

For a six-round protocol, Alice's measuring strategy is to prepare the qubits~$AA'$ in the following state $\psi' = (\psi_0' + \psi_1')/\norm{\psi_0' + \psi_1'}$
where~$\psi_0'$ and $\psi_1'$ are
purifications of~$\tr_{A_2, A'}(\psi_0 \psi_0^*)$ and $\tr_{A_2, A'}(\psi_1 \psi_1^*)$,
respectively. She measures Bob's first message to try to learn $b$, then depending on the outcome, she applies a (fidelity achieving) unitary before sending the rest of her messages.
Her six-round eigenstrategy is similar to her measuring strategy, except her first message is optimized in a way described in the proof.

\comment{ (figures depicting Alice cheating in four and six-round protocols are shown  below).

\begin{figure}[!h]
  \centering
  \scalebox{0.75}{
  \unitlength=.65mm
  \begin{gpicture}
\gasset{Nh=1,Nadjust=w,Nframe=n,linewidth=0.1,AHnb=0}

\node(A)(-100,110){\textcolor{red}{Alice does not follow protocol}}
\node(B)(100,110){\textcolor{blue}{Bob prepares ${\phi} \in \C^{B_0 \times B'_0 \times B \times B'}$}}

\node(M1A)(-100,80){}
\node(M1B)(100,80){}
\node(M2A)(-100,60){}
\node(M2B)(100,60){}
\node(M3A)(-100,40){}
\node(M3B)(100,40){}
\node(M4A)(-100,20){}
\node(M4B)(100,20){}

  \drawedge[AHnb=1,AHangle=30,AHLength=5,AHlength=0,ATnb=1,ATangle=30,ATLength=0,ATlength=0](M1A,M1B){\textcolor{red}{Alice sends $\C^{A}$ \quad ($x \in A$)}}
\drawedge[AHnb=1,AHangle=30,AHLength=0,AHlength=0,ATnb=1,ATangle=30,ATLength=5,ATlength=0](M2A,M2B){\textcolor{blue}{Bob sends $\C^{B}$ \quad ($y \in B$)}}
\drawedge[AHnb=1,AHangle=30,AHLength=5,AHlength=0,ATnb=1,ATangle=30,ATLength=0,ATlength=0](M3A,M3B){\textcolor{red}{Alice sends $\C^{A'_0 \times A'}$ \quad ($a \in \{ 0, 1 \}$ and a copy of $x$)}}
\drawedge[AHnb=1,AHangle=30,AHLength=0,AHlength=0,ATnb=1,ATangle=30,ATLength=5,ATlength=0](M4A,M4B){\textcolor{blue}{Bob sends $\C^{B'_0 \times B'}$ \quad ($b \in \{ 0, 1 \}$ and a copy of $y$)}}

\node(Ameasure)(-100,-5){\textcolor{red}{Alice simply outputs her desired outcome}}
\node(Bmeasure)(100,-5){\textcolor{blue}{Bob checks if Alice cheated}}

   \end{gpicture}
  }
  \label{4RAliceprotocolnv}
  \caption{Alice cheating in a four-round protocol.}
\end{figure}

\begin{figure}[!h]
  \centering
  \scalebox{0.75}{
  \unitlength=.65mm
  \begin{gpicture}
\gasset{Nh=1,Nadjust=w,Nframe=n,linewidth=0.1,AHnb=0}

\node(A)(-100,110){\textcolor{red}{Alice does not follow protocol}}
\node(B)(100,110){\textcolor{blue}{Bob prepares ${\phi} \in \C^{B_0 \times B'_0 \times B_1 \times B_2 \times B'_1 \times B'_2}$}}

\node(M1A)(-100,80){}
\node(M1B)(100,80){}
\node(M2A)(-100,60){}
\node(M2B)(100,60){}
\node(M3A)(-100,40){}
\node(M3B)(100,40){}
\node(M4A)(-100,20){}
\node(M4B)(100,20){}
\node(M5A)(-100,0){}
\node(M5B)(100,0){}
\node(M6A)(-100,-20){}
\node(M6B)(100,-20){}

  \drawedge[AHnb=1,AHangle=30,AHLength=5,AHlength=0,ATnb=1,ATangle=30,ATLength=0,ATlength=0](M1A,M1B){\textcolor{red}{Alice sends $\C^{A_1}$ \quad ($x_1 \in A_1$)}}
\drawedge[AHnb=1,AHangle=30,AHLength=0,AHlength=0,ATnb=1,ATangle=30,ATLength=5,ATlength=0](M2A,M2B){\textcolor{blue}{Bob sends $\C^{B_1}$ \quad ($y_1 \in B_1$)}}
\drawedge[AHnb=1,AHangle=30,AHLength=5,AHlength=0,ATnb=1,ATangle=30,ATLength=0,ATlength=0](M3A,M3B){\textcolor{red}{Alice sends $\C^{A_2}$ \quad ($x_2 \in A_2$)}}
\drawedge[AHnb=1,AHangle=30,AHLength=0,AHlength=0,ATnb=1,ATangle=30,ATLength=5,ATlength=0](M4A,M4B){\textcolor{blue}{Bob sends $\C^{B_2}$ \quad ($y_2 \in B_2$)}}
\drawedge[AHnb=1,AHangle=30,AHLength=5,AHlength=0,ATnb=1,ATangle=30,ATLength=0,ATlength=0](M5A,M5B){\textcolor{red}{Alice sends $\C^{A'_0 \times A'_1 \times A'_2}$ \quad ($a \in \{ 0, 1 \}$ and a copy of $x_1, x_2$)}}
\drawedge[AHnb=1,AHangle=30,AHLength=0,AHlength=0,ATnb=1,ATangle=30,ATLength=5,ATlength=0](M6A,M6B){\textcolor{blue}{Bob sends $\C^{B'_0 \times B'_1 \times B'_2}$ \quad ($b \in \{ 0, 1 \}$ and a copy of $y_1, y_2$)}}

\node(Ameasure)(-100,-45){\textcolor{red}{Alice simply outputs her desired outcome}}
\node(Bmeasure)(100,-45){\textcolor{blue}{Bob checks if Alice cheated}}

   \end{gpicture}
  }
  \caption{Alice cheating in a six-round protocol.}
  \label{6RAliceprotocolnv}
\end{figure}

} 

We prove Theorem~\ref{AliceFilter} in the appendix.


\paragraph{Cheating Bob} \quad \\

We turn to strategies for a dishonest Bob. 

\newpage 
\begin{theorem}\label{BobFilter}
For a protocol parameterized by $\alpha_0, \alpha_1 \in \prob^A$ and $\beta_0, \beta_1 \in \prob^B$, we can bound Bob's optimal cheating probability as follows:

\begin{equation}
P_{\rB,0}^* \geq \half + \half \sqrt{\rF(\beta_0, \beta_1)},
\label{B1}
\end{equation}
and
\begin{equation}
P_{\rB,0}^* \geq \half + \half \Delta(\tr_{A_2 \times \cdots \times A_n}(\alpha_0), \tr_{A_2 \times \cdots \times A_n}(\alpha_1)).
\label{B2}
\end{equation}

In a four-round protocol, we have
\begin{eqnarray}
P_{\rB,0}^*
& \geq & \half \sum_{a \bit} \rF\!\left( \sum_{x \in A} \alpha_{a,x} v_x, \beta_a \right) \label{B3} \\
& \geq & \half \sum_{x \in A} \lambda_{\max} \left( \sum_{a \bit} \alpha_{a,x} \sqrtt{\beta_a} \right) \label{B4} \\
& \geq & \max \left\{ \half + \half \Delta(\alpha_0, \alpha_1), \; \half + \half \sqrt{\rF(\beta_0, \beta_1)}
\right\} \enspace, \nonumber
\end{eqnarray}
where $\sqrt{v_x}$ is the normalized principal eigenvector of $\sum_{a \bit} \alpha_{a,x} \sqrtt{\beta_a}$.

In a six-round protocol, we have
\begin{eqnarray}
P_{\rB,0}^* & \geq &
\half \sum_{a \in A'_0} \rF \left( \sum_{x \in A} \alpha_{a,x} \, \tilde{p_2}^{(x)}, \beta_a \right) \label{B5} \\
& \geq & \half \, \lambda_{\max} \! \left(\kappa \sqrtt{\tr_{B_2}(\beta_0)} + \zeta \sqrtt{\tr_{B_2}(\beta_1)} \right) \label{B6} \\
& \geq & \left( \half + \half \sqrt{\rF(\tr_{B_2}(\beta_0), \tr_{B_2}(\beta_1))} \right) \left( \half + \half \Delta (\alpha_0, \alpha_1) \right), \label{B7}
\end{eqnarray}
where
\[ [\tilde{p_2}^{(x)}]_{y_1, y_2} := \left\{ \begin{array}{ccl}
c_{y_1} \frac{\beta_{g(x), y_1, y_2}}{[\tr_{B_2}(\beta_{g(x)})]_{y_1}} & \text{ if }~ [\tr_{B_2}(\beta_{g(x)})]_{y_1} > 0
\enspace, \\
\quad \\
c_{y_1} \frac{1}{|B_2|} & \text{ if }~ [\tr_{B_2}(\beta_{g(x)})]_{y_1} = 0
\enspace,
\end{array} \right. \]
\[ \kappa = \dsum_{x \in A: \atop \alpha_{0,x} \geq \alpha_{1,x}} \alpha_{0,x}
\enspace, \quad \text{} \quad
\zeta = \dsum_{x \in A: \atop \alpha_{0,x} < \alpha_{1,x}} \alpha_{1,x}
\enspace, \quad \text{} \quad g(x) = \arg\max_{a} \set{\alpha_{a,x}}
\enspace, \]
and $\sqrt{c}$ is the normalized principal eigenvector of
\[ \half \, \lambda_{\max} \! \left( \kappa \sqrtt{\tr_{B_2}(\beta_{0})} + \zeta \sqrtt{\tr_{B_2}(\beta_{1})} \right). \]

Furthermore, if $|A_i| = |B_i|$ for all $i \in \{ 1, \ldots, n \}$, then
\begin{equation}
\label{B8}
P_{\rB,0}^* \geq \half \sum_{a \bit} \rF(\alpha_a, \beta_a) \enspace.
\end{equation}

We get analogous lower bounds for $P_{\rB,1}^*$ by switching the roles of $\beta_0$ and $\beta_1$
in the above expressions.
\end{theorem}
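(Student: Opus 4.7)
My plan is to derive each inequality by invoking Bob's reduced problem (Theorem~\ref{thm-brp}),
\[ P_{\rB,c}^* \;=\; \max\left\{\half \sum_{a \bit} \rF\bigl((\alpha_a \otimes \id_B)^{\transpose} p_n,\, \beta_{a \oplus c}\bigr) : (p_1, \ldots, p_n) \in \calP_{\rB}\right\}, \]
and, for each bound, exhibiting a specific feasible $p_n$; the lower-indexed $p_j$ are then forced by the partial-trace constraints. Feasibility amounts to checking that the implied marginals $\tr_{B_j}(p_j)$ are independent of $x_j$ (so the identification with $p_{j-1} \otimes e_{A_j}$ is consistent), after which the corresponding objective value is a valid lower bound on $P_{\rB,0}^*$. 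Each choice of $p_n$ corresponds to a natural cheating strategy: fidelity-based commitment by Bob, Helstrom guessing of Alice's early messages, a Perron-eigenvector strategy, or a combination of these.

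For (B1), I take $p_n(x,y) = q(y)$ independent of $x$ with $q_y \propto (\sqrt{\beta_{0,y}} + \sqrt{\beta_{1,y}})^2$; a direct calculation yields $\rF(q, \beta_0) = \rF(q, \beta_1) = \half(1 + \sqrt{\rF(\beta_0, \beta_1)})$. For (B2), I take $p_n(x,y) = \mu_0(x_1)\beta_0(y) + \mu_1(x_1)\beta_1(y)$, modelling Bob guessing $a$ from $x_1$ with probability $\mu_a(x_1)$ and then playing honestly; the elementary bound $\rF(\lambda r + (1-\lambda)s,\, r) \geq \lambda$ gives $\half \sum_a \rF \geq \half \sum_{x_1}[\tr_{A_2 \cdots A_n}(\alpha_0)(x_1)\mu_0(x_1) + \tr_{A_2 \cdots A_n}(\alpha_1)(x_1)\mu_1(x_1)]$, and maximizing over indicator $\mu_0$ yields the trace distance of the reductions. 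For (B8), which requires $|A_i| = |B_i|$, I take $p_n(x,y) = \prod_i \delta_{x_i, y_i}$ (Bob mirrors Alice's commits): feasibility is immediate, $(\alpha_a \otimes \id_B)^{\transpose} p_n = \alpha_a$, and the bound follows.

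For the four-round eigenstrategies (B3)--(B4), I set $p_1(x,y) = v_x(y)$ where $\sqrt{v_x}$ is the nonnegative Perron eigenvector of $M_x := \sum_a \alpha_{a,x}\sqrtt{\beta_a}$, normalized so that $v_x \in \Prob^{B_1}$ (feasibility being automatic). Writing $\rF(v_x,\beta_a) = \inner{\sqrt{v_x}\sqrt{v_x}^{\transpose}}{\sqrtt{\beta_a}}$, I obtain $\sum_a \alpha_{a,x}\rF(v_x,\beta_a) = \inner{\sqrt{v_x}\sqrt{v_x}^{\transpose}}{M_x} = \lambda_{\max}(M_x)$ by the variational characterization of the top eigenvalue; concavity of fidelity in its first argument, $\sum_x \alpha_{a,x}\rF(v_x,\beta_a) \leq \rF(\sum_x \alpha_{a,x} v_x, \beta_a)$, converts this into (B3), with (B4) following directly. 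The scalar corollaries $\half + \half\sqrt{\rF(\beta_0,\beta_1)}$ and $\half + \half\Delta(\alpha_0,\alpha_1)$ are recovered by non-eigenvector choices of $v_x$ (fidelity-optimal and indicator, respectively).

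For the six-round strategies (B5)--(B7), I take $p_2(x,y_1,y_2) = c_{y_1}\,\beta_{g(x)}(y_2 \mid y_1)$ with $g(x) = \arg\max_a \alpha_{a,x}$: Bob samples $y_1$ from some $c \in \Prob^{B_1}$, and on seeing the full $x$ sends $y_2$ from the $\beta_{g(x)}$-conditional on $y_1$. The key feasibility point is that $\tr_{B_2}(p_2)(x,y_1) = c_{y_1}$ is independent of both $x_1$ and $x_2$, so $p_1(x_1,y_1) = c_{y_1}$ is consistent with the polytope. The resulting objective is (B5). Applying $\sqrt{s+t} \geq \sqrt{s}$ and recognizing $\kappa, \zeta$ as the marginals of $\alpha_0, \alpha_1$ on $\{x : g(x) = 0\}$ and its complement gives $\half \sum_a \rF \geq \half\inner{\sqrt{c}\sqrt{c}^{\transpose}}{\kappa \sqrtt{\tr_{B_2}(\beta_0)} + \zeta \sqrtt{\tr_{B_2}(\beta_1)}}$; optimizing $c$ by the Perron eigenvector yields (B6), while taking $c$ as the fidelity-optimal distribution between $\tr_{B_2}(\beta_0)$ and $\tr_{B_2}(\beta_1)$ produces the product form (B7), using $\kappa + \zeta = 1 + \Delta(\alpha_0,\alpha_1)$. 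The bounds on $P_{\rB,1}^*$ follow by swapping $\beta_0 \leftrightarrow \beta_1$. The main obstacle throughout is verifying feasibility for the multi-round strategies: the implied marginals over later messages must be genuinely independent of Alice's later commit bits $x_j$; the six-round strategy works precisely because all dependence on $(x_1, x_2)$ is injected into $p_2$ only through the last-message conditional, leaving the $B_1$ marginal $c_{y_1}$ free of $x$.
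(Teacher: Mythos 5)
Your proposal is correct, and the overall architecture matches the paper's: derive each inequality by exhibiting a feasible $(p_1,\ldots,p_n)\in\calP_\rB$ in Bob's reduced problem, verify the partial-trace constraints (checking the marginals do not depend on the later $x_j$), and then bound the objective. A few of your micro-derivations deviate from the paper in pleasant ways. For \textup{(\ref{B8})} you stay entirely inside the reduced polytope, taking $p_n(x,y)=\prod_i\delta_{x_i,y_i}$ and directly computing $(\alpha_a\otimes\id_B)^\transpose p_n=\alpha_a$; the paper instead reverts to Kitaev's original SDP and uses $\bar\rho_j=\psi\psi^*$, noting it is ``easier'' there. Your route is arguably cleaner and self-contained. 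For the last inequalities \textup{(\ref{B4})}$\,\geq\max\{\cdot,\cdot\}$ and \textup{(\ref{B6})}$\,\geq\,$\textup{(\ref{B7})}, you obtain the lower bounds by plugging particular probability vectors into the maximizations (the fidelity-optimal $q\propto(\sqrt{\beta_0}+\sqrt{\beta_1})^2$ and the indicator $v_x=\beta_{g(x)}$ for the former; $c\propto(\sqrt{\tr_{B_2}\beta_0}+\sqrt{\tr_{B_2}\beta_1})^2$ for the latter). The paper instead manipulates the $\lambda_{\max}$ expression directly via Lemma~\ref{lmaxfid2} ($\lambda_{\max}(\eta\sqrtt{p}+\tau\sqrtt{q})=\tfrac12(\eta+\tau+\sqrt{(\eta-\tau)^2+4\eta\tau\,\rF(p,q)})$) together with Lemma~\ref{mn} ($\kappa+\zeta=1+\Delta(\alpha_0,\alpha_1)$). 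Both yield the stated bounds; your version avoids the closed form for $\lambda_{\max}$ but uses essentially the same ingredients. The feasibility analysis you emphasize -- that $\tr_{B_2}(p_2)$ must be free of $(x_1,x_2)$ -- is exactly the structural point the paper's construction of $\tilde{p}_2^{(x)}$ is engineered to guarantee (via Lemma~\ref{PartialFidelityLemma}), and you have articulated it correctly, including the degenerate branch when $[\tr_{B_2}(\beta_{g(x)})]_{y_1}=0$.
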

We prove Theorem~\ref{BobFilter} in the appendix.
We call \textup{(\ref{B1})} Bob's \emph{ignoring strategy\/} and \textup{(\ref{B2})} his \emph{measuring strategy\/}.
For four-round protocols, we call \textup{(\ref{B3})} Bob's \emph{eigenstrategy\/} and
\textup{(\ref{B4})} his \emph{eigenstrategy lower bound\/}.
For six-round protocols, we call \textup{(\ref{B5})} Bob's \emph{six-round eigenstrategy\/}, \textup{(\ref{B6})} his \emph{eigenstrategy lower bound\/}, and \textup{(\ref{B7})} his \emph{three-round strategy\/}.
We call \textup{(\ref{B8})} Bob's \emph{returning strategy\/}.

Note that the only strategies that are affected by switching $\beta_0$ and $\beta_1$ are the eigenstrategy and the returning strategy.

\comment{

\begin{figure}[!h]
  \centering
  \scalebox{0.75}{
  \unitlength=.65mm
  \begin{gpicture}
\gasset{Nh=1,Nadjust=w,Nframe=n,linewidth=0.1,AHnb=0}

\node(A)(-100,110){\textcolor{red}{Alice prepares ${\psi} \in \C^{A_0 \times A'_0 \times A \times A'}$}}
\node(B)(100,110){\textcolor{blue}{Bob does not follow protocol}}

\node(M1A)(-100,80){}
\node(M1B)(100,80){}
\node(M2A)(-100,60){}
\node(M2B)(100,60){}
\node(M3A)(-100,40){}
\node(M3B)(100,40){}
\node(M4A)(-100,20){}
\node(M4B)(100,20){}

  \drawedge[AHnb=1,AHangle=30,AHLength=5,AHlength=0,ATnb=1,ATangle=30,ATLength=0,ATlength=0](M1A,M1B){\textcolor{red}{Alice sends $\C^{A}$ \quad ($x \in A$)}}
\drawedge[AHnb=1,AHangle=30,AHLength=0,AHlength=0,ATnb=1,ATangle=30,ATLength=5,ATlength=0](M2A,M2B){\textcolor{blue}{Bob sends $\C^{B}$ \quad ($y \in B$)}}
\drawedge[AHnb=1,AHangle=30,AHLength=5,AHlength=0,ATnb=1,ATangle=30,ATLength=0,ATlength=0](M3A,M3B){\textcolor{red}{Alice sends $\C^{A'_0 \times A' }$ \quad ($a \in \{ 0, 1 \}$ and a copy of $x$)}}
\drawedge[AHnb=1,AHangle=30,AHLength=0,AHlength=0,ATnb=1,ATangle=30,ATLength=5,ATlength=0](M4A,M4B){\textcolor{blue}{Bob sends $\C^{B'_0 \times B'}$ \quad ($b \in \{ 0, 1 \}$ and a copy of $y$)}}

\node(Ameasure)(-100,-5){\textcolor{red}{Alice checks if Bob cheated}}
\node(Bmeasure)(100,-5){\textcolor{blue}{Bob simply outputs his desired outcome}}

   \end{gpicture}
  }
  \label{fig:xmlcar}
  \caption{Bob cheating in a four-round protocol.}
\end{figure}

} 

We now briefly describe the strategies that yield the corresponding cheating probabilities in Theorem~\ref{BobFilter}. Bob's ignoring strategy is to
prepare the qubits~$BB'$ in the state~$\phi' = (\phi_0 + \phi_1)/\norm{\phi_0 + \phi_1}$
instead of~$\phi_0$ or~$\phi_1$, send the first~$n$ messages
accordingly, then send a value for $b$ that
favours his desired outcome (along with the rest of~$\phi'$). His measuring strategy is to measure Alice's first message, choose $b$ according to his best guess for $a$
and run the protocol with~$\phi_b$. His returning strategy is to send Alice's messages right back to her. For the four-round eigenstrategy, Bob's commitment state is a principal eigenvector depending on Alice's first message.

For a six-round protocol, Bob's three-round strategy is to prepare the qubits~$BB'$ in the following state $\phi' = (\phi_0' + \phi_1')/\norm{\phi_0' + \phi_1'}$
where~$\phi_0'$ and $\phi_1'$ are
purifications of~$\tr_{B_2, B'}(\phi_0 \phi_0^*)$ and $\tr_{B_2, B'}(\phi_1 \phi_1^*)$,
respectively. He measures Alice's second message to try to learn $a$, then depending on the outcome, he applies a (fidelity achieving) unitary before sending the rest of his messages. His six-round eigenstrategy is similar to his three-round strategy except that the first message is optimized in a way described in the proof.

\comment{

\begin{figure}[!h]
  \centering
  \scalebox{0.75}{
  \unitlength=.65mm
  \begin{gpicture}
\gasset{Nh=1,Nadjust=w,Nframe=n,linewidth=0.1,AHnb=0}

\node(A)(-100,110){\textcolor{red}{Alice prepares ${\psi} \in \C^{A_0 \times A'_0 \times A_1 \times A_2 \times A'_1 \times A'_2}$}}
\node(B)(100,110){\textcolor{blue}{Bob does not follow protocol}}

\node(M1A)(-100,80){}
\node(M1B)(100,80){}
\node(M2A)(-100,60){}
\node(M2B)(100,60){}
\node(M3A)(-100,40){}
\node(M3B)(100,40){}
\node(M4A)(-100,20){}
\node(M4B)(100,20){}
\node(M5A)(-100,0){}
\node(M5B)(100,0){}
\node(M6A)(-100,-20){}
\node(M6B)(100,-20){}

  \drawedge[AHnb=1,AHangle=30,AHLength=5,AHlength=0,ATnb=1,ATangle=30,ATLength=0,ATlength=0](M1A,M1B){\textcolor{red}{Alice sends $\C^{A_1}$ \quad ($x_1 \in A_1$)}}
\drawedge[AHnb=1,AHangle=30,AHLength=0,AHlength=0,ATnb=1,ATangle=30,ATLength=5,ATlength=0](M2A,M2B){\textcolor{blue}{Bob sends $\C^{B_1}$ \quad ($y_1 \in B_1$)}}
\drawedge[AHnb=1,AHangle=30,AHLength=5,AHlength=0,ATnb=1,ATangle=30,ATLength=0,ATlength=0](M3A,M3B){\textcolor{red}{Alice sends $\C^{A_2}$ \quad ($x_2 \in A_2$)}}
\drawedge[AHnb=1,AHangle=30,AHLength=0,AHlength=0,ATnb=1,ATangle=30,ATLength=5,ATlength=0](M4A,M4B){\textcolor{blue}{Bob sends $\C^{B_2}$ \quad ($y_2 \in B_2$)}}
\drawedge[AHnb=1,AHangle=30,AHLength=5,AHlength=0,ATnb=1,ATangle=30,ATLength=0,ATlength=0](M5A,M5B){\textcolor{red}{Alice sends $\C^{A'_0 \times A'_1 \times A'_2}$ \quad ($a \in \{ 0, 1 \}$ and a copy of $x_1, x_2$)}}
\drawedge[AHnb=1,AHangle=30,AHLength=0,AHlength=0,ATnb=1,ATangle=30,ATLength=5,ATlength=0](M6A,M6B){\textcolor{blue}{Bob sends $\C^{B'_0 \times B'_1 \times B'_2}$ \quad ($b \in \{ 0, 1 \}$ and a copy of $y_1, y_2$)}}

\node(Ameasure)(-100,-45){\textcolor{red}{Alice checks if Bob cheated}}
\node(Bmeasure)(100,-45){\textcolor{blue}{Bob simply outputs his desired outcome}}

   \end{gpicture}
  }
  \caption{Bob cheating in a six-round protocol.}
  \label{fig:6RBobprotocolnv}
\end{figure}

} 


\section{Protocol symmetry} \label{sect:symmetry}

In this section, we discuss equivalence between protocols due to symmetry
in the states used in them. Namely, we identify transformations on
states under which the bias remains unchanged. This allows us to prune
the search space of parameters needed to specify a protocol in the
family under scrutiny. As a result, we significantly reduce the time
required for our searches.

\subsection{Index symmetry}

We show that if we permute the elements of $A_i$ or $B_i$, for any $i \in \set{1, \ldots, n}$, then this does not change the bias of the protocol. We first show that cheating Bob is unaffected.

\paragraph{Cheating Bob} \quad \\

Bob's reduced problems are to maximize $\half \sum_{a \in A'_0} \rF\!\left( (\alpha_{a} \otimes \id_{B})^{\transpose} p_{n}, \, \beta_a \right)$, for forcing outcome $0$, and $\half \sum_{a \in A'_0} \rF\!\left( (\alpha_{a} \otimes \id_{B})^{\transpose} p_{n}, \, \beta_{\bar{a}} \right)$, for forcing outcome $1$, over the polytope $\calP_\rB$ defined as the set of all vectors
$(p_{1}, p_{2}, \ldots, p_{n})$ that satisfy
\[ \begin{array}{rrrcllllllllllllll}

                     &  & \tr_{B_1}(p_1) & = & e_{A_{1}}, \\
                     &  & \tr_{B_2} (p_2) & = & p_{1} \otimes e_{A_{2}}, \\
                     & & & \vdots \\
                     &  & \tr_{B_n} (p_n) & = & p_{n-1} \otimes e_{A_{n}}, \\
                     & & p_j & \in & \R_+^{A_{1} \times B_{1} \times \cdots \times A_{j} \times B_{j}}, \; \forAll j \in \{ 1, \ldots, n \}
\enspace.
\end{array} \]

Suppose we are given a new protocol where the elements of $A_i$ have been permuted, for some $i \in \set{1, \ldots, n}$ (and therefore the entries of $\alpha_a$ for both $a \bit$). We can write the entries of $(\alpha_{a} \otimes \id_{B})^{\transpose} p_{n}$ as
\[ [(\alpha_{a} \otimes \id_{B})^{\transpose} p_{n}]_y = \sum_{x \in A} \alpha_{a,x} [p_{n}]_{x,y}, \]
for each $y \in B$. For any feasible solution for the original protocol,
we construct a feasible solution by permuting the elements of $p_j$ corresponding to $A_i$.
This gives us a bijection, and the feasible solution so constructed has the
same objective function value as the original one. Thus, dishonest Bob
cannot cheat more or less than in the original protocol.

Now suppose we are given a new protocol where the elements of $B_i$ have been permuted for some $i \in \set{1, \ldots, n}$. We can write
\[ \rF\! \left( (\alpha_{a} \otimes \id_{B})^{\transpose} p_{n}, \, \beta_a \right) = \left( \sqrt{(\alpha_{a} \otimes \id_{B})^{\transpose} p_{n}}^\transpose \sqrt{\beta_a} \right)^2. \]
If we permute the entries in $p_n$ corresponding to $B_i$ (and likewise for every variable in the polytope) we get the same objective function value.

Similar arguments hold for $P_{\rB,1}^*$. In both cases, Bob's  cheating probabilities are unaffected.

\paragraph{Cheating Alice} \quad \\

To show that the bias of the protocol remains unchanged, we still need to check that cheating Alice is unaffected by a permutation
of the elements of~$A_i$ or~$B_i$.  Alice's reduced problem is to maximize $\half \sum_{a \in A'_0} \sum_{y \in B} \beta_{a,y} \; \rF(s^{(a,y)}, \alpha_{a})$
for forcing outcome $0$, and $\half \sum_{a \in A'_0} \sum_{y \in B} \beta_{\bar{a},y} \; \rF(s^{(a,y)}, \alpha_{a})$ for forcing outcome $1$, over the set of all vectors $(s_{1}, s_{2}, \ldots, s_{n}, s)$ that satisfy
\[ \begin{array}{rrrcllllllllllllll}
                     &                               & \tr_{A_1}(s_1) & = & 1
\enspace, \\
                     &                               & \tr_{A_2}(s_2) & = & s_1 \otimes e_{B_{1}}
\enspace, \\
                     & & & \vdots \\
                     &                               & \tr_{A_n}(s_n) & = & s_{n-1} \otimes e_{B_{n-1}}
\enspace, \\
                     &                               & \tr_{A'_{0}}(s) & = & s_n \otimes e_{B_{n}}
\enspace, \\
                     &                               & s_{j} & \in & \R_{+}^{A_{1} \times B_{1} \times \cdots \times B_{j-1} \times A_{j}}
\enspace, \; \forAll j \in \{ 1, \ldots, n \} \enspace, \\
                     & & s & \in & \R_{+}^{A'_{0} \times A \times B}
\enspace.
\end{array} \]

By examining the above problem, we see that the same arguments that apply to cheating Bob also apply to cheating Alice. We can simply permute any
feasible solution to account for any permutation in $A_i$ or $B_i$.

Note that these arguments only hold for ``local'' permutations, i.e.,
we cannot in general permute the indices in $A_i \times A_{i'}$
without affecting the bias.

\subsection{Symmetry between probability distributions}

We now identify a different kind of symmetry in the protocols. Recall the four objective functions
\[
P_{\rB,0}^* = \half \sum_{a \in A'_0} \, \rF\! \left( (\alpha_{a} \otimes \id_{B})^{\transpose} p_{n}, \, \beta_a \right) \quad \text{ and } \quad
P_{\rB,1}^* = \half \sum_{a \in A'_0} \, \rF\! \left( (\alpha_{a} \otimes \id_{B})^{\transpose} p_{n}, \, \beta_{\bar{a}} \right) \]
for Bob and
\[
P_{\rA,0}^* = \half \sum_{y \in B} \sum_{a \bit} \beta_{a,y}\, \rF(s^{(a,y)}, \alpha_a) \quad \text{ and } \quad
P_{\rA,1}^* = \half \sum_{y \in B} \sum_{a \bit} \beta_{\bar{a},y}\, \rF(s^{(a,y)}, \alpha_a) \]
for Alice.

We argue that the four quantities above are not affected if we switch $\beta_0$ and $\beta_1$ and simultaneously switch $\alpha_0$ and $\alpha_1$. This is immediate for cheating Bob, but requires explanation for cheating Alice. The only constraints involving $s^{(a,y)}$ can be written as
\[ \sum_{a \in A'_0} s^{(a,y)} = s_n^{(y_1, \ldots, y_{n-1})}, \]
for all $y = (y_1, \ldots, y_{n-1}, y_n) \in B$. Since this constraint is symmetric about $a$, the result follows.

It is also evident that switching $\beta_0$ and $\beta_1$ switches $P_{\rA,0}^*$ and $P_{\rA,1}^*$ and it also switches $P_{\rB,0}^*$ and $P_{\rB,1}^*$. With these symmetries, we can effectively switch the roles of $\alpha_0$ and $\alpha_1$ and the roles of $\beta_0$ and $\beta_1$ independently and the bias is unaffected.

\subsection{The use of symmetry in the search algorithm}

Since we are able to switch the roles of $\alpha_0$ and $\alpha_1$, we assume $\alpha_0$ has the largest entry out of $\alpha_0$ and $\alpha_1$ and similarly that $\beta_0$ has the largest entry out of $\beta_0$ and $\beta_1$.

In four-round protocols, since we can permute the elements of $A = A_1$, we also assume $\alpha_0$ has entries that are non-decreasing. This allows us to upper bound all the entries of $\alpha_0$ and $\alpha_1$ by the last entry in $\alpha_0$. We do this simultaneously for $\beta_0$ and $\beta_1$.

In the six-round version, we need to be careful when applying the index symmetry, we cannot permute all of the entries in $\alpha_0$. The index symmetry only applies to local permutations so we only partially order them. We order $A_2$ such that the entries $\alpha_{0, \tilde{x}_1 {x}_2}$ do not decrease for \emph{one particular index} $\tilde{x}_1 \in A_1$. It is convenient to choose the index corresponding to the largest entry. Then we order the last block of entries in $\alpha_0$ such that they do not decrease. Note that the last entry in $\alpha_0$ is now the largest among all the entries in $\alpha_0$ and $\alpha_1$. We do this simultaneously for $\beta_0$ and $\beta_1$. Note that the search algorithm does not stop all symmetry; for example if $\alpha_0$ and $\alpha_1$ both have an entry of largest magnitude, we do not compare the second largest entries. But, as will be shown in the computational tests, we have a dramatic reduction in the number of protocols to be tested using the symmetry in the way described above. 


\section{Search algorithm}\label{sect:algo}
\label{algo}

In this section, we develop an algorithm for finding coin-flipping protocols with small bias within our parametrized family.

\comment{
We start with the following search algorithm prototype.
\begin{center}
\quad \\
\begin{tabular}{|l|}
\hline \\
\quad
\underline{Pseudo-algorithm for finding a protocol with small bias}
\quad \quad \\
\quad \\
\quad For each choice of $\alpha_0$, $\alpha_1$, $\beta_0$, and $\beta_1$ (modulo the symmetry): \quad \\
\quad $\bullet$ Use the Protocol Filter to test if the protocol has
too ``high'' a bias. \quad \\
\quad $\bullet$ Solve the necessary SDPs characterising the bias. \quad \\
\quad $\bullet$ If the protocol has a desired bias: stop and output the protocol parameters. \quad \\
\quad \\
\hline
\end{tabular}
\quad \\
\quad \\
\end{center}
We now discuss a few key steps to speed up the search and to make it finite.
} 

To search for protocols, we first fix a dimension $d$ for the parameters
\[ \alpha_0, \alpha_1, \beta_0, \beta_1 \in \prob^d. \]
We then create a finite mesh over these parameters by creating a mesh over the entries in the probability vectors $\alpha_0$, $\alpha_1$, $\beta_0$, and $\beta_1$. We do so by increments of a precision parameter $\nu \in (0,1)$. For example, we range over the values
\[ \set{0, \nu, 2 \nu, \ldots, 1 - \nu, 1} \]
for $[\alpha_0]_0$, the first entry of $\alpha_0$. For the second entry of $\alpha_0$, we range over
\[ \{ 0, \nu, 2 \nu, \ldots, 1 - [\alpha_0]_0 \} \]
and so forth. Note that we only consider $\nu = 1/N$ for some positive integer $N$ so that we use the endpoints of the intervals.

This choice in creating the mesh makes it very easy to exploit the symmetry discussed in Section~\ref{sect:symmetry}. We show computationally (in Section~\ref{sect:numerical}) that this symmetry helps by dramatically reducing the number of protocols to be tested. This is important since there are ${{d+N-1}\choose{N}}^4$ protocols to test (before applying symmetry considerations). 
 
Each point in this mesh is a set of candidate parameters for an optimal protocol. As described in Section~\ref{sect:filter}, the protocol filter can be used to expedite the process of checking whether the protocol has high bias or is a good candidate for an optimal protocol. There are two things to be considered at this point which we now address.

First, we have to determine the order in which the cheating strategies in the protocol filter are applied. It is roughly the case that the computationally cheaper tasks give a looser lower bound to the optimal cheating probabilities. Therefore, we start with these easily computable probabilities, i.e. the probabilities involving norms and fidelities, then check the more computationally expensive tasks such as largest eigenvalues and calculating principal eigenvectors. We lastly solve the semidefinite programs. Another heuristic that we use is alternating between Alice and Bob's strategies. Many protocols with high bias seem to prefer either cheating Alice or cheating Bob. Having cheating strategies for both Alice and Bob early in the filter removes the possibility of checking many of Bob's strategies when it is clearly insecure concerning cheating Alice and vice versa. Starting with these heuristics, we then ran preliminary tests to see which order seemed to perform the best. The order (as well as the running times for the filter strategies) is shown in Tables~\ref{4round_running_times1} and \ref{4round_running_times2} for the four-round version and Tables~\ref{6round_running_times1} and \ref{6round_running_times2} for the six-round version.

Second, we need to determine a threshold for what constitutes a ``high bias.'' If a filter strategy has success probability $0.9$, do we eliminate this candidate protocol? The lower the threshold, the more quickly the filter eliminates protocols. However, if the threshold is too low, we may be too ambitious and not find any protocols. To determine a good threshold, consider the following protocol parameters
\[ \alpha_0 = \half \left[ 1, 0, 1 \right]^\transpose, \quad \alpha_1 = \half \left[ 0, 1, 1 \right]^\transpose, \quad \beta_0 = \left[ 1,0 \right]^\transpose, \quad  \beta_1 =\left[ 0,1 \right]^\transpose. \]
This is the four-round version of the optimal three-round protocol in Subsection~\ref{ex}. Numerically solving for the cheating probabilities for this protocol shows that
\[ P_{\rA,0}^* = P_{\rA,1}^* = P_{\rB,0}^* = P_{\rB,1}^* = 3/4. \]
Thus, there exists a protocol with the same bias as the best-known explicit
coin-flipping protocol constructions. This suggests that we use a threshold around $0.75$. Preliminary tests show that using a threshold of $0.75$ or larger  is much slower than a value of $0.7499$. This is because using the larger threshold allows protocols with optimal cheating probabilities (or filter cheating probabilities) of $0.75$ to slip through the filter and these protocols are no better than the one mentioned above (and many are just higher dimensional embeddings of it). Therefore, we use a threshold of $0.7499$. (Tests using a threshold of slightly larger than $0.75$ are considered in Subsection~\ref{ssect:zoningin}.)

Using these ideas, we now state the search algorithm.

\begin{center}
\quad \\
\begin{tabular}{|l|}
\hline \\
\quad \underline{Search algorithm for finding the best protocol parameters} \quad \quad \\
\quad \\
\quad Fix a dimension $d$ and mesh precision $\nu$. \quad \\
\quad For each protocol in the mesh (modulo the symmetry): \quad \\
\quad $\bullet$ Use the Protocol Filter to eliminate (some) protocols with bias above $0.2499$. \quad \\
\quad $\bullet$ Calculate the optimal cheating probabilities by solving the SDPs. \quad \\
\quad \phantom{$\bullet$} If any are larger than $0.7499$, move on to the next protocol. \quad \\
\quad \phantom{$\bullet$} Else, output the protocol parameters with bias $\epsilon < 1/4$. \\
\quad \\
\hline
\end{tabular}
\quad \\
\quad \\
\end{center}

We test the algorithm on the cases of four and six-round protocols and for certain dimensions and precisions for the mesh. These are presented in detail next.


\section{Numerical results}\label{sect:numerical}

\paragraph{Computational Platform.}
We ran our programs on Matlab, Version 7.12.0.635, on an SGI XE C1103 with 2x 3.2 GHz 4-core Intel X5672 x86 CPUs processor, and 10 GB memory, running Linux.

We solved the semidefinite programs using SeDuMi 1.3, a program for solving semidefinite programs in Matlab \cite{Stu99, Stu02}.

Sample programs can be found at the following link: \\ \quad \\ 
\url{http://www.math.uwaterloo.ca/~anayak/coin-search/}

\subsection{Four-round search}

We list the filter cheating strategies in Tables~\ref{4round_running_times1} and \ref{4round_running_times2}
which also give an estimate of how long it
takes the program to compute the success probability for each strategy based
on the average over $1000$ random instances (i.e. four randomly chosen probability vectors $\alpha_0$, $\alpha_1$, $\beta_0$, and $\beta_1$.)

\begin{sidewaystable}
\caption{Average running times for filter strategies for a $4$-round protocol when $d=5$ over
random protocol states (1 of 2).}
\label{4round_running_times1}
\quad
\begin{center}
\begin{tabular}{|r|c|c|}
\hline
\comment{Strategy Description & } Success Probability & Comp. Time (s) & Code \\
\hline
\hline
\quad & \quad & \quad \\
\comment{Bob's ignoring strategy & } $\half + \half \sqrt{\rF(\beta_0, \beta_1)}$
&            $0.000034429$               & F1 \\
\quad & \quad & \quad \\
\comment{Bob's measuring strategy & } $\half + \half \Delta(\alpha_0, \alpha_1)$
&       $0.000004640$                & F2 \\
\quad & \quad & \quad \\
\comment{Alice's three-round strategy & } $\left( \half + \half \sqrt{\rF(\alpha_0, \alpha_1)} \right) \left( \half + \frac{1}{2} \Delta(\beta_0, \beta_1) \right)$
&           $0.000025980$              & F3 \\
\quad & \quad & \quad \\
\comment{Bob's returning strategies & } \quad $\half \sum_{a \bit} \rF(\alpha_a, \beta_a)$
&      $0.000023767$            & F4 \\
\quad & \quad & \quad \\
\comment{ & } \quad $\half \sum_{a \bit} \rF(\alpha_a, \beta_{\bar{a}})$
&            $0.000018019$           & F5 \\
 \quad & \quad & \quad \\
\comment{Alice's eigenstrategy & } $\half \lambda_{\max} \left( \left( \displaystyle\sum_{y: \beta_{0,y} \geq \beta_{1,y}} \beta_{0,y} \right) \sqrtt{\alpha_{0}} + \left( \displaystyle\sum_{y: \beta_{0,y} < \beta_{1,y}} \beta_{1,y} \right) \sqrtt{\alpha_{1}} \right)$
&      $0.000036613$          & F6  \\
($\sqrt{v}$ is the principal eigenvector) & & \\
\quad & \quad & \quad \\
\comment{Bob's eigenstrategy lower bound & } $\frac{1}{2} \sum_{x \in A} \lambda_{\max} \left( \sum_{a \bit} \alpha_{a,x} \sqrtt{\beta_a}  \right)$
&            $0.000073010$            & F7  \\
($\sqrt{v_x}$ is the principal eigenvector for each $x \in A$) & & \\
\quad & \quad & \quad \\
\comment{Bob's eigenstrategy & } $\half \sum_{a \bit} \rF\! \left( \sum_{x \in A} \alpha_{a,x} (v_x), \beta_a \right)$
&    $0.000697611$       & F8 \\
\quad & \quad & \quad \\
\comment{Bob's eigenstrategy & } $\half \sum_{a \bit} \rF\! \left( \sum_{x \in A} \alpha_{a,x} (v_x), \beta_{\bar a} \right)$
&       $0.000532954$    & F9 \\
& & \\
\hline
\end{tabular}
\end{center}
\end{sidewaystable}

\begin{sidewaystable}
\caption{Average running times for filter strategies for a $4$-round protocol when $d=5$ over random protocol states (2 of 2).}
\label{4round_running_times2}
\quad
\begin{center}
\begin{tabular}{|r|c|c|}
\hline
\comment{Strategy Description & } Success Probability & Comp. Time (s) & Code \\
\hline
\hline
\quad & \quad & \quad \\
\comment{Alice's improved eigenstrategy & } $\sum_{y \in B} \conc \set{ \frac{1}{2} \beta_{0,{y}} \rF(\cdot, \alpha_0), \frac{1}{2} \beta_{1,{y}} \rF(\cdot, \alpha_1)}(v)$
&        $0.122971205$            & F10 \\
\quad & \quad & \quad \\
\comment{Alice's improved eigenstrategy & } $\sum_{y \in B} \conc \set{ \frac{1}{2} \beta_{1,{y}} \rF(\cdot, \alpha_0), \frac{1}{2} \beta_{0,{y}} \rF(\cdot, \alpha_1)}(v)$
&         $0.123375678$           & F11 \\
\quad & \quad & \quad \\
\comment{Alice's optimal strategy & } $P_{\rA,0}^*$
&             $0.149814373$          & SDPA0 \\
\quad & \quad & \quad \\
\comment{Bob's implicit strategies & } $\dfrac{1}{2 P_{\rA,0}^*}$
&      $0.000000947$                & F12 \\
\quad & \quad & \quad \\
\comment{Bob's optimal strategies & } $P_{\rB,0}^*$
&            $0.070846378$          & SDPB0 \\
\quad & \quad & \quad \\
\comment{Alice's optimal strategy & } $P_{\rA,1}^*$
&        $0.149176117$               & SDPA1 \\
\quad & \quad & \quad \\
\comment{Bob's implicit strategies & } $\dfrac{1}{2 P_{\rA,1}^*}$
&      $0.000000760$                & F13 \\
\quad & \quad & \quad \\
\comment{ & } $P_{\rB,1}^*$
&         $0.070479449$          & SDPB1 \\
\quad & \quad & \quad \\
\hline
\end{tabular}
\end{center}
\end{sidewaystable}

Notice the two strategies with
codes F1 and F2 are special because they only involve two of the four probability distributions. Preliminary tests show that first generating $\beta_0$ and $\beta_1$ and checking with F1 is much faster than first generating $\alpha_0$ and $\alpha_1$ and checking with F2, even though F2 is much faster to compute.

We can similarly justify the placement of $P_{\rA,0}^*$ before $P_{\rB,0}^*$ or $P_{\rB,1}^*$. The strategies F8 and F9 perform very well and the cheating probabilities are empirically very close to $P_{\rB,0}^*$ and $P_{\rB,1}^*$. Thus, if a protocol gets through the F8 and F9 filter strategies, then  it is likely that $P_{\rB,0}^*$ and $P_{\rB,1}^*$ are also less than $0.7499$. This is why we place $P_{\rA,0}^*$ first (although it will be  shown that the order of solving the SDPs does not matter much).

Recall from Subsection~\ref{SDPvsSOCP} that we solve for $P_{\rB,0}^*$, $P_{\rB,1}^*$, $P_{\rA,0}^*$, and $P_{\rA,1}$ using the semidefinite programming formulations of the reduced problems.

We then give tables detailing how well the filter performs for
four-round protocols, by counting the number of protocols that are
\emph{not\/} determined to have bias greater than~$0.2499$ by each prefix
of cheating strategies. We test four-round protocols with message
dimension $d \in \set{2, \ldots, 9}$ and precision $\nu$ ranging up to $1/2000$ (depending on $d$).

\begin{sidewaystable}
\caption{The number of protocols that get past each strategy in the filter for $d=2$.}
\quad
\begin{center}
\begin{tabular}{|r||r|r|r|r|r|r|r|r|r|r|r|r|r|r|r|r|}
\hline
$d = 2$ 	&  \comment{p = 1/100 &  		 p = 1/200 			 			& p = 1/350 &}			 $\nu = 1/500$ &	$\nu = 1/1000$ & $\nu = 1/1250$ & $\nu = 1/1500$ & $\nu = 1/2000$ 	\\
\hline
\hline
Protocols 	& \comment{1.46e+04 			& 1.04e+08 &	 1.63e+09 &}	 $6.30 \; \e \! + \! 10$ & $1.00 \; \e \! + \! 12$ & 	$2.44 \; \e \! + \! 12$ & $5.07 \; \e \! + \! 12$ & $1.60 \; \e \! + \! 13$ \\
\hline
Symmetry & \comment{6,765,201  & 104,060,401  & 959,512,576 &}   $3,969,126,001$  & $63,001,502,001$ &  $153,566,799,376$  & $318,097,128,001$  & $1,004,006,004,001$ \\
\hline
F1 &    \comment{195,075  & 2,713,466  & 23,851,520 &} $96,706,535$  & $1,499,479,974$  & $3,636,609,280$  & $7,506,289,309$  & $23,607,143,560$ \\
\hline
F2 &   \comment{144,375  & 2,021,600  & 17,820,880 &}  $72,336,875$  & $1,123,112,000$  & $2,724,552,320$  & $5,624,716,125$  & $17,693,560,000$ \\
\hline
F3 &                   \comment{0      &             0  & 3 &}   $5$ & $27$ & $50$ & $67$ & $124$ \\
\hline
F4 &                   \comment{0      &             0      &             0  &}                 $0$     &              $0$      &             $0$      &             $0$       &            $0$ \\
\hline
F5 &                   \comment{0      &             0      &             0  &}                 $0$     &              $0$      &             $0$      &             $0$       &            $0$ \\
\hline
F6 &                   \comment{0      &             0      &             0  &}                 $0$     &              $0$      &             $0$      &             $0$       &            $0$ \\
\hline
F7 &                   \comment{0      &             0      &             0  &}                 $0$     &              $0$      &             $0$      &             $0$       &            $0$ \\
\hline
F8 &                   \comment{0      &             0      &             0  &}                 $0$     &              $0$      &             $0$      &             $0$       &            $0$ \\
\hline
\end{tabular}
\end{center}

\caption{The number of protocols that get past each strategy in the filter for $d=3$.}
\quad
\begin{center}
\begin{tabular}{|r||r|r|r|r|r|r|r|r|r|r|r|r|r|r|r|r|}
\hline
$d = 3$ 	& $\nu = 1/5$ & $\nu = 1/10$ & $\nu = 1/20$ & $\nu = 1/30$ & $\nu = 1/50$ \\		
\hline
\hline
Protocols 	&  $1.94 \; \e \! + \! 05$ &  $1.89 \; \e \! + \! 07$ 	& $2.84 \; \e \! + \! 09$ 	& $6.05 \; \e \! + \! 10$ & $3.09 \; \e \! + \! 12$ \\
\hline
Symmetry &   $4,356$ &  $272,484$ &  $29,430,625$ &  $55,436,7025$ &  $25,475,990,544$ \\
\hline
F1 &  $1,254$ &  $37,584$ &  $2,175,425$ &  $30,985,220$  & $1,020,080,292$ \\
\hline
F2  & $665$ &  $19,656$ &  $1,300,042$  & $19,366,256$ & $662,158,728$ \\
\hline
F3  & $49$ &  $470$ & $22,282$ &  $225,098$  & $4,414,994$ \\
\hline
F4  & $29$ &  $261$ &  $11,667$  & $110,931$ &  $2,028,518$ \\
\hline
F5  & $28$ &  $258$  & $11,495$  & $109,515$ &  $2,009,141$ \\
\hline
F6  & $28$ &  $241$ &  $10,405$  & $96,464$  & $1,765,114$ \\
\hline
F7   &                $0$    &    $3$  & $54$  & $148$  & $1,158$ \\
\hline
F8    &               $0$        &           $0$           &        $0$      &             $0$      &             $0$ \\
\hline
\end{tabular}
\end{center}
\end{sidewaystable}

\begin{sidewaystable}
\caption{The number of protocols that get past each strategy in the filter for $d=4$.}
\quad
\begin{center}
\begin{tabular}{|r||r|r|r|r|r|r|r|r|r|r|r|r|r|r|r|r|}
\hline
$d = 4$	& \comment{$\nu = 1/8$		&} $\nu = 1/10$ 			& $\nu = 1/12$ 			& $\nu = 1/16$ &  $\nu = 1/20$ & $\nu = 1/24$ & $\nu = 1/30$	\\
\hline
\hline
Protocols 	  & $  \comment{7.41e+08 	 $ & $ }  6.69 \; \e \! + \! 09 	 $ & $  4.28 \; \e \! + \! 10 	 $ & $  8.81 \; \e \! + \! 11  $ & $   	9.83 \; \e \! + \! 12  $ & $  7.31 \; \e \! + \! 13  $ & $  8.86 \; \e \! + \! 14	 $ 	\\
\hline
Symmetry   & $  \comment{2,053,489  $ & $ }  13,498,276  $ & $   74,166,544  $ & $   1,154,640,400  $ & $   10,334,552,281  $ & $   69,927,455,844  $ & $   736,486,643,344  $ \\
\hline
F1   & $   \comment{490,086  $ & $ }  2,432,188   $ & $  12,616,580   $ & $  146,114,000  $ & $   934,856,164  $ & $   5,916,006,936  $ & $      49,798,933,264  $ \\
\hline
F2   & $   \comment{183,312  $ & $ }  1,036,030   $ & $  5,616,810   $ & $  71,246,700  $ & $   489,282,376  $ & $   3,170,626,956  $ & $       27,760,130,976 $  \\
\hline
F3   & $   \comment{10,532  $ & $ }  66,623   $ & $  302,547   $ & $  3,185,895  $ & $   19,670,642  $ & $   101,703,667  $ & $      738,284,522 $  \\
\hline
F4   & $   \comment{7,732  $ & $ }  46,734   $ & $  209,747   $ & $  2,061,868  $ & $   12,000,187  $ & $   59,503,895  $ & $      406,963,112 $  \\
\hline
F5   & $   \comment{7,709  $ & $ }  46,531   $ & $  208,961   $ & $  2,054,891  $ & $   11,962,104  $ & $   59,353,374  $ & $     406,099,637 $  \\
\hline
F6   & $   \comment{7,071  $ & $ }  42,591   $ & $  198,192   $ & $  1,886,782  $ & $   11,004,125  $ & $   54,702,075  $ & $      367,847,304 $  \\
\hline
F7   & $   \comment{83  $ & $ }  329  $ & $   756   $ & $  3,439   $ & $  17,144  $ & $   55,929  $ & $      190,699 $ \\
\hline
F8   & $                    \comment{0  $ & $ }               0  $ & $                   0  $ & $                  0  $ & $                   0  $ & $                   0  $ & $  0 $ \\
\hline
\end{tabular}
\end{center}

\caption{The number of protocols that get past each strategy in the filter for $d=5$.}
\begin{center}
\begin{tabular}{|r||r|r|r|r|r|r|r|r|r|r|r|r|r|r|r|r|}
\hline
$d = 5$ 	  &   $\nu = 1/5$ 			  &    $\nu = 1/8$	  &   $\nu = 1/10$   &   $\nu = 1/12$	\\
\hline
\hline
Protocols 	  & $  2.52 \; \e \! + \! 08	 $ & $   6.00 \; \e \! + \! 10  $ & $  1.00 \; \e \! + \! 12  $ & $  1.09 \; \e \! + \! 13	 $  \\
\hline
Symmetry   & $   240,100   $ & $  29,539,225  $ & $   284,529,424  $ & $   2,485,919,881 $  \\
\hline
F1   & $   105,840  $ & $   9,467,770  $ & $   66,257,504  $ & $   567,544,997 $  \\
\hline
F2   & $   37,584  $ & $   2,687,906   $ & $  22,774,544  $ & $   203,983,360 $  \\
\hline
F3   & $   8,561  $ & $   241,420  $ & $   2,440,765  $ & $   17,794,655 $  \\
\hline
F4   & $   7,423  $ & $   201,569  $ & $   1,937,298   $ & $  13,682,059 $  \\
\hline
F5   & $   7,417   $ & $  200,965   $ & $  1,933,833  $ & $   13,665,087 $  \\
\hline
F6   & $   7,417   $ & $  189,144  $ & $   1,790,144  $ & $   13,117,165 $  \\
\hline
F7   & $                   0  $ & $  1,415   $ & $  10,790  $ & $   43,459 $  \\
\hline
F8       & $               0     $ & $                0       $ & $              0          $ & $           0 $  \\
\hline
\end{tabular}
\end{center}
\end{sidewaystable}

\begin{sidewaystable}
\caption{The number of protocols that get past each strategy in the filter for $d=6$.}
\quad
\begin{center}
\begin{tabular}{|r||r|r|r|r|r|r|r|r|r|r|r|r|r|r|r|r|}
\hline
$d = 6$ 	  &   $\nu = 1/7$   &   $\nu = 1/8$   &   $\nu = 1/9$   &   $\nu=1/10$   &   $\nu = 1/11$   &   $\nu = 1/12$ \\
\hline
\hline
Protocols   & $   3.93 \; \e \! + \! 11  $ & $  2.74 \; \e \! + \! 12  $ & $  1.60 \; \e \! + \! 13  $ & $  8.13 \; \e \! + \! 13  $ & $  3.64 \; \e \! + \! 14  $ & $  1.46 \; \e \! + \! 15 $  \\
\hline
Symmetry   & $   53,144,100   $ & $  265,950,864    $ & $  1,021,825,156   $ & $  3,534,302,500   $ & $  12,577,398,201   $ & $  46,107,255,076 $  \\
\hline
F1   & $   25,070,310   $ & $  107,583,876    $ & $  387,459,886   $ & $  1,034,786,700   $ & $  3,605,814,648   $ & $  13,370,558,568 $  \\
\hline
F2   & $   7,276,924   $ & $  23,294,007    $ & $  123,246,328   $ & $  287,251,218   $ & $  1,330,224,696   $ & $  3,841,063,848  $ \\
\hline
F3   & $   1,744,038   $ & $  2,811,374    $ & $  25,114,451   $ & $  42,503,208   $ & $  258,455,916   $ & $  468,218,324  $ \\
\hline
F4   & $   1,551,522   $ & $  2,526,900    $ & $  21,682,087   $ & $  36,628,517   $ & $  214,823,642   $ & $  390,846,158  $ \\
\hline
F5   & $   1,550,617   $ & $  2,524,052    $ & $  21,666,437   $ & $  36,594,682   $ & $  214,698,072   $ & $  390,649,931 $  \\
\hline
F6   & $   1,451,038   $ & $  2,419,474    $ & $  20,598,749   $ & $  34,117,986   $ & $  203,605,433   $ & $  377,899,946  $ \\
\hline
F7   & $  9,169  $ & $    13,976  $ & $   57,720  $ & $   174,118  $ & $   526,077  $ & $   1,153,864  $  \\
\hline
F8   & $       0                  $ & $   0                 $ & $    0                 $ & $    0                 $ & $    0                $ & $     0 $  \\
\hline
\end{tabular}
\end{center}

\caption{The number of protocols that get past each strategy in the filter for $d=7$.}
\quad
\begin{center}
\begin{tabular}{|r||r|r|r|r|r|r|r|r|r|r|r|r|r|r|r|r|}
\hline
$d = 7$ 	  &   $\nu = 1/5$   &   $\nu=1/6$   &   $\nu = 1/7$   &   $\nu = 1/8$   &   $\nu = 1/9$   &   $\nu = 1/10$ \\
\hline
\hline
Protocols   & $  4.55 \; \e \! + \! 10  $ & $
7.28 \; \e \! + \! 11  $ & $  8.67 \; \e \! + \! 12  $ & $  8.13 \; \e \! + \! 13  $ & $  6.27 \; \e \! + \! 14  $ & $  4.11 \; \e \! + \! 15  $  \\
\hline
Symmetry   & $   3,709,476  $ & $   46,963,609  $ & $   289,374,121  $ & $   1,730,643,201  $ & $   7,402,021,225  $ & $   30,490,398,225  $ \\
\hline
F1   & $   2,270,754  $ & $   26,952,849  $ & $   161,111,181  $ & $   841,297,023  $ & $   3,456,456,125  $ & $   10,915,707,495  $ \\
\hline
F2   & $   495,180  $ & $   3,154,266  $ & $   36,330,756  $ & $   136,788,372  $ & $   851,509,125  $ & $   2,419,940,743 $  \\
\hline
F3   & $   149,806  $ & $   369,434  $ & $   10,277,699  $ & $   20,469,535  $ & $   216,148,269  $ & $   449,464,967  $ \\
\hline
F4   & $   142,255  $ & $   351,290  $ & $   9,583,747  $ & $   19,200,670  $ & $   197,250,330  $ & $   409,366,494 $  \\
\hline
F5   & $   142,241  $ & $   351,219  $ & $   9,582,215  $ & $   19,194,692  $ & $   197,214,454  $ & $   409,185,885 $  \\
\hline
F6   & $   142,241  $ & $   351,219  $ & $   9,034,728  $ & $   18,734,072  $ & $   187,977,589  $ & $   383,402,064 $  \\
\hline
F7   & $                   0  $ & $                   0  $ & $   60,155  $ & $   91,787  $ & $   512,171  $ & $   1,804,382 $  \\
\hline
F8   & $                   0  $ & $                   0  $ & $                   0  $ & $                   0  $ & $                   0  $ & $                   0  $ \\
\hline
\end{tabular}
\end{center}
\end{sidewaystable}

\begin{sidewaystable}
\caption{The number of protocols that get past each strategy in the filter for $d=8$.}
\quad
\begin{center}
\begin{tabular}{|r||r|r|r|r|r|r|r|r|r|r|r|r|r|r|r|r|}
\hline
$d = 8$ 	  &   $\nu = 1/4$   &   $\nu = 1/5$   &   $\nu = 1/6$ &  $\nu = 1/7$  &  $\nu = 1/8$  &  $\nu = 1/9$ \\
\hline
\hline
Protocols 	   & $  1.18 \; \e \! + \! 10  $ & $  3.93 \; \e \! + \! 11  $ & $  8.67 \; \e \! + \! 12  $ & $  1.38 \; \e \! + \! 14  $ & $  1.71 \; \e \! + \! 15  $ & $   1.71 \; \e \! + \! 16 $   \\
\hline
Symmetry    & $    1,572,516  $ & $   11,532,816  $ & $   179,345,664  $ & $   1,293,697,024  $ & $   9,018,161,296  $ & $  42,352,405,209 $  \\
\hline
F1   & $   1,054,614  $ & $   7,797,216  $ & $   115,131,024  $ & $   814,855,040  $ & $   5,050,850,268  $ & $  23,061,817,617 $  \\
\hline
F2    & $   60,552  $ & $   1,356,936  $ & $   9,766,192  $ & $   142,862,430  $ & $   606,597,735  $ & $  4,417,668,742 $  \\
\hline
F3    & $                   0  $ & $   431,956  $ & $   1,254,420  $ & $   44,457,239  $ & $   106,851,420  $ & $  1,276,499,496 $  \\
\hline
F4    & $                   0  $ & $   417,759  $ & $   1,213,728  $ & $   42,541,702  $ & $   102,719,851  $ & $  1,204,238,273 $  \\
\hline
F5    & $                   0  $ & $   417,741  $ & $   1,213,629  $ & $   42,539,430  $ & $   102,710,139  $ & $  1,204,173,244 $  \\
\hline
F6    & $                   0  $ & $   417,741  $ & $   1,213,629  $ & $   40,425,272  $ & $   101,061,706  $ & $  1,151,097,965  $ \\
\hline
F7    & $                   0  $ & $                   0  $ & $                   0  $ & $   277,225  $ & $   452,792  $ & $  3,194,346  $ \\
\hline
F8    & $                   0  $ & $                   0  $ & $                   0  $ & $                   0  $ & $                   0  $ & $  0 $  \\
\hline
\end{tabular}
\end{center}

\caption{The number of protocols that get past each strategy in the filter for $d=9$.}
\quad
\begin{center}
\begin{tabular}{|r||r|r|r|r|r|r|r|r|r|r|r|r|r|r|r|r|}
\hline
$d = 9$ 	  &   $\nu = 1/3$   &   $\nu = 1/4$   &   $\nu = 1/5$   &   $\nu = 1/6$   &   $\nu = 1/7$   &   $\nu = 1/8$ \\
\hline
\hline
Protocols   & $  7.41 \; \e \! + \! 08  $ & $  6.00 \; \e \! + \! 10  $ & $  2.74 \; \e \! + \! 12  $ & $  8.13 \; \e \! + \! 13  $ & $  1.71 \; \e \! + \! 15  $ & $  2.74 \; \e \! + \! 16 $  \\
\hline
Symmetry   & $   164,025  $ & $   3,744,225  $ & $   32,069,569  $ & $   594,433,161  $ & $  4,957,145,649  $ & $  39,808,629,441 $  \\
\hline
F1   & $   131,625  $ & $   2,666,430  $ & $   23,348,549  $ & $   414,160,047  $ & $   3,423,681,189  $ & $  24,851,338,155 $  \\
 \hline
F2   & $   14,300  $ & $   115,752  $ & $   3,273,662  $ & $   26,075,045  $ & $   470,028,582  $ & $  2,216,082,560 $  \\
 \hline
F3   & $   2,700   $ & $                  0  $ & $   1,065,271  $ & $   3,484,092  $ & $   153,932,946  $ & $  432,754,976 $  \\
 \hline
F4   & $   2,639   $ & $                  0  $ & $   1,041,339  $ & $   3,405,532  $ & $  149,523,487  $ & $  421,903,500 $  \\
 \hline
F5   & $   2,639   $ & $                  0  $ & $   1,041,317  $ & $   3,405,403  $ & $   149,520,361  $ & $  421,889,260 $  \\
 \hline
F6   & $  2,639   $ & $                  0  $ & $   1,041,317  $ & $   3,405,403  $ & $   142,916,565  $ & $  416,869,327 $  \\
 \hline
F7   & $                   0           $ & $          0   $ & $                  0   $ & $                  0  $ & $   1,053,222  $ & $  1,809,800 $  \\
 \hline
F8   & $                   0           $ & $          0   $ & $                  0   $ & $                  0    $ & $                 0  $ & $  0  $ \\
\hline
\end{tabular}
\end{center}
\end{sidewaystable}


\newpage
\paragraph{Observations on the four-round search} \quad \\

We were able to search larger spaces than feasible with the SDP formulations alone. For example, suppose we took the $2.74 \times 10^{16}$ protocols from the $d = 9$, $\nu = 1/8$ search and checked to see if any of these had bias less than $0.7499$ by solving only the reduced SDPs. Since each SDP takes at least $0.08$ seconds to solve, this search would take at least $69$ million years to finish. By applying the techniques in this paper, we were able to run this search in a matter of days.

We see that symmetry helped dramatically reduce the number of protocols that needed to be tested. In the largest search, we were able to cut the $2.74 \times 10^{16}$ protocols down to $3.98 \times 10^{10}$. F1 and F2 perform very well, together cutting down the number of protocols by a factor of about $10$. An interesting observation is that F2 performs much better than F1, and is also $10$ times faster to compute. It may seem better to put F2 before F1 in the tests, however, we place F1 first since it is beneficial to have the more expensive strategy being computed first. This way, it only needs to be computed for every choice of $\beta_0$ and $\beta_1$. If we were to calculate F2 first, we would have to calculate F1 on $(\alpha_0, \alpha_0, \beta_0, \beta_1)$ for every $\alpha_0, \alpha_1$ that F2 did not filter out.

Being the first strategy to rely on all four probability distributions, F3 performs very well by reducing the number of protocols by another factor of $10$. F4, F5, and F6 do not perform well (F5 being the same as F4 but with $\beta_0$ swapped with $\beta_1$);
they cut down the number of protocols by a very small number. F7 and F8 perform so well that no SDPs were  needed to be solved.

These numbers suggest a conjecture along the lines of
\[ \min_{\alpha_0, \alpha_1, \beta_0, \beta_1 \in \prob^9} \max \set{\mathrm{F1}, \ldots, \mathrm{F8}} \geq 0.7499. \]
However, we shall soon see computational evidence in
Subsection~\ref{ssect:zoningin} showing that this may not be true if we replace $0.7499$ with $0.75$ and conduct zoning-in searches with
much higher precision.

\subsection{Six-round search}

We list the filter cheating strategies in the tables on the next two pages and give an estimate for how long it
takes to compute the success probability for each strategy by taking the average over $1000$ random instances. We then give tables of how well the filter performs for six-round protocols with $d \in \set{2, 3}$ and $\nu$ as small as $1/15$ for $d=2$ and $1/4$ for $d=3$.
The measure of performance of the filter that we use is as before.
For each prefix of cheating strategies in the filter, we
count the number of protocols in the mesh that are \emph{not\/} determined
to have bias greater than~$0.2499$ by that prefix.

\begin{sidewaystable}
\caption{Average running times for filter strategies in a six-round protocol for $d = 3$ over random protocol states (1 of 2).}
\label{6round_running_times1} 
\quad
\begin{center}
\begin{tabular}{|r|c|c|c|}
\hline
\comment{Strategy Description  $ & $  } Success Probability   &   Comp. Time (s)   &   Code \\
\hline
\hline
\quad   &   \quad   &   \quad \\
$\half + \half \sqrt{\rF(\beta_0, \beta_1)}$
  & $      0.000036128   $ &   G1 \\
 \quad   &   \quad   &   \quad \\
$\half + \half \Delta( \tr_{A_2}(\alpha_0), \tr_{A_2}(\alpha_1))$
  & $      0.000005552             $ &   G2  \\
\quad   &   \quad   &   \quad \\
$\half \lambda_{\max} \left( \kappa \sqrtt{\tr_{B_2} (\beta_0)} + \zeta \sqrtt{\tr_{B_2} (\beta_1)} \right)$
  & $    0.000015667      $ &   G3 \\
where $\kappa  := \sum_{x: \alpha_{0,x} \geq \alpha_{1,x}} \alpha_{0,x}$ and $\zeta := \sum_{x: \alpha_{0,x} < \alpha_{1,x}} \alpha_{1,x}$   &     &   \\
 \quad   &   \quad   &   \quad \\
$\left( \half + \half \sqrt{\rF(\tr_{A_2}(\alpha_0), \tr_{A_2}(\alpha_1))} \right) \left( \half + \half \Delta (\tr_{B_2}(\beta_0), \tr_{B_2}(\beta_1)) \right)$
  & $     0.000028408    $ &   G4  \\
 \quad   &   \quad   &   \quad \\
$\half \lambda_{\max} \left( \left( \displaystyle\sum_{y: \beta_{0,y} \geq \beta_{1,y}} \beta_{0,y} \right) \sqrtt{\alpha_{0}} + \left( \displaystyle\sum_{y: \beta_{0,y} < \beta_{1,y}} \beta_{1,y} \right) \sqrtt{\alpha_{1}} \right)$
  & $         0.000052325          $ &   G5  \\
 \quad   &   \quad   &   \quad \\
$\half \lambda_{\max} \left( \eta' \sqrtt{\tr_{A_2} (\alpha_{0})} + \tau' \sqrtt{\tr_{A_2} (\alpha_{1})} \right)$
  & $         0.000044243          $ &   G6  \\
 \quad   &   \quad   &   \quad \\
where
$\eta' := \sum_{y_1 \in B_1: [\tr_{B_2}(\beta_{0})]_{y_1} \geq  [\tr_{B_2}(\beta_{1})]_{y_1}}  [\tr_{B_2}(\beta_{0})]_{y_1}$,    &     &   \\
and
$\tau' := \sum_{y_1 \in B_1:  [\tr_{B_2}(\beta_{0})]_{y_1} <  [\tr_{B_2}(\beta_{1})]_{y_1}}  [\tr_{B_2}(\beta_{1})]_{y_1}$   &     &   \\
\hline
\end{tabular}
\end{center}
\end{sidewaystable}

\begin{sidewaystable}
\caption{Average running times for filter strategies in a six-round protocol for $d = 3$ over
random protocol states (2 of 2).}
\label{6round_running_times2} 
\quad
\begin{center}
\begin{tabular}{|r|c|c|c|}
\hline
\comment{Strategy Description  $ & $  } Success Probability   &   Comp. Time (s)   &   Code \\
\hline
\hline
 \quad   &   \quad   &   \quad \\
$\half \sum_{a \in A'_0} \rF\! \left( \sum_{x \in A} \alpha_{a,x} \tilde{p_2}^{(x)}, \beta_a \right) $
  & $      0.000879119    $ &   G7 \\
 \quad   &   \quad   &   \quad \\
\comment{Alice's improved eigenstrategy  $ & $  } $\half \sum_{a \in A'_0} \rF\! \left( \sum_{x \in A} \alpha_{a,x} \tilde{p_2}^{(x)}, \beta_{\bar a} \right) $
  & $     0.000797106     $ &   G8 \\
where $\tilde{p_2}^{(x)}$ is as defined in Theorem~\ref{BobFilter}
 \quad   &   \quad   &   \quad \\
 \quad   &   \quad   &   \quad \\
\comment{Alice's improved eigenstrategy  $ & $  } $\sum_{y \in B} \conc \set{ \frac{1}{2} \beta_{0,{y}} \rF(\cdot, \alpha_0), \frac{1}{2} \beta_{1,{y}} \rF(\cdot, \alpha_1)}(v)$
  & $    0.256377981      $ &   G9 \\
 \quad   &   \quad   &   \quad \\
\comment{Alice's improved eigenstrategy  $ & $  } $\sum_{y \in B} \conc \set{ \frac{1}{2} \beta_{1,{y}} \rF(\cdot, \alpha_0), \frac{1}{2} \beta_{0,{y}} \rF(\cdot, \alpha_1)}(v)$
  & $    0.249946219      $ &   G10 \\
($\sqrt{v}$ is the principal eigenvector of the matrix in G5)
 \quad   &   \quad   &   \quad \\
 \quad   &   \quad   &   \quad \\
\comment{Bob's optimal strategies  $ & $  } $P_{\rB,0}^*$
  & $     0.164744870    $ &   SDPB0 \\
 \quad   &   \quad   &   \quad \\
\comment{Bob's strategy from Kitaev  $ & $  } $\dfrac{1}{2 P_{\rB,0}^*}$
  & $    0.000000996     $ &   G11 \\
 \quad   &   \quad   &   \quad \\
\comment{Alice's optimal strategy  $ & $  } $P_{\rA,0}^*$
  & $    0.276034548     $ &   SDPA0 \\
 \quad   &   \quad   &   \quad \\
\comment{ $ & $  } $P_{\rB,1}^*$
  & $     0.162818974     $ &   SDPB1 \\
 \quad   &   \quad   &   \quad \\
\comment{Bob's strategy from Kitaev  $ & $  } $\dfrac{1}{2 P_{\rB,1}^*}$
  & $   0.000001075      $ &   G12 \\
 \quad   &   \quad   &   \quad \\
\comment{Alice's optimal strategy  $ & $  } $P_{\rA,1}^*$
  & $    0.271631913     $ &   SDPA1 \\
\hline
\end{tabular}
\end{center}
\end{sidewaystable}


Again, we choose which strategy to put first, G1 or G2. Preliminary tests show that placing G1 first results
in a much faster search, similar to the four-round case. Even though G5 takes longer to compute than G6, tests show that it is better to have G5 first. We calculate $P^*_{\rB,0}$ before $P^*_{\rA,0}$ since G9 and G10 are close approximations of $P^*_{\rA,0}$ and $P^*_{\rA,1}$, respectively. It will be evident that the order of solving the SDPs does not matter much.

We note here a few omissions as compared to the four-round tests. First, we have removed the two returning strategies, F4 and F5. These did not perform well in the four-round tests and preliminary tests show that they did not perform well in the six-round search either. Also, we do not have all the lower bounds for the eigenstrategies. Preliminary tests show that the lower bounds omitted take just as long or longer to compute than the corresponding upper bound, thus we just use the upper bound in the filter. Also, the marginal probabilities take approximately $5.49 \times 10^{-6}$ seconds to compute which is negligible compared to the other times. Thus, we need not be concerned whether the strategies rely on the full probability distributions or marginal distributions.

\begin{sidewaystable}
\caption{The number of protocols that get past each strategy in the filter for $d=2$.}
\quad
\begin{center}
\begin{tabular}{|r||r|r|r|r|r|r|r|r|r|r|r|r|r|r|r|r|}
\hline
$d = 2$	   &   $\nu = 1/3$   &   $\nu = 1/4$   &   $\nu = 1/5$   &   $\nu = 1/6$   &   $\nu = 1/7$   &   $\nu = 1/8$ \\
\hline
\hline
Protocols    & $    160,000   $ & $  1,500,625  $ & $  9,834,496   $ & $  49,787,136    $ & $   207,360,000   $ & $  7.41 \; \e \! + \! 08 $ \\
\hline
Symmetry & $ 6,400 $ & $ 59,049  $ & $    280,900   $ & $  1,517,824 $ & $    5,683,456  $ & $      19,713,600  $  \\
\hline
G1 & $         3,200  $ & $     20,412   $ & $    82,680  $ & $    389,312  $ & $   1,397,024 $ & $    4,115,880    $  \\
\hline
G2 & $         2,320    $ & $   12,516   $ & $    67,548  $ & $    272,392  $ & $   1,112,228 $ & $    3,057,246    $  \\
\hline
G3 & $         1,725      $ & $  9,627   $ & $    52,424  $ & $    223,034   $ & $   899,450 $ & $  2,526,712      $   \\
\hline
G4 & $          714      $ & $  4,206   $ & $    27,965  $ & $    105,050 $ & $     430,454 $ & $  1,240,106      $   \\
\hline
G5 & $          210     $ & $    684   $ & $     7,743   $ & $    20,373   $ & $   112,435 $ & $    228,274    $   \\
\hline
G6 & $          210    $ & $     684  $ & $      7,743  $ & $     20,373  $ & $    110,401 $ & $   228,274     $   \\
\hline
G7 & $           30     $ & $     48    $ & $    1,285  $ & $      1,856  $ & $     10,979 $ & $ 17,831       $   \\
\hline
G8 & $            0      $ & $     0    $ & $     466   $ & $      164   $ & $     3,427 $ & $  4,620      $   \\
\hline
G9 & $            0     $ & $      0    $ & $     466  $ & $       164   $ & $     3,419 $ & $     4,512   $   \\
\hline
G10 & $            0     $ & $      0    $ & $     466  $ & $       164   $ & $     3,369 $ & $   4,512     $   \\
\hline
SDPB0 & $            0     $ & $      0    $ & $       6    $ & $       0     $ & $     26 $ & $   20     $   \\
\hline
G11 & $            0     $ & $      0    $ & $       6    $ & $       0     $ & $     26 $ & $   20     $   \\
\hline
SDPA0 & $            0     $ & $      0    $ & $       0     $ & $      0     $ & $      0 $ & $   0     $   \\
\hline
\end{tabular}
\end{center}
\end{sidewaystable}

\begin{sidewaystable}
\caption{The number of protocols that get past each strategy in the filter for $d=2$.}
\quad
\begin{center}
\begin{tabular}{|r||r|r|r|r|r|r|r|r|r|r|r|r|r|r|r|r|}
\hline
$d = 2$ 	    &   $\nu = 1/9$   &   $\nu = 1/10$   &   $\nu = 1/11$   &   $\nu = 1/12$   &   $\nu = 1/13$   &   $\nu = 1/14$ &   $\nu = 1/15$  \\
\hline
\hline
Protocols   & $  2.34 \; \e \! + \! 09  $ & $  6.69 \; \e \! + \! 09  $ & $  1.75 \; \e \! + \! 10  $ & $  4.28 \; \e \! + \! 10  $ & $  9.83 \; \e \! + \! 10  $ & $   2.13 \; \e \! + \! 11 $ & $ 4.43 \; \e \! + \! 11 $ \\
\hline
Symmetry & $ 58,247,424 $ & $ 155,276,521 $ & $ 401,080,729 $ & $ 973,502,401 $ & $ 2,052,180,601 $ & $ 4,632,163,600 $ & $ 9,372,176,100 $ \\
\hline
G1 & $ 11,020,608 $ & $ 23,862,815 $ & $ 60,761,918 $ & $ 140,154,892 $ & $ 240,820,116 $ & $ 555,641,840 $ & $ 1,048,452,300 $  \\
\hline
G2 & $ 8,944,136 $ & $ 18,717,210 $ & $ 50,337,094 $ & $ 110,274,108 $ & $ 204,522,468 $ & $ 444,537,964 $  & $ 877,684,860 $ \\
\hline
G3 & $ 7,335,617 $ & $ 15,503,308 $ & $ 41,447,668 $ & $ 93,222,286 $ & $ 167,717,637 $ & $ 380,238,435 $ & $ 739,653,758 $  \\
\hline
G4 & $ 3,477,093 $ & $ 8,534,326 $ & $ 20,503,550 $ & $ 45,888,192 $ & $ 91,991,055 $ & $ 185,971,770 $  & $ 350,105,435 $ \\
\hline
G5 & $ 696,601 $ & $ 1,367,115 $ & $ 3,435,390 $ & $ 6,577,917 $ & $ 12,425,039 $ & $ 23,210,979 $  & $ 43,785,997 $ \\
\hline
G6 & $ 688,613 $ & $ 1,367,115 $ & $ 3,435,390 $ & $ 6,577,917 $ & $ 12,258,117 $ & $ 23,097,713 $  & $ 43,188,099 $ \\
\hline
G7 & $ 57,598 $ & $ 87,303 $ & $ 232,382 $ & $ 355,057 $ & $ 678,384 $ & $ 1,051,339 $  & $ 1,977,185 $ \\
\hline
G8 & $ 17,512 $ & $ 18,105 $ & $ 64,273 $ & $ 86,272 $ & $ 177,297 $ & $ 230,146 $  & $ 479,088 $ \\
\hline
G9 & $ 16,005 $ & $ 15,689 $ & $ 50,847 $ & $ 74,114 $ & $ 143,172  $ & $ 195,858 $  & $ 411,864 $ \\
\hline
G10 & $ 15,875 $ & $ 15,124 $ & $ 49,819 $ & $ 71,439 $ & $ 137,232 $ & $ 185,696 $  & $ 386,741 $ \\
\hline
SDPB0 & $ 68 $ & $ 58 $ & $ 152 $ & $ 126 $ & $ 492 $ & $ 346 $  & $ 594 $ \\
\hline
G11 & $ 68 $ & $ 58 $ & $ 152 $ & $ 126 $ & $ 492 $ & $ 346 $ & $ 594 $  \\
\hline
SDPA0 & $ 0 $ & $ 0 $ & $ 0 $ & $ 0 $ & $ 0 $ & $ 0 $ & $ 0 $  \\
\hline
\end{tabular}
\end{center}
\end{sidewaystable}

\begin{sidewaystable}
\caption{The number of protocols that get past each strategy in the filter for $d=3$.}
\quad
\begin{center}
\begin{tabular}{|r||r|r|r|r|r|r|r|r|r|r|r|r|r|r|r|r|}
\hline
$d = 3$	  & $\nu = 1/2$    &   $\nu = 1/3$   &   $\nu = 1/4$ \\
\hline
\hline
Protocols 	  & $ 4,100,625 $ & $ 741,200,625 $ & $ 60,037,250,625 $ \\
\hline
Symmetry   & $ 68,121 $ & $ 6,395,841 $ & $279,324,369$ \\
\hline
G1        & $ 42,282 $ & $ 5,222,385 $ & $180,500,400$ \\
\hline
G2        & $ 8,748 $ & $ 3,324,650 $ & $86,151,600$ \\
\hline
G3        & $ 5,643 $ & $ 1,958,070 $ & $58,038,667$ \\
\hline
G4         & $ 161 $ & $ 714,393 $ & $30,773,918$ \\
\hline
G5         & $ 0 $ & $ 464,538 $ & $15,310,116$ \\
\hline
G6         & $ 0 $ & $ 464,538 $ & $15,310,116$ \\
\hline
G7        & $ 0 $ & $ 310,518 $ & $6,557,007$ \\
\hline
G8         & $ 0 $ & $ 284,418 $ & $5,447,015$ \\
\hline
G9        & $ 0 $ & $ 284,418 $ & $5,393,911$ \\
\hline
G10       & $ 0 $ & $ 284,418 $ & $5,393,911$ \\
\hline
SDPB0       & $ 0 $ & $ 2,655 $ & $24,012$ \\
\hline
G11       & $ 0 $ & $ 2,655 $ & $24,012$ \\
\hline
SDPA0      & $ 0 $ & $ 0 $ & $0$ \\
\hline
\end{tabular}
\end{center}
\end{sidewaystable}


\newpage
\paragraph{Observations on the six-round search} \quad \\

We first note that the filter does not work as effectively as in the four-round case. The six-round search for $d=3$ ran for about a month. In comparison, all the four-round searches ran in the matter of days.

The symmetry arguments cut down the number of protocols we need to
examine significantly, by a factor of roughly $100$. Note that in the four-round case it was a factor of $1,000,000$ (for the $d=9$ case). This can be explained by the weaker index symmetry in the six-round version.

Cheating strategy G1 cut the number of protocols down by a factor of $10$ with G2 performing less well than the corresponding strategy in the four-round tests. G5 also performed well, but after this, G6 was not much help. G7 and G8 cut down the number of protocols by a factor of $10$ each in the $d=2$ case, but not as much in the $d=3$ case. The next notable strategy was G10, being G9 with $\beta_0$ and $\beta_1$ swapped, which performed very poorly. It seems that the swapped strategies do not help much in the filters, that is, there is not much discrepancy between cheating towards $0$ or $1$. SDPB0 almost filtered out the rest of the protocols, relying on SDPA0 to stop the rest. The implicit strategy from Kitaev's bound, G11, did not perform well after SDPB0 (note that it relies on SDPB0 so it is computed afterwards). Again, we notice that no protocols with bias less than $0.2499$ were found.

We notice that G9 and G10, the improved eigenstrategies for Alice,
hardly filter out any protocols, if any at all, in the low-precision
tests. In these strategies, we compute a value on the concave hull
\[ \conc \left\{ \frac{1}{2} \beta_{0,y} \, \rF(\cdot, \alpha_0), \frac{1}{2} \beta_{1,y} \, \rF(\cdot, \alpha_1) \right\}, \]
for every value of $y$. In the eigenstrategy, we approximate the concave hull with the one of the two that has the larger constant. When we choose these constants according to a coarse mesh, e.g., $\nu =1/3$ or $\nu = 1/4$, the one with the larger constant is a very good approximation of the concave hull. It
appears that, we need finer precisions to bring out the power of this strategy in the filter.

In all our searches, we did not find any protocols with bias less than $0.2499$, and it seems that $1/4$ might be the least bias
achievable by the class of protocols we study. To further test this conjecture, in the next two subsections we present two other kinds
of search.

\subsection{Random offset}

We would like to test more protocols, and also avoid anomalies that may have arisen in the previous tests due to the structure
of the mesh we use and also any special relation
the protocol states may have with each other due to low precision.
The six-round searches take a long time, which restricts the
precision~$\nu$ we can use.
The resulting mesh is also highly structured. We would like to test protocol parameters that do not necessarily have such regular entries.
With this end in mind, we offset all of the values in the search by some random additive term $\delta > 0$. For example, say the entries of $\alpha_0$, $\alpha_1$, $\beta_0$, and $\beta_1$ have been selected from the set $\set{0, \nu, 2\nu, \ldots, 1-\nu, 1}$.
With an offset parameter $\delta \in (0, \nu/2)$, we use the range
\[ \set{\delta, \delta + \nu, \delta + 2\nu, \ldots, \delta + 1 - \nu}. \]
Note that this destroys index symmetry. The simplest way to see this is to consider the $2$-dimensional probability distributions created in this way. They are
\[ \set{ \twovec{\delta}{1-\delta}, \twovec{\delta + \nu}{1 - \delta - \nu}, \twovec{\delta + 2 \nu}{1 - \delta - 2 \nu}, \ldots, \twovec{\delta + 1 - \nu}{\nu - \delta} }. \]
We see that the set of first entries is not the same as the set of second entries when $\delta > 0$. We choose the last entry in each vector to be such that the entries add to $1$. Since we generate all four of the probability distributions in the same manner, we can still apply the symmetry arguments to suppose $\alpha_0$ has the largest entry out of both $\alpha_0$ and $\alpha_1$ and similarly for $\beta_0$ and $\beta_1$.

Table~\ref{table:worstcase} (above) shows how well each strategy in the filter performs in the worst case and Table~\ref{table:averagecase} (on the next page) shows the average case over $100$ random choices of offset parameter $\delta \in [0, 1/100]$.

\begin{table}
\caption{The percentage of protocols that get stopped by each strategy in the worst case over $100$ random instances of offset parameter $\delta$.}
\quad
\begin{center}
\begin{tabular}{|r||r|r|r|r|r|r|r|r|r|r|r|r|r|r|r|r|}
\hline
$d = 2$  &   $\nu = 1/3$  &   $\nu = 1/4$  &   $\nu = 1/5$   &   $\nu = 1/6$ \\
\hline
\hline
G1 &   $71.87 \% $ & $82.35 \% $ & $84.06 \% $ & $86.63 \% $ \\
\hline
G2 &   $17.18 \% $ & $29.80 \% $ & $15.80 \% $ & $24.15 \% $ \\
\hline
G3 &   $8.17 \% $ & $10.73 \% $ & $13.46 \% $ & $12.12 \% $  \\
\hline
G4 &  $51.45 \% $ & $49.68 \% $ & $53.99 \% $ & $48.44 \% $  \\
\hline
G5 &  $70.00 \% $ & $83.29 \% $ & $78.02 \% $ & $82.86 \% $  \\
\hline
G6 &   $   0 \% $ & $0 \% $ & $0 \% $ & $0 \% $  \\
\hline
G7 &  $75.00 \% $ & $92.43 \% $ & $87.32 \% $ & $94.35 \% $  \\
\hline
G8 &   $100 \% $ & $100 \% $ & $49.10 \% $ & $100 \% $  \\
\hline
G9 &     &   & $0 \% $ &    \\
\hline
G10 &     &   & $0 \% $ &     \\
\hline
SDPB0 &     &   & $100 \% $ &    \\
\hline
\end{tabular}
\label{table:worstcase}
\end{center}
\end{table}

\begin{table}
\caption{The percentage of protocols that get stopped by each strategy in the average case over $100$ random instances of offset parameter $\delta$.}
\quad
\begin{center}
\begin{tabular}{|r||r|r|r|r|r|r|r|r|r|r|r|r|r|r|r|r|}
\hline
$d = 2$ 	  &   $\nu = 1/3$   &  $\nu = 1/4$   &   $\nu = 1/5$   &   $\nu = 1/6$ \\
\hline
\hline
G1 &  $85.75 \% $ & $87.30 \% $ & $89.42 \% $ & $90.47 \% $  \\
\hline
G2 &   $17.18 \% $ & $29.80 \% $ & $15.80 \% $ & $24.15 \% $ \\
\hline
G3 &   $10.85 \% $ & $13.15 \% $ & $14.53 \% $ & $12.35 \% $ \\
\hline
G4 &   $62.49 \% $ & $52.53 \% $ & $55.34 \% $ & $53.03 \% $ \\
\hline
G5 &   $70.00 \% $ & $87.11 \% $ & $93.46 \% $ & $93.29 \% $ \\
\hline
G6 &   $   0 \% $ & $0 \% $ & $0 \% $ & $0 \% $ \\
\hline
G7 &   $98.70 \% $ & $99.01 \% $ & $96.58 \% $ & $98.77 \% $ \\
\hline
\end{tabular}
\label{table:averagecase}
\end{center}
\end{table}

\newpage
\paragraph{Observations on the random offset tests} \quad \\

We notice that G6 performs very poorly on these tests. We need finer precision to see the effects of G6 in the filter. Also, G1 performs generally better as the filter precision increases. We see from the previous tables that it should stay at roughly $90 \%$. We see that G5 and G7 perform very well. G7 sometimes filters out the rest (why the average case table only displays up to G7). G8 performs well most of the time, except in the $\nu = 1/5$ case in the worst case table. Few protocols made it past the entire filter, and only SDPB0 needed to be solved of the four SDPs.
No protocols with bias at most~$0.2499$ were found.


\subsection{Computer aided bounds on bias}
\label{sec-cproof}

The search algorithm has the potential to give us computer aided
\emph{proofs\/} that certain coin-flipping protocols have bias within a
small interval. In this section, we describe the kind of bound we can
deduce under the assumption that the software provides us
an independently verifiable upper bound on the additive
error in terms of the objective value.

We begin by showing that any state~$\xi \in \R^D$ of the form used in
the protocols is suitably close to a state given by the mesh used in the
search algorithm. For an integer~$N \ge 1$, let~$\bM_N = \set{j/N :
j \in \Z, 0 \le j \le N }$.

\begin{lemma}
\label{thm-sapprox}
Let~$N \ge 1$ be an integer.
Consider the state~$\xi = \sum_{i = 1}^D \sqrt{\gamma_i}\, e_i$ in~$\R^D$,
where~$\gamma \in \Prob^D$. Then there is a probability
distribution~$\gamma' \in \Prob^D \cap \bM_N^D$ such that the
corresponding state~$\xi' = \sum_{i = 1}^D \sqrt{\gamma'_i} \, e_i$
satisfies~$\xi^*\xi' \ge 1 - D/2N$.
\end{lemma}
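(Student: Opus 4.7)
The plan is to construct $\gamma'$ by a floor-and-redistribute rounding. First I would let $c_i := \lfloor N \gamma_i \rfloor$ for each $i \in \{1, \ldots, D\}$. Since the $N\gamma_i$ sum to $N$ and each loses strictly less than $1$ when floored, the deficit $r := N - \sum_i c_i$ is a non-negative integer with $r \leq D - 1$. I would then pick any set $S$ of $r$ indices and define $\gamma'_i := (c_i + 1)/N$ for $i \in S$ and $\gamma'_i := c_i/N$ for $i \notin S$. By construction $\gamma' \in \Prob^D \cap \bM_N^D$, and $|\gamma_i - \gamma'_i| \leq 1/N$ for every $i$.

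Next, I would invoke the elementary inequality $(\sqrt{a} - \sqrt{b})^2 \leq |a - b|$ valid for all $a, b \geq 0$, which follows from the factorization $|a - b| = (\sqrt{a} + \sqrt{b})\,|\sqrt{a} - \sqrt{b}|$ together with the trivial bound $\sqrt{a} + \sqrt{b} \geq |\sqrt{a} - \sqrt{b}|$. Applying this coordinate-wise and using the previous step yields
\[
\|\xi - \xi'\|_2^2 \;=\; \sum_{i=1}^D \bigl(\sqrt{\gamma_i} - \sqrt{\gamma'_i}\bigr)^2 \;\leq\; \sum_{i=1}^D |\gamma_i - \gamma'_i| \;\leq\; \frac{D}{N}.
\]
Since $\xi$ and $\xi'$ are unit vectors, the polarization identity gives $\xi^* \xi' = 1 - \tfrac{1}{2} \|\xi - \xi'\|_2^2$, so $\xi^* \xi' \geq 1 - D/(2N)$, as claimed.

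There is no real obstacle here: both steps are entirely elementary. The only point that requires a little care is ensuring that $\gamma'$ lies simultaneously in $\Prob^D$ and in $\bM_N^D$, which is precisely why the floor-and-redistribute step is used rather than independent rounding of each coordinate. The square-root inequality $(\sqrt{a}-\sqrt{b})^2 \leq |a-b|$ is what converts a coordinate-wise $\ell_\infty$ defect of order $1/N$ into an inner-product (fidelity) defect of only $D/(2N)$; sharper entry-wise bounds involving $\sqrt{a}+\sqrt{b}$ are available but are not needed to obtain the stated estimate.
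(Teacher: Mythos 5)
Your rounding construction is exactly the one the paper uses (floor each $N\gamma_i$, compute the deficit, add $1/N$ to that many coordinates), and your analysis of why the result lands in $\Prob^D \cap \bM_N^D$ with coordinate-wise error at most $1/N$ is correct, including the implicit observation that no coordinate can overflow past $1$ since $c_i = N$ forces $r = 0$.

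Where you diverge is in converting $\norm{\gamma - \gamma'}_1 \le D/N$ into the inner-product bound. The paper simply invokes the lower Fuchs--van de Graaf inequality $1 - \sqrt{\rF(\gamma,\gamma')} \le \Delta(\gamma,\gamma')$ from Proposition~\ref{thm-fvdg}, together with $\xi^*\xi' = \rF(\gamma,\gamma')^{1/2}$. You instead prove the needed inequality from scratch: the coordinate-wise estimate $(\sqrt{a}-\sqrt{b})^2 \le |a-b|$ gives $\norm{\xi-\xi'}_2^2 \le \norm{\gamma-\gamma'}_1$, and then the polarization identity $\xi^*\xi' = 1 - \tfrac12\norm{\xi-\xi'}_2^2$ (valid because both vectors are real and unit-norm) finishes the job. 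This is in fact a self-contained proof of the classical (commuting) case of the Fuchs--van de Graaf lower bound, so the two arguments are mathematically the same inequality; yours buys self-containment and elementarity, the paper's buys brevity by leaning on a proposition it has already stated. Both yield precisely $\xi^*\xi' \ge 1 - D/(2N)$.
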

\begin{proof}
Let~$\tgamma_i = \floor{\gamma_i N}/N$ for~$i \in \set{1,2,\dotsc,D}$.
Note that~$\sum_{i = 1}^D \tgamma_i \le 1$, and that~$1 - \sum_{i = 1}^D
\tgamma_i = \sum_{i = 1}^D \gamma_i - \sum_{i = 1}^D \tgamma_i = j/N$, for
some~$j \in \set{0, 1, 2, \dotsc, D}$. We may obtain~$\gamma'$ by
adding~$1/N$ to~$j$ coordinates of~$\tgamma$. For concreteness,
let~$\gamma'_i = \tgamma_i + 1/N$ for~$i \in \set{1, 2, \dotsc, j}$
and~$\gamma'_i = \tgamma_i$ for~$i \in \set{j+1, \dotsc, D}$. We
therefore have~$\norm{\gamma - \gamma'}_1 \le D/N$, and
\[
\xi^*\xi' \quad = \quad \rF(\gamma,\gamma')^{1/2}
    \quad \ge \quad 1 - \frac{D}{2N} \enspace,
\]
by Proposition~\ref{thm-fvdg}.
\qed
\end{proof}

The above lemma helps us show that any protocol in the family we
consider is approximated by one given by the mesh.
\begin{lemma}
\label{thm-papprox}
Consider a bit-commitment based coin-flipping protocol~$\calA$ with
bias~$\epsilon$ of the form defined in Section~\ref{family}. Let~$\calA$
be specified by
the~$4$-tuple~$(\alpha_0,\alpha_1,\beta_0,\beta_1)$, where~$\alpha_i,\beta_i
\in \Prob^D$. There is a protocol~$\calA'$ with bias~$\epsilon'$ of the same
form, defined by a~$4$-tuple~$(\alpha'_0,\alpha'_1,\beta'_0,\beta'_1)$,
where~$\alpha'_i,\beta'_i \in \Prob^D \cap \bM_N^D$,
such that~$\size{\epsilon - \epsilon'} \le 2\sqrt{D/N}$.
\end{lemma}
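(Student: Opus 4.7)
The plan is to apply Lemma~\ref{thm-sapprox} independently to each of the four probability distributions~$\alpha_0,\alpha_1,\beta_0,\beta_1$, obtaining mesh approximations~$\alpha'_a,\beta'_b \in \Prob^D \cap \bM_N^D$ with $\sqrt{\rF(\alpha_a,\alpha'_a)} \ge 1 - D/(2N)$ and $\sqrt{\rF(\beta_b,\beta'_b)} \ge 1 - D/(2N)$. A direct computation from the definitions of $\psi$ and $\psi'$ gives $\psi^*\psi' = \tfrac{1}{2}\sum_a \sqrt{\rF(\alpha_a,\alpha'_a)} \ge 1 - D/(2N)$, and analogously $\phi^*\phi' \ge 1 - D/(2N)$. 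Applying Proposition~\ref{thm-fvdg} then yields $\Delta(\psi\psi^*,\psi'{\psi'}^*) \le \sqrt{1-(1-D/(2N))^2} \le \sqrt{D/N}$, and likewise for $\phi$. The same estimate applied to each commitment state $\psi_a$ or $\phi_b$ individually, together with the identity $\|uu^*-vv^*\|_\infty = \sqrt{1-|u^*v|^2}$ for unit vectors and the block-diagonal structure of the measurement projectors in the index~$b$ (resp.\ $a$), gives $\|\Pi_{\rA,c} - \Pi'_{\rA,c}\|_\infty \le \sqrt{D/N}$ and $\|\Pi_{\rB,c} - \Pi'_{\rB,c}\|_\infty \le \sqrt{D/N}$.

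The next step is to show that each of the four cheating probabilities is Lipschitz in the honest party's initial state and in the measurement operator. For $P_{\rB,c}^*$, I would fix an arbitrary cheating strategy for Bob, consisting of an initial state $\xi\xi^*$ and a sequence of unitaries $V_i$. Composed with Alice's honest unitaries, the strategy realizes a fixed overall unitary $W$ on the joint space, so the pre-measurement joint state is $W(\psi\psi^* \otimes \xi\xi^*)W^*$. Replacing $\psi$ by $\psi'$ and using unitary invariance of the trace distance together with its multiplicativity across a fixed tensor factor gives $\Delta(W(\psi\psi^* \otimes \xi\xi^*)W^*, W(\psi'{\psi'}^* \otimes \xi\xi^*)W^*) = \Delta(\psi\psi^*,\psi'{\psi'}^*) \le \sqrt{D/N}$, and hence by (\ref{eqn-trbound}) the probability $\tr((\Pi_{\rA,c}\otimes\id)\cdot)$ changes by at most $\sqrt{D/N}$ regardless of the strategy. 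Swapping next $\Pi_{\rA,c}$ for $\Pi'_{\rA,c}$, with the joint state now fixed, changes the probability by at most $\|\Pi_{\rA,c} - \Pi'_{\rA,c}\|_\infty \le \sqrt{D/N}$ uniformly in the strategy. Taking suprema, $|P_{\rB,c}^* - {P'}_{\rB,c}^*| \le 2\sqrt{D/N}$, and the symmetric argument (swapping the roles of Alice and Bob, and hence of $\psi,\Pi_{\rB,c}$ with $\phi,\Pi_{\rA,c}$) gives $|P_{\rA,c}^* - {P'}_{\rA,c}^*| \le 2\sqrt{D/N}$. Since the bias is the difference of a maximum of these four quantities and $\tfrac12$, the elementary inequality $|\max_i x_i - \max_i y_i| \le \max_i |x_i - y_i|$ yields $|\epsilon-\epsilon'|\le 2\sqrt{D/N}$.

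The main obstacle I anticipate is making the Lipschitz step rigorous: one has to argue that taking a supremum over an unrestricted family of cheating strategies (which can involve ancillary qubits of arbitrary dimension) commutes with the uniform pointwise estimate. The key observation that makes this work is that the bound $\Delta(W(\psi\psi^* \otimes \xi\xi^*)W^*, W(\psi'{\psi'}^* \otimes \xi\xi^*)W^*) \le \sqrt{D/N}$ is independent of $\xi$ and $W$, so the same constant controls the difference of every pair of strategy outcomes, which is exactly the setting in which sup--sup differences are dominated by the pointwise bound. The remaining computations---the exact form of $\psi^*\psi'$, the operator-norm identity for differences of rank-one projectors, and the reduction of the operator norm of a block-diagonal matrix to a maximum over blocks---are routine.
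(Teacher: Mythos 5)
Your proof is correct and follows essentially the same route as the paper's: it uses Lemma~\ref{thm-sapprox} for the four distributions, bounds $\Delta(\psi\psi^*,\psi'(\psi')^*)$ via Proposition~\ref{thm-fvdg} and $\norm{\Pi_{\rA,c}-\Pi'_{\rA,c}}_{\textup{op}}$ via the rank-one projector identity and block-diagonality, then applies the same cheating strategy in both protocols and telescopes the probability difference through the state and measurement. The only presentational difference is that you establish a uniform Lipschitz estimate and invoke the sup--sup and max--max inequalities explicitly, whereas the paper fixes an optimal strategy for one direction ($\epsilon'-\epsilon$) and notes the other direction is symmetric; these are the same argument.
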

\begin{proof}
The statement of the lemma is vacuous if~$1 - D/2N < 0$, we therefore assume $1
- D/2N \ge 0$.
We show that~$\epsilon' \le \epsilon + 2\sqrt{D/N}$. The other
inequality~$\epsilon \le \epsilon' + 2\sqrt{D/N}$ follows similarly.

Without loss in generality, assume that bias~$\epsilon'$ is achieved
when Bob cheats towards~$0$ in protocol~$\calA'$.
Recall
\begin{eqnarray*}
\psi & = & \frac{1}{\sqrt{2}} \left( e_0 \otimes e_0 \otimes \psi_0
            + e_1 \otimes e_1 \otimes \psi_1 \right) \enspace,
            \qquad \textrm{and} \\
\Pi_{\rA,0} & = & \sum_{b \in \set{0,1}} e_b e_b^* \otimes e_b e_b^*
                   \otimes \phi_b \phi_b^* \enspace.
\end{eqnarray*}
Let the probability distributions~$\alpha'_0, \alpha'_1
, \beta'_0, \beta'_1$ and states~$\psi'_0, \psi'_1, \phi'_0, \phi'_1$
corresponding to the
distributions~$\alpha_0, \alpha_1, \beta_0, \beta_1$, respectively,
be the ones guaranteed by Lemma~\ref{thm-sapprox}.  Let
\begin{eqnarray*}
\psi' & = & \frac{1}{\sqrt{2}} \left( e_0 \otimes e_0 \otimes \psi'_0
            + e_1 \otimes e_1 \otimes \psi'_1 \right) \enspace,
            \qquad \textrm{and} \\
\Pi'_{\rA,0} & = & \sum_{b \in \set{0,1}} e_b e_b^* \otimes e_b e_b^*
                   \otimes \phi'_b (\phi'_b)^* \enspace.
\end{eqnarray*}
We have $\psi^* \psi' \geq 1 - \frac{D}{2N}$, by Lemma~\ref{thm-sapprox}, and 
\begin{eqnarray*}
\norm{\psi' (\psi')^* - \psi \psi^*}_*
    & \le & 2 \left( 1 - (\psi^* \psi')^2 \right)^{1/2} \\
    & \le & 2 \sqrt{D/N} \enspace,
\end{eqnarray*}
by Proposition~\ref{thm-fvdg}.
Further,
\begin{eqnarray*}
\norm{\Pi'_{\rA,0} - \Pi_{\rA,0}}_{{\textup{op}}}
    & \le & \max \set{ \norm{\phi'_0 (\phi'_0)^* - \phi_0 \phi_0^*}_{{\textup{op}}},
            \norm{\phi'_1 (\phi'_1)^* - \phi_1 \phi_1^*}_{{\textup{op}}} } \\
    & \le & \sqrt{D/N} \enspace,
\end{eqnarray*}
using the identity~$\norm{ v v^* - u u^* }_{{\textup{op}}}  = \left( 1 - (v^* u)^2
\right)^{1/2}$ for normalized real vectors $v$ and $u$. Here, $\norm{X}_{\textup{op}}$ denotes
the operator norm of~$X$, namely the largest singular value of the matrix $X$.

For this analysis, we assume that the protocol~$\calA'$ is presented
in the form described in Section~\ref{sect:CF}, and the two parties
start with joint initial state~$e_0^{\otimes 4n}$,
apply~$U_{1}, U_{2}, \dotsc, U_{2n}$ alternately, and finally
measure their parts of the system to obtain the output.

Consider Bob's cheating strategy towards~$0$ (which we assumed
achieves bias~$\epsilon'$). As in the proof of
Lemma~\ref{thm-bob-sdp}, it follows that there are spaces~$\calH_i$ and
corresponding unitary operations~$U'_{i}$ on them for even~$i \le 2n$
that characterize his cheating strategy.
When Alice measures~$\zeta' = (U'_{2n} U_{2n-1} U'_{2n-2} \dotsb U_1)
e_0^{\otimes 4n}$, she obtains outcome~$0$ with
probability~$\norm{\Pi'_{\rA,0} \zeta'}^2_{2} = \tfrac{1}{2} + \epsilon'$.
(In the expression for the final
state~$\zeta'$, we assume that the unitary operations extend to the combined
state space by tensoring with identity over the other part.)

We consider the same cheating strategy for Bob in the protocol~$\calA$,
in which Alice starts with the commitment state~$\psi$, and performs the
measurement~$\set{\Pi_{\rA,0}, \Pi_{\rA,1}, \Pi_{\rA,\abort}}.$ \comment{where
\begin{eqnarray*}
\psi & = & \frac{1}{\sqrt{2}} \left( e_0 \otimes e_0 \otimes \psi_0
            + e_1 \otimes e_1 \otimes \psi_1 \right) \enspace,
            \qquad \textrm{and} \\
\Pi_{\rA,0} & = & \sum_{b \in \set{0,1}} e_b e_b^* \otimes e_b e_b^*
                   \otimes \phi_b \phi_b^* \enspace.
\end{eqnarray*}
} 
This
corresponds to a different initial unitary transformation for
Alice instead of~$U_1$. Let~$\zeta$ be the corresponding final
joint state. Note that $\psi$ is mapped to $\zeta$ using the same unitary transformation that maps $\psi'$ to $\zeta'$ since Bob is using the same cheating strategy. The probability of outcome~$0$ is~$\norm{\Pi_{\rA,0}
\zeta}^2_{2} \le \tfrac{1}{2} + \epsilon$, as the protocol~$\calA$
has bias~$\epsilon$. We may bound the difference in probabilities
as follows.
\begin{eqnarray*}
\epsilon' - \epsilon
    & \le & \tr \left(\Pi'_{\rA,0} \zeta' (\zeta')^* \right)
            - \tr\left(\Pi_{\rA,0} \zeta \zeta^* \right) \\
    & = & \tr\left( (\Pi'_{\rA,0} - \Pi_{\rA,0}) \zeta' (\zeta')^* \right)
            + \tr\left(\Pi_{\rA,0} (\zeta' (\zeta')^* - \zeta \zeta^*)
            \right) \\
    & \le & \norm{\Pi'_{\rA,0} - \Pi_{\rA,0}}_{{\textup{op}}} +
            \frac{1}{2} \norm{\zeta \zeta^* - \zeta' (\zeta')^*}_* \qquad
            \textrm{By Eq.~(\ref{eqn-trbound})} \\
    & = & \norm{\Pi'_{\rA,0} - \Pi_{\rA,0}}_{{\textup{op}}} +
            \frac{1}{2} \norm{\psi \psi^* - \psi' (\psi')^*}_* \\
    & \le & 2\sqrt{D/N} \enspace,
\end{eqnarray*}
as claimed.
\qed
\end{proof}

We may infer bounds on classes of protocols using the search algorithm
and the lemma above.
Suppose the computational approximation to the bias obtained by the
algorithm has net additive error~$\tau$ due to the protocol filter and
SDP solver and the finite precision arithmetic used in the computations.
If the algorithm reports that there are no protocols with bias at
most~$\epsilon^*$ given by a mesh with precision parameter~$N$,
then it holds that there are no~$4$-tuples, even
outside the mesh, with bias at most~$\epsilon^* - 2\sqrt{D/N} - \tau$. Here~$D$
is the dimension of Alice's (or Bob's) first $n$ messages (i.e., commitment states used, or
equivalently, the size of the support of an element of the $4$-tuple).

A quick calculation with~$\epsilon^* = 0.2499$ shows that
mesh fineness parameter~$N \ge 2184 \times d$ for four-round protocols
and~$N \ge 2184 \times d^2$ for six-round protocols with
message dimension~$d$, would be
sufficient for us to conclude that such protocols \emph{do not
achieve\/} optimal bias~$\approx 0.2071$. We would then
obtain \emph{computer aided\/} lower bounds for new classes of
bit-commitment based protocols. Thus, a refinement of
the search algorithm that allows finer meshes for messages of larger
dimension and over more rounds would be well worth pursuing.


\subsection{New bounds for four-round qubit protocols}
\label{ssect:newbounds}

We can derive analytical bounds on the bias of four-round protocols using the strengthened Fuchs-van de Graaf inequality for qubit states, below:

\begin{proposition}[\cite{SR01}]
\label{thm-fvdg-qubit}
For any quantum states~$\rho_1,\rho_2 \in \Pos^2$, i.e., qubits, we
have
\[ 1 \quad \leq \quad \Delta (\rho_1, \rho_2) + {\rF(\rho_1, \rho_2)}
\enspace. \]
\end{proposition}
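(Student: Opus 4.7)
The plan is to use the Bloch sphere parametrization. Every qubit state may be written as $\rho_i = \tfrac{1}{2}(I + \vec{r}_i \cdot \vec{\sigma})$ with $|\vec{r}_i| \le 1$ and $\vec{\sigma}$ the vector of Pauli matrices. Let $a = |\vec{r}_1|$, $b = |\vec{r}_2|$, and $c = |\vec{r}_1 - \vec{r}_2|$. I would first derive the closed forms $\Delta(\rho_1,\rho_2) = c/2$ (since $\rho_1 - \rho_2 = \tfrac{1}{2}(\vec{r}_1-\vec{r}_2)\cdot\vec{\sigma}$ has eigenvalues $\pm c/2$) and, using the $2{\times}2$ identity $\rF(\rho_1,\rho_2) = \tr(\rho_1\rho_2) + 2\sqrt{\det(\rho_1)\det(\rho_2)}$, the expression $\rF(\rho_1,\rho_2) = \tfrac{1}{2}(1+\vec{r}_1\cdot\vec{r}_2) + \tfrac{1}{2}\sqrt{(1-a^2)(1-b^2)}$. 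Since $\vec{r}_1\cdot\vec{r}_2 = (a^2+b^2-c^2)/2$, the claim $\Delta + \rF \ge 1$ becomes the polynomial inequality
\[ 2c - c^2 + a^2 + b^2 + 2\sqrt{(1-a^2)(1-b^2)} \ge 2, \qquad (\ast) \]
where $a,b \in [0,1]$ and $|a-b| \le c \le a+b$.

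Next I would exploit concavity. For fixed $a,b$, the left-hand side of $(\ast)$ is a concave quadratic in $c$ (second derivative $-2$), so its minimum on the admissible interval $[|a-b|, a+b]$ is attained at one of the two endpoints. At $c=a+b$, inequality $(\ast)$ reduces after simplification to $\sqrt{(1-a^2)(1-b^2)} \ge (1-a)(1-b)$, which follows from $(1-a^2)(1-b^2) = (1-a)(1-b)(1+a)(1+b)$ together with $(1+a)(1+b) \ge (1-a)(1-b)$ for $a,b \in [0,1]$. At $c=|a-b|$ (say $a \ge b$ without loss of generality), $(\ast)$ reduces to $\sqrt{(1-a^2)(1-b^2)} \ge (1-a)(1+b)$; after squaring and cancelling the common factor $(1-a)(1+b)$, this becomes $(1+a)(1-b) \ge (1-a)(1+b)$, i.e., $a \ge b$.

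The main obstacle is largely bookkeeping: one must handle the boundary cases (e.g.\ when $a = 1$, where $1-a^2$ vanishes and several factors drop out of the factored inequalities) to keep the square-root algebra rigorous. As a possibly cleaner alternative, one could first establish the qubit identity
\[ \rF(\rho_1,\rho_2) + \Delta(\rho_1,\rho_2)^2 = 1 - \bigl(\sqrt{\det\rho_1} - \sqrt{\det\rho_2}\bigr)^2, \]
which follows from the $2{\times}2$ polarization identity $\det(A-B) = \det A + \det B + \tr(AB) - \tr A \, \tr B$ applied to the density matrices. Since $\Delta \le 1$, this reduces the desired inequality to $\Delta(\rho_1,\rho_2)\,(1-\Delta(\rho_1,\rho_2)) \ge (\sqrt{\det\rho_1} - \sqrt{\det\rho_2})^2$, which again yields to the Bloch-sphere parametrization but avoids explicit appearance of the angle $c$.
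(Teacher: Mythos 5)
The paper cites this inequality from~\cite{SR01} without reproducing a proof, so there is no in-paper argument to measure you against; your Bloch-sphere argument is correct and self-contained. The closed forms $\Delta(\rho_1,\rho_2) = c/2$ and $\rF(\rho_1,\rho_2) = \tr(\rho_1\rho_2) + 2\sqrt{\det\rho_1\det\rho_2}$ hold for $2\times 2$ density matrices (the latter because $M = \sqrt{\rho_2}\,\rho_1\sqrt{\rho_2}$ shares the trace and determinant of $\rho_1\rho_2$ and $(\sqrt{\lambda_1}+\sqrt{\lambda_2})^2 = \tr M + 2\sqrt{\det M}$), and substituting $\tr(\rho_1\rho_2) = \tfrac12(1+\vec r_1\cdot\vec r_2)$, $\det\rho_i = \tfrac14(1-|\vec r_i|^2)$, $\vec r_1\cdot\vec r_2 = (a^2+b^2-c^2)/2$ gives precisely your polynomial $(\ast)$. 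Reducing to the endpoints $c\in\{|a-b|,\,a+b\}$ by concavity of $-c^2+2c$ is valid, and both endpoint inequalities factor as you indicate; the boundary cases you flag (e.g.\ $a=1$ or $a=b$) collapse to equalities once factored, so there is no real obstruction. Your alternative route is also correct and arguably cleaner: the $2\times 2$ polarization identity $\det(A-B) = \det A + \det B + \tr(AB) - \tr A\,\tr B$, applied to unit-trace Hermitian matrices, together with $\det(\rho_1-\rho_2) = -\Delta(\rho_1,\rho_2)^2$ for a traceless $2\times 2$ difference, yields the exact qubit identity
\[
\rF(\rho_1,\rho_2) + \Delta(\rho_1,\rho_2)^2 \;=\; 1 - \bigl(\sqrt{\det\rho_1}-\sqrt{\det\rho_2}\bigr)^2 \enspace,
\]
which shows that the Fuchs--van de Graaf bound $\Delta^2 \leq 1-\rF$ is saturated for qubits exactly when the two Bloch vectors have equal length, and reduces the proposition to $\Delta(1-\Delta) \geq (\sqrt{\det\rho_1}-\sqrt{\det\rho_2})^2$ --- an inequality that again yields to the Bloch parametrization without ever introducing the angle between $\vec r_1$ and $\vec r_2$.
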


Recall from Section~\ref{sect:filter} that Bob can cheat in a four-round protocol with probability bounded below by
\begin{equation}
P_{B,0}^* \quad \geq \quad \half + \half \sqrt{\rF(\beta_0, \beta_1)} \label{ineq:Bob1}
\end{equation}
and
\begin{equation}
P_{B,0}^* \quad \geq \quad \half + \half \Delta(\alpha_0, \alpha_1) \label{ineq:Bob2}
\end{equation}
and Alice can cheat with probability bounded below by
\begin{equation}
P_{A,0}^* \quad \geq \quad \left( \half + \half \sqrt{\rF(\alpha_0, \alpha_1)} \right) \left( \half + \half \Delta(\beta_0, \beta_1) \right) \enspace. \label{ineq:alice}
\end{equation}
If $\beta_0, \beta_1 \in \prob^2$, then by~(\ref{ineq:Bob1}) and Proposition~\ref{thm-fvdg-qubit}, we have
$\Delta(\beta_0, \beta_1) \geq 4 P_{B,0}^* (1 - P_{B,0}^* )$
and if $\alpha_0, \alpha_1 \in \prob^2$, then from~(\ref{ineq:Bob2}) and Proposition~\ref{thm-fvdg-qubit}, we have
$\rF(\alpha_0, \alpha_1) \geq 2 - 2 P_{B,0}^*$. 
Combining these two bounds with~(\ref{ineq:alice}), we get
\[ 4 P_{A,0}^* \quad \geq \quad \left( 1+ \sqrt{2-2 P_{B,0}^*} \right) \left( 1 + 4 P_{B,0}^* (1 - P_{B,0}^*) \right) \]
implying $\max \{ P_{A,0}^*, P_{B,0}^* \} \geq 0.7487 > 1/\sqrt{2} \approx 0.7071$. In fact, using the Fuchs-van de Graaf inequalities from Proposition~\ref{thm-fvdg}, we can get bounds when they are not both two-dimensional. If $\beta_0, \beta_1$ are two-dimensional and $\alpha_0, \alpha_1$ are not, we get a lesser bound of $\max \{ P_{A,0}^*, P_{B,0}^* \} \geq 0.7140 > 1/\sqrt{2}$. On the other hand, if $\alpha_0, \alpha_1$ are two-dimensional and $\beta_0, \beta_1$ are not, then we get $\max \{ P_{A,0}^*, P_{B,0}^* \} \geq 0.7040 \not >  1/\sqrt{2}$, so we do not rule out the possibility of optimal protocols with these parameters.

Note that tests where $\alpha_0, \alpha_1$ are two-dimensional are subsumed in the higher-dimensional tests we performed. However, future experiments could include  computationally testing the case where Alice's first message is two-dimensional and Bob's first message has dimension 10 or greater. 


\subsection{Zoning-in on near-optimal protocols} \label{ssect:zoningin}

The computational tests that we performed so far suggest that there are no protocols with cheating probabilities less than $0.7499$, that is, slightly smaller than the best known constructions. The tests also show that the number of protocols grows very large as the mesh precision increases. This poses the question of whether there are protocols that have optimal cheating probabilities just slightly less than $3/4$ when one considers increased mesh precisions. In this subsection, we focus on searching for such protocols.

There are a few obstacles to deal with in such a search. The first is that increasing the precision of the mesh drastically increases the number of protocols to be tested. To deal with this, we restrict the set of parameters to be tested by only considering protocols which are close to optimal, i.e., near-optimal protocols. In other words, we ``zone in'' on some promising protocols to see if there is any hope of improving the bias by perturbing some of the entries. To do this, we fix a near-optimal protocol and create a mesh over a small ball around the entries in each probability vector. We would like a dramatic increase in precision, so we  use a ball of radius $2 \, \nu$ (unless stated otherwise), yielding up to $5$ increments tested around each entry. This gives us the advantage of having a constant number of protocols to check, independent of the mesh precision. However, this comes at the cost that we lose symmetry, since we do not wish to permute the entries nor the probability distributions defining the protocol.

Another challenge is to find the near-optimal protocols. The approach we
take is to keep track of the best protocol found, updating the filter threshold accordingly. There are two issues with this approach. One is that increasing the threshold decreases the efficiency of the filter, so we are not able to search over the same mesh precisions given earlier in this section. The
second is that there is an abundance of protocols with cheating probabilities exactly equal to $3/4$. As was done in Section~\ref{sect:algo}, we can embed an optimal three-round protocol with optimal cheating probabilities $3/4$ into a four-round or six-round protocol. One way to do this is to set $\alpha_0 = \alpha_1$ (i.e. Alice's first $n$ messages contain \emph{no} information) or by setting $\beta_0 \perp \beta_1$ (i.e. Bob's first message reveals $b$, making the rest of his messages meaningless). So we already know many protocols with cheating probabilities equal to $3/4$, but can we find others? We now discuss the structure of near-optimal protocols in the case of four-round and six-round protocols, and how we zone in on them.

\paragraph{Four-round version} \quad \\

For the four-round search, we fix a message dimension $d=5$ and use precision parameters $\nu \in \set{1/7, 1/8, 1/9, 1/10, 1/11}$. This search yields a minimum (computer
verified) bias of $\epsilon = 0.2647$ when we rule out protocols with $\alpha_0 = \alpha_1$ or $\beta_0 \perp \beta_1$. In other words, we have that all of the protocols tested had one of the following three properties:
\newpage 
\begin{itemize}
\item $\alpha_0 = \alpha_1$,
\item $\inner{\beta_0}{\beta_1} = 0$,
\item $\max \set{P_{\rA,0}^*, P_{\rA,1}^*, P_{\rB,0}^*, P_{\rB,1}^*} \geq 0.7647$.
\end{itemize}
This suggests that near-optimal four-round protocols behave similarly to optimal three-round protocols. We now zone in on two protocols, one representing each of the first two conditions above. The first protocol is
\[ \alpha_0 = \half \left[ 0, 0, 0, 1, 1 \right]^\transpose, \quad \alpha_1 = \half \left[ 0, 0, 1, 0, 1 \right]^\transpose, \quad \beta_0 = \left[ 0, 0, 0, 0, 1 \right]^\transpose, \quad  \beta_1 =\left[ 0, 0, 0, 1, 0 \right]^\transpose \]
which satisfies ${\beta_0} \perp {\beta_1} = 0$ and has all four (computationally
verified) cheating probabilities equal to $3/4$. The second protocol is
\[ \alpha_0 = \left[ 0, 0, 0, 0, 1 \right]^\transpose, \quad \alpha_1 = \left[ 0, 0, 0, 0, 1 \right]^\transpose, \quad \beta_0 = \half \left[ 0, 0, 0, 1, 1 \right]^\transpose, \quad  \beta_1 = \half \left[ 0, 0, 1, 0, 1 \right]^\transpose \]
which satisfies $\alpha_0 = \alpha_1$ and has all four (computationally
verified) cheating probabilities equal to $3/4$. Tables~\ref{4_first_zoning_in} and \ref{4_second_zoning_in} display the zoning-in searches for these two protocols with threshold exactly $3/4$. Note we use mesh precisions up to $10^{-16}$ which, by Lemma~\ref{thm-papprox}, can guarantee us a change in bias up to $4 \times 10^{-8}$. A (computationally verified) change in bias of this magnitude could be argued to be an actual decrease in bias and not an error due to finite precision arithmetic.

\begin{sidewaystable}
\caption{The number of four-round protocols that get past each strategy when zoning-in on the first near-optimal protocol (showing F1 and only the other strategies that helped to weed out protocols).}
\label{4_first_zoning_in}
\begin{center}
\begin{tabular}{|r||r|r|r|r|r|r|r|r|r|r|r|r|r|r|r|r|rlrlrlrlrlrlrlrlrlrlrlrlrlrl}
\hline
$d=5$ & $\nu=1/10^{10}$ & $\nu=1/10^{11}$ & $\nu=1/10^{12}$ & $\nu=1/10^{13}$ & $\nu=1/10^{14}$ & $\nu=1/10^{15}$ & $\nu=1/10^{16}$ \\
\hline
\hline
F1  & $  119,574,225  $ & $   119,574,225  $ & $   119,574,225   $ & $  119,574,225  $ & $   119,574,225   $ & $  119,574,225   $ & $  119,574,225 $ \\
\hline
F2  & $   20,253,807 $ & $     20,253,807   $ & $   20,411,271   $ & $   21,067,371   $ & $   20,253,807   $ & $   20,253,807
$ & $    6,337,926 $ \\
\hline
F3   & $    493,557    $ & $    493,557     $ & $   493,557    $ & $    581,503    $ & $    493,557   $ & $     498,504    $ & $     33,279 $ \\
\hline
F6    & $   493,557    $ & $    493,557    $ & $    493,557   $ & $     576,819     $ & $   493,557    $ & $    480,276    $ & $     13,695 $ \\
\hline
F7    & $      981      $ & $     981      $ & $     981    $ & $      1,245     $ & $      981        $ & $   855     $ & $        0 $ \\
\hline
F8    & $        0        $ & $     0        $ & $     0       $ & $      0        $ & $     0      $ & $      29      $ & $       0 $ \\
\hline
F10   & $         0     $ & $       0       $ & $      0    $ & $         0        $ & $     0      $ & $       0       $ & $      0 $ \\
\hline
\end{tabular}
\end{center}

\caption{The number of four-round protocols that get past each strategy when zoning-in on the second near-optimal protocol (showing F1 and only the other strategies that helped to weed out protocols).}
\label{4_second_zoning_in}
\begin{center}
\begin{tabular}{|r||r|r|r|r|r|r|r|r|r|r|r|r|r|r|r|r|rlrlrlrlrlrlrlrlrlrlrlrlrlrl}
\hline
$d=5$ & $\nu=1/10^{10}$ & $\nu=1/10^{11}$ & $\nu=1/10^{12}$ & $\nu=1/10^{13}$ & $\nu=1/10^{14}$ & $\nu=1/10^{15}$ & $\nu=1/10^{16}$ \\
\hline
\hline
F1 &   $ 9,277,254   $ & $  9,277,254    $ & $ 9,277,254 $ & $    9,277,254   $ & $  9,277,254  $ & $   8,516,178     $ & $ 4,953,555 $ \\
\hline
F3   &  $ 907,608  $ & $    907,608   $ & $   913,496   $ & $   912,864   $ & $   907,608   $ & $   828,952   $ & $  1,030,152$ \\
\hline
F6    & $ 693,576  $ & $    693,576  $ & $    695,016  $ & $    713,424   $ & $   693,576  $ & $    412,392    $ & $    8,624$ \\
\hline
F7     & $ 45,376   $ & $    45,376   $ & $    45,376  $ & $     55,064 $ & $      43,264   $ & $     3,136     $ & $   5,056$ \\
\hline
F8       &  $  0      $ & $     0     $ & $      0     $ & $      0      $ & $     0    $ & $      68   $ & $     3,140$ \\
\hline
F9      &  $   0      $ & $     0      $ & $     0     $ & $      0      $ & $     0     $ & $      8     $ & $   2,072$ \\
\hline
F10     & $      0     $ & $      0   $ & $        0      $ & $     0    $ & $       0     $ & $      0      $ & $     0$ \\
\hline
\end{tabular}
\end{center}
\end{sidewaystable}

\newpage
Note that not all filter strategies are useful in the zoning-in tests. For example, if the strategy F1 $\approx 1/2 < 3/4$ for the protocol we are zoning-in on, then it never filters out any protocols with the precisions considered. Considering this, and by examining the tables, we see that most strategies filter out many protocols, or none at all. Also from the tables, we see that no protocols get through the entire filter. Notice that we needed to use more strategies than were needed in previous tables, namely F9 and F10. In the previous searches, F8 was the last filter strategy needed, thus demonstrating some protocols which F8 fails to filter out (noting a larger threshold was used here than in the previous tests). It is worth noting the efficiency of the four-round filter. The
algorithm did not need to solve for any optimal cheating values  in any of the four-round zoning-in tests.

These tables suggest that perturbing the entries of the parameters defining these two near-optimal protocols does
not yield better bias.

\paragraph{Six-round version} \quad \\

For the six-round search, we fix a message dimension $d=2$ and use precision parameters $\nu \in \set{1/7, 1/8, 1/9, 1/10, 1/11, 1/12}$. For $\nu > 1/12$, the test results were similar to the four-round version, that all of the protocols tested had one of the following three properties:
\begin{itemize}
\item $\alpha_0 = \alpha_1$,
\item $\inner{\beta_0}{\beta_1} = 0$,
\item $\max \set{P_{\rA,0}^*, P_{\rA,1}^*, P_{\rB,0}^*, P_{\rB,1}^*} \geq 0.7521$.
\end{itemize}

We choose the following two near-optimal protocols to represent the first two conditions:
\[ \alpha_0 = \half \left[ 0, 0, 1, 1 \right]^\transpose, \quad \alpha_1 = \half \left[ 0, 1, 0, 1 \right]^\transpose, \quad \beta_0 = \left[ 0, 0, 0, 1 \right]^\transpose, \quad  \beta_1 =\left[ 0, 0, 1, 0 \right]^\transpose, \]
which satisfies ${\beta_0} \perp {\beta_1} = 0$, and
\[ \alpha_0 = \left[ 0, 0, 0, 1 \right]^\transpose, \quad \alpha_1 = \left[ 0, 0, 0, 1 \right]^\transpose, \quad \beta_0 = \half \left[ 0, 0, 1, 1 \right]^\transpose, \quad  \beta_1 = \half \left[ 0, 1, 0, 1 \right]^\transpose, \]
which satisfies $\alpha_0 = \alpha_1$. Both of these protocols have all four (computationally
verified) cheating probabilities equal to $3/4$.

However, when $\nu = 1/12$, we found several protocols with a (computationally found) bias of $0.25$. We therefore searched for all protocols with bias $0.2501$ or less. We discovered the following $4$ protocols, no two of which are equivalent to each other with respect to symmetry. Note that these protocols bear no resemblance to any bias $1/4$ protocols previously discovered. These protocols are below: 

\[ \alpha_0 = \frac{1}{3} \left[ 0, 1, 1, 1 \right]^\transpose, \quad \alpha_1 = \frac{1}{3} \left[ 1, 1, 0, 1 \right]^\transpose, \quad \beta_0 = \frac{1}{12} \left[ 0, 3, 0, 9 \right]^\transpose, \quad  \beta_1 = \frac{1}{12} \left[ 0, 3, 9, 0 \right]^\transpose \]
and
\[ \alpha_0 = \frac{1}{3} \left[ 0, 1, 1, 1 \right]^\transpose, \quad \alpha_1 = \frac{1}{3} \left[ 1, 1, 0, 1 \right]^\transpose, \quad \beta_0 = \frac{1}{12} \left[ 1, 2, 0, 9 \right]^\transpose, \quad  \beta_1 = \frac{1}{12} \left[ 1, 2, 9, 0 \right]^\transpose \]
and
\[ \alpha_0 = \frac{1}{3} \left[ 0, 1, 1, 1 \right]^\transpose, \quad \alpha_1 = \frac{1}{3} \left[ 1, 1, 1, 0 \right]^\transpose, \quad \beta_0 = \frac{1}{12} \left[ 0, 3, 0, 9 \right]^\transpose, \quad  \beta_1 = \frac{1}{12} \left[ 0, 3, 9, 0 \right]^\transpose \]
and
\[ \alpha_0 = \frac{1}{3} \left[ 0, 1, 1, 1 \right]^\transpose, \quad \alpha_1 = \frac{1}{3} \left[ 1, 1, 1, 0 \right]^\transpose, \quad \beta_0 = \frac{1}{12} \left[ 1, 2, 0, 9 \right]^\transpose, \quad  \beta_1 = \frac{1}{12} \left[ 1, 2, 9, 0 \right]^\transpose. \]

Note that these four protocols have the property that all the filter strategies for them have cheating probabilities strictly less than $3/4$. Since many of these strategies are derived
from optimal three-round strategies, this property makes them especially interesting. (Other
six-round protocols were found. However, these were
equivalent to the ones above, under the equivalence relation described
in Section~\ref{sect:symmetry}.)

We now zone in on these six protocols as indicated in the following tables. Note that we decrease the radius of the balls to $\nu$ for the third, fourth, fifth, and sixth protocol (compared to $2 \nu$ for the other protocols). This is for two reasons. One is that most the entries are bounded away from $0$ or $1$, making the intersection of the ball and valid probability vectors large. Second, the filter has to work harder in this case since many of the filter cheating probabilities are bounded away from $3/4$ and thus more computationally expensive cheating probabilities need to be computed.

Preliminary tests show that when zoning-in on some of these 6 protocols, the default SDP solver precision is not enough to determine whether the bias is strictly less than $3/4$, or whether it is numerical round-off. To provide a further test, we add an extra step for those protocols that get through the filter and SDPs, we increase the SDP solver accuracy (set pars.eps = $0$ in SeDuMi) and let the solver run until no more progress is being made. The row "Better Accuracy" shows how many protocols get through this added step. Furthermore, we use the maximum of the primal and dual values when calculating the optimal cheating values since we are not guaranteed exact feasibility of both primal and dual solutions in these computational experiments.

\begin{sidewaystable}
\caption{The number of six-round protocols that get past each strategy when zoning-in on the first near-optimal protocol (showing G1 and only the other strategies that helped to weed out protocols).}
\begin{center}
\begin{tabular}{|r||r|r|r|r|r|r|r|r|r|r|r|r|r|r|r|r|rlrlrlrlrlrlrlrlrlrlrlrlrlrl}
\hline
$d=2$ & $\eta=1/10^{10}$ & $\eta=1/10^{11}$ & $\eta=1/10^{12}$ & $\eta=1/10^{13}$ & $\eta=1/10^{14}$ & $\eta=1/10^{15}$ & $\eta=1/10^{16}$ \\
\hline
\hline
G1   &  $1,476,225$  &   $1,476,225$  &   $1,476,225$   &  $1,476,225$  &   $1,476,225$  &   $1,476,225$ &    $1,476,225$ \\
\hline
G2   &   $874,800$ &     $874,800$   &   $874,800$  &    $879,174$  &    $874,800$  &    $874,800$  &    $601,425$ \\
\hline
G3    &  $533,439$    &  $533,439$  &    $533,655$  &    $538,326$    &  $533,439$   &   $448,065$   &   $149,040$ \\
\hline
G5    &   $20,434$     &  $20,434$   &    $20,434$    &   $21,250$  &     $20,434$    &   $14,494$  &       $359$ \\
\hline
G7      &   $656$     &    $656$     &    $668$      &   $685$   &      $579$   &      $455$    &       $0$ \\
\hline
G8       &   $70$    &      $70$     &     $70$     &     $76$    &      $42$    &      $21$    &       $0$ \\
\hline
G9         &  $0$   &        $0$     &      $0$   &        $0$     &      $0$      &     $0$     &      $0$ \\
\hline
\end{tabular}
\label{table:ZI6P1}
\vspace{1cm}
\end{center}

\caption{The number of six-round protocols that get past each strategy when zoning-in on the second near-optimal protocol (showing G1 and only the other strategies that helped to weed out protocols).}
\begin{center}
\begin{tabular}{|r||r|r|r|r|r|r|r|r|r|r|r|r|r|r|r|r|rlrlrlrlrlrlrlrlrlrlrlrlrlrl}
\hline
$d=2$ & $\eta=1/10^{10}$ & $\eta=1/10^{11}$ & $\eta=1/10^{12}$ & $\eta=1/10^{13}$ & $\eta=1/10^{14}$ & $\eta=1/10^{15}$ & $\eta=1/10^{16}$ \\
\hline
\hline
G1   & $    93,312  $ & $     93,312    $ & $   93,312    $ & $   93,312    $ & $   93,312    $ & $   86,022    $ & $   40,824 $ \\
\hline
G4    & $    38,061    $ & $   38,061    $ & $   38,061   $ & $    38,061   $ & $    38,061   $ & $    28,125    $ & $    4,995 $ \\
\hline
G5    & $     2,664   $ & $     2,664    $ & $    2,664    $ & $    2,716    $ & $    2,664     $ & $   1,418    $ & $       0 $ \\
\hline
G6    & $     2,376   $ & $     2,376    $ & $    2,376   $ & $     2,420   $ & $     2,376     $ & $   1,174 $ & $          0 $ \\
\hline
G9    & $     1,270   $ & $        0     $ & $      0      $ & $     0      $ & $     0     $ & $      0      $ & $     0 $ \\
\hline
G10  & $        774   $ & $        0     $ & $      0     $ & $      0     $ & $      0     $ & $      0  $ & $         0 $ \\
\hline
SDPA0  & $       538   $ & $        0     $ & $      0     $ & $      0    $ & $       0     $ & $      0     $ & $      0 $ \\
\hline
SDPA1  & $       474   $ & $        0       $ & $    0    $ & $       0      $ & $     0    $ & $       0      $ & $     0 $ \\
\hline
Better Accuracy  & $          0   $ & $        0    $ & $       0     $ & $      0    $ & $       0     $ & $      0     $ & $      0 $ \\
\hline
\end{tabular}
\label{table:ZI6P2}
\end{center}
\end{sidewaystable}

\begin{sidewaystable}
\caption{The number of six-round protocols that get past each strategy when zoning-in on the third, fourth, fifth and sixth near-optimal protocols (showing G1 and only the other strategies that helped to weed out protocols).}
\begin{center}
\begin{tabular}{|r||r|r|r|r|r|r|r|r|r|r|r|r|r|r|r|r|rlrlrlrlrlrlrlrlrlrlrlrlrlrl}
\hline
$d=2$ & $\eta=1/10^{10}$ & $\eta=1/10^{11}$ & $\eta=1/10^{12}$ & $\eta=1/10^{13}$ & $\eta=1/10^{14}$ & $\eta=1/10^{15}$ & $\eta=1/10^{16}$ \\
\hline
\hline
G1  & $     34,992    $ & $     34,992    $ & $     34,992   $ & $      34,992   $ & $      34,992   $ & $      34,992   $ & $      34,992 $ \\
\hline
SDPB0    & $     9,720     $ & $     9,720       $ & $   9,720       $ & $   9,720       $ & $   9,720     $ & $     9,720    $ & $     27,215 $ \\
\hline
SDPA0      & $      0    $ & $       0    $ & $         0    $ & $         0    $ & $         0     $ & $        0      $ & $       0 $ \\
\hline
\end{tabular}
\label{table:ZI6P3}
\end{center}

\vspace{0.25cm}

\begin{center}
\begin{tabular}{|r||r|r|r|r|r|r|r|r|r|r|r|r|r|r|r|r|rlrlrlrlrlrlrlrlrlrlrlrlrlrl}
\hline
$d=2$ & $\eta=1/10^{10}$ & $\eta=1/10^{11}$ & $\eta=1/10^{12}$ & $\eta=1/10^{13}$ & $\eta=1/10^{14}$ & $\eta=1/10^{15}$ & $\eta=1/10^{16}$ \\
\hline
\hline
G1    & $    99,144   $ & $    99,144   $ & $      93,312     $ & $    99,144   $ & $      99,144    $ & $     99,144      $ & $   99,144 $ \\
\hline
SDPB0     & $       0     $ & $        0    $ & $         0      $ & $       0      $ & $       0      $ & $       0         $ & $    0 $ \\
\hline
\end{tabular}
\label{table:ZI6P4}
\end{center}

\vspace{0.25cm}

\begin{center}
\begin{tabular}{|r||r|r|r|r|r|r|r|r|r|r|r|r|r|r|r|r|rlrlrlrlrlrlrlrlrlrlrlrlrlrl}
\hline
$d=2$ & $\eta=1/10^{10}$ & $\eta=1/10^{11}$ & $\eta=1/10^{12}$ & $\eta=1/10^{13}$ & $\eta=1/10^{14}$ & $\eta=1/10^{15}$ & $\eta=1/10^{16}$ \\
\hline
\hline
G1   & $     34,992    $ & $     34,992  $ & $       34,992   $ & $      34,992    $ & $     34,992    $ & $     34,992      $ & $   34,992 $ \\
\hline
SDPB0    & $     9,720    $ & $      9,720   $ & $       9,720     $ & $     9,720   $ & $       9,720     $ & $     9,720    $ & $     27,215 $ \\
\hline
SDPA0    & $        0    $ & $         0     $ & $        0        $ & $     0    $ & $         0     $ & $        0      $ & $       0 $ \\
\hline
\end{tabular}
\label{table:ZI6P4}
\end{center}

\vspace{0.25cm}

\begin{center}
\begin{tabular}{|r||r|r|r|r|r|r|r|r|r|r|r|r|r|r|r|r|rlrlrlrlrlrlrlrlrlrlrlrlrlrl}
\hline
$d=2$ & $\eta=1/10^{10}$ & $\eta=1/10^{11}$ & $\eta=1/10^{12}$ & $\eta=1/10^{13}$ & $\eta=1/10^{14}$ & $\eta=1/10^{15}$ & $\eta=1/10^{16}$ \\
\hline
\hline
G1   & $     99,144     $ & $    99,144     $ & $    93,312     $ & $    99,144      $ & $   99,144     $ & $    99,144     $ & $    99,144 $ \\
\hline
SDPB0    & $        0     $ & $        0   $ & $          0         $ & $    0      $ & $       0      $ & $       0         $ & $    0 $ \\
\hline
\end{tabular}
\label{table:ZI6P4}
\end{center}

\end{sidewaystable}

\newpage 
We see in Tables~\ref{table:ZI6P1}, ~\ref{table:ZI6P2}, and~\ref{table:ZI6P3} that zoning-in on the six protocols yields no protocols with bias less than $1/4$. The zoning-in tests for the second near-optimal protocol are the only ones where we needed the added step of increasing the SDP solver accuracy. We see that this added step removed the remaining protocols.

We remark on the limitations of using such fine mesh precisions. For example, when zoning-in on the fourth and sixth protocol, only two strategies were used, G1 and SDPB0. These are both strategies for Bob which suggests that there are some numerical precision issues. We expect that some perturbations would decrease Bob's cheating probability, for example when $\alpha_0$ and $\alpha_1$ become ``closer'' and $\beta_0$ and $\beta_1$ remain the same. However, the precisions used in these searches do not find any such perturbations. 

From the outcome of the zoning-in tests, along with the computational evidence from all the other tests we conducted, we conjecture that any strong coin-flipping protocol based on
bit-commitment as defined formally in Section~\ref{family} has bias at
least~$1/4$ (Conjecture~\ref{conj-bias} in Section~\ref{sec-results}).


\section{Conclusions} \label{sect:conclusions}

We introduced a parameterized family of quantum coin-flipping protocols based
on bit-commitment, and formulated the cheating probabilities of Alice and Bob as simple semidefinite programs. Using these semidefinite programming formulations, we designed an algorithm to search for parameters yielding a protocol with small bias. We exploited symmetry and developed cheating strategies to create a protocol filter so that a wider array of protocols can be searched. For example, without the heuristics used in this paper, it would have taken over $69$ million years to search the same $3 \times 10^{16}$ protocols that
we tested.

Using the search algorithm, we searched four and six-round protocols
from a mesh over the parameter space, with messages
of varying dimension and with varying fineness for the mesh. After the initial systematic searches, no protocols having all four cheating probabilities less than $0.7499$ were found. We then performed a search
over a randomly translated mesh to avoid any anomalies that may have occurred while testing structured parameter sets. These tests also did not find any protocols with cheating probabilities less than $0.7499$. Our final tests zoned-in on protocols with maximum cheating probability $3/4$ to test whether there
are protocols with cheating probabilities  between $0.7499$ and $0.75$. A computational search to find such protocols yielded $8$
equivalence classes of protocols representing all the protocols with cheating probabilities equal to $3/4$. Four of these protocols bear no resemblance to previously known protocols with bias $1/4$. Zoning-in on these protocols showed that we cannot improve the bias by perturbing the parameters defining the protocols. Improvements to the algorithm may yield computer aided proofs of bounds on the bias of new sets of protocols. 

An obvious open problem is to resolve the conjecture that
all the protocols in the family we study have bias at least~$1/4$. It seems the smallest bias does not decrease when the number of messages increases from four rounds to six. We conjecture the smallest bias does not decrease even if more messages are added. One way to show this is to find closed-form expressions of the optimal objective values of the SDP formulations. This would be of
great theoretical
significance since very few highly interactive protocols (such as those examined in this paper) have
been characterized by closed-form expressions for their bias or even by a description of optimal cheating strategies.


\section*{Acknowlegdements}

We thank Andrew Childs, Michele Mosca, Peter H\o yer, and John Watrous for their comments and suggestions. A.N.'s research was supported in part by NSERC Canada, CIFAR, an ERA (Ontario),
QuantumWorks, and MITACS. A part of this work was completed
at Perimeter Institute for Theoretical Physics. Perimeter Institute
is supported in part by the Government of Canada
through Industry Canada and by the Province of Ontario through MRI.
J.S.'s research is supported by NSERC Canada, MITACS, ERA (Ontario), ANR project ANR-09-JCJC-0067-01, and ERC project
QCC 306537. L.T.'s research is supported in part by Discovery Grants from NSERC.


\nocite{ABD+04}
\nocite{Amb01}
\nocite{Amb02}
\nocite{ATVY00}
\nocite{BB84}
\nocite{Blu81}
\nocite{CK09}
\nocite{Moc04}
\nocite{Moc05}
\nocite{Moc07}
\nocite{NS03}
\nocite{SDP}
\nocite{SR01}
\nocite{Blu81}
\nocite{KN04}
\nocite{LC97}
\nocite{LC97a}
\nocite{May97}

\bibliographystyle{alpha}
\bibliography{paper}


\appendix

\section{SDP characterization of cheating strategies}
\label{sec-cheating-sdp}

In this section, we present proofs for Lemmas~\ref{thm-bob-sdp}
and~\ref{thm-alice-sdp}, originally due to Kitaev.

\paragraph{Proof of Lemma~\ref{thm-bob-sdp}:}
The matrix constraints in the SDP may readily be rewritten as linear
constraints on the variables~$\rho_j$, so the optimization problem is an
SDP. The variables are the density matrices of qubits under Alice's control after each of Bob's messages. The partial trace is trace-preserving, so any feasible solution satisfies
\[ \tr(\rho_F) = \tr(\rho_n) = \cdots = \tr(\rho_1) = \tr(\psi \psi^*) = 1. \]
Since $\rho_1, \ldots, \rho_n, \rho_F$ are constrained to be positive semidefinite, they are quantum states.

Bob sends the ${B_{1}}$ qubits to Alice replacing the ${A_{1}}$ part already sent to him. Being the density matrix Alice has after Bob's first message, $\rho_1$ satisfies
\[ \tr_{B_{1}}(\rho_{1}) = \tr_{A_{1}} ( \psi \psi^* ), \]
since the state of the qubits other than those in~$A_1, B_1$
remains unchanged.
Similarly, we have the constraint
\[ \tr_{B_{j}}(\rho_{j}) = \tr_{A_{j}} (\rho_{j-1}), \quad \text{ for } \quad j \in \{ 2, \ldots, n \}, \]
for each $\rho_j$ after Bob's $j$'th message. Also $\rho_F$, the state Alice has at the end of the protocol, satisfies
\[ \tr_{B' \times B'_{0}}(\rho_{F}) = \tr_{A' \times A'_{0}}(\rho_{n}). \]
She then measures $\rho_{F}$ and accepts $c$ with probability $\inner{\rho_{F}}{\Pi_{{\rA,c}}}$.

These constraints are necessary conditions on the states under Alice's control.
We may further restrict the states to be real matrices: the real parts
of any complex feasible solution also form a feasible solution with the
same objective function value.

We now show that every feasible solution to the above problem yields a valid cheating strategy for Bob with success probability equal to the objective function value of the feasible solution. He can find such a strategy by maintaining a purification of each density matrix in the feasible solution. For example, suppose the protocol starts in the state $\psi \otimes \phi'$, where $\phi' \in \C^K := \C^{B_0} \otimes \C^{B'_0} \otimes \C^B \otimes \C^{B'} \otimes \C^{K'}$ where $\C^{K'}$ is extra space Bob uses to cheat. Consider $\tau \in \C^{A_0} \otimes \C^{A'_0} \otimes \C^A \otimes \C^{A'} \otimes \C^K$ a purification of $\rho_1$ and $\eta := \psi \otimes \phi'$ a purification of $\psi \psi^*$. Since $\tr_{B_1}(\rho_1) = \tr_{A_1} (\psi \psi^*)$,
\[ \tr_{A_1 \times K} (\tau \tau^*)
= \tr_{B_1}(\rho_1)
= \tr_{A_1} (\psi \psi^*)
= \tr_{A_1 \times K} (\eta \eta^*). \]
Thus, there exists a unitary $U$ which acts on $\C^{A_1} \otimes \C^{K}$ which maps $\tau$ to $\eta$. If Bob applies this unitary after Alice's first message and sends
the ${B_1}$ qubits back then he creates $\rho_1$ under Alice's control. The same argument can be applied to the remaining constraints.

The states corresponding to honest Bob yield a feasible solution.
Attainment of an optimal solution then follows from continuity of the objective function and from the compactness
of the feasible region.
An optimal solution yields an optimal cheating strategy.  \qed

The characterization of Alice's cheating strategies is almost the same as that for cheating Bob; we
only sketch the parts that are different.

\paragraph{Proof of Lemma~\ref{thm-alice-sdp}:}
 There are two key differences from the proof of
Lemma~\ref{thm-bob-sdp}.  One is that Alice sends the first message and Bob sends the last, explaining the slightly different constraints. Secondly, Bob measures only the $\C^{B_{0}} \otimes \C^{A'_{0}} \otimes \C^A \otimes \C^{A'}$ part of his state after Alice's last message, i.e., he measures $\tr_{B'_0 \times B'} (\sigma_F)$. Note that the adjoint of the partial trace can be written as $\tr_{B'_0
\times B'}^*(Y) = Y \otimes \id_{B'_0 \times B'}$. Therefore we
have
\[ \inner{\tr_{B'_0 \times B'} (\sigma_F)}{\Pi_{\rB,c}} = \inner{\sigma_F}{\Pi_{\rB,c} \otimes \id_{B'_0 \times B'}}
\enspace, \]
which explains the objective function. \qed


\section{Derivations of the reduced cheating strategies} \label{Alice}

In this appendix, we show the derivation of Alice's reduced cheating strategy (the derivation of Bob's is very similar and the arguments are the same). We show that if we are given an optimal solution to Alice's cheating SDP, then we can assume it has a special form while retaining the same objective function value. Then we show this special form for an optimal solution can be written in the way desired.

\paragraph{Technical lemmas} \quad \\

We now discuss some of the tools used in the proofs in the rest of the appendix.

\begin{lemma} \label{diag}
Suppose $A$ is a finite set. Suppose  $p = \sum_{x \in A} {p_x} \, e_x \otimes e_x \in \Prob^{A \times A}$ and $\sigma \in \Herm_{+}^A$ is a density matrix. Then we have
\[ \max_{\rho \in \Herm_{+}^{A \times A}} \set{ \inner{\sqrtt{p}}{\rho} : \tr_{A}(\rho) = \sigma} \\
\leq
\max_{\rho \in
\Herm_{+}^{A \times A}} \set{ \inner{\sqrtt{p}}{\rho} : \tr_{A}(\rho) = \Diag(\sigma)},
\]
where $\Diag$ restricts to the diagonal of a square matrix. Moreover, an optimal solution to the problem on the right is $\overline{\rho} := \sqrtt{q}$, where $q = \sum_{x \in A} [\sigma]_{x,x} \, e_x \otimes e_x \in \Prob^{A \times A}$, yielding an objective function value of $\rF(p,q)$.
\end{lemma}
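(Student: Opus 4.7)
The plan is to prove both the inequality and the optimality claim by establishing directly that the quantity $\inner{\sqrtt{p}}{\rho}$ is bounded above by $\rF(p,q)$ for every $\rho$ feasible on \emph{either} side, and then exhibiting $\overline{\rho} = \sqrtt{q}$ as an attainer for the right-hand problem. Since the right-hand problem is a relaxation of the left-hand one only in the sense that its constraint $\tr_A(\rho) = \Diag(\sigma)$ differs from $\tr_A(\rho) = \sigma$, the inequality between the two maxima will follow by sandwiching both between the same upper bound $\rF(p,q)$ that the right-hand problem achieves.

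First I would use the special structure of $p$. Since $p$ is supported on the ``diagonal'' indices $(x,x)$, the vector $\sqrt{p} = \sum_{x \in A} \sqrt{p_x}\,(e_x \otimes e_x)$ lies in $\C^A \otimes \C^A$, so
\[
\inner{\sqrtt{p}}{\rho} \;=\; \sqrt{p}^{*}\rho\,\sqrt{p} \;=\; \sum_{x,y \in A} \sqrt{p_x p_y}\,\rho_{(x,x),(y,y)}.
\]
Only the entries of $\rho$ indexed by pairs of the form $((x,x),(y,y))$ contribute. Positive semidefiniteness of $\rho$ then gives the Cauchy--Schwarz-type bound $|\rho_{(x,x),(y,y)}| \le \sqrt{\rho_{(x,x),(x,x)}\,\rho_{(y,y),(y,y)}}$, so
\[
\inner{\sqrtt{p}}{\rho} \;\le\; \left(\sum_{x \in A} \sqrt{p_x\,\rho_{(x,x),(x,x)}}\right)^{\!2}.
\]

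Next I would use the partial trace constraint to pass from the diagonal of $\rho$ to the diagonal of $\sigma$. Writing out the partial trace one obtains $\sigma_{xx} = \sum_{z \in A} \rho_{(x,z),(x,z)}$, and every term in this sum is nonnegative by PSDness of $\rho$, so $\rho_{(x,x),(x,x)} \le \sigma_{xx} = q_x$. Substituting,
\[
\inner{\sqrtt{p}}{\rho} \;\le\; \left(\sum_{x \in A} \sqrt{p_x\,q_x}\right)^{\!2} \;=\; \rF(p,q).
\]
This bound uses only the diagonal of $\sigma$, so it applies verbatim to both maximization problems.

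Finally, I would verify that $\overline{\rho} := \sqrtt{q}$ is feasible for the right-hand problem and attains $\rF(p,q)$. Since $\overline{\rho} = \sqrt{q}\sqrt{q}^{*}$ is a rank-one PSD matrix, and a direct computation gives $\tr_A(\overline{\rho}) = \sum_{x,y}\sqrt{q_x q_y}\,\tr(e_x e_y^{*})\,e_x e_y^{*} = \sum_{x} q_x\,e_x e_x^{*} = \Diag(\sigma)$, feasibility is immediate. The objective value is $\inner{\sqrtt{p}}{\sqrtt{q}} = |\inner{\sqrt{p}}{\sqrt{q}}|^2 = \rF(p,q)$, matching the upper bound. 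Hence the right-hand maximum equals $\rF(p,q)$, and the left-hand maximum, being bounded above by the same quantity, is no larger. There is no serious obstacle here; the only care required is in the indexing of the tensor factors when unfolding the partial trace, which is why I would carry out the diagonal bound and the verification of $\overline{\rho}$ in explicit coordinates rather than abstractly.
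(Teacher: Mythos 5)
Your proof is correct and follows essentially the same route as the paper's: both bound $\inner{\sqrtt{p}}{\rho}$ via the Cauchy--Schwarz inequality $|\rho_{(x,x),(y,y)}| \le \sqrt{\rho_{(x,x),(x,x)}\,\rho_{(y,y),(y,y)}}$ (the paper writes this through $\norm{\sqrt{\rho}(e_x\otimes e_x)}$, which is the same fact), then use nonnegativity of the partial-trace summands to get $\rho_{(x,x),(x,x)}\le[\sigma]_{x,x}$, and conclude with $\rF(p,q)$ attained at $\sqrtt{q}$. The one thing to tidy is the partial-trace indexing ($\sum_z \rho_{(z,x),(z,x)}$ rather than $\sum_z\rho_{(x,z),(x,z)}$ for tracing out the first factor), but since $(x,x)$ is symmetric this does not affect the bound.
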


\begin{proof} Consider~$\bar{\rho}$ as defined in the statement of the lemma.
Since~$\tr_A(\bar{\rho}) = \Diag(\sigma)$, it suffices to show that
for any density matrix~$\rho \in \Herm^{A \times A}_+$ satisfying either $\tr_A(\rho)
= \sigma$ or~$\tr_A(\rho) = \Diag(\sigma)$, we have~$\inner{\sqrtt{p}}{\rho}
\le \inner{\sqrtt{p}}{\bar{\rho}} = \rF(p,q)$.

Expanding the first inner product, and using the Cauchy-Schwartz
inequality, we get
\[ \inner{\sqrtt{p}}{\rho} \! =  \!\! \sum_{x,y \in A} \sqrt{p_x p_y} (e_x \tensor e_x)^\transpose
          \rho \, (e_y \tensor e_y) \! \le \!\! \sum_{x,y \in A} \sqrt{p_x p_y} \norm{ \sqrt{\rho} \, (e_x
            \tensor e_x)} \cdot \norm{\sqrt{\rho} \, (e_y \tensor e_y)}. \]
\comment{
\begin{eqnarray*}
\inner{\sqrtt{p}}{\rho}
    & = & \sum_{x,y \in A} \sqrt{p_x p_y} (e_x \tensor e_x)^\transpose
          \rho \, (e_y \tensor e_y) \\
    & \le & \sum_{x,y \in A} \sqrt{p_x p_y} \norm{ \sqrt{\rho} \, (e_x
            \tensor e_x)} \cdot \norm{\sqrt{\rho} \, (e_y \tensor e_y)}
            \enspace.
\end{eqnarray*}
} 
We can simplify this by noting
\[ \norm{ \sqrt{\rho} \, (e_x \tensor e_x)}^2 \! = \! (e_x \tensor
e_x)^\transpose \rho \, (e_x \tensor e_x) \le \sum_{z \in A}
(e_z \tensor e_x)^\transpose \rho \, (e_z \tensor e_x)
= e_x^\transpose \tr_{A}(\rho) e_x
= [\sigma]_{x,x} \]
implying
$\inner{\sqrtt{p}}{\rho}
\leq \sum_{x,y \in A} \sqrt{p_x p_y} \left( [\sigma]_{x,x} [\sigma]_{y,y} \right)^{\frac{1}{2}}
=
\left( \sum_{x \in A} \sqrt{p_x [\sigma]_{x,x}} \right)^2
= \rF(p,q)$,
as desired. \qed
\end{proof}

\comment{
\begin{proof}
We first show that $\inf_{W \in \Herm^A} \set{\inner{\Diag(\sigma)}{W}: W \otimes \id_{A} \succeq \sqrtt{p}}$, the dual of the problem on the right, has a sequence of feasible solutions all of which are diagonal and the objective function values converge to the optimal value. Consider the following primal-dual pair
\[ \begin{array}{rrrclcrrrclrrrrrrrrrrrr}
& \sup                         & \inner{\sqrtt{p}}{\rho}  &    &    & & & \inf                        & \inner{\Diag(\sigma)}{W}               \\
                     & \textrm{subject to}  &               \tr_{A}(\rho)  & = &  \Diag(\sigma), & &                     & \textrm{subject to} & W \otimes \id_{A} & \succeq & \sqrtt{p},  \\
                     &                                &             \rho    & \in & \Pos^{A \times A}, & & & & W & \in & \Herm^A. \\
\end{array} \]
We know that $\overline{\rho}$ is a feasible solution for the primal problem with objective value $\rF(p,q)$. We now define a sequence of dual feasible solutions with objective value approaching this bound. Define positive definite $W = \sum_{x \in A} w_x \, e_x e_x^*$ where the values $w_x > 0$ are determined later.
Then
\begin{eqnarray*}
W \otimes \id_{A} \succeq \sqrtt{p}
& \iff & \id \succeq (W \otimes \id_{A})^{-1/2} \sqrtt{p} (W \otimes \id_{A})^{-1/2} \\
& \iff & 1 \geq \sqrt{p}^\transpose (W \otimes \id_{A})^{-1} \sqrt{p} \\
& \iff & 1 \geq \sum_{x \in A} \frac{p_x}{w_x}.
\end{eqnarray*}
For $\delta > 0$, define
\[ w_x := \left\{ \begin{array}{rcl}
(\sqrt{\rF(p,q)} + \delta) \dfrac{\sqrt{p_x}}{\sqrt{q_x}} & \text{ if } & p_x, q_x > 0, \\
\dfrac{\delta}{\sqrt{\rF(p,q)} + \delta} & \text{ if } & q_x = 0, \\
\epsilon & \text{ if } & p_x = 0, q_x > 0.
\end{array} \right. \]
Now,
\[
\sum_{x \in A} \frac{p_x}{w_x}
= \sum_{x \in A: \, p_x, q_x > 0} p_x \dfrac{\sqrt{q_x}}{\sqrt{p_x} (\sqrt{\rF(p,q)} + \delta)} +
\sum_{x \in A: \, p_x > 0, q_x = 0} p_x \dfrac{\delta}{\sqrt{\rF(p,q)} + \delta} \leq 1,
\]
and
\begin{eqnarray*}
\inner{\Diag(\sigma)}{W}
& = & \sum_{x \in A} q_x w_x \\
& = & \sum_{x \in A: \, q_x, p_x > 0} q_x (\sqrt{\rF(p,q)} + \delta) \dfrac{\sqrt{p_x}}{\sqrt{q_x}} \; +
\sum_{x \in A: \, q_x > 0, p_x = 0} q_x \, \delta \\
& \to & \rF(p,q) \text{ as } \delta \to 0.
\end{eqnarray*}
Therefore, we have a sequence of diagonal dual feasible solutions approaching optimality.

Using these diagonal solutions, we have the following by applying strong duality twice
\begin{eqnarray*}
& & \max_{\rho \in \Pos^{A \times A}} \set{\inner{\sqrtt{p}}{\rho}: \tr_{A}(\rho) = \sigma} \\
& = & \inf_{W \in \Herm^A} \set{\inner{\sigma}{W}:  W \otimes \id_{A} \succeq \sqrtt{p}} \\
& \leq & \inf_{W \in \Herm^A} \set{\inner{\sigma}{W}: W \otimes \id_{A} \succeq \sqrtt{p}, W = \Diag(W)} \\
& = & \inf_{W \in \Herm^A} \set{\inner{\Diag(\sigma)}{W}: W \otimes \id_{A} \succeq \sqrtt{p}, W = \Diag(W)} \\
& = & \inf_{W \in \Herm^A} \set{\inner{\Diag(\sigma)}{W}: W \otimes \id_{A} \succeq \sqrtt{p}} \\
& = & \max_{\rho \in \Pos^{A \times A}} \set{\inner{\sqrtt{p}}{\rho}: \tr_{A}(\rho) = \Diag(\sigma)},
\end{eqnarray*}
as desired. \qed
\end{proof}
} 

\begin{definition}
We define the \emph{partial $\Diag$ operator} over the subspace $\C^A$,
denoted $\Diag_{A}$, as the operator that projects density matrices over~$\C^B
\tensor \C^A$ onto the diagonal only on the subspace $\C^A$:
\[
\Diag_A (\rho) \quad = \quad
    \sum_{x \in A} (\id_B \tensor e_x^{\transpose}) \, \rho \, (\id_B \tensor e_x) \otimes e_x e_x^{\transpose}.
\]

\comment{
for finite sets $A$ and $B$, and matrix $M = \sum_{x,x',y,y'} [M]_{x,x',y,y'} \, e_x e_{x'}^\transpose \otimes e_y e_{y'}^\transpose \in \Herm^{A \times B}$ we apply the partial $\Diag$ operator over the subspace $\C^A$ to get
\[ \Diag_{A}(M) := \sum_{x,y,y'} [M]_{x,x,y,y'} \, e_x e_x^\transpose \otimes e_y e_{y'}^\transpose \in \Herm^{A \times B}. \]
} 
\end{definition}
We may write $\Diag_A$  as the superoperator $\mathbb{I} \otimes \Diag_A$, where $\mathbb{I}$ is the identity superoperator acting on the rest of the space.
Similarly, we may write the partial trace over $A$ as the superoperator $\tr_A := \mathbb{I} \otimes \tr(\cdot)$ where $\tr(\cdot)$ acts only on $\C^A$. Using this perspective, we see that the partial trace and the partial $\Diag$ operators commute when they act on different subspaces. Also, $\tr_A \circ \Diag_A = \tr_A$ since the trace only depends on the diagonal elements.

We also make use of the following lemma.
\begin{lemma} \label{rank1}
Consider a matrix $\rho \in \Pos^{A \times B}$. If $\tr_{A}(\rho) = \psi \psi^*$ for some vector $\psi \in \C^B$, then $\rho$ can be written as $\rho = \tilde{\rho} \otimes \psi \psi^*$, for some $\tilde{\rho} \in \Pos^A$.
\end{lemma}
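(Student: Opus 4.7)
The plan is to reduce the problem to analyzing individual rank-one components via the spectral decomposition of $\rho$. First I would write $\rho = \sum_i \lambda_i v_i v_i^*$ with $\lambda_i > 0$ and $v_i \in \C^A \otimes \C^B$, take the partial trace, and obtain $\psi\psi^* = \sum_i \lambda_i \tr_A(v_i v_i^*)$, a sum of positive semidefinite matrices on $\C^B$ equal to a rank-one matrix. The goal is then to show that each $v_i$ itself factorizes as $u_i \otimes \psi$ for some $u_i \in \C^A$, after which the desired decomposition is immediate.

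The key technical step, which I expect to be the main (though elementary) obstacle, is the following range-containment fact: if $N_1, N_2, \ldots \succeq 0$ and $\sum_i N_i = M$ has rank at most one, then each $N_i$ is a nonnegative scalar multiple of $M$. This holds because for any $u$ orthogonal to the range of $M$, one has $0 = u^* M u = \sum_i u^* N_i u$, and each summand is nonnegative, so $u \in \Null(N_i)$ for every $i$; thus each $N_i$ has range contained in $\Span(\psi)$. Applying this to $\sum_i \lambda_i \tr_A(v_i v_i^*) = \psi\psi^*$ yields $\tr_A(v_i v_i^*) = c_i \psi \psi^*$ for some $c_i \ge 0$.

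To conclude, I would expand $v_i = \sum_{x \in A} e_x \otimes w_{i,x}$ with $w_{i,x} \in \C^B$, so that $\tr_A(v_i v_i^*) = \sum_x w_{i,x} w_{i,x}^* = c_i \psi\psi^*$. Applying the same range-containment fact a second time forces each $w_{i,x}$ to be a scalar multiple of $\psi$, say $w_{i,x} = a_{i,x}\psi$. Then $v_i = u_i \otimes \psi$ with $u_i := \sum_x a_{i,x} e_x \in \C^A$, and
\[
\rho = \sum_i \lambda_i (u_i \otimes \psi)(u_i \otimes \psi)^*
     = \left( \sum_i \lambda_i u_i u_i^* \right) \otimes \psi\psi^*
     = \tilde\rho \otimes \psi\psi^*,
\]
where $\tilde\rho := \sum_i \lambda_i u_i u_i^* \in \Pos^A$. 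The degenerate case $\psi = 0$ is handled separately: $\tr_A(\rho) = 0$ together with $\rho \succeq 0$ forces $\tr(\rho) = 0$ and hence $\rho = 0$, so we may take $\tilde\rho = 0$.
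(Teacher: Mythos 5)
Your proof is correct and fills in the details of the paper's first suggested route: the ``range-containment fact'' you isolate and reprove is precisely the statement that a rank-one positive semidefinite matrix spans an extreme ray of the PSD cone, which the paper invokes in a one-line remark. The only cosmetic difference is that you route the argument through the spectral decomposition of $\rho$ and apply the extreme-ray fact twice, whereas the paper's alternative hint (decomposing $\rho$ block-wise in an orthonormal basis of $\C^B$ containing $\psi/\norm{\psi}_2$, then noting that every diagonal block other than the $\psi\psi^*$ one has zero trace and hence vanishes, forcing the corresponding off-diagonal blocks to vanish too) reaches the same conclusion in a single pass.
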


This is easily proven using the fact that the half-line emanating through a rank one positive semidefinite matrix forms an extreme ray of the cone of positive semidefinite matrices,
or more directly by expressing~$\rho$ using an orthogonal basis for~$\C^B$ that
includes~$\psi$.


\paragraph{Derivations of Alice's reduced cheating strategy} \quad \\ 

Assume $(\sigma_1, \sigma_2, \ldots, \sigma_n, \sigma_F)$ is optimal for Alice's cheating SDP. We now define
\[ (\sigma'_1, \sigma'_2, \ldots, \sigma'_n, \sigma'_F)
= (\sigma_1, \Diag_{B'_1} (\sigma_2), \ldots, \Diag_{B'_1 \times \cdots \times B'_{n-1}} (\sigma_n), \Diag_{B' \times A'_0}(\sigma_F)) \]
and show it is also optimal. All we need to show is feasibility since the objective function value is preserved because $\Pi_{\rB,c} \otimes \id_{B'_0 \times B'}$ is diagonal in the space $\pos^{B' \times A'_0}$.

The first constraint is satisfied since $\sigma'_1 = \sigma_1$ is part of a feasible solution. From Lemma~\ref{rank1}, we can write $\sigma'_1 = \phi \phi^* \otimes \tilde{\sigma}_1$ for some $\tilde{\sigma}_1 \in \Pos^{A_1}$. We can write
\[ \tr_{B_1}(\sigma'_1) = \sum_{y_1 \in B'_1} e_{y_1} e_{y_1}^* \otimes \phi_{y_1} \phi_{y_1}^* \otimes \tilde{\sigma}_1, \]
where $\phi_{y_1, \ldots, y_j} := \dsum_{b \in B_0} \sum_{y_{j+1} \in B'_{j+1}} \cdots \dsum_{y_n \in B'_n} \dfrac{1}{\sqrt 2} \sqrt{\beta_{b,y}} \, e_{b} \otimes e_{b} \otimes e_{y_{j+1}} \otimes e_{y_{j+1}} \otimes \cdots \otimes e_{y_n} \otimes e_{y_n}$,
which is in $\C^{B_0 \times B'_0 \times B_{j+1} \times B'_{j+1} \times \cdots \times B_n \times B'_{n}}$.
Therefore, $\tr_{B_1}(\sigma'_1)$ is diagonal in $B'_1$ and
\begin{equation} \tr_{B_1}(\sigma'_1)
= \Diag_{B'_1} (\tr_{B_1}(\sigma'_1))
= \Diag_{B'_1} (\tr_{B_1}(\sigma_1))
= \Diag_{B'_1} (\tr_{A_2}(\sigma_2))
= \tr_{A_2}(\sigma'_2). \label{eqn:reduction}
\end{equation}
Therefore, the second constraint is satisfied. Since $\sigma'_2$ is diagonal in $B'_1$ we can write it as
\[ \sigma'_2 = \sum_{y_1 \in B'_1} e_{y_1} e_{y_1}^* \otimes {\sigma}_{2,y_1}, \quad \textup{ for some } \quad \sigma_{2,y_1} \in \Pos^{B_0 \times B'_0 \times A_1 \times A_2 \times B_2 \times \cdots \times B_n \times B'_2 \times \cdots \times B'_n}. \]
By feasibility,
${\tr_{A_2} (\sigma'_2) \! = \!\! \dsum_{y_1 \in B'_1} e_{y_1} e_{y_1}^* \otimes \tr_{A_2} ({\sigma}_{2,y_1})
\! = \! \tr_{B_1}(\sigma'_1)
\! = \!\! \dsum_{y_1 \in B'_1} e_{y_1} e_{y_1}^* \otimes \phi_{y_1} \phi_{y_1}^* \otimes \tilde{\sigma}_1}$,
therefore $\sigma'_2 = \sum_{y_1 \in B'_1} e_{y_1} e_{y_1}^* \otimes \phi_{y_1} \phi_{y_1}^* \otimes \tilde{\sigma}_{2,y_1}$,
where $\tilde{\sigma}_{2,y_1} \in \Pos^{B_0 \times B'_0 \times A_1 \times A_2}$ satisfies $\tr_{A_2}(\tilde{\sigma}_{2,y_1}) = \tilde{\sigma}_{1}$ for all $y_1 \in B'_1$. Using
similar arguments, we may show that the rest of the first $n$ constraints are satisfied. For every $j \in \set{3, \ldots, n}$, we have
\[ \sigma'_j = \sum_{y_1 \in B'_1} \cdots \sum_{y_{j-1} \in B'_{j-1}} e_{y_1} e_{y_1}^* \otimes \cdots \otimes e_{y_{j-1}} e_{y_{j-1}}^* \otimes \phi_{y_1, \ldots, y_{j-1}} \phi_{y_1, \ldots, y_{j-1}}^* \otimes \tilde{\sigma}_{j, y_1, \ldots, y_{j-1}}, \]
where $\tilde{\sigma}_{j, y_1, \ldots, y_{j-1}} \in \Pos^{B_0 \times B'_0 \times A_1 \times \cdots \times A_{j}}$ satisfies ${\tr_{A_j}(\tilde{\sigma}_{j, y_1, \ldots, y_{j-1}}) = \tilde{\sigma}_{j-1, y_1, \ldots, y_{j-2}}}$ for all $y_1 \in B'_1, \ldots, y_{j-1} \in B'_{n-1}$.
Note that ${\tr_{B_n}(\sigma'_n) = \sum_{y \in B'} e_{y} e_{y}^* \otimes \phi_y \phi_y^* \otimes \tilde{\sigma}_{n, y_1, \ldots, y_{n-1}}}$
which is helpful in proving feasibility of the last constraint. For the
last constraint, we can use a similar reduction as in Equation~(\ref{eqn:reduction}) to show $\tr_{A' \times A'_0}(\sigma'_F) = \tr_{B_n}(\sigma'_n)$ proving $(\sigma'_1, \ldots, \sigma'_n, \sigma'_F)$ is feasible. We now use this feasible solution to simplify the problem.

\comment{
\begin{eqnarray*}
\tr_{A' \times A'_0}(\sigma'_F)
& = & \tr_{A' \times A'_0} (\Diag_{B' \times A'_0}(\sigma_F)) \\
& = & \Diag_{B'} (\tr_{A' \otimes A'_0} (\sigma_F)) \\
& = & \Diag_{B'} (\tr_{B_n}(\sigma_n)) \\
& = & \Diag_{B'_n} (\tr_{B_n}(\sigma'_n)) \\
& = & \tr_{B_n}(\sigma'_n) \enspace.
\end{eqnarray*}
} 

We can clean up $\sigma'_F$ by noting that it is diagonal in $\C^{B'}$ and $\C^{A'_0}$ and write it as
\[ \sigma'_F = \sum_{a \in A'_0} \sum_{y \in B'} e_a e_a^* \otimes e_y e_y^* \otimes \sigma_{F,a,y}, \quad \text{ for some } \quad \sigma_{F,a,y} \in \Pos^{B_0 \times B'_0 \times A \times A'}. \]
Thus, $\tr_{A' \times A'_0} (\sigma'_F) = \sum_{a \in A'_0} \sum_{y \in B'} e_y e_y^* \otimes \tr_{A'}(\sigma_{F,a,y}) = \sum_{y \in B'} e_y e_y^* \otimes \left( \sum_{a \in A'_0} \tr_{A'}(\sigma_{F,a,y}) \right)$.
Similarly, by feasibility, we have $\tr_{A' \times A'_0} (\sigma'_F) = \tr_{B_n}(\sigma'_n) = \sum_{y \in B'} e_y e_y^* \otimes \phi_y \phi_y^* \otimes \sigma_{n, y_1, \ldots, y_{n-1}}$.
Thus,
\[ \sigma'_F = \sum_{a \in A'_0} \sum_{y \in B'} \kb{a} \otimes \kb{y} \otimes \phi_y \phi_y^* \otimes \tilde{\sigma}_{F,a,y}, \]
by writing $\sigma_{F,a,y} = \phi_y \phi_y^* \otimes \tilde{\sigma}_{F,a,y}$ where $\tilde{\sigma}_{F, a, y} \in \Pos^{A \times A'}$ satisfies $\dsum_{a \in A'_0} \tr_{A'}(\tilde{\sigma}_{F,a,y}) = \sigma_{n, y_1, \ldots, y_{n-1}}$ for all $a \in A'_0$ and $y \in B'$.

The objective function becomes $\inner{\sigma'_F}{\Pi_{\rB,0} \otimes \id_{B'_0 \times B'}} = \frac{1}{2} \sum_{a \in A'_0} \sum_{y \in B'} \beta_{a,y} \inner{\tsigma_{F,a,y}}{\psi_a \psi_a^*}$. At this point, we note that $\inner{\sigma'_F}{\Pi_{\rB,1} \otimes \id_{B'_0 \times B'}} =
\half \sum_{a \in A'_0} \sum_{y \in B'} \beta_{\bar{a},y} \inner{\tsigma_{F,{a},y}}{\psi_{a} \psi_{a}^*}$,
proving that evaluating Alice's success probability of cheating towards $0$ or $1$ with this strategy is a matter of switching Bob's two probability distributions.

\comment{
\begin{eqnarray*}
& & \inner{\sigma'_F}{\Pi_{\rB,0} \otimes \id_{B'_0 \times B'}} \\
& = &
\inner{\tr_{B'_0 \otimes B'}(\sigma'_F)}{\Pi_{\rB,0}} \\
& = & \inner{\tr_{B'_0 \times B'} \left( \sum_{a \in A'_0} \sum_{y \in B'} \kb{a} \otimes \kb{y} \otimes \phi_y \phi_y^* \otimes \tsigma_{F,a,y} \right)}{\Pi_{\rB,0}} \\
& = & \inner{\sum_{a \in A'_0} \sum_{y \in B'} \kb{a} \otimes \left( \sum_{b \in B_0} \frac{1}{2} \beta_{b,y} \, \kb{b} \right) \otimes \tsigma_{F,a,y}}{\Pi_{\rB,0}} \\
& = & \half \sum_{y \in B'} \sum_{b \in B_0} \sum_{a \in A'_0} \sum_{a' \in A'_0} \beta_{b,y} \inner{ \kb{a} \otimes \kb{b} \otimes \tsigma_{F,a,y}}{\kb{a'} \otimes \kb{a'} \otimes \psi_{a'} \psi_{a'}^*} \\
& = & \half \sum_{a \in A'_0} \sum_{y \in B'} \beta_{a,y} \inner{\tsigma_{F,a,y}}{\psi_a \psi_a^*}. \\
\end{eqnarray*}
} 

Carrying on with $P_{\rA,0}^*$, we get the following SDP
\[ \begin{array}{rrrcllllllllllllll}
& \sup                         & \half \displaystyle\sum_{a \in A'_0, \, y \in B'} \beta_{a,y} \, \inner{\tsigma_{F,a,y}}{\psi_a \psi_a^*} \\
                     & \textrm{subject to } & \tr_{A_1} (\tsigma_1) & = & 1, \\                                                                                             &  & \tr_{A_j} (\tsigma_{j,y_1, \ldots, y_{j-1}}) & = & \tsigma_{j-1,y_1, \ldots, y_{j-2}}, & \forall j \in \set{2, \ldots, n}, \\
& & & & & \forall y_{1} \in B'_{1}, \\
& & & & & \quad \quad \vdots \\
& & & & & \forall y_{j-1} \in B'_{j-1}, \\
                     &                                   & \sum_{a \in A'_0} \tr_{A'}(\tsigma_{F,a,y}) & = & \tsigma_{n, y_1, \ldots, y_{n-1}}, & \forall y \in B', \\
                     & & \tsigma_{j, y_{1}, \ldots, y_{j-1}} & \in & \Pos^{A_1 \times \cdots \times A_{j}}, & \forall j \in \{ 1, \ldots, n \}, \\
& & & & & \forall y_{1} \in B'_{1}, \\
& & & & & \quad \quad \vdots \\
& & & & & \forall y_{j-1} \in B'_{j-1}, \\
& & \tsigma_{F,a,y} & \in & \pos^{A' \times A}, & \forall a \in A'_0, y \in B'. \\
\end{array} \]
By Lemma~\ref{diag}, the following restrictions can only improve the objective function value:
\begin{eqnarray*}
s_1 := \diag(\tsigma_1), & \quad & \\
s_2^{(y_1)} := \diag(\tsigma_{2,y_1}), & \quad & \forall y_1 \in B'_1, \\
\vdots \quad \quad & & \\
s_n^{(y_1, \ldots, y_{n-1})} := \diag(\tsigma_{n,y_1, \ldots, y_{n-1}}), & \quad & \forall y_1 \in B'_1, \ldots, y_{n-1} \in B'_{n-1}, \\
s^{(a,y)} := \diag(\tr_{A'}(\tsigma_{F,a,y})), & \quad & \forall a \in A'_0, \, y \in B', \\
\tr_{A'}(\tsigma_{F,a,y}) = \Diag({s^{(a,y)}}), & \quad & \forall a \in A'_0, \, y \in B',
\end{eqnarray*}
where the superscripts are the restrictions of the vectors as before. With these new variables, and using Lemma~\ref{diag}, we can write the new objective function as
\[ \half \sum_{a \bit} \sum_{y \in B} \beta_{a,y} \, \rF( s^{(a,y)}, \alpha_a), \]
where $(s_1, \ldots, s_n, s) \in \calP_\rA$. Any feasible solution to
the reduced SDP also gives us a feasible solution to the original SDP,
so their optimal values are equal. \qed


\comment{
\section{Derivation of Bob's reduced cheating strategy}\label{Bob}

The proof for Bob's reduced cheating problem follows in the same manner as cheating Alice. We show that we can assume an optimal solution to have a particular form, then exploit its structure.

Let $(\rho_1, \rho_2, \ldots, \rho_n, \rho_F)$ be an optimal solution for Bob's cheating SDP. Define
\[ (\rho'_1, \rho'_2, \ldots, \rho'_n, \rho'_F) = (\Diag_{A'_1}(\rho_1), \Diag_{A'_1 \times A'_2}(\rho_2), \ldots, \Diag_{A'}(\rho_n), \Diag_{B'_0 \times A_0}(\rho_F)). \]
We show this is also an optimal solution. Note that $\Pi_{\rA,c}$ is diagonal in $\C^{A_0 \times B'_0}$, so this solution has the same objective function value.

To show feasibility, we have that
\[ \tr_{A_1} (\psi \psi^*) = \sum_{x_1 \in A'_1} e_{x_1} e_{x_1}^* \otimes \psi_{x_1} \psi_{x_1}^*, \]
where
\[ \psi_{x_1, \ldots, x_j} := \sum_{a \in A_0} \sum_{x_{j+1} \in A'_{j+1}} \cdots \sum_{x_n \in A'_n} \frac{1}{\sqrt 2} \sqrt{\alpha_{a,x}} \, e_{a} \otimes e_{a} \otimes e_{x_{j+1}} \otimes e_{x_{j+1}} \otimes \cdots \otimes e_{x_n} \otimes e_{x_n}, \]
which is in $\C^{A_0 \times A'_0 \times A_{j+1} \times A'_{j+1} \times \cdots \times A_n \times A'_{n}}$.
Since $\tr_{A_1}(\psi \psi^*)$ is diagonal in $A'_1$, we can write
\[ \tr_{B_1}(\rho'_1) = \Diag_{A'_1} (\tr_{B_1} (\rho_1)) = \Diag_{A'_1} (\tr_{A_1} (\psi \psi^*)) = \tr_{A_1} (\psi \psi^*), \]
showing the first constraint is satisfied.
Now we show that $\rho'_1$ has a special form. Let $\rho'_1 = \sum_{x_1 \in A'_1} \kb{x_1} \otimes {\rho}_{1,x_1}$, for some ${\rho}_{1,x_1} \in \Pos^{A_0 \times A'_0 \times B_1 \times A_2 \times \cdots \times A_n \times A'_2 \times \cdots \times A'_n}$.
Due to feasibility, we have
\[ \tr_{B_1}(\rho'_1) = \sum_{x_1 \in A'_1} \kb{x_1} \otimes \tr_{B_1}({\rho}_{1,x_1}) = \tr_{A_1} (\psi \psi^*) = \sum_{x_1 \in A'_1} e_{x_1} e_{x_1}^* \otimes \psi_{x_1} \psi_{x_1}^*. \]
Thus,
\[ \rho'_1 = \sum_{x_1 \in A'_1} \kb{x_1} \otimes {\psi}_{x_1} {\psi}_{x_1}^* \otimes {\tilde{\rho}}_{1,x_1}, \]
where $\tilde{\rho}_{1,x_1} \in \pos^{B_1}$ satisfies $\tr_{B_1}(\tilde{\rho}_{1,x_1}) = 1$ for all $x_1 \in A'_1$.
Now we can write
\[ \tr_{A_2}(\rho'_1) = \sum_{x_1 \in A'_1} \sum_{x_2 \in A'_2} \kb{x_1} \otimes \kb{x_2} \otimes {\psi}_{x_1, x_2} {\psi}_{x_1, x_2}^* \otimes {\tilde{\rho}}_{1,x_1}, \]
which is diagonal in $A'_2$ and
\[ \tr_{A_2}(\rho'_1)
= \Diag_{A'_2} \tr_{A_2}(\rho'_1)
= \Diag_{A'_1 \times A'_2} \tr_{A_2}(\rho_1)
= \Diag_{A'_1 \times A'_2} \tr_{B_2}(\rho_2)
= \tr_{B_2}(\rho'_2).
\]
Thus, the second constraint is satisfied.
We can similarly argue the first $n$ constraints are satisfied and, for all $j \in \set{2, \ldots, n}$,
\[ \rho'_j = \sum_{x_1 \in A'_1} \cdots \sum_{x_j \in A'_j} \kb{x_1} \otimes \cdots \otimes \kb{x_j} \otimes \psi_{x_1, \ldots, x_j} \psi_{x_1, \ldots, x_j}^* \otimes \tilde{\rho}_{j, x_1, \ldots, x_j}, \]
where $\tilde{\rho}_{j, x_1, \ldots, x_j} \in \Pos^{A_0 \times A'_0 \times B_1 \times \cdots \times B_j}$ satisfies the condition $\tr_{B_j}(\tilde{\rho}_{j, x_1, \ldots, x_j}) = \tilde{\rho}_{j-1, x_1, \ldots, x_{j-1}}$, for all $x_1 \in A'_1, \ldots, x_j \in A'_j$.
In particular,
\[ \tr_{A' \times A'_0}(\rho'_n) = \half \sum_{a \in A_0} \sum_{x \in A'} \alpha_{a,x} \, \kb{a} \otimes {\tilde{\rho}}_{n,x}. \]
Notice $\tr_{A' \times A'_0}(\rho'_n)$ is diagonal in $A_0$, thus
\begin{eqnarray*}
\tr_{A' \times A'_0}(\rho'_n)
& = & \Diag_{A_0} (\tr_{A' \times A'_0}(\rho'_n)) \\
& = & \Diag_{A_0} (\tr_{A' \times A'_0}(\rho_n)) \\
& = & \Diag_{A_0} (\tr_{B' \times B'_0}(\rho_F)) \\
& = & \tr_{B' \times B'_0}(\rho'_F),
\end{eqnarray*}
showing the last constraint is also satisfied.

We can write $\rho'_F = \sum_{a \in A_0} \sum_{b \in B'_0} \kb{a} \otimes \kb{b} \otimes \tilde{\rho}_{F,a,b}$ for some $\rho_{F,a,b} \in \Pos^{B \times B'}$.
By feasibility we have
\[ \tr_{B'_0 \times B'}(\rho'_F) = \sum_{a \in A_0} \kb{a} \otimes \sum_{b \in B'_0} \tr_{B'} (\tilde{\rho}_{F,a,b}) = \tr_{A' \times A'_0}(\rho'_n) = \half \sum_{a \in A_0} \sum_{x \in A'} \alpha_{a,x} \, \kb{a} \otimes {\tilde{\rho}}_{n,x}. \]
Thus, we have the constraints $\sum_{b \in B'_0} \tr_{B'} (\tilde{\rho}_{F,a,b}) = \half \sum_{x \in A} \alpha_{a,x} \, {\tilde{\rho}}_{n,x}$. For forcing outcome $0$, we can assume $b=a$ from the definition of $\Pi_{\rA,0}$ and for forcing outcome $1$ we can assume $b= \bar{a}$ from the definition of $\Pi_{\rA,1}$. For forcing outcome $0$, we have the constraint
$\tr_{B'} (\tilde{\rho}_{F,a,a}) = \half \sum_{x \in A} \alpha_{a,x} \, \tilde{{\rho}}_{n,x}$. Thus, we have the following SDP:
\[ \begin{array}{rrrcllllllllllllll}
& \sup                         & \sum_{a \in A_0} \inner{\tilde{\rho}_{F,a,a}}{\phi_a \phi_a^*}
\quad \\
                     & \textrm{subject to} & \tr_{B_1}({\tilde{\rho}_{1,x_1}}) & = & 1, & \forall x_1 \in A'_1, \\
                     & & \tr_{B_2}({\tilde{\rho}_{2, x_1, x_2}}) & = & {\tilde{\rho}_{x_1}}, & \forall x_1 \in A'_1, x_2 \in A'_2, \\
                     & & & \vdots \\
                     & & \tr_{B_n}({\tilde{\rho}_{n, x}}) & = & {\tilde{\rho}_{n-1, x_1, \ldots, x_{n-1}}}, & \forall x \in A', \\
                     &  & \tr_{B'}(\tilde{\rho}_{F,a,a}) & = & \half \sum_{x \in A} \alpha_{a,x} \, {\tilde{\rho}_{n,x}}, & \forall a \in A'_0, \\
                     & & \tilde{\rho}_{j,x_1, \ldots, x_j} & \in & \Pos^{B_1 \times \cdots \times B_j}, & \forall j \in \set{1, \ldots, n}, \\
                     & & & & & \forall x_1 \in A'_1, \ldots, x_{j} \in A'_j, \\
                     & & \tilde{\rho}_{F,a,a} & \in & \Pos^{B \times B'}, & \forall a \in A'_0.
\end{array} \]

Now, by Lemma~\ref{diag}, the following restriction can only help the objective function value:
\begin{eqnarray*}
p_1^{(x_1)} := \diag({\tilde{\rho}_{1, x_1}}), & \quad & \forall x_1 \in A'_1, \\
p_2^{(x_1, x_2)} := \diag({\tilde{\rho}_{2, x_1, x_2}}), & \quad & \forall x_1 \in A'_1, x_2 \in A'_2, \\
\vdots \quad \quad & & \\
p_n^{(x)} := \diag({\tilde{\rho}_{n,x}}), & \quad & \forall x \in A', \\
{\tilde{\rho}_{F,a,a}} := \sqrtt{\half \sum_{x \in A} \alpha_{a,x} \, p_n^{(x)}}, & \quad & \forall a \in A'_0.
\end{eqnarray*}
This yields the new objective function
\[ \sum_{a \in A_0} \rF\! \left( \half \sum_{x \in A} \alpha_{a,x} \, p_n^{(x)}, \beta_a \right) = \half \sum_{a \bit} \rF\! \left( (\alpha_a \otimes \id_{B})^\transpose p_n, \beta_a \right), \]
for forcing outcome $0$, and
\[ \sum_{a \in A_0} \rF\! \left( \half \sum_{x \in A} \alpha_{a,x} \, p_n^{(x)}, \beta_{\bar a} \right) = \half \sum_{a \bit} \rF\! \left( (\alpha_a \otimes \id_{B})^\transpose p_n, \beta_{\bar a} \right), \]
for forcing outcome $1$, with $(p_1, \ldots, p_n) \in \calP_\rB$. Moreover,
a feasible solution for the reduced SDP gives us a feasible solution to
the original SDP with the same objective function value. \qed
} 

This proof shows that the reduced cheating problem does not eliminate all of the optimal solutions of the corresponding SDP. We can also show that the reduced problems capture optimal solutions to the corresponding SDPs by examining the dual SDPs. However, the primal SDPs are more important for the purposes of this paper and this proof is more illustrative.


\section{Developing the strategies in the filter}

In this appendix, we prove Theorems~\ref{AliceFilter}~and~\ref{BobFilter} by developing the cheating strategies used in the filter for Alice and Bob.


\paragraph{Cheating Alice} \quad \\

Recall Alice's optimization problem
\[ P_{\rA,0}^* = \max \left\{ \half \sum_{a \bit} \sum_{y \in B} \beta_{a,y} \; \rF(s^{(a,y)}, \alpha_{a}) \; : \;
(s_{1}, \ldots, s_{n}, s) \in \calP_\rA \right\}. \]

To get a feasible solution, suppose Alice guesses $b$ before she reveals $a$ in the following way. If Bob reveals $y \in B$, then Alice guesses $b=0$ if
$\beta_{0,y} \geq \beta_{1,y}$ and $b=1$ if $\beta_{0,y} < \beta_{1,y}$. Let Alice's guess be denoted by $f(y)$, so
\[ f(y) = \arg \max_a \set{\beta_{a,y}} \in \zo, \]
and we set $f(y) = 0$ in the case of a tie.
We have chosen a way to satisfy the last constraint in Alice's cheating polytope, but we can choose how Alice sends her first $n$ messages $s_1, \ldots, s_n$. We make one more restriction, we set $s_n = d \otimes e_{B_1 \times \cdots \times B_{n-1}}$ and optimize over $d \in \prob^A$. We can easily satisfy the rest of the constraints given any $d$ by choosing each variable as the corresponding marginal probability distribution.

Under these restrictions, we have that Alice's reduced problem can be written as
\[ \max_{d \in \Prob^A} \set{\half \sum_{y \in B} \beta_{f(y), y} \rF(d, \alpha_{f(y)})} = \max_{d \in \Prob^A} \set{\eta \, \rF(d, \alpha_{0}) + \tau \, \rF(d, \alpha_{1})}. \]
We can simplify this using the following lemma.

\begin{lemma} \label{FidLemma}
For nonnegative vectors $\set{z_1, \ldots, z_n} \subset \R_+^n$, we have that
\[ \max \set{\sum_{i=1}^n \rF(p, z_i) : p \in \Prob^n} = \lambda_{\max} \left( \sum_{i=1}^n \sqrtt{z_i} \right). \]
Furthermore, an optimal solution is the entry-wise square of the normalized principal eigenvector.
\end{lemma}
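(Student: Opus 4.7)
The plan is to rewrite the objective as a quadratic form in $\sqrt{p}$ and then recognize it as a constrained Rayleigh quotient whose maximum is attained by the Perron eigenvector.

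First, I would note that by the definition of fidelity on nonnegative vectors,
\[ \rF(p, z_i) \quad = \quad \left( \sqrt{p}^{\transpose} \sqrt{z_i} \right)^2 \quad = \quad \sqrt{p}^{\transpose} \sqrtt{z_i} \sqrt{p}, \]
so summing over $i$ gives
\[ \sum_{i=1}^n \rF(p, z_i) \quad = \quad \sqrt{p}^{\transpose} M \sqrt{p}, \qquad \text{where} \qquad M \;:=\; \sum_{i=1}^n \sqrtt{z_i}. \]
Next, I would make the substitution $q := \sqrt{p}$. As $p$ ranges over $\Prob^n$, the vector $q$ ranges precisely over nonnegative unit vectors in $\R^n$ (and the map $q \mapsto q^{\circ 2}$, entry-wise squaring, is the inverse). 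Hence the optimization reduces to
\[ \max \left\{ q^{\transpose} M q \; : \; q \in \R_+^n, \; \norm{q}_2 = 1 \right\}. \]

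For the upper bound, one drops the nonnegativity constraint and uses the standard Rayleigh quotient characterization to get $q^{\transpose} M q \leq \lambda_{\max}(M)$ for any unit vector $q$. The step that requires care is showing that this bound is \emph{achieved} by a feasible (nonnegative) $q$. This is where I would invoke the Perron--Frobenius theorem: since each $\sqrtt{z_i}$ is an entry-wise nonnegative matrix and the sum $M$ is therefore an entry-wise nonnegative symmetric positive semidefinite matrix, there exists a nonnegative principal eigenvector, say $\sqrt{v}$ with $\norm{\sqrt{v}}_2 = 1$ and $M \sqrt{v} = \lambda_{\max}(M) \sqrt{v}$. Then $p := v$ (the entry-wise square) lies in $\Prob^n$ and satisfies $\sum_i \rF(p, z_i) = \lambda_{\max}(M)$, proving both the equality and the assertion about the optimal solution.

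The main (and essentially only) subtle point is the nonnegativity constraint on $q$, which would otherwise create a gap between the constrained and unconstrained Rayleigh quotients; it is resolved precisely because $M$ inherits entry-wise nonnegativity from the rank-one summands $\sqrtt{z_i}$. All other steps are direct algebraic manipulations.
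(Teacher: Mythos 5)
Your proof is correct and follows essentially the same route as the paper's: rewrite the objective as the quadratic form $\sqrt{p}^{\transpose} M \sqrt{p}$ with $M = \sum_i \sqrtt{z_i}$, observe that the constrained maximum over nonnegative unit vectors is bounded above by $\lambda_{\max}(M)$ via the Rayleigh quotient, and then use entry-wise nonnegativity of $M$ to produce a nonnegative (hence feasible) principal eigenvector. The paper inserts an intermediate step of restricting to $\cup_i \supp(z_i)$ and asserts the restricted matrix has positive entries before invoking Perron--Frobenius; that claim is not always true (e.g.\ $z_1 = e_1$, $z_2 = e_2$ gives a diagonal restricted matrix), but it is immaterial: the general Perron--Frobenius theorem for nonnegative (not necessarily irreducible) matrices already guarantees a nonnegative eigenvector for the spectral radius, which for a PSD matrix is $\lambda_{\max}$. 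Your more direct version avoids this cosmetic detour and is, if anything, slightly cleaner.
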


\begin{proof}
Since $\sum_{i=1}^n \rF(p, z_i) = \sum_{i=1}^n \inner{\sqrtt{p}}{\sqrtt{z_i}} = \sqrt{p}^\transpose \left( \sum_{i=1}^n \sqrtt{z_i} \right) \sqrt{p}$, \\ where $\sqrt{\cdot}$ is the entry-wise square root, the maximization problem reduces to
\[ \max \set{ \sqrt{p}^\transpose \left( \sum_{i=1}^n \sqrtt{z_i} \right) \sqrt{p} : p \in \prob^n}. \]
Let $\hat{x} \in \R^m$ be the restriction of a vector $x$ onto $\cup_{i = 1}^n \supp(z_i)$. Then the optimal objective value of the above optimization problem is equal to that of
\[ \max \left\{ \sqrt{\hat{p}}^\transpose \left( \sum_{i=1}^n \sqrtt{\hat{z_i}} \right) \sqrt{\hat{p}} : \hat{p} \in \prob^{\displaystyle\cup_{i = 1}^n \supp(z_i)} \right\}. \]
If the nonnegativity constraint were not present, the optimum value would be attained by setting $\sqrt{\hat{p}}$ to be the normalized principal eigenvector of the matrix $\sum_{i=1}^n \sqrtt{\hat{z_i}}$. Because $\sum_{i=1}^n \sqrtt{\hat{z_i}}$ has positive entries, we know the principal eigenvector is also positive by the Perron-Frobenius Theorem. Since this does not violate the nonnegativity constraint in the problem, $\hat{p}$, where $\sqrt{\hat{p}}$ is the normalized principal eigenvector, is an optimal solution yielding an optimal objective value of $\lambda_{\max} \left( \sum_{i=1}^n \sqrtt{\hat{z_i}} \right)$. Notice that $\sum_{i=1}^n \sqrtt{\hat{z_i}}$ is the matrix obtained by removing the zero rows and columns from $\sum_{i=1}^n \sqrtt{{z_i}}$ and thus has the same largest eigenvalue. $\quad \square$
\end{proof}

Using this lemma, Alice can cheat with probability
\[ \half \lmax{\eta \sqrtt{\alpha_0} + \tau \sqrtt{\alpha_1}}, \]
which we call Alice's \emph{eigenstrategy}.

We can find a lower bound on this value using the following two lemmas.

\begin{lemma} \label{mn}
For $\beta_0$, $\beta_1$, $\eta$, and $\tau$ defined above, we have $\eta + \tau = 1 + \Delta(\beta_0, \beta_1)$.
\end{lemma}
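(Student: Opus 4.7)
The plan is to unpack both sides of the claimed identity in terms of sums over the partition of $B$ into $S_+ := \{y \in B : \beta_{0,y} \geq \beta_{1,y}\}$ and $S_- := \{y \in B : \beta_{0,y} < \beta_{1,y}\}$. By definition $\eta$ is supported on $S_+$ (via $\beta_0$) and $\tau$ is supported on $S_-$ (via $\beta_1$), so the left-hand side is immediately $\eta + \tau = \sum_{y \in S_+} \beta_{0,y} + \sum_{y \in S_-} \beta_{1,y}$.

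For the right-hand side, I would use the definition $\Delta(\beta_0,\beta_1) = \tfrac{1}{2}\|\beta_0-\beta_1\|_1$ and split the $\ell_1$ sum according to the sign of $\beta_{0,y}-\beta_{1,y}$, which is precisely the partition into $S_+$ and $S_-$. This gives
\[ 2\Delta(\beta_0,\beta_1) = \sum_{y \in S_+}(\beta_{0,y}-\beta_{1,y}) + \sum_{y \in S_-}(\beta_{1,y}-\beta_{0,y}). \]
The final step is then a short bookkeeping argument: add $2 = \sum_y \beta_{0,y} + \sum_y \beta_{1,y}$ to this expression, regroup the four resulting sums according to whether the index lies in $S_+$ or $S_-$, and observe that the cross terms $\sum_{y\in S_+}\beta_{1,y}$ and $\sum_{y\in S_-}\beta_{0,y}$ cancel, leaving exactly $2\sum_{y \in S_+}\beta_{0,y} + 2\sum_{y \in S_-}\beta_{1,y} = 2(\eta+\tau)$.

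There is no real obstacle here; this is a one-line combinatorial identity about probability vectors, essentially a rephrasing of the standard fact that the variational distance equals the probability mass of the ``winner'' restricted to the positive-difference set. The only thing to be careful about is the tie-breaking convention ($\beta_{0,y}=\beta_{1,y}$ is placed in $S_+$), but this is harmless since the two vectors agree on ties and so contribute $0$ to $2\Delta$ while contributing the shared value to $\eta$; the same value would be contributed to $\tau$ under the opposite tie-breaking rule, so the identity is independent of the convention.
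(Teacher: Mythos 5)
Your proof is correct and is essentially the same argument as the paper's: the paper phrases it by introducing $M := \sum_y \max_a \beta_{a,y}$ and $m := \sum_y \min_a \beta_{a,y}$ and observing $M+m=2$, $M-m=2\Delta(\beta_0,\beta_1)$, $\eta+\tau=M$, whereas you spell out the same cancellation directly over the partition $S_+\cup S_-$. Your remark that the identity is insensitive to the tie-breaking convention is a nice extra but not needed.
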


\begin{proof}
Notice that we can write ${\dsum_{y \in B} \displaystyle\max_{a \bit} \set{\beta_{a,y}} + \dsum_{y \in B} \displaystyle\min_{a \bit} \set{\beta_{a,y}} = 2}$ and we can also write
${\dsum_{y \in B} \displaystyle\max_{a \bit} \set{\beta_{a,y}} - \dsum_{y \in B} \displaystyle\min_{a \bit} \set{\beta_{a,y}} = 2 \Delta(\beta_0, \beta_1)}$. With this, we can conclude that $\eta + \tau = \dsum_{y \in B} \displaystyle\max_{a \bit} \set{\beta_{a,y}} = 1 + \Delta(\beta_0, \beta_1)$, as desired. \qed
\end{proof}

The above lemma can be restated as $\sum_{y \in B} \max_{a \bit} \set{\beta_{a,y}} = 1 + \Delta(\beta_0, \beta_1)$ for any probability distributions $\beta_0$ and $\beta_1$. This is helpful when looking at Bob's cheating strategies as well.

\begin{lemma} \label{lmaxfid2}
For $\eta, \tau \in \R$ and $p, q \in \Prob^n$, we have
\[ \lmax{ \eta \sqrtt{p} + \tau \sqrtt{q} } \\
= \frac{1}{2} \left( \eta + \tau + \sqrt{ (\eta - \tau)^2 + 4 \eta \tau \, \rF(p, q) } \right). \]
\end{lemma}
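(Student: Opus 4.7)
The key observation is that the matrix $M := \eta \sqrt{p}\sqrt{p}^{\transpose} + \tau \sqrt{q}\sqrt{q}^{\transpose}$ has rank at most two, since it is a sum of two rank-one matrices. Hence $M$ has at most two nonzero eigenvalues, and the plan is to compute them explicitly by reducing to a $2 \times 2$ eigenvalue problem.

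First I would write $M = W D W^{\transpose}$, where $W := [\sqrt{p}\; \sqrt{q}] \in \R^{n \times 2}$ and $D := \mathrm{Diag}(\eta, \tau) \in \R^{2 \times 2}$. Since for any matrices $A \in \R^{n \times 2}$ and $B \in \R^{2 \times n}$ the products $AB$ and $BA$ share the same nonzero eigenvalues (with multiplicity), the nonzero eigenvalues of $M$ coincide with those of the $2 \times 2$ matrix $W^{\transpose} W D$. A direct calculation gives
\[
W^{\transpose} W \;=\; \begin{pmatrix} \|\sqrt{p}\|_2^2 & \sqrt{p}^{\transpose}\sqrt{q} \\ \sqrt{q}^{\transpose}\sqrt{p} & \|\sqrt{q}\|_2^2 \end{pmatrix} \;=\; \begin{pmatrix} 1 & \sqrt{\rF(p,q)} \\ \sqrt{\rF(p,q)} & 1 \end{pmatrix}
\enspace,
\]
using that $p, q \in \Prob^n$ forces $\|\sqrt{p}\|_2^2 = \|\sqrt{q}\|_2^2 = 1$ and that by definition $\sqrt{p}^{\transpose}\sqrt{q} = \sqrt{\rF(p,q)}$.

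Multiplying out, $W^{\transpose} W D$ has trace $\eta + \tau$ and determinant $\eta\tau(1 - \rF(p,q))$. Solving the resulting quadratic characteristic equation $\lambda^2 - (\eta+\tau)\lambda + \eta\tau(1 - \rF(p,q)) = 0$, the discriminant simplifies to $(\eta + \tau)^2 - 4\eta\tau(1 - \rF(p,q)) = (\eta-\tau)^2 + 4\eta\tau\, \rF(p,q)$, and taking the larger root yields the claimed formula. The one subtlety to check (and the only mild obstacle) is that $M$, being a sum of positive multiples of positive semidefinite matrices when $\eta,\tau \ge 0$, has its largest eigenvalue equal to the larger of the two roots rather than $0$; this is immediate since the larger root equals $\tfrac{1}{2}(\eta+\tau)$ plus a nonnegative quantity. (For the fully general real $\eta,\tau$ case considered in the lemma, the same algebra goes through and the right-hand side of the claimed identity is the maximum of the two real eigenvalues of $M$.)
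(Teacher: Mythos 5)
Your proof is correct and reaches the same endpoint — a $2\times 2$ trace/determinant computation — but the reduction step is genuinely different from the paper's. The paper constructs an explicit unitary $U$ that rotates $\sqrt{p}$ to $e_1$ and $\sqrt{q}$ into $\Span\{e_1,e_2\}$, thereby concentrating the matrix $\eta\sqrtt{p}+\tau\sqrtt{q}$ in a $2\times 2$ block, and reads off the trace and determinant of that block. You instead factor $M = W D W^\transpose$ with $W = [\sqrt{p}\ \sqrt{q}]$ and $D = \Diag(\eta,\tau)$ and invoke the standard fact that $AB$ and $BA$ share their nonzero spectrum, which lands you on the $2\times 2$ matrix $W^\transpose W D$ directly, without constructing any rotation. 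This is a bit cleaner and more structural; the paper's version is more hands-on but equally elementary. Both arguments carry the same implicit caveat for truly arbitrary real $\eta,\tau$: when $\eta,\tau<0$ and $n>2$, the matrix $M$ is negative semidefinite and $\lambda_{\max}(M)=0$ rather than the larger root of the quadratic, so strictly the formula computes the larger eigenvalue of the $2\times 2$ reduction, not of $M$ itself. You flag this in your final parenthetical, which is more careful than the paper; the discrepancy is harmless because the lemma is only ever applied with $\eta,\tau\ge 0$, where the two notions agree.
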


\begin{proof}
Since we can write $\rF(p,q) = \left( \sqrt{p}^{\transpose} \sqrt{q} \right)^2$, we can apply a unitary to both $\sqrt{p}$ and $\sqrt{q}$ and both sides of the equality we want to prove are unaffected. Choose a unitary $U$ such that
\[ U \sqrt{p} = [1,0, 0, \ldots, 0]^{\transpose} \quad \text{ and } \quad U \sqrt{q} = [\sin \theta, \cos \theta, 0, \ldots, 0]^{\transpose}, \]
for some $\theta \in [0, 2 \pi)$. Then we can write $\rF(p,q) = \sin^2 \theta$. Let $\lambda_{\max}$ be the largest eigenvalue of $\eta \sqrtt{p} + \tau \sqrtt{q}$, or equivalently, of $\eta U \sqrtt{p} U^* + \tau U \sqrtt{q} U^*$, and let $\lambda_{2}$ be the second largest eigenvalue. Then
\[ \lambda_{\max} + \lambda_{2} = \tr(\eta \sqrtt{p} + \tau \sqrtt{q}) = \eta + \tau \]
and, by taking the determinant of the only nonzero  block, we get
\[ \lambda_{\max} \cdot \lambda_{2} = \eta \tau \cos^2 \theta = \eta \tau (1-\rF(p,q)) \]
implying $\lambda_{\max} = \half \left( \eta + \tau + \sqrt{(\eta - \tau)^2 + 4 \eta \tau \rF(p,q)} \right)$, as desired. \qed
\end{proof}
Note that Lemma~\ref{lmaxfid2} shows that switching the roles of $\eta$ and $\tau$ does not affect the largest eigenvalue.

Using the above two lemmas, we have
\begin{eqnarray*}
\lefteqn{\half \lmax{\eta \sqrtt{\alpha_0} + \tau \sqrtt{\alpha_1}} } \\
& = & \frac{1}{4} \left( \eta + \tau + \sqrt{ (\eta - \tau)^2 + 4 \eta \tau \, \rF(\alpha_0, \alpha_1) } \right) \\
& \geq & \frac{1}{4} \left( \eta + \tau + \sqrt{ (\eta - \tau)^2 \,\rF(\alpha_0, \alpha_1) + 4 \eta \tau \, \rF(\alpha_0, \alpha_1) } \right) \\
& = & \frac{1}{4} \left( \left( 1 + \sqrt{\rF(\alpha_0, \alpha_1)} \right) \left( \eta + \tau \right) \right) \\
& = & \left( \half + \half \sqrt{\rF(\alpha_0, \alpha_1)} \right) \left( \half + \half \Delta(\beta_0, \beta_1) \right).
\end{eqnarray*}

This lower bound has a natural interpretation. This is the strategy where Alice ignores all of Bob's messages until $\C^{B_n}$ is sent. Then she measures it to learn $b$ with probability $\half + \half \Delta(\beta_0, \beta_1)$. Conditioned on having the correct value for $b$, she tries to get past Bob's cheat detection and can do so with probability $\half + \half \sqrt{\rF(\alpha_0, \alpha_1)}$. We call this Alice's \emph{three-round strategy} since it combines optimal strategies for the three-round protocol example in Subsection~\ref{ex}. It makes sense that this is a lower bound on the success probability of Alice's eigenstrategy since her eigenstrategy is optimized from the same restrictions that apply to her three-round strategy.

We can also examine how Alice can choose her last message optimally supposing she has already sent her first $n$ messages in a particular way. I.e., suppose
${s_n := c \otimes e_{B_1 \times \cdots \times B_{n-1}}}$ for some $c \in \prob^A$ (as in the eigenstrategy). From this we can find $s_1, \ldots, s_{n-1}$ satisfying the first $n-1$ constraints of her cheating polytope by taking the corresponding marginal distributions of $c$. We want to optimize over $s$ satisfying $\tr_{A'_0}(s) = s_n \otimes e_{B_n} = c \otimes e_B$. In this case, this constraint can be written as $\sum_{a \bit} s^{(a,y)} = c$, for each $y \in B$, where again, $s^{(a,y)}$ is the restriction of $s$ with $a$ and $y$ fixed. Now we get the following optimization problem

\[ \begin{array}{rrrcllllllllllllll}
 & \max                         & \half \sum_{a \bit} \sum_{y \in B} \beta_{a,y}\, \rF(s^{(a,y)}, \alpha_a) \\
                     & \textrm{subject to} & \sum_{a \bit} s^{(a,y)} & = & c, & \forAll y \in B, \\
                     &                                   & s^{(a,y)} & \geq & 0,
\end{array} \]
where $c$ is now constant. If we rewrite this as

\[ \begin{array}{rrrcllllllllllllll}
 & \max                         & \half \sum_{y \in B} \sum_{a \bit} \rF(s^{(a,y)}, \beta_{a,y} \alpha_a) \\
                     & \textrm{subject to} & \sum_{a \bit} s^{(a,y)} & = & c, & \forAll y \in B, \\
                     &                                   & s^{(a,y)} & \geq & 0,
\end{array} \]
we have a separable problem over $y \in B$. That is, for each fixed $\tilde{y} \in B$, Alice needs to solve the optimization problem

\[ G_{\tilde{y}}(c) := \max \left\{ \half \sum_{a \bit} \rF(s^{(a,\tilde{y})}, \beta_{a,\tilde{y}} \alpha_a): \sum_{a \bit} s^{(a,\tilde{y})} = c, \, s^{(a,\tilde{y})} \geq 0, \, \forall a \bit \right\}. \]

This optimization problem has a special structure.

\begin{definition}
The \emph{infimal convolution} of the convex functions $f_1, f_2, \ldots, f_n$, where \\ $f_1, \ldots, f_n: \R^m \to \R \cup \set{\infty}$, is
\[ (f_1 \infconv f_2 \infconv \cdots \infconv f_n)(d) := \inf_{x_1, \ldots, x_n \in \R^m} \left\{\sum_{i = 1}^n f_i(x_i): \sum_{i = 1}^n x_i = d \right\}. \]
\end{definition}

We do not need to worry about the nonnegativity constraints on the variables since we can define our convex function $-\rF(p,q) = + \infty$ if $p$ or $q$ is not nonnegative. Note for every $p \in \R_+^m$, that $-\rF(p, \cdot )$ is a \emph{proper, convex function}, i.e., it is convex and $-\rF(p,q) < + \infty$ for some $q \in \R_+^m$ and $-\rF(p,q) > - \infty$ for every $q \in \R_+^m$. Proper, convex functions have many useful properties as detailed in this section. Using these properties and the fact that $-\rF(p, \cdot )$ is positively homogeneous, we show a way to express $G_{\tilde{y}}$.

Recall that for proper, convex functions ${f_1, \ldots, f_n} : \R^m \to \R \cup \set{\infty}$, the convex hull of $\{ f_1, \ldots, f_n \}$ is the greatest convex function $f$ such that $f(x) \leq f_1(x), \ldots, f_n(x)$ for every $x \in \R^m$. To write down explicitly what the convex hull is, we use the following theorem.

\begin{theorem}[{[Roc70, page 37]}] \label{convexhull1}
Let ${f_1, \ldots, f_n} : \R^m \to \R \cup \set{\infty}$ be proper, convex functions. Then we have
\[ \conv \set{f_1, \ldots, f_n}(d) = \inf \left\{ \sum_{i=1}^n \lambda_i f_i(x_i) : \sum_{i=1}^n \lambda_i x_i = d \right\}. \]
\end{theorem}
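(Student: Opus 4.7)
The plan is to work directly from the epigraph-based definition of the convex hull introduced in Section~\ref{convex}, namely
\[
\conv\set{f_1,\ldots,f_n}(d) \;=\; \inf\set{t : (d,t) \in \conv\left(\cup_{i=1}^n \mathrm{epi}(f_i)\right)},
\]
and to establish the identity by proving the two inequalities separately, interpreting the right-hand side of the claim as an infimum over $\lambda \in \Prob^n$ together with points $x_i \in \R^m$ satisfying $\sum_i \lambda_i x_i = d$.

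For the ``$\leq$'' direction, I would take any feasible tuple $(\lambda, x_1, \ldots, x_n)$ with $\lambda \in \Prob^n$, $\sum_i \lambda_i x_i = d$, and each $f_i(x_i) < +\infty$. The point $\left(d, \sum_{i=1}^n \lambda_i f_i(x_i)\right)$ equals $\sum_{i=1}^n \lambda_i (x_i, f_i(x_i))$, which is a convex combination of points in $\cup_{i=1}^n \mathrm{epi}(f_i)$ and hence lies in $\conv(\cup_i \mathrm{epi}(f_i))$. By the epigraphic definition, $\conv\set{f_1, \ldots, f_n}(d) \leq \sum_i \lambda_i f_i(x_i)$, and taking the infimum over all feasible tuples gives the upper bound.

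For the ``$\geq$'' direction, I would start from an arbitrary $(d,t) \in \conv(\cup_i \mathrm{epi}(f_i))$. By the definition of convex hull, there exist $N \ge 1$, coefficients $\mu \in \Prob^N$, indices $i(1),\ldots,i(N) \in \set{1,\ldots,n}$, and points $(y^{(j)}, s^{(j)}) \in \mathrm{epi}(f_{i(j)})$ such that $(d,t) = \sum_{j=1}^N \mu_j (y^{(j)}, s^{(j)})$. Grouping by index, I would set $\lambda_i := \sum_{j: i(j)=i} \mu_j$ so that $\lambda \in \Prob^n$, and, for each $i$ with $\lambda_i > 0$, consolidate via
\[
x_i \;:=\; \lambda_i^{-1} \sum_{j:\, i(j)=i} \mu_j y^{(j)}, \qquad t_i \;:=\; \lambda_i^{-1} \sum_{j:\, i(j)=i} \mu_j s^{(j)}.
\]
Convexity of $f_i$ applied to the convex combination defining $x_i$ yields $f_i(x_i) \leq t_i$. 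Reassembling, $\sum_i \lambda_i x_i = d$ and $t = \sum_i \lambda_i t_i \geq \sum_i \lambda_i f_i(x_i)$, so the right-hand side of the claim is at most $t$. Taking the infimum over $t$ such that $(d,t) \in \conv(\cup_i \mathrm{epi}(f_i))$ yields the desired lower bound.

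The main technical nuisance will be the treatment of indices $i$ with $\lambda_i = 0$, where no natural $x_i$ is produced by the grouping step. I would handle these using that each $f_i$ is proper, so $\mathrm{dom}(f_i)$ is nonempty; pick any $x_i^\circ \in \mathrm{dom}(f_i)$ and note that the corresponding contribution $\lambda_i f_i(x_i^\circ) = 0$ does not affect either the objective or the affine constraint $\sum_i \lambda_i x_i = d$. A small secondary check is that the ``greatest convex minorant'' characterization of $\conv\set{f_1,\ldots,f_n}$ agrees with the epigraphic infimum used above; this follows from the equivalence between convexity of a function and convexity of its epigraph, together with the fact that no closure of the epigraph is taken in the definition stated in Section~\ref{convex}.
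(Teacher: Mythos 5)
The paper does not prove this statement; it is cited verbatim from Rockafellar's \emph{Convex Analysis} (Theorem~5.6) and used as a black box in deriving Alice's improved eigenstrategy. Your proof is a correct, self-contained derivation from the epigraph-based definition given in Section~\ref{convex}, and it is essentially Rockafellar's own argument: exhibit $\bigl(d, \sum_i \lambda_i f_i(x_i)\bigr)$ as a convex combination of epigraph points for one inequality, and for the other decompose an arbitrary point of $\conv\bigl(\cup_i \mathrm{epi}(f_i)\bigr)$, regroup by which epigraph each term came from, and apply convexity of each $f_i$ within its group. Your padding of indices with $\lambda_i = 0$ via properness is the right repair for the ``$\geq$'' direction; note that the same remark is also implicitly needed in the ``$\leq$'' direction, since you restrict to tuples with all $f_i(x_i)$ finite and must observe that this restricted infimum coincides with the infimum in the statement (it does, by the identical replace-$x_i$-with-a-point-of-$\mathrm{dom}(f_i)$ argument, regardless of the convention adopted for $0 \cdot \infty$).
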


For a positively homogeneous function $f$, we have $\lambda \, f \left( \lambda^{-1} x \right) = f(x)$, for $\lambda > 0$. Therefore, we have the following corollary.

\begin{corollary}
Let ${f_1, \ldots, f_n} : \R^m \to \R \cup \set{\infty}$ be positively homogeneous, proper, convex functions. Then we have
\[ \conv \set{f_1, \ldots, f_n} = f_1 \infconv f_2 \infconv \cdots \infconv f_n. \]
\end{corollary}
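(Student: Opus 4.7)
My plan is to derive this corollary directly from Theorem~\ref{convexhull1} by using the positive homogeneity hypothesis to absorb the convex-combination weights $\lambda_i$ into the arguments of the $f_i$, so that the infimum in the definition of the convex hull becomes exactly the infimum defining the infimal convolution. The outline is a two-sided inequality between the two infima.

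For the inequality $(f_1 \infconv \cdots \infconv f_n)(d) \leq \conv\{f_1,\dotsc,f_n\}(d)$, I would start from an arbitrary feasible point $(\lambda_1,\dotsc,\lambda_n,x_1,\dotsc,x_n)$ for the right-hand side of Theorem~\ref{convexhull1}, i.e.\ $\lambda_i \ge 0$, $\sum_i \lambda_i = 1$, and $\sum_i \lambda_i x_i = d$. Set $y_i := \lambda_i x_i$. Then $\sum_i y_i = d$, and for each index $i$ with $\lambda_i > 0$ positive homogeneity gives $f_i(y_i) = f_i(\lambda_i x_i) = \lambda_i f_i(x_i)$. Summing yields $\sum_i f_i(y_i) = \sum_i \lambda_i f_i(x_i)$, so $(y_1,\dotsc,y_n)$ is feasible for the infimal convolution with the same objective value. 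Taking the infimum over the original tuples gives the inequality.

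For the reverse inequality $\conv\{f_1,\dotsc,f_n\}(d) \le (f_1 \infconv \cdots \infconv f_n)(d)$, I would take any $(y_1,\dotsc,y_n)$ with $\sum_i y_i = d$ and set $\lambda_i := 1/n$ and $x_i := n\, y_i$. Then $\lambda_i \ge 0$, $\sum_i \lambda_i = 1$, $\sum_i \lambda_i x_i = d$, and positive homogeneity gives $\lambda_i f_i(x_i) = (1/n) f_i(n y_i) = f_i(y_i)$. Summing and taking the infimum gives the claim, invoking Theorem~\ref{convexhull1} to identify the value with $\conv\{f_1,\dotsc,f_n\}(d)$.

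The one technical point to handle carefully is the case where some $\lambda_i = 0$ in the first inequality: then $y_i = 0$, and I need $f_i(0) \le 0 \cdot f_i(x_i)$ (interpreted as $0$) so that the estimate still goes through. This is standard for proper convex positively homogeneous functions, since convexity and positive homogeneity together imply $f_i(0) = \lim_{t\downarrow 0} f_i(t x) = 0$ for any $x$ in the effective domain of $f_i$ (propriety guarantees such an $x$ exists). I will record this observation up front and then the two inequalities above complete the proof. I do not expect any serious obstacle here; the corollary is essentially bookkeeping once Theorem~\ref{convexhull1} is in hand.
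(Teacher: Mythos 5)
Your approach is the same as the paper's. The paper's entire argument is the observation $\lambda f(\lambda^{-1} x) = f(x)$ for $\lambda > 0$, which is the homogeneity identity you use to convert the infimum in Theorem~\ref{convexhull1} into the infimum defining $\infconv$; your $\conv \leq \infconv$ direction (set $\lambda_i = 1/n$, $x_i = n y_i$) is a clean instance of that and is correct.

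The gap is in the edge case you explicitly flag. The assertion $f_i(0) = 0$ does not follow from the stated hypotheses. Evaluating positive homogeneity at $x = 0$ gives $f_i(0) = \lambda f_i(0)$ for every $\lambda > 0$, hence $f_i(0) \in \{0, +\infty\}$ once properness rules out $-\infty$; the value $+\infty$ is not excluded by convexity. (The step $f_i(0) = \lim_{t\downarrow 0} f_i(tx)$ in your sketch needs lower semicontinuity at the origin, which is not part of the hypotheses.) Concretely, $f(x) = x$ for $x > 0$ and $f(x) = +\infty$ for $x \leq 0$ is positively homogeneous, proper, and convex with $f(0) = +\infty$. When such an $f_i$ appears with $\lambda_i = 0$, your choice $y_i = 0$ makes $\sum_j f_j(y_j) = +\infty$ and the estimate fails. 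In fact the corollary needs an extra hypothesis: in $\R^2$ take $f_1(x,y) = x$ if $x > 0, y = 0$, else $+\infty$, and $f_2(x,y) = y$ if $y > 0, x = 0$, else $+\infty$; then $\conv\{f_1, f_2\}(e_1) = 1$ (via $\lambda = (1,0)$, $x_1 = e_1$), but $(f_1 \infconv f_2)(e_1) = +\infty$ since no $y_1 + y_2 = e_1$ has both $f_1(y_1)$ and $f_2(y_2)$ finite. The fix is to assume $0 \in \mathrm{dom}(f_i)$ for each $i$ (equivalently $f_i(0) = 0$), or to assume the $f_i$ are lower semicontinuous, which forces $f_i(0) \leq \liminf_{t\downarrow 0} f_i(tx) = 0$ and recovers your claim. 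In the paper's application the functions $-\tfrac{1}{2}\rF(\cdot,q)$ are continuous and vanish at the origin, so the use of the corollary is sound there, but your write-up should state the extra assumption rather than attempt to derive it from the listed hypotheses alone.
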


Therefore, we can write Alice's cheating probability using concave hulls as shown below

\begin{eqnarray*}
G_{\tilde{y}}(c)
& = & \max \left\{ \half \sum_{a \bit} \rF(s^{(a,\tilde{y})}, \beta_{a,\tilde{y}} \alpha_a): \sum_{a \bit} s^{(a,\tilde{y})} = c, \, s^{(a,\tilde{y})} \geq 0, \, \forall a \bit \right\} \\
& = & - \min \left\{ -\frac{1}{2} \sum_{a \bit} \rF(s^{(a,\tilde{y})}, \beta_{a,\tilde{y}} \alpha_a): \sum_{a \bit} s^{(a,\tilde{y})} = c, \, s^{(a,\tilde{y})} \geq 0, \, \forall a \bit \right\} \\
& = & - \left( -\frac{1}{2} \rF( \cdot, \beta_{0,\tilde{y}} \alpha_0) \right) \infconv \left( -\frac{1}{2} \rF( \cdot, \beta_{1,\tilde{y}} \alpha_1) \right)(c) \\
& = & - \conv \set{\frac{-1}{2} \beta_{0,{ \tilde y}} \rF(\cdot, \alpha_0), \frac{-1}{2} \beta_{1,{\tilde y}} \rF(\cdot, \alpha_1)}(c) \\
& = & \conc \set{\frac{1}{2} \beta_{0,{ \tilde y}} \rF(\cdot, \alpha_0), \frac{1}{2} \beta_{1,{\tilde y}} \rF(\cdot, \alpha_1)}(c).
\end{eqnarray*}

Thus, for each $c \in \prob^A$, we can write Alice's cheating probability as
\[ \sum_{y \in B} \conc \set{\frac{1}{2} \beta_{0,{y}} \rF(\cdot, \alpha_0), \frac{1}{2} \beta_{1,{y}} \rF(\cdot, \alpha_1)}(c).  \]
Note this way of optimizing the last message works for any strategy. For a general strategy, we would have a different $c$ for every $y_1, \ldots, y_{n-1}$.

Thus, we have Alice's \emph{improved eigenstrategy} which is when Alice chooses her first $n$ messages according to her eigenstrategy, yet reveals $a$ optimally.

\paragraph{Cheating Alice in six-round protocols} \quad \\

In six-round protocols, Alice's goal is to maximize the objective function
\[ \dfrac{1}{2} \dsum_{a \in \zo} \dsum_{y_1 \in B_1} \dsum_{y_2 \in B_2} \beta_{a,y_1 y_2} \rF(s^{(a,y_1 y_2)}, \alpha_a)
\] over $(s_1, s_2, s)$ satisfying:
\[ \begin{array}{rrrcllllllllllllll}
                     &                               & \tr_{A_1}(s_1) & = & 1, \\
                     &                               & \tr_{A_2}(s_2) & = & s_1 \otimes e_{B_{1}}, \\
                     &                               & \tr_{A'_{0}}(s) & = & s_2 \otimes e_{B_{2}}, \\
                     \quad \\
                     &                               & s_{1} & \in & \R_{+}^{A_{1}}, \\
                     &                               & s_{2} & \in & \R_{+}^{A_{1} \times B_{1} \times A_{2}}, \\
                     & & s & \in & \R_{+}^{A_1 \times A_2 \times B_1 \times B_2 \times A'_{0}}.
\end{array} \]
We suppose that Alice chooses her commitment $a$ based on the most likely choice of $b$ after seeing $y_1$ from Bob's first message. Let $f'(y_1) = \arg\max_{a \in A'_0} \set{[\tr_{B_2}(\beta_{a})]_{y_1}}$ and $0$ in the case of a tie. The last constraint can be written as $\sum_{a \in A'_0} s^{(a,y_1 y_2)} = s_2^{(y_1)}$, for all $y_1 \in B_1$, where $s_2^{(y_1)}$ is the projection of $s_2$ with the index $y_1$ fixed. We set $s^{(a, y_1, y_2)} = s_2^{(y_1)}$, if $a = f'(y_1)$, and $0$ otherwise. Now we set $s_2^{(y_1)} = s_2^0$, if $f'(y_1)
= 0$, and $s_2^{(y_1)} = s_2^1$, if $f'(y_1) = 1$, where we optimize $s_2^0, s_2^1 \in \R_+^{A_1 \times A_2}$. The new objective function can be written as
\begin{eqnarray*}
\half \sum_{a \in A'_0} \sum_{y_1 \in B_1, y_2 \in B_2} \beta_{a,y_1 y_2} \rF(s^{(a,y_1 y_2)}, \alpha_a)
& = & \half \sum_{y_1 \in B_1} \left[ \sum_{y_2 \in B_2} \beta_{f'(y_1),y_1 y_2} \right] \rF(s_2^{f'(y_1)}, \alpha_{f'(y_1)}) \\
& = & \half \eta' \, \rF(s_2^{0}, \alpha_{0}) + \half \tau' \, \rF(s_2^{1}, \alpha_{1}).
\end{eqnarray*}
Since the only constraints remaining are $\tr_{A_2}(s_2^0) = s_1 = \tr_{A_2}(s_2^1)$, we now optimize over each choice of $s_2^0$ and $s_2^1$ separately using the following lemma.
\begin{lemma} \label{PartialFidelityLemma}
For $\alpha \in \R_+^{A_1 \times A_2}$ \comment{ with~$|A_2| \ge |A_1|$,} and $c \in \R_+^{A_1}$, we have
\[ \max \set{\rF(p, \alpha) : \tr_{A_2}(p) = c, \, p \geq 0} \geq \rF(c, \tr_{A_2}(\alpha)). \]
\end{lemma}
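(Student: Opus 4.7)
The plan is to establish the inequality by exhibiting an explicit feasible $p$ and showing that it already achieves equality. The natural candidate is the vector obtained by ``splitting'' each coordinate $c_{x_1}$ across $A_2$ proportionally to the conditional distribution induced by $\alpha$ along the fiber $\{x_1\} \times A_2$. Concretely, I would set
\[
p_{x_1, x_2} \;=\; \begin{cases} c_{x_1} \dfrac{\alpha_{x_1, x_2}}{[\tr_{A_2}(\alpha)]_{x_1}} & \text{if } [\tr_{A_2}(\alpha)]_{x_1} > 0, \\[4pt] c_{x_1}/|A_2| & \text{if } [\tr_{A_2}(\alpha)]_{x_1} = 0. \end{cases}
\]

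First I would verify feasibility: $p \ge 0$ coordinatewise by construction, and summing over $x_2 \in A_2$ gives $\tr_{A_2}(p)_{x_1} = c_{x_1}$ in both branches (the second branch is consistent because $[\tr_{A_2}(\alpha)]_{x_1} = 0$ forces $\alpha_{x_1, x_2} = 0$ for all $x_2$, so both $p_{x_1,x_2} \alpha_{x_1,x_2}$ and $\sqrt{c_{x_1} [\tr_{A_2}(\alpha)]_{x_1}}$ vanish).

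Next I would compute $\rF(p,\alpha)$ directly. For any $x_1$ with $[\tr_{A_2}(\alpha)]_{x_1} > 0$,
\[
\sum_{x_2 \in A_2} \sqrt{p_{x_1, x_2}\, \alpha_{x_1, x_2}} \;=\; \sqrt{\frac{c_{x_1}}{[\tr_{A_2}(\alpha)]_{x_1}}}\, \sum_{x_2 \in A_2} \alpha_{x_1, x_2} \;=\; \sqrt{c_{x_1}\, [\tr_{A_2}(\alpha)]_{x_1}},
\]
and the identity holds trivially (both sides are $0$) for $x_1$ with $[\tr_{A_2}(\alpha)]_{x_1} = 0$. Summing over $x_1$ and squaring yields
\[
\rF(p, \alpha) \;=\; \left( \sum_{x_1 \in A_1} \sqrt{c_{x_1}\, [\tr_{A_2}(\alpha)]_{x_1}} \right)^{\!2} \;=\; \rF(c, \tr_{A_2}(\alpha)).
\]

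Because $p$ is feasible, the maximum on the left-hand side of the claimed inequality is at least $\rF(p, \alpha) = \rF(c, \tr_{A_2}(\alpha))$, which is the desired bound. There is no real obstacle here: once one guesses the right ``conditional lift'' of $c$, the computation is forced by the concavity/positive homogeneity structure of $\rF$. The only point requiring a line of care is the handling of indices $x_1$ where $[\tr_{A_2}(\alpha)]_{x_1} = 0$, which is why the definition of $p$ is split into two cases.
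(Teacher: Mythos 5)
Your proposal is correct and uses exactly the same construction as the paper: the ``conditional lift'' of $c$ along the fibers of $\alpha$, with the $1/|A_2|$ fallback when $[\tr_{A_2}(\alpha)]_{x_1}=0$. You simply spell out the fidelity computation in more detail than the paper does.
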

The inequality can be shown to hold with equality by Uhlmann's theorem. However, we prove the inequality by exhibiting a feasible solution which is also useful for the analysis of cheating Bob.

\begin{proof}
For each $x_1 \in A_1$, $x_2 \in A_2$, define $p_{x_1, x_2}$ as
\[ p_{x_1, x_2} := \left\{ \begin{array}{ccl}
c_{x_1} \frac{\alpha_{x_1, x_2}}{[\tr_{A_2}(\alpha)]_{x_1}} & \text{ if } [\tr_{A_2}(\alpha)]_{x_1} > 0, \\
\quad \\
c_{x_1} \frac{1}{|A_2|} & \text{ if } [\tr_{A_2}(\alpha)]_{x_1} = 0.
\end{array} \right. \]
Then we have $p \geq 0$ is feasible since $[\tr_{A_2}(p)]_{x_1} = c_{x_1}$ and it has objective function value $\rF(p, \alpha) = \rF(c, \tr_{A_2}(\alpha))$, as desired. \qed
\comment{
\begin{eqnarray*}
\rF(p, \alpha)
& = & \left( \sum_{x_1, x_2 : \, \alpha_{x_1, x_2} > 0} \sqrt{p_{x_1, x_2}} \sqrt{\alpha_{x_1, x_2}} \right)^2 \\
& = & \left( \sum_{x_1, x_2} \sqrt{c_{x_1} \frac{\alpha_{x_1, x_2}}{[\tr_{A_2}(\alpha)]_{x_1}}} \sqrt{\alpha_{x_1, x_2}} \right)^2 \\
& = & \left( \sum_{x_1} \sqrt{c_{x_1}} \sqrt{[\tr_{A_2}(\alpha)]_{x_1}} \right)^2, \\
& = & \rF(c, \tr_{A_2}(\alpha)),
\end{eqnarray*}
} 
\end{proof}

Using the lemma, we can write the problem as
\[ \max_{c \in \prob^{A_1}} \eta' \, \rF(c, \tr_{A_2}(\alpha_{0})) + \tau' \, \rF(c, \tr_{A_2}(\alpha_{1})) \]
which has optimal value $\half \lambda_{\max} \left( \eta' \sqrtt{\tr_{A_2}(\alpha_0)} + \tau' \sqrtt{\tr_{A_2}(\alpha_1)} \right)$
and is lower bounded by $\left( \half + \half \sqrt{\rF(\tr_{A_2}(\alpha_0), \tr_{A_2}(\alpha_1))} \right) \left( \half + \half \Delta (\tr_{B_2}(\beta_0), \tr_{B_2}(\beta_1)) \right)$.
Again, this last quantity has context. This is the strategy where Alice measures the first message to learn $b$ early and then tries to change the value of $a$. She can learn $b$ with probability $\half + \half \Delta(\tr_{B_2}(\beta_0), \tr_{B_2}(\beta_1))$. She can successfully change the value of $a$ with probability $\half + \half \sqrt{\rF(\tr_{A_2}(\alpha_0), \tr_{A_2}(\alpha_1))}$. Thus, she can cheat with probability at least
\[ \left( \half + \half \sqrt{\rF(\tr_{A_2}(\alpha_0), \tr_{A_2}(\alpha_1))} \right) \left( \half + \half \Delta(\tr_{B_2}(\beta_0), \tr_{B_2}(\beta_1)) \right). \]


\paragraph{Cheating Bob} \quad \\

Bob's returning strategy is to send Alice's messages right back to her (if the dimensions agree). This way, the state that Alice checks at the end of the protocol is her own state. This is a good strategy when Alice and Bob share the same starting states, i.e., for a  protocol with parameters $\alpha_0 = \beta_0$ and $\alpha_1 = \beta_1$. To calculate the cheating probability of this strategy, for any choice of parameters, it is easier to use the original cheating SDP as opposed to the reduced cheating SDP. This cheating strategy corresponds to the feasible solution
\[ \bar{\rho}_1 = \bar{\rho}_2 = \cdots = \bar{\rho}_n = \bar{\rho}_F = \psi \psi^* \]
which has success probability given by the objective function value
\[ \inner{\bar{\rho}_F}{\Pi_{\rA,0}} = \inner{\psi \psi^*}{\Pi_{\rA,0}} = \half \sum_{a \bit} \rF(\alpha_a, \beta_a). \]
This is clearly optimal when $\alpha_0 = \beta_0$ and $\alpha_1 = \beta_1$.

Recall Bob's reduced problem below
\[ P_{\rB,0}^* = \max \left\{ \half \sum_{a \bit} \, \rF \left( (\alpha_{a} \otimes \id_{B})^{\transpose}p_{n}, \, \beta_a \right) : (p_{1}, \ldots, p_{n}) \in \calP_\rB \right\}. \]
There is a strategy for Bob that works for any $n$ and is very important in the search algorithm. This is the strategy where Bob ignores all of Alice's messages and tries to choose $b$ after learning $a$ from Alice. By ignoring Alice's messages, he effectively sets $p_n = e_{A} \otimes d$, for some $d \in \prob^B$, which we optimize. Under this restriction, he can cheat with probability
\begin{eqnarray*}
\max_{d \in \prob^B} \half \sum_{a \bit} \, \rF \left( (\alpha_{a} \otimes \id_{B})^{\transpose}(e_{A} \otimes d), \, \beta_a \right)
& = &
\max_{d \in \prob^B} \half \sum_{a \bit} \, \rF \left( d, \beta_a \right) \\
& = & \half \lambda_{\max} \left( \sqrtt{\beta_0} + \sqrtt{\beta_1} \right) \\
& = & \half + \half \sqrt{\rF(\beta_0, \beta_1)}
\end{eqnarray*}
using Lemma~\ref{FidLemma} and Lemma~\ref{lmaxfid2}. Note this is similar to the three-round case (discussed in Subsection~\ref{ex}). The reason this strategy is important is that it is easy to compute, only depends on half of the parameters, and is effective in pruning
sub-optimal protocols. We call this Bob's \emph{ignoring strategy}.

Another strategy for Bob is to measure Alice's first message, choose $b$ accordingly, then play honestly. This is called Bob's \emph{measuring strategy} and succeeds with probability
\[ \half + \half \Delta(\tr_{A_2 \times \cdots \times A_n} \left( \alpha_0 \right), \tr_{A_2 \times \cdots \times A_n} \left( \alpha_1 \right)), \]
when $n \geq 2$.

\paragraph{Cheating Bob in four-round protocols} \quad \\

There are cheating strategies that apply to four-round protocols, that do not
extend to a larger number of rounds.
For example, Bob has all of Alice's $\C^A$ space before he sends any messages. We show that Bob can use this to his advantage. One example is Bob's measuring strategy,
which leads to a cheating probability of
\[ \half + \half \Delta(\alpha_0, \alpha_1) \enspace. \]

Similar to cheating Alice, we can develop an eigenstrategy for Bob. For the special case of four-round protocols, notice that Bob's cheating polytope contains only the constraints $\tr_{B}(p) = e_{A}$ and $p \in \R_+^{A \times B}$. This can be rewritten as
$p_x \in \Prob^B$ for all $x \in A$. Also, $\rF \left( (\alpha_{a} \otimes \id_{B})^{\transpose} p_{n}, \, \beta_a \right)$ can be written as $\rF\! \left( \sum_{x \in A} \alpha_{a,x} p_n^{(x)} , \beta_a \right)$, where $p_n^{(x)}$ is the projection of $p_n$ with $x$ fixed. Thus, we can simplify Bob's reduced problem as
\[ P_{\rB,0}^* = \max \set{ \half \sum_{a \bit} \rF\! \left( \sum_{x \in A} \alpha_{a,x} p_n^{(x)} , \beta_a \right) : p_n^{(x)} \in \prob^B, \, \forAll x \in A}. \]
Since fidelity is concave, we have that $\rF\! \left( \sum_{x \in A} \alpha_{a,x} p_n^{(x)}, \beta_a \right) \geq \sum_{x \in A} \alpha_{a,x}
\, \rF(p_n^{(x)}, \beta_a)$.
Therefore Bob's optimal cheating probability is bounded below by
\[ \max \set{ \half \sum_{x \in A} \sum_{a \bit} \alpha_{a,x} \, \rF(p_n^{(x)}, \beta_a) : p_n^{(x)} \in \prob^B, \, \forAll x \in A} \]
which separates over $x \in A$. That is, we choose each $p_n^{(x)} \in \prob^B$ separately to maximize $\dsum_{a \bit} \alpha_{a,x} \, \rF(p_n^{(x)}, \beta_a)$, which has optimal objective value
$\lambda_{\max}\!\left( \dsum_{a \bit} \alpha_{a,x} \sqrtt{\beta_a} \right)$ using Lemma \ref{FidLemma}.
Thus, we know that
\[ P_{\rB,0}^* \geq \half \sum_{x \in A} \lambda_{\max} \left( \sum_{a \bit} \alpha_{a,x} \sqrtt{\beta_a} \right). \]

Since we use the concavity of the objective function, the bound we get
may not be tight. Notice that solving the smaller separated problems yields a solution which is feasible for the original problem. Therefore, we can substitute this into the original objective function to get a better lower bound on Bob's optimal cheating probability. We call this Bob's \emph{eigenstrategy}.

Since eigenvalues are expensive to compute, we can bound this quantity by
\begin{eqnarray*}
\frac{1}{2} \sum_{x \in A} \lambda_{\max}\!\left( \sum_{a \bit} \alpha_{a,x} \sqrtt{\beta_a}  \right)
& \geq & \min_{\beta_0, \beta_1 \in \prob^B} \frac{1}{2} \sum_{x \in A} \lambda_{\max} \left( \sum_{a \bit} \alpha_{a,x} \sqrtt{\beta_a}  \right) \\
& = & \half \sum_{x \in A} \max_{a \bit} \set{\alpha_{a,x}} \\
& = & \frac{1}{2} + \frac{1}{2} \Delta(\alpha_0, \alpha_1) \enspace,
\end{eqnarray*}
where the last equality follows from Lemma~\ref{mn}.

Since $\lambda_{\max}(X+Y) \leq \lambda_{\max}(X) + \lambda_{\max}(Y)$ for all matrices $X$ and $Y$, we have that
\begin{eqnarray*}
\frac{1}{2} \sum_{x \in A} \lambda_{\max}\!\left( \sum_{a \bit} \alpha_{a,x} \sqrtt{\beta_a}  \right)
& \geq &
\frac{1}{2} \lambda_{\max}\!\left( \sum_{x \in A} \sum_{a \bit} \alpha_{a,x} \sqrtt{\beta_a}  \right) \\
& = &
\frac{1}{2} \lambda_{\max}\!\left( \sum_{a \bit} \sqrtt{\beta_a}  \right) \\
& = &
\half + \half \sqrt{\rF(\beta_0, \beta_1)} \enspace.
\end{eqnarray*}
Therefore, Bob's eigenstrategy performs better than both his measuring strategy and ignoring strategy.

\paragraph{Cheating Bob in six-round protocols} \quad \\

In six-round protocols, Bob's goal is to maximize the objective function
\[ \dfrac{1}{2} \dsum_{a \in \zo \comment{A'_0}} \rF((\alpha_a \otimes \id_{B_1 \times B_2})^\transpose p_2, \beta_a) \]
over $(p_1, p_2)$ satisfying:
\[ \begin{array}{rrrcllllllllllllll}
                     &                               & \tr_{B_1}(p_1) & = & e_{A_1}, \\
                     &                               & \tr_{B_2}(p_2) & = & p_1 \otimes e_{A_{2}}, \\
                     &                               & p_{1} & \in & \R_{+}^{A_{1} \times B_{1}}, \\
                     &                               & p_{2} & \in & \R_{+}^{A_{1} \times B_{1} \times A_{2} \times B_{2}}.
\end{array} \]
Like in four-round protocols, we can lower bound the objective function as
\[ \half \sum_{a \in A'_0} \rF\!\left( \sum_{x \in A} \alpha_{a,x} p_2^{(x)}, \beta_a \right) \geq \half \sum_{x \in A} \sum_{a \in A'_0} \rF(p_2^{(x)}, \alpha_{a,x} \beta_a) \]
and focus our attention on optimizing the function $\dsum_{a \in A'_0} \rF(p_2^{(x)}, \alpha_{a,x} \beta_a)$.
We use the following lemma.

\begin{lemma} \label{pfidlemma2}
For $\beta_0, \beta_1 \in \R_+^{B_1 \times B_2}$ and $c \in \R_+^{B_1}$, we have
\[ \max \set{ \sum_{a \bit} \rF(p, \beta_a): \tr_{B_2}(p) = c, \, p \geq 0 } \geq \rF(c, \tr_{B_2}(\beta_{\tilde{a}})), \]
for \emph{any} $\tilde{a} \bit$.
\end{lemma}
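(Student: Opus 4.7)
\textbf{Proof plan for Lemma~\ref{pfidlemma2}.} The plan is to exhibit a single feasible $p$ that simultaneously makes one of the two fidelity terms equal to $\rF(c, \tr_{B_2}(\beta_{\tilde{a}}))$, and then discard the other term using nonnegativity of fidelity. This is the natural analogue of the proof of Lemma~\ref{PartialFidelityLemma}, which already provides the required construction.

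Fix $\tilde{a} \in \{0,1\}$. First I would invoke Lemma~\ref{PartialFidelityLemma} with the roles $A_1 \mapsto B_1$, $A_2 \mapsto B_2$, and $\alpha \mapsto \beta_{\tilde{a}}$. This yields an explicit vector $\bar{p} \in \R_+^{B_1 \times B_2}$ defined by
\[
\bar{p}_{y_1,y_2} \;=\; \begin{cases} c_{y_1} \dfrac{\beta_{\tilde{a},y_1,y_2}}{[\tr_{B_2}(\beta_{\tilde{a}})]_{y_1}} & \text{if } [\tr_{B_2}(\beta_{\tilde{a}})]_{y_1} > 0, \\[4pt] c_{y_1}/|B_2| & \text{if } [\tr_{B_2}(\beta_{\tilde{a}})]_{y_1} = 0, \end{cases}
\]
which is feasible for the optimization problem in the statement (i.e. $\bar{p} \geq 0$ and $\tr_{B_2}(\bar{p}) = c$) and satisfies $\rF(\bar{p}, \beta_{\tilde{a}}) = \rF(c, \tr_{B_2}(\beta_{\tilde{a}}))$ by the calculation already carried out in the proof of Lemma~\ref{PartialFidelityLemma}.

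To finish, I would simply note that fidelity is nonnegative on $\R_+^{B_1 \times B_2} \times \R_+^{B_1 \times B_2}$, so $\rF(\bar{p}, \beta_{1-\tilde{a}}) \geq 0$. Consequently,
\[
\sum_{a \in \{0,1\}} \rF(\bar{p}, \beta_a) \;\geq\; \rF(\bar{p}, \beta_{\tilde{a}}) \;=\; \rF(c, \tr_{B_2}(\beta_{\tilde{a}})),
\]
and the maximum over all feasible $p$ is at least the value achieved at $\bar{p}$. Since $\tilde{a} \in \{0,1\}$ was arbitrary, this establishes the lemma. There is no real obstacle here: the only subtle point is that the same $\bar{p}$ need not make the other fidelity term large, but we do not need it to, since the claim is only a lower bound by one of the two symmetric quantities. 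One could in fact conclude the slightly stronger statement that the maximum dominates $\max_{\tilde{a}} \rF(c, \tr_{B_2}(\beta_{\tilde{a}}))$, but the form stated above is what the downstream cheating-strategy analysis for Bob requires.
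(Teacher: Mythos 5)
Your proof is correct and follows the same route as the paper: both invoke the feasible point furnished by Lemma~\ref{PartialFidelityLemma} (with $A_i \mapsto B_i$, $\alpha \mapsto \beta_{\tilde{a}}$) and then discard the other fidelity term by nonnegativity. The only cosmetic difference is that you write out the explicit $\bar{p}$ from the proof of Lemma~\ref{PartialFidelityLemma}, whereas the paper simply refers to an argmax of the single-fidelity problem.
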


\begin{proof}
Fix any $\tilde{a}$ and choose $p \in \arg\max \set{ \rF(p, \beta_{\tilde{a}}): \tr_{B_2}(p) = c, \, p \geq 0 }$. Since the fidelity is nonnegative, the result follows by Lemma~\ref{PartialFidelityLemma}. \qed
\end{proof}

By setting $p_1 = c \otimes e_{A_1}$, we have the constraint $\tr_{B_2}(p^{(x)}) = c$ for all $x \in A$. We now apply Lemma~\ref{pfidlemma2} to get
\[ \max_{p_2^{(x)}} \left\{ \sum_{a \in A'_0} \rF(p_2^{(x)}, \alpha_{a,x} \beta_a) \right\} \geq \alpha_{g(x), x} \, \rF(c, \tr_{B_2}(\beta_{g(x)})), \]
where $g(x) := \arg\max_{a \in A'_0} \set{\alpha_{a,x}}$, and $0$ in the case of a tie.

Substituting this into the relaxed objective function above, we have
\begin{eqnarray}
\lefteqn{ \max_{c \in \prob^{B_1}} \frac{\kappa}{2} \, \rF(c, \tr_{B_2}(\beta_{0})) + \frac{\zeta}{2} \, \rF(c, \tr_{B_2}(\beta_1))
} \nonumber \\
& = & \half \lambda_{\max} \left( \kappa \sqrtt{\tr_{B_2}(\beta_{0})} + \zeta \sqrtt{\tr_{B_2}(\beta_{1})} \right) \\
& \geq & \left( \half + \half \Delta(\alpha_0, \alpha_1) \right)  \left( \half + \half \sqrt{\rF(\tr_{B_2}(\beta_{0}), \tr_{B_2}(\beta_{1}))} \right). \label{B15}
\end{eqnarray}
The quantity $\textup{(\ref{B15})}$ corresponds to the strategy where Bob measures Alice's second message to try to learn $a$ early, then tries to change the value of $b$. He can learn $a$ after Alice's second message with probability $\half + \half \Delta(\alpha_0, \alpha_1)$. He can change the value of $b$ with probability $\half + \half \sqrt{\rF(\tr_{B_{2}}(\beta_0), \tr_{B_{2}}(\beta_1))}$. Thus, he can cheat with probability at least
\[ \left( \half + \half \sqrt{\rF(\tr_{B_{2}}(\beta_0), \tr_{B_{2}}(\beta_1))} \right) \left( \half + \half \Delta(\alpha_0, \alpha_1) \right). \]
We call this Bob's \emph{three-round strategy}.

Although we used many bounds in developing the quantity $\textup{(\ref{B6})}$, such as concavity and the lower bound in Lemma~\ref{pfidlemma2}, we can recover some of the losses by generating its corresponding feasible solution and computing its objective function value for the original objective function. For example, we can calculate $c$ as the entry-wise square of the normalized principal eigenvector of
\[ \half \lambda_{\max} \left( \kappa \sqrtt{\tr_{B_2}(\beta_{0})} + \zeta \sqrtt{\tr_{B_2}(\beta_{1})} \right), \]
then calculate $p_2^{(x)}$ for each value of $x$ from the construction of the feasible solution in the proof of Lemma~\ref{PartialFidelityLemma}. We call this Bob's \emph{eigenstrategy}.

\end{document}